\definecolor{hl}{gray}{.85}
\newcolumntype{a}{>{\columncolor{hl}}c}
\newcommand{\Flow}[1]{\ensuremath{\text{Flow}(#1)}}
\theoremstyle{theorem}
\newtheorem{theorem}[]{Theorem}[section]
\newtheorem{prop}[theorem]{Proposition}
\newtheorem{lemma}[theorem]{Lemma}
\newtheorem{corollary}[theorem]{Corollary}
\newtheorem{observation}[theorem]{Observation}
\theoremstyle{definition}
\newtheorem{definition}[]{Definition}
\newtheorem{algorithm}[theorem]{Algorithm}
\newtheorem{question}{Question}
\newtheorem{conjecture}{Conjecture}
\newtheorem{remark}{Remark}
\newcommand{\be}{\begin{enumerate}}
\newcommand{\ee}{\end{enumerate}}
\newcommand{\bi}{\begin{itemize}}
\newcommand{\ei}{\end{itemize}}
\newcommand{\mc}{\mathcal}
\tikzstyle{vtx}=[inner sep=1pt,draw, shape=circle, minimum size=5pt]
\tikzstyle{line}=[inner sep=3pt,draw, shape=circle, minimum size=6pt, fill=black!20]
\tikzset{>=stealth}
\tikzset{arclabel/.style={midway, fill=white, inner sep = 2 pt}}
\definecolor{mycyan}{RGB}{0, 255, 255}
\definecolor{mymagenta}{RGB}{255,0, 255}
\newcommand{\drawPentaBlobs}[4][1]{
\pgfmathtruncatemacro{\result}{#2-1}
\pgfmathsetmacro{\angle}{360/#2}
\foreach \i in {0, ..., \result}{
	\node[rotate = \i*\angle,  circle, gray!70!white, minimum width = #1 cm, inner sep = 1pt] (G\i) at ($(\angle*\i:2)$){};
	\node[vtx, fill = orange] (C\i) at (G\i.180) {};
	\node[vtx, fill = mymagenta] (A\i) at (G\i.180-72) {};
	\node[vtx, fill = gray!50!white] (v\i) at (G\i.180-2*72) {};
	\node[vtx, fill = yellow] (A'\i) at (G\i.180+72) {};
	\node[vtx, fill = gray!50!white] (w\i) at (G\i.180+2*72) {};
	\node[vtx, fill = cyan] (B\i) at ($(v\i) + (\angle*\i:#4)$) {};
	\node[vtx, fill = green!50!black] (B'\i) at ($(w\i) + (\angle*\i:#4)$) {};

	\node[vtx, fill=black] (z\i) at (\angle*\i:#3){};
	\draw[gray!70!white] (C\i) -- (v\i) -- (A'\i) -- (A\i) -- (w\i)--(C\i);
	\draw[gray!70!white] (v\i) -- (B\i) -- (B'\i) -- (w\i);
}
}
\newcommand{\drawPentaBlobOne}[6][1]{
\pgfmathsetmacro{\angle}{360/#2}
	\node[rotate = #5*\angle,  circle, gray!70!white,  minimum width = #1 cm, inner sep = 1pt] (G#5) at ($(\angle*#5:2)$){};
	\node[vtx, fill = orange] (C#5) at (G#5.180) {};
	\node[vtx, fill = mymagenta] (A#5) at (G#5.180-72) {};
	\node[vtx, fill = gray!50!white] (v#5) at (G#5.180-2*72) {};
	\node[vtx, fill = yellow] (A'#5) at (G#5.180+72) {};
	\node[vtx, fill = gray!50!white] (w#5) at (G#5.180+2*72) {};
	\node[vtx, fill = cyan] (B#5) at ($(v#5) + (\angle*#5:#4)$) {};
	\node[vtx, fill = green!50!black] (B'#5) at ($(w#5) + (\angle*#5:#4)$) {};

	\node[vtx, fill=black] (z#5) at (\angle*#5:#3){};
	\draw[gray!70!white, #6] (C#5) -- (v#5) -- (A'#5) -- (A#5) -- (w#5)--(C#5);
	\draw[gray!70!white, #6] (v#5) -- (B#5) -- (B'#5) -- (w#5);
}
\newcommand{\drawPentaBlobsNoBlack}[4][1]{
\pgfmathtruncatemacro{\result}{#2-1}
\pgfmathsetmacro{\angle}{360/#2}
\foreach \i in {0, ..., \result}{
	\node[rotate = \i*\angle,  circle, gray!70!white, minimum width = #1 cm, inner sep = 1pt] (G\i) at ($(\angle*\i:2)$){};
	\node[vtx, fill = orange] (C\i) at (G\i.180) {};
	\node[vtx, fill = mymagenta] (A\i) at (G\i.180-72) {};
	\node[vtx, fill = gray!50!white] (v\i) at (G\i.180-2*72) {};
	\node[vtx, fill = yellow] (A'\i) at (G\i.180+72) {};
	\node[vtx, fill = gray!50!white] (w\i) at (G\i.180+2*72) {};
	\node[vtx, fill = cyan] (B\i) at ($(v\i) + (\angle*\i:#4)$) {};
	\node[vtx, fill = green!50!black] (B'\i) at ($(w\i) + (\angle*\i:#4)$) {};

	\draw[gray!70!white] (C\i) -- (v\i) -- (A'\i) -- (A\i) -- (w\i)--(C\i);
	\draw[gray!70!white] (v\i) -- (B\i) -- (B'\i) -- (w\i);
}
}
\newcommand{\drawBlobs}[4][1]{
\pgfmathtruncatemacro{\result}{#2-1}
\pgfmathsetmacro{\angle}{360/#2}
\foreach \i in {0, ..., \result}{
	\node[rotate = \i*\angle, draw,  cloud, gray!70!white, minimum width = #1 cm, minimum height=#4 cm, inner sep = 1pt] (G\i) at ($(\angle*\i:2)$){};
	\node[vtx, fill = orange] (C\i) at (G\i.180) {};
	\node[vtx, fill = mymagenta] (A\i) at (G\i.180-45) {};
	\node[vtx, fill = yellow] (A'\i) at (G\i.180+45) {};
	\node[vtx, fill = cyan] (B\i) at (G\i.90-45) {};
	\node[vtx, fill =green!70!black] (B'\i) at (G\i.-90+45) {};

	\node[vtx, fill=black] (z\i) at (\angle*\i:#3){};
}
}
\newcommand{\drawAlphaA}[6][1]{ 
\pgfmathtruncatemacro{\result}{int(mod(#3+#4, #2))}
\pgfmathsetmacro{\angle}{360/#2}
\draw[#5] let \n1 = \result in (A#3) --
(A'\n1);}
\newcommand{\drawAlphaB}[6][1]{ 
\pgfmathtruncatemacro{\result}{int(mod(#3+#4, #2))}
\pgfmathsetmacro{\angle}{360/#2}
\draw[#5] let \n1 = \result in (B#3) --
(B'\n1);}
\newcommand{\drawBetaA}[6][1]{ 
\pgfmathtruncatemacro{\result}{int(mod(#3+#4, #2))}
\pgfmathsetmacro{\angle}{360/#2}
\draw[#5] let \n1 = \result in (A#3) -- 
(B'\n1);}
\newcommand{\drawBetaB}[6][1]{ 
\pgfmathtruncatemacro{\result}{int(mod(#3+#4, #2))}
\pgfmathsetmacro{\angle}{360/#2}
\draw[#5] let \n1 = \result in (B#3)  --
(A'\n1);}
\newcommand{\drawSpoke}[2]{ 
\draw[#2] (C#1) -- (z#1);}
\newcommand{\drawLoop}[4]{
\pgfmathtruncatemacro{\result}{int(mod(#2+#4, #1))}
\draw[#3] let \n1 = \result in (z#2) -- (z\n1);}
\newcommand{\drawGTwentyEightBetaBlobs}[4][1]{
\pgfmathtruncatemacro{\result}{#2-1}
\pgfmathsetmacro{\angle}{360/#2}
\foreach \i in {0, ..., \result}{
	\node[rotate = \i*\angle,  circle, gray!70!white, minimum width = #1 cm, inner sep = 1pt] (G\i) at ($(\angle*\i:2)$){};
	\node[vtx, fill = orange] (C\i) at (G\i.180) {};
	\node[vtx, fill = gray!50!white,] (x4\i) at (G\i.180-72) {};
	\node[vtx, fill = gray!50!white,] (x1\i) at (G\i.180-2*72) {};
	\node[vtx, fill = yellow] (A'\i) at (G\i.180+72) {};
	\node[vtx, fill = gray!50!white,] (x3\i) at (G\i.180+2*72) {};
	\node[vtx, fill = gray!50!white] (x2\i) at ($(x1\i) + (\angle*\i:#4)$) {};
	\node[vtx, fill = green!70!black] (B'\i) at ($(x3\i) + (\angle*\i:#4)$) {};
	
	\node[vtx, fill = mymagenta] (A\i) at ($(\angle/2+\angle*\i:2+#4)$){};
	\node[vtx, fill = cyan] (B\i) at ($(\angle/2+\angle*\i:2)$){};

	\node[vtx, fill=black] (z\i) at (\angle*\i:#3){};
	\draw[gray!70!white] (C\i) -- (x3\i) -- (x4\i) -- (A'\i) -- (x1\i)--(C\i);
	\draw[gray!70!white] (x4\i) -- (B\i) -- (A\i) -- (x2\i)--(B'\i)--(x3\i);
	\draw[gray!70!white] (x1\i) -- (x2\i);
}
}
\newcommand{\drawGTwentyEightAlphaBlobs}[4][1]{
\pgfmathtruncatemacro{\result}{#2-1}
\pgfmathsetmacro{\angle}{360/#2}
\foreach \i in {0, ..., \result}{
	\node[rotate = \i*\angle,  circle, gray!70!white, minimum width = #1 cm, inner sep = 1pt] (G\i) at ($(\angle*\i:2)$){};
	\node[vtx, fill = orange] (C\i) at (G\i.180) {};
	\node[vtx, fill = gray!50!white] (x4\i) at (G\i.180-72) {};
	\node[vtx, fill = gray!50!white] (x1\i) at (G\i.180-2*72) {};
	\node[vtx, fill = yellow] (A'\i) at (G\i.180+72) {};
	\node[vtx, fill = gray!50!white] (x3\i) at (G\i.180+2*72) {};
	\node[vtx, fill = gray!50!white] (x2\i) at ($(x1\i) + (\angle*\i:#4)$) {};
	\node[vtx, fill = green!70!black] (B'\i) at ($(x3\i) + (\angle*\i:#4)$) {};
	
	\node[vtx, fill = mymagenta] (A\i) at ($(\angle/3+\angle*\i:2+#4)$){};
	\node[vtx, fill = cyan] (B\i) at ($(2*\angle/3+\angle*\i:2+#4)$){};

	\node[vtx, fill=black] (z\i) at (\angle*\i:#3){};
	\draw[gray!70!white] (C\i) -- (x3\i) -- (x4\i) -- (A'\i) -- (x1\i)--(C\i);
	\draw[gray!70!white] (x4\i) -- (B\i) -- (A\i) -- (x2\i)--(B'\i)--(x3\i);
	\draw[gray!70!white] (x1\i) -- (x2\i);
}
}
\newcommand{\drawGTwentyEightNEWBlobs}[4][1]{
\pgfmathtruncatemacro{\result}{#2-1}
\pgfmathsetmacro{\angle}{360/#2}
\foreach \i in {0, ..., \result}{
	\node[rotate = \i*\angle,  circle,  minimum width = #1 cm, inner sep = 1pt] (G\i) at ($(\angle*\i:2)$){};
	\node[vtx, fill = gray!50!white] (x1\i) at (G\i.0) {};
	\node[vtx, fill = gray!50!white] (x4\i) at (G\i.0-72) {};
	\node[vtx, fill = gray!50!white ] (x3\i) at (G\i.0-2*72) {};
	\node[vtx, fill = gray!50!white] (x2\i) at (G\i.72) {};
	\node[vtx, fill = mymagenta ] (A\i) at (G\i.2*72) {};
	\node[vtx, fill = cyan] (B\i) at ($(0,0)!1.3!(x2\i)$) {};
	\node[vtx, fill = gray!50!white] (x5\i) at($(0,0)!1.3!(x1\i)$) {};
	\node[vtx, fill = green!70!black] (B'\i) at ($(0,0)!1.3!(x4\i)$)  {};
	
	\node[vtx, fill = yellow] (A'\i) at ($(-\angle/2+\angle*\i:1-#4/2)$){};
	\node[inner sep = 1pt] (C\i) at (A'\i){};

	\node[vtx, fill=black] (z\i) at (-\angle/2+\angle*\i:#3){};
	\draw[gray!70!white] (A\i) -- (x4\i) -- (x2\i) -- (x3\i) -- (x1\i)--(A\i);
	\draw[gray!70!white] (x2\i) -- (B\i) -- (x5\i) -- (B'\i) -- (x4\i);
	\draw[gray!70!white] (x3\i) -- (A'\i) (x1\i)--(x5\i);
}
}
\newcommand{\drawGThirtyFourNoNineBlobs}[5][1]{
\pgfmathtruncatemacro{\result}{#2-1}
\pgfmathsetmacro{\angle}{360/#2}
\foreach \i in {0, ..., \result}{
	\node[rotate = \i*\angle,  circle, gray!70!white, minimum width = #1 cm, inner sep = 1pt] (G\i) at ($(\angle*\i:2)$){};
	\node[vtx,fill = gray!50!white](x1\i) at (G\i.0) {};
	\node[vtx,fill = gray!50!white] (x4\i) at (G\i.0-72) {};
	\node[vtx,fill = cyan] (B\i) at (G\i.0-2*72) {};
	\node[vtx, fill = gray!50!white] (x2\i) at (G\i.72) {};
	\node[vtx,fill = green!70!black] (B'\i) at (G\i.2*72) {};
	\node[vtx, fill = gray!50!white] (x6\i) at ($(0,0)!1.3!(x2\i)$) {};
	\node[vtx, fill = gray!50!white] (x5\i) at($(0,0)!1.2!(x1\i)$) {};
	\node[vtx, fill = gray!50!white] (x3\i) at ($(0,0)!1.3!(x4\i)$)  {};
	\node[vtx, fill = yellow] (A'\i) at ($(-\angle/3+\angle*\i:2*1.3)$){};
	\node[vtx, fill = mymagenta] (A\i) at ($(\angle/3+\angle*\i:2*1.3)$){};
	\node[vtx, fill = orange] (C\i) at (\angle*\i:#5){};

	\node[vtx, fill=black] (z\i) at (\angle*\i:#3){};
	
	\draw[gray!70!white] (B'\i) -- (x4\i) -- (x2\i) -- (B\i) -- (x1\i)--(B'\i);
	\draw[gray!70!white] (x2\i) -- (x6\i) -- (x5\i) -- (x3\i) -- (x4\i);
	\draw[gray!70!white] (x6\i) -- (A\i) -- (C\i) -- (A'\i)--(x3\i);
	\draw[gray!70!white]  (x1\i)--(x5\i);
}
}
\newcommand{\drawOneBlob}[7][]{
	\node[ draw,  cloud, gray!70!white, minimum width = 1.3 cm, minimum height=2 cm, inner sep = 1pt] (G#2) at (#2,0) {#1};
	
	\node[vtx, fill = orange] (C#2) at (G#2.270) {};
	\node[vtx, fill = mymagenta] (A#2) at (G#2.270-45) {};
	\node[vtx, fill = yellow] (A'#2) at (G#2.270+45) {};
	\node[vtx, fill = cyan] (B#2) at (G#2.90+45) {};
	\node[vtx, fill = green!70!black] (B'#2) at (G#2.90-45) {};
	
	\node[left=of A#2] (a#2) {};
	\node[left = of B#2] (b#2) {};
	\node[right =of A'#2] (a'#2) {};
	\node[right =of B'#2] (b'#2) {};

	\node[vtx,fill=black, 
	below=of C#2] (z#2){};
	
	\draw[ultra thick, #3] (B#2) -- (b#2);
	\draw[ultra thick, #4] (A#2) -- (a#2);
	\draw[ultra thick, #5] (C#2) -- (z#2);
	\draw[ultra thick, #6] (A'#2) -- (a'#2);
	\draw[ultra thick, #7] (B'#2) -- (b'#2);
	}
\newcommand{\drawOneBlobWithLabels}[7][]{
	\node[ draw,  cloud, gray!70!white, minimum width = 1.5 cm, minimum height=2.5 cm, inner sep = 1pt] (G#2) at (#2,0) {#1};
	
	\node[vtx, white,fill = orange] (C#2) at (G#2.270) {$v$};
	\node[vtx, white,fill = cyan] (B#2) at (G#2.90+45) {$B$};
	\node[vtx, white,fill = green!70!black] (B'#2) at (G#2.90-45) {$B'$};
	\node[vtx, white,fill = mymagenta] (A#2) at (G#2.270-45) {$A$};
	\node[vtx, fill = yellow] (A'#2) at (G#2.270+45) {$A'$};
	

	\node[vtx,white,fill=black, 
	below=of C#2] (z#2){$w$};
	
	}
\newcommand{\drawOneGTwentyEight}{
\pgfmathsetmacro{\off}{1}
\pgfmathsetmacro{\angle}{90}
\foreach \i in {1}{
	\node[rotate = \angle,  circle, gray!70!white, minimum width = 2 cm, inner sep = 1pt] (G\i) at ($(\angle*\i:2)$){};
	\node[vtx, fill = orange] (C\i) at (G\i.180) {$v$};
	\node[vtx, fill = gray!50!white] (x4\i) at (G\i.180-72) {};
	\node[vtx, fill = gray!50!white] (x1\i) at (G\i.180-2*72) {};
	\node[vtx, fill = yellow] (A'\i) at (G\i.180+72) {$A'$};
	\node[vtx, fill = gray!50!white] (x3\i) at (G\i.180+2*72) {};
	\node[vtx, fill = gray!50!white] (x2\i) at ($(x1\i) + (\angle*\i:\off)$) {};
	\node[vtx, fill = green!70!black] (B'\i) at ($(x3\i) + (\angle*\i:\off)$) {$B'$};
	
	\node[vtx, fill = mymagenta,  below left=of x2\i] (A\i){$A$}; 
	\node[vtx, fill = cyan, left =  .5 of x4\i] (B\i){$B$};

	\node[vtx, white,fill=black, below = of C\i] (z\i) {$w$};
	
	\draw[gray!70!white] (C\i) -- (z\i);
	\draw[gray!70!white] (C\i) -- (x3\i) -- (x4\i) -- (A'\i) -- (x1\i)--(C\i);
	\draw[gray!70!white] (x4\i) -- (B\i) -- (A\i) -- (x2\i)--(B'\i)--(x3\i);
	\draw[gray!70!white] (x1\i) -- (x2\i);
	}
%
	
}
\newcommand{\drawOneGTwentyEightNEW}{
\pgfmathsetmacro{\angle}{90}
\pgfmathsetmacro{\off}{.6}
\pgfmathsetmacro{\scale}{1.4}
\foreach \i in {1}{
	\node[rotate = \i*\angle,  circle, blue, minimum width = 2 cm, inner sep = 1pt] (G\i) at ($(\angle*\i:2)$){};
	\node[vtx, fill = gray!50!white] (x1\i) at (G\i.0) {};
	\node[vtx, fill = gray!50!white] (x4\i) at (G\i.0-72) {};
	\node[vtx, fill = gray!50!white ] (x3\i) at (G\i.0-2*72) {};
	\node[vtx, fill = gray!50!white] (x2\i) at (G\i.72) {};
	\node[vtx, fill = mymagenta ] (A\i) at (G\i.2*72) {$A$};
	\node[vtx, fill = cyan] (B\i) at ($(0,0)!\scale!(x2\i)$) {$B$};
	\node[vtx, fill = gray!50!white] (x5\i) at($(0,0)!\scale!(x1\i)$) {};
	\node[vtx, fill = green!70!black] (B'\i) at ($(0,0)!\scale!(x4\i)$)  {$B'$};
	
	\node[vtx, fill = yellow, below right = .5 of x3\i] (A'\i) {$A'$};
	\node[inner sep = 1pt] (C\i) at (A'\i){};

	\node[vtx, white,fill=black, below = 3.5 of x1\i] (z\i) {$w$};
	
		\draw[gray!70!white] (C\i) -- (z\i);
	\draw[gray!70!white] (A\i) -- (x4\i) -- (x2\i) -- (x3\i) -- (x1\i)--(A\i);
	\draw[gray!70!white] (x2\i) -- (B\i) -- (x5\i) -- (B'\i) -- (x4\i);
	\draw[gray!70!white] (x3\i) -- (A'\i) (x1\i)--(x5\i);
}
}
\newcommand{\drawGThirtyFourNoFourBlobs}[4][1]{
\pgfmathtruncatemacro{\result}{#2-1}
\pgfmathsetmacro{\angle}{360/#2}
\pgfmathsetmacro{\scale}{1.3}
\foreach \i in {0, ..., \result}{
	\node[rotate = \i*\angle,  circle,  minimum width = #1 cm, inner sep = 1pt] (G\i) at ($(\angle*\i:2)$){};
	\node[vtx, fill = gray!50!white] (x1\i) at (G\i.0) {};
	\node[vtx, fill = gray!50!white] (x4\i) at (G\i.0-72) {};
	\node[vtx, fill = yellow ] (A'\i) at (G\i.0-2*72) {};
	\node[vtx, fill = gray!50!white] (x2\i) at (G\i.72) {};
	\node[vtx, fill = gray!50!white ] (x3\i) at (G\i.2*72) {};
	\node[vtx, fill = gray!50!white] (x5\i) at ($(0,0)!\scale!(x2\i)$) {};
	\node[vtx, fill = gray!50!white] (x6\i) at($(0,0)!\scale!(x1\i)$) {};
	\node[vtx, fill = green!70!black] (B'\i) at ($(0,0)!\scale!(x4\i)$)  {};
	
	\node[vtx, fill = mymagenta] (A\i) at ($(\angle/2+\angle*\i:1+#4/2)$){};
	\node[vtx, fill = cyan] (B\i) at ($(\angle/2+\angle*\i:2+#4)$){};


	\node[vtx, fill=orange] (C\i) at (\angle*\i:1+#3){};

	\node[vtx, fill=black] (z\i) at (\angle*\i:#3){};
	
	\draw[gray!70!white] (x1\i) -- (x3\i) -- (x4\i) -- (x2\i) -- (A'\i)--(x1\i);
	\draw[gray!70!white] (x4\i) -- (B'\i) -- (x6\i) -- (x5\i) -- (x2\i);
	\draw[gray!70!white](B\i) -- (C\i) -- (A\i)-- (x3\i) (x5\i) -- (B\i);
	\draw[gray!70!white] (x2\i) -- (x5\i) (x1\i) -- (x6\i);

}
}
\newcommand{\drawOneGThirtyFourNoFour}{
\pgfmathsetmacro{\angle}{90}
\pgfmathsetmacro{\scale}{1.3}
\pgfmathsetmacro{\off}{1.3}

\foreach \i in {1}{
	\node[rotate = \i*\angle,  circle,  minimum width = 2 cm, inner sep = 1pt] (G\i) at ($(\angle*\i:2)$){};
	\node[vtx, fill = gray!50!white] (x1\i) at (G\i.0) {};
	\node[vtx, fill = gray!50!white] (x4\i) at (G\i.0-72) {};
	\node[vtx, fill = yellow ] (A'\i) at (G\i.0-2*72) {$A'$};
	\node[vtx, fill = gray!50!white] (x2\i) at (G\i.72) {};
	\node[vtx, fill = gray!50!white ] (x3\i) at (G\i.2*72) {};
	\node[vtx, fill = gray!50!white] (x5\i) at ($(0,0)!\scale!(x2\i)$) {};
	\node[vtx, fill = gray!50!white] (x6\i) at($(0,0)!\scale!(x1\i)$) {};
	\node[vtx, fill = green!70!black] (B'\i) at ($(0,0)!\scale!(x4\i)$)  {$B'$};
	
	\node[vtx, fill = mymagenta, below left = of x3\i] (A\i) {$A$};
	\node[vtx, fill = cyan, below left = of x5\i] (B\i) {$B$};


	\node[vtx, fill=orange] (C\i) at (\angle*\i:.5){$v$};

	\node[vtx, white,fill=black] (z\i) at (\angle*\i:-1){$w$};
	
	\draw[gray!70!white] (x1\i) -- (x3\i) -- (x4\i) -- (x2\i) -- (A'\i)--(x1\i);
	\draw[gray!70!white] (x4\i) -- (B'\i) -- (x6\i) -- (x5\i) -- (x2\i);
	\draw[gray!70!white](B\i) -- (C\i) -- (A\i)-- (x3\i) (x5\i) -- (B\i);
	\draw[gray!70!white] (x2\i) -- (x5\i) (x1\i) -- (x6\i);
	\draw[gray!70!white] (z\i) -- (C\i);

}
}
\newcommand{\colorNineAlt}[4]{
\pgfmathsetmacro{\i}{#1}
\draw[thick, #3] (C\i) -- (A\i) (A'\i) -- (x3\i) (x2\i) -- (x4\i) (x6\i) -- (x5\i) (B'\i) -- (x1\i) ;
\draw[thick,#4] (C\i) -- (A'\i) (A\i) -- (x6\i) (x2\i) --(B\i) (x4\i) -- (x3\i) (x1\i) -- (x5\i);
\draw[thick,#2] (x2\i) -- (x6\i) (x5\i) -- (x3\i) (B'\i) -- (x4\i) (B\i) -- (x1\i);}
\newcommand{\colorNineAltBot}[4]{
\pgfmathsetmacro{\i}{#1}
\draw[thick, #3] (C\i) -- (A'\i) (x2\i) -- (x6\i) (x5\i) -- (x3\i) (B'\i) -- (x4\i) (B\i)--(x1\i);
\draw[thick,#4] (x6\i) -- (x5\i) (x1\i) -- (B'\i) (B\i)--(x2\i) (x4\i) -- (x3\i) (A\i) -- (C\i);
\draw[thick,#2] (A\i) -- (x6\i) (x2\i) --(x4\i) (x3\i) -- (A'\i) (x1\i) -- (x5\i);}
\newcommand{\drawBetaAwiggle}[6][1]{ 
\pgfmathtruncatemacro{\result}{int(mod(#3+#4, #2))}
\pgfmathsetmacro{\angle}{360/#2}
\draw[#5] let \n1 = \result in (A#3) to[bend left=#1]  
(B'\n1);}
\newcommand{\drawBetaBwiggle}[6][1]{ 
\pgfmathtruncatemacro{\result}{int(mod(#3+#4, #2))}
\pgfmathsetmacro{\angle}{360/#2}
\draw[#5] let \n1 = \result in (B#3)  to[bend left=#1]   
(A'\n1);}
\newcommand{\betaPair}[5][0]{
\drawBetaAwiggle[#1]{#2}{#3}{1}{#4, ultra thick}{0}
\drawBetaBwiggle[#1]{#2}{#3}{1}{#5, ultra thick}{0}
}
\newcommand{\drawAlphaAwiggle}[6][1]{ 
\pgfmathtruncatemacro{\result}{int(mod(#3+#4, #2))}
\pgfmathsetmacro{\angle}{360/#2}
\draw[#5] let \n1 = \result in (A#3) to[bend left=#1, looseness = #6]  
(A'\n1);}
\newcommand{\drawAlphaBwiggle}[6][1]{ 
\pgfmathtruncatemacro{\result}{int(mod(#3+#4, #2))}
\pgfmathsetmacro{\angle}{360/#2}
\draw[#5] let \n1 = \result in (B#3)  to[bend left=#1, looseness = #6]   
(B'\n1);}
\newcommand{\GpointsStuff}[8][1]{

\pgfmathtruncatemacro{\result}{#2-1}
\pgfmathsetmacro{\angle}{360/#2}
\pgfmathtruncatemacro{\i}{mod(#5,#2)}
\pgfmathtruncatemacro{\ii}{mod(#5+1, #2)}
	\node[rotate = \i*\angle,  circle,  minimum width = #1 cm, inner sep = 1pt] (G\i) at ($(\angle*\i:2)$){};
	\node[vtx, fill = gray!50!white] (x1\i) at (G\i.0) {};
	\node[vtx, fill = gray!50!white] (x4\i) at (G\i.0-72) {};
	\node[vtx, fill = gray!50!white ] (x3\i) at (G\i.0-2*72) {};
	\node[vtx, fill = gray!50!white] (x2\i) at (G\i.72) {};
	\node[vtx, fill = mymagenta ] (A\i) at (G\i.2*72) {};
	\node[vtx, fill = cyan] (B\i) at ($(0,0)!1.3!(x2\i)$) {};
	\node[vtx, fill = gray!50!white, label={[inner sep = 1 pt, outer sep = 3pt, font=\tiny]
	\angle*\i:#8}] (x5\i) at($(0,0)!1.3!(x1\i)$) {};
	\node[vtx, fill = green!70!black] (B'\i) at ($(0,0)!1.3!(x4\i)$)  {};
	
	\node[vtx, fill = yellow] (A'\i) at ($(-\angle/2+\angle*\i:#4)$){};
	
		\node[rotate = \ii*\angle,circle,  minimum width = #1 cm, inner sep = 1pt] (G\ii) at ($(\angle*\ii:2)$){};
		\coordinate (a\i) at ($(-\angle/2+\angle*\ii:#4)$);
		\node[] (x4\ii) at (G\ii.0-72) {};
		\coordinate[] (b\i) at ($(0,0)!1.3!(x4\ii)$);

	\node[vtx, fill=black] (z\i) at (-\angle/2+\angle*\i:#3){};
	\coordinate (wa\i) at (-\angle/2+\angle*\ii:#3){};}
\newcommand{\ThreeA}[8][1]{

\GpointsStuff{#2}{#3}{#4}{#5}{#6}{#7}{#8}

	\begin{scope}[on background layer]
	\draw[ cyan, thick] (A\i) -- (x4\i);
	\draw[red,ultra thick] (x4\i) -- (x2\i);
	\draw[ #6](x2\i) -- (x3\i);
	\draw[red,ultra thick ] (x3\i) -- (x1\i);
	\draw[  cyan, thick] (x1\i)--(A\i);
	\draw[  #6] (x2\i) -- (B\i);
	\draw[ red ,ultra thick] (B\i) -- (x5\i);
	\draw[  cyan, thick ] (x5\i)  -- (B'\i);
	\draw[  cyan, thick ] (B'\i) -- (x4\i);
	\draw[#6 ] (x3\i) -- (A'\i);
	\draw[  cyan, thick ] (x1\i)--(x5\i);
	\draw [#6 ] (A'\i) -- (z\i);
	\draw[  red,ultra thick ] (A\i) -- (a\i);
	\draw[ #6 ] (B\i) -- (b\i);
	\draw[ red,ultra thick ] (z\i) --(wa\i);
	\end{scope}

}
\newcommand{\FourB}[8][1]{
\GpointsStuff{#2}{#3}{#4}{#5}{#6}{#7}{#8}
	\begin{scope}[on background layer]
	\draw[ #6] (A\i) -- (x4\i);
	\draw[red,ultra thick] (x4\i) -- (x2\i);
	\draw[#6 ](x2\i) -- (x3\i);
	\draw[ #6] (x3\i) -- (x1\i);
	\draw[red,ultra thick ] (x1\i)--(A\i);
	\draw[ #6 ] (x2\i) -- (B\i);
	\draw[ red,ultra thick ] (B\i) -- (x5\i);
	\draw[ #6 ] (x5\i)  -- (B'\i);
	\draw[ #6 ] (B'\i) -- (x4\i);
	\draw[red,ultra thick ] (x3\i) -- (A'\i);
	\draw[ #6 ] (x1\i)--(x5\i);
	\draw [#7 ] (A'\i) -- (z\i);
	\draw[  #6 ] (A\i) -- (a\i);
	\draw[ #6 ] (B\i) -- (b\i);
	\draw[ #7 ] (z\i) -- (wa\i);
\end{scope}
}
\newcommand{\FiveB}[8][1]{
\GpointsStuff{#2}{#3}{#4}{#5}{#6}{#7}{#8}
	\begin{scope}[on background layer]
	\draw[#6 ] (A\i) -- (x4\i);
	\draw[red,ultra thick] (x4\i) -- (x2\i);
	\draw[ #6](x2\i) -- (x3\i);
	\draw[ red,ultra thick] (x3\i) -- (x1\i);
	\draw[#6 ] (x1\i)--(A\i);
	\draw[ #6 ] (x2\i) -- (B\i);
	\draw[ #6 ] (B\i) -- (x5\i);
	\draw[ red,ultra thick ] (x5\i)  -- (B'\i);
	\draw[ #6 ] (B'\i) -- (x4\i);
	\draw[#6 ] (x3\i) -- (A'\i);
	\draw[ #6 ] (x1\i)--(x5\i);
	\draw [#6 ] (A'\i) -- (z\i);
	\draw[  red,ultra thick ] (A\i) -- (a\i);
	\draw[ red,ultra thick ] (B\i) -- (b\i);
	\draw[ #6 ] (z\i) -- (wa\i);
\end{scope}

}
\newcommand{\SixA}[8][1]{
\GpointsStuff{#2}{#3}{#4}{#5}{#6}{#7}{#8}

	\begin{scope}[on background layer]
	\draw[ #6] (A\i) -- (x4\i);
	\draw[red,ultra thick] (x4\i) -- (x2\i);
	\draw[ cyan, thick ](x2\i) -- (x3\i);
	\draw[  cyan, thick] (x3\i) -- (x1\i);
	\draw[red,ultra thick ] (x1\i)--(A\i);
	\draw[  cyan, thick ] (x2\i) -- (B\i);
	\draw[  cyan, thick ] (B\i) -- (x5\i);
	\draw[ red,ultra thick ] (x5\i)  -- (B'\i);
	\draw[ #6 ] (B'\i) -- (x4\i);
	\draw[ red,ultra thick] (x3\i) -- (A'\i);
	\draw[  cyan, thick ] (x1\i)--(x5\i);
	\draw [#7 ] (A'\i) -- (z\i);
	\draw[ #6  ] (A\i) -- (a\i);
	\draw[ red,ultra thick] (B\i) -- (b\i);
	\draw[ red,ultra thick ] (z\i) -- (wa\i);
	\end{scope}

}
\newcommand{\Nine}[9][1]{

\GpointsStuff{#2}{#3}{#4}{#5}{#6}{#7}{#9}

	\begin{scope}[on background layer]
	\draw[red,ultra thick ] (A\i) -- (x4\i);
	\draw[#7] (x4\i) -- (x2\i);
	\draw[#7 ](x2\i) -- (x3\i);
	\draw[ red,ultra thick] (x3\i) -- (x1\i);
	\draw[#6 ] (x1\i)--(A\i);
	\draw[ red,ultra thick ] (x2\i) -- (B\i);
	\draw[ #6 ] (B\i) -- (x5\i);
	\draw[ red,ultra thick ] (x5\i)  -- (B'\i);
	\draw[ #7 ] (B'\i) -- (x4\i);
	\draw[ #7] (x3\i) -- (A'\i);
	\draw[ #6 ] (x1\i)--(x5\i);
	\draw [ red, ultra thick] (A'\i) -- (z\i);
	\draw[  #6 ] (A\i) -- (a\i);
	\draw[ #6 ] (B\i) -- (b\i);
	\draw[ #8 ] (z\i) -- (wa\i);
	\end{scope}
}
\newcommand{\Twelve}[8][1]{
\GpointsStuff{#2}{#3}{#4}{#5}{#6}{#7}{#8}
	\begin{scope}[on background layer]
	\draw[#6 ] (A\i) -- (x4\i);
	\draw[red,ultra thick] (x4\i) -- (x2\i);
	\draw[#6 ](x2\i) -- (x3\i);
	\draw[ red,ultra thick] (x3\i) -- (x1\i);
	\draw[ #6] (x1\i)--(A\i);
	\draw[ #6 ] (x2\i) -- (B\i);
	\draw[ #6 ] (B\i) -- (x5\i);
	\draw[ red,ultra thick ] (x5\i)  -- (B'\i);
	\draw[ #6 ] (B'\i) -- (x4\i);
	\draw[#6 ] (x3\i) -- (A'\i);
	\draw[ #6 ] (x1\i)--(x5\i);
	\draw [red,ultra thick ] (A'\i) -- (z\i);
	\draw[ red,ultra thick  ] (A\i) -- (a\i);
	\draw[ red,ultra thick ] (B\i) -- (b\i);
	\draw[ #7 ] (z\i) -- (wa\i);
	\end{scope}

}
\newcommand{\ThirteenB}[8][1]{
\GpointsStuff{#2}{#3}{#4}{#5}{#6}{#7}{#8}

	\begin{scope}[on background layer]
	\draw[ #6, thick] (A\i) -- (x4\i);
	\draw[red,ultra thick] (x4\i) -- (x2\i);
	\draw[ #6, thick](x2\i) -- (x3\i);
	\draw[#6 , thick] (x3\i) -- (x1\i);
	\draw[#6 , thick] (x1\i)--(A\i);
	\draw[ #6, thick ] (x2\i) -- (B\i);
	\draw[ #6 , thick] (B\i) -- (x5\i);
	\draw[ #6, thick ] (x5\i)  -- (B'\i);
	\draw[ #6 , thick] (B'\i) -- (x4\i);
	\draw[red,ultra thick ] (x3\i) -- (A'\i);
	\draw[ red,ultra thick ] (x1\i)--(x5\i);
	\draw [#7 ] (A'\i) -- (z\i);
	\draw[  red,ultra thick ] (A\i) -- (a\i);
	\draw[ red,ultra thick ] (B\i) -- (b\i);
	\draw[ #7 ] (z\i) -- (wa\i);
\end{scope}

}
\newcommand{\FourteenA}[8][1]{
\GpointsStuff{#2}{#3}{#4}{#5}{#6}{#7}{#8}

	\begin{scope}[on background layer]
	\draw[ red,ultra thick] (A\i) -- (x4\i);
	\draw[ cyan, thick] (x4\i) -- (x2\i);
	\draw[ red,ultra thick](x2\i) -- (x3\i);
	\draw[ #6] (x3\i) -- (x1\i);
	\draw[ #6] (x1\i)--(A\i);
	\draw[  cyan, thick ] (x2\i) -- (B\i);
	\draw[  cyan, thick ] (B\i) -- (x5\i);
	\draw[  cyan, thick ] (x5\i)  -- (B'\i);
	\draw[  cyan, thick ] (B'\i) -- (x4\i);
	\draw[#6 ] (x3\i) -- (A'\i);
	\draw[ red,ultra thick ] (x1\i)--(x5\i);
	\draw [ #6] (A'\i) -- (z\i);
	\draw[ #6  ] (A\i) -- (a\i);
	\draw[ red ,ultra thick ] (B\i) -- (b\i);
	\draw[ red,ultra thick ] (z\i) -- (wa\i);
	\end{scope}

}
\newcommand{\ExtraAB}[7][1]{
\pgfmathsetmacro{\angle}{360/#2}
\pgfmathtruncatemacro{\i}{mod(#5,#2)}
\pgfmathtruncatemacro{\ii}{mod(#5+1, #2)}
\pgfmathtruncatemacro{\iii}{mod(#5-1, #2)}
	\node[rotate = \iii*\angle,circle,  minimum width = #1 cm, inner sep = 1pt] (G\iii) at ($(\angle*\iii:2)$){};
		\coordinate (a'\i) at  (G\iii.2*72) ;
		\node[] (x4\iii) at (G\iii.0-72) {};
		\node[] (x2\iii) at (G\iii.72) {};
		\coordinate[] (b'\i) at 
		($(0,0)!1.3!(x2\iii)$);
		\coordinate (wb\i) at (-\angle/2+\angle*\iii:#3);
}
\newcommand{\SixteenB}[8][1]{
\GpointsStuff{#2}{#3}{#4}{#5}{#6}{#7}{#8}
	\begin{scope}[on background layer]
	\draw[ #6] (A\i) -- (x4\i);
	\draw[#6] (x4\i) -- (x2\i);
	\draw[ red ,ultra thick](x2\i) -- (x3\i);
	\draw[#7 ] (x3\i) -- (x1\i);
	\draw[red,ultra thick ] (x1\i)--(A\i);
	\draw[ #6 ] (x2\i) -- (B\i);
	\draw[ red,ultra thick ] (B\i) -- (x5\i);
	\draw[ #7 ] (x5\i)  -- (B'\i);
	\draw[ red,ultra thick ] (B'\i) -- (x4\i);
	\draw[ #7] (x3\i) -- (A'\i);
	\draw[ #7 ] (x1\i)--(x5\i);
	\draw [ #7] (A'\i) -- (z\i);
	\draw[  #6 ] (A\i) -- (a\i);
	\draw[ #6 ] (B\i) -- (b\i);
	\draw[ #7 ] (z\i) -- (wa\i);
	\end{scope}

}
\begin{document}

\title{Cyclic pseudo-{L}oupekine snarks}



\author{Leah Wrenn Berman}
\address{Department of Mathematics \& Statistics, University of Alaska Fairbanks\\ Fairbanks, Alaska, USA}
\email{lwberman@alaska.edu}
\author{D\'eborah Oliveros}
\address{Instituto de Mat\'ematicas, Universidad Nacional Auton\'oma de M\'exico---Campus Juriquilla\\ Juriquilla, Qu\'eretaro, M\'exico}
\email{debolivero@matem.unam.mx}
\author{Gordon I. Williams}
\address{Department of Mathematics \& Statistics, University of Alaska Fairbanks\\ Fairbanks, Alaska, USA}
\email{giwilliams@alaska.edu}

\date{Received: date / Accepted: date}

\maketitle

\begin{abstract}
In 1976, Loupekine introduced (via Isaacs) a very general way of constructing new snarks from old snarks by cyclically connecting multipoles constructed from smaller snarks. In this paper, we generalize Loupekine's construction to produce a variety of snarks which can be drawn with $m$-fold rotational symmetry for $m\geq 3$ (and often, $m$ odd), constructed as $\mathbb{Z}_{m}$ lifts of \emph{voltage graphs} with certain properties; we call these snarks \emph{cyclic pseudo-Loupekine snarks}. In particular, we discuss three infinite families of snarks which can be drawn with $\mathbb{Z}_{m}$ rotational symmetry whose smallest element is constructed from 3 snarks with 3-fold rotational symmetry on 28 vertices; one family has the property that the oddness of the family increases with $m$.
We also develop a new infinite family of snarks, of order $12m$ for each odd $m\geq 3$, which can be drawn with $m$-fold rotational symmetry and which are constructed beginning with a 3-edge-colorable graph, instead of a snark.  

\keywords{\bf{Keywords:} snarks \and graph coloring \and symmetry}

\end{abstract}




We define a \emph{snark} to be a cubic graph whose chromatic index is 4. We define a \emph{proper snark} to be a snark which has cyclic edge-connectivity $\geq 4$ (that is, it is impossible to find a set of three edges whose removal disconnects the graph into components which each contain a cycle) and whose girth is at least 5. 

We are interested in developing specific families of snarks with the property that they have (relatively) large automorphism groups (which we define arbitrarily as having order strictly greater than 2), by forming graphs which have drawings with $\mathbb{Z}_{m}$ rotational symmetry. In this paper, we first determine all proper snarks of order $n$ (with girth $\geq 4$ for $10 \leq n \leq 32$ and girth $\geq 5$ for $n = 34, 36$) on at most 36 vertices whose automorphism group is divisible by 3, working from a database of all proper snarks of order at most 36 from the House of Graphs \cite[Snarks]{HoG}. Next, we modify Loupekine's classical construction of snarks using voltage graphs, to produce snarks with guaranteed rotational symmetry. In particular, we describe three snarks on 28 vertices with 3-fold rotational symmetry which can arise from Loupekine's construction and from them produce infinite families of snarks with $10m$ vertices and $\mathbb{Z}_{m}$ symmetry  for any odd $m \geq 3$, and we discuss the oddness and cyclic edge-connectivity of members of these families. Finally, we construct  a new infinite family of snarks with $12m$ vertices and $m$-fold rotational symmetry for any odd $m$, which are constructed by a construction similar to Loupekine's construction, by removing a path of length 2 from a certain starting graph and systematically connecting the resulting dangling edges, where the starting graph is \emph{not} a snark! These resulting snarks (possibly excepting the $m = 3$ case, which is one of the 3833587 snarks on 34 vertices with girth at least 5 listed at \cite{HoG}, although we don't think it has been analyzed previously) are new.

\section{Rotationally symmetric snarks}

The first snarks that were discovered tended to possess a lot of symmetry; the smallest and first snark, the Petersen graph, is traditionally drawn with 5-fold dihedral symmetry. The Flower Snarks $J_{2k+1}$ found by Rufus Isaacs \cite{Isa1975} were initially drawn with a twist which broke pure rotational symmetry, but can be drawn with $(2k+1)$-fold dihedral symmetry (see \cite{ClaEnt1983}), and in fact, they have the interesting property that $|V(J_{n})|=|Aut(J_{n})| = 4n$ \cite{FioRui2008}.

Recent investigations on snarks have tended to focus on constructions of infinite families of snarks with certain properties such as having high  
oddness ~\cite{LukMacMaz2015,Hag2016}, having arbitrarily large girth \cite{Koc1996}, being irreducible \cite{MacSko2006b,ChlSko2006,Ste1998}, or being edge and vertex critical \cite{Gol1981}. Snarks are also possible counterexamples to certain important conjectures such as the Fulkerson conjecture \cite{KarCam2014} and other questions relating cubic graphs and perfect matchings~\cite{EspMaz2014} and to graph-coloring conjectures \cite{Koc2009}, etc. Other investigations have determined the automorphism 
groups of particular snarks \cite{FioRui2008}, and characterized irreducible snarks with different conditions \cite{ChlSko2010,Ste1998} as well as considering various factorization properties \cite{CavMesRui1998}. 
New concepts and variations of classical definitions have also been explored in the context of snarks, such as determining the total chromatic number \cite{BriPreSas2015} and almost-Hamiltonicity \cite{CavMurRui2003}.

However, in this paper---like the initial investigation of snarks---we are interested in snarks that can be drawn with nontrivial geometric symmetry. We define a \emph{cyclic snark} to be a snark that can be drawn with $m$-fold rotational symmetry for some $m>2$. 

Thus, every cyclic snark has the property that the automorphism group of that snark must possess an element which is cyclic of order $m$. Unsurprisingly, it turns out that among known snarks of fixed order,  those whose automorphism group has order greater than 2 are rare.

The House of Graphs \cite[Snarks]{HoG} maintains a list, in Graph6 format \cite{graph6}, of all proper snarks on up to 36 vertices. For $n <34$, we computed the automorphism group of all snarks listed in the database (using a combination of Mathematica \cite{mathematica} and Sage \cite{sage}, as well as some standard programs and methods for converting Graph6 objects to graphs and computing graph automorphism groups and orders), and for $n = 34, 36$ we computed the automorphism groups of all snarks with girth at least 5 (due to computational limitations). Our findings are listed in Table \ref{Table:autOrderSnarks}.

\begin{table}[htbp]
\caption{The number of snarks on up to 36 vertices, whose automorphism groups have order more than 2, collected into counts by divisibility (categories are not disjoint). The lists and numbers of snarks were taken from the House of Graphs \cite[Snarks]{HoG}. 
}
\begin{center}
\begin{tabular}{c||c|c|c||c| c
}
$n$ & \# snarks $S$ & \#  ($|Aut(S)|>2)$ & \# $(3 \mid |Aut(S)|)$ & \# $(4 \mid |Aut(S)|)$ & \# $(5 \mid |Aut(S)|)$ 
\\ \hline
10 & 1 & 1 & 1 & 1 & 1 
\\ \hline
18 & 2&2 & 0 & 2 & 0 
\\ \hline
20 & 6&  3 & 0 & 3 & 1 
\\ \hline
22 &31& 7 & 2 & 7 & 0 
\\ \hline 
24 & 155 & 21& 0 & 21 & 0 
\\ 
26 & 1297 & 100 & 0 & 100 & 0
\\ \hline
28 & 12517 & 391 & 3 & 390 & 0 
\\ \hline
30 & 139854 & 2073 & 0 & 2073 & 1 
\\ \hline
32&1764950	 &12630& 0 & 12360 & 0 
\\ \hline
34 & 3833587\footnote{\label{footGirth1}Only girth $\geq 5$} & 5167 & 19& 5153 &0
 \\ \hline
36 & 60167732\footref{footGirth1}&	25936 & 10 & 25931 & 0 
\end{tabular}
\end{center}
\label{Table:autOrderSnarks}
\end{table}%

Further analysis of counts is given in Table \ref{Table:MoreAutOrders}: here, we count all snarks whose automorphism group has order larger than 2, by the order of the group.  Note that this verifies the numbers shown in Table 6 of \cite{BriGoeHag2012}. All the group orders that occurred in the computations are listed. Two things are readily apparent: first, there are a lot of snarks whose automorphism group is a power of 2, and second, there are a few snarks for certain orders whose automorphism group has order divisible by 3. For $n = 22, 28, 34$, observation of the elements of the automorphism group shows that these snarks all have a central fixed point. In thinking about these snarks as potentially being the first member $(m = 3)$ of some infinite family, we expand the central fixed point into a central triangle, which then can be generalized into some sort of $m$-gon (possibly star-$m$-gon, or collection of $m$-gons). (Note that these expanded snarks do not appear in the list of snarks at the House of Graphs, since those counts required that the cyclic edge-connectivity is at least 4, and the expansion of the central vertex into a triangle causes the cyclic edge-connectivity to be 3.)

All snarks on $n = 28, 34,$ and $36$ vertices whose automorphism groups are divisible by 3 are explicitly listed in the appendices as Graph6 strings.

\begin{table}[htbp]
\caption{The number of proper snarks on up to 36 vertices with specified automorphism group order; these orders include all the non-trivial group orders found. Highlighted graphs admit drawings with 3-fold rotational symmetry. (Categories are disjoint.) }
\begin{center}
\begin{tabular}{c|ccccccccccccccc} \hline
order & \multicolumn{15}{c}{Number of snarks $S$ with $|Aut(S)| = x$}\\
$n$ &  3 & 4 & 6 & 8 & 12 & 16 & 20 & 24 & 28 & 32 & 36 & 48 & 64 & 80 & 120
\\ \hline \hline
10 & 0 & 0 & 0 & 0 & 0 & 0 & 0 & 0 & 0 & 
   0 & 0 & 0 & 0 & 0 & 1\footnote{Petersen Graph}\\
18\footnote{There are no proper snarks on 12, 14 or 16 vertices} & 0 & 1 & 0 & 1 & 0 & 0 & 0 & 0 & 0 &
   0 & 0 & 0 & 0 & 0 & 0 \\
20 & 0 & 2 & 0 & 0 & 0 & 0 & 1\footnote{\label{flower}Flower Snark} & 0 & 0 &
   0 & 0 & 0 & 0 & 0 & 0 \\
22 & 0 & 2 & 0 & 2 & \cellcolor{yellow}2\footnote{Loupekine's 22-vertex Snarks} & 1 & 0 & 0 & 0 &
   0 & 0 & 0 & 0 & 0 & 0 \\
24 & 0 & 18 & 0 & 3 & 0 & 0 & 0 & 0 & 0
   & 0 & 0 & 0 & 0 & 0 & 0 \\
26 & 0 & 78 & 0 & 19 & 0 & 2 & 0 & 0 & 0
   & 1 & 0 & 0 & 0 & 0 & 0 \\
28 & 0 & 329 & \cellcolor{yellow}1 & 48 & \cellcolor{yellow}2 & 10 & 0 & 0 &
   1\footref{flower} & 0 & 0 & 0 & 0 & 0 & 0 \\
30 &  0 & 1763 & 0 & 266 & 0 & 35 & 0 & 0
   & 0 & 8 & 0 & 0 & 0 & 1\footnote{Double Star Snark} & 0 \\
32 & 0 & 11236 & 0 & 1241 & 0 & 142 & 0
   & 0 & 0 & 11 & 0 & 0 & 0 & 0 & 0\\
34\footnote{\label{footGirth2}Only girth $\geq 5$} & \cellcolor{yellow}7 & 4798 & \cellcolor{yellow}7 & 329 & \cellcolor{yellow}1 & 20 & 0 & \cellcolor{yellow}3
   & 0 & 1 & 0 & \cellcolor{yellow}1 & 0 & 0 & 0  \\
36\footref{footGirth2} & \cellcolor{yellow}2 & 24855 & \cellcolor{yellow}3 & 1044 & \cellcolor{yellow}3 & 24 & 0 &0 & 0 & 2 & 1\footref{flower} & \cellcolor{yellow}1 & 1 & 0 & 0 \\ \hline
\end{tabular}
\end{center}
\label{Table:MoreAutOrders}
\end{table}%

\section{Loupekine's snark construction}\label{Sec:LoupConstr}

\begin{figure}[htbp]
\begin{center}
\ffigbox{
\begin{subfloatrow}
\ffigbox{\caption{Loupekine's first snark}\label{}}{
\includegraphics[width=.6\linewidth]{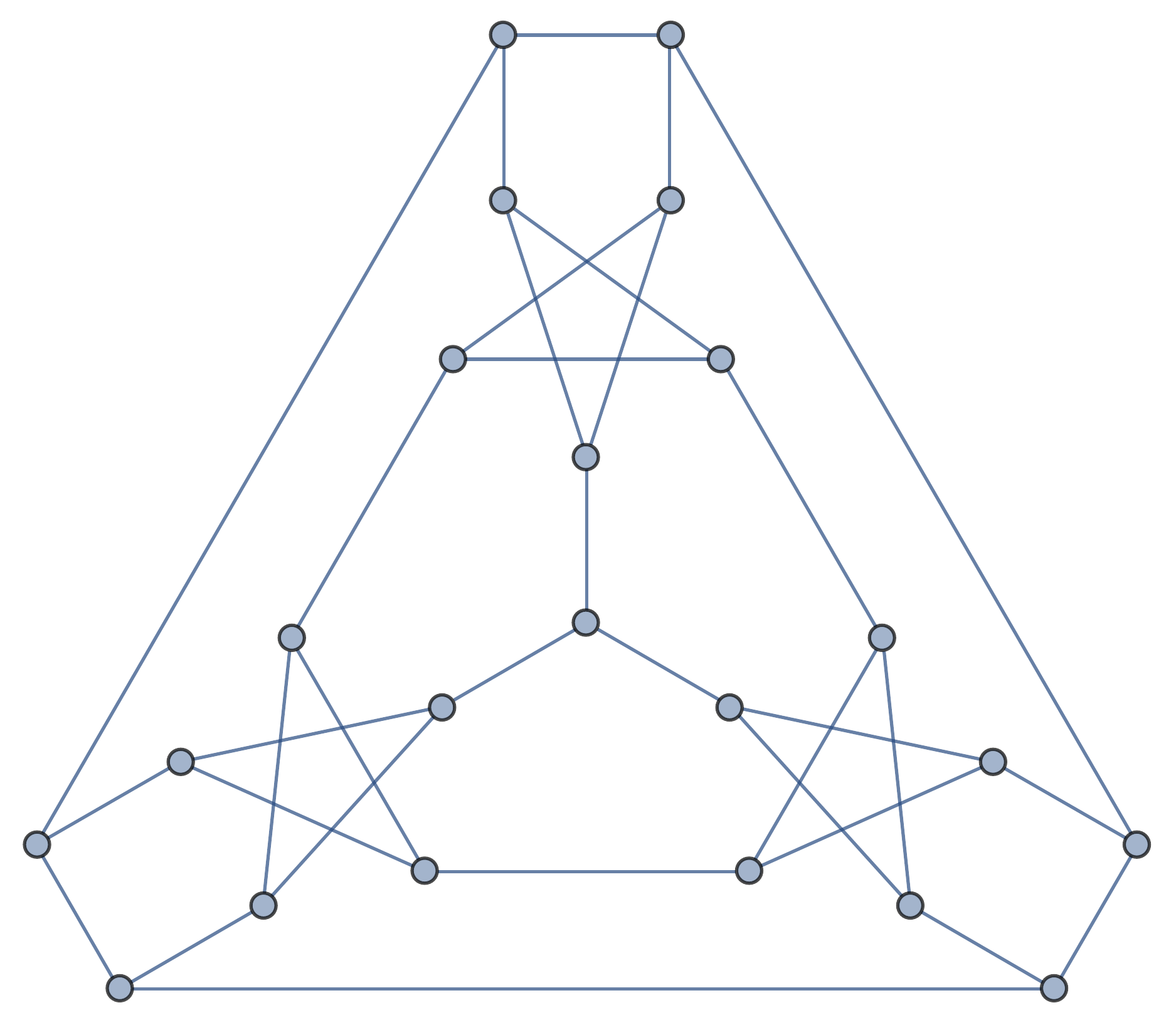} }
\ffigbox{\caption{Loupekine's second snark}\label{}}{
 \includegraphics[width=.6\linewidth]{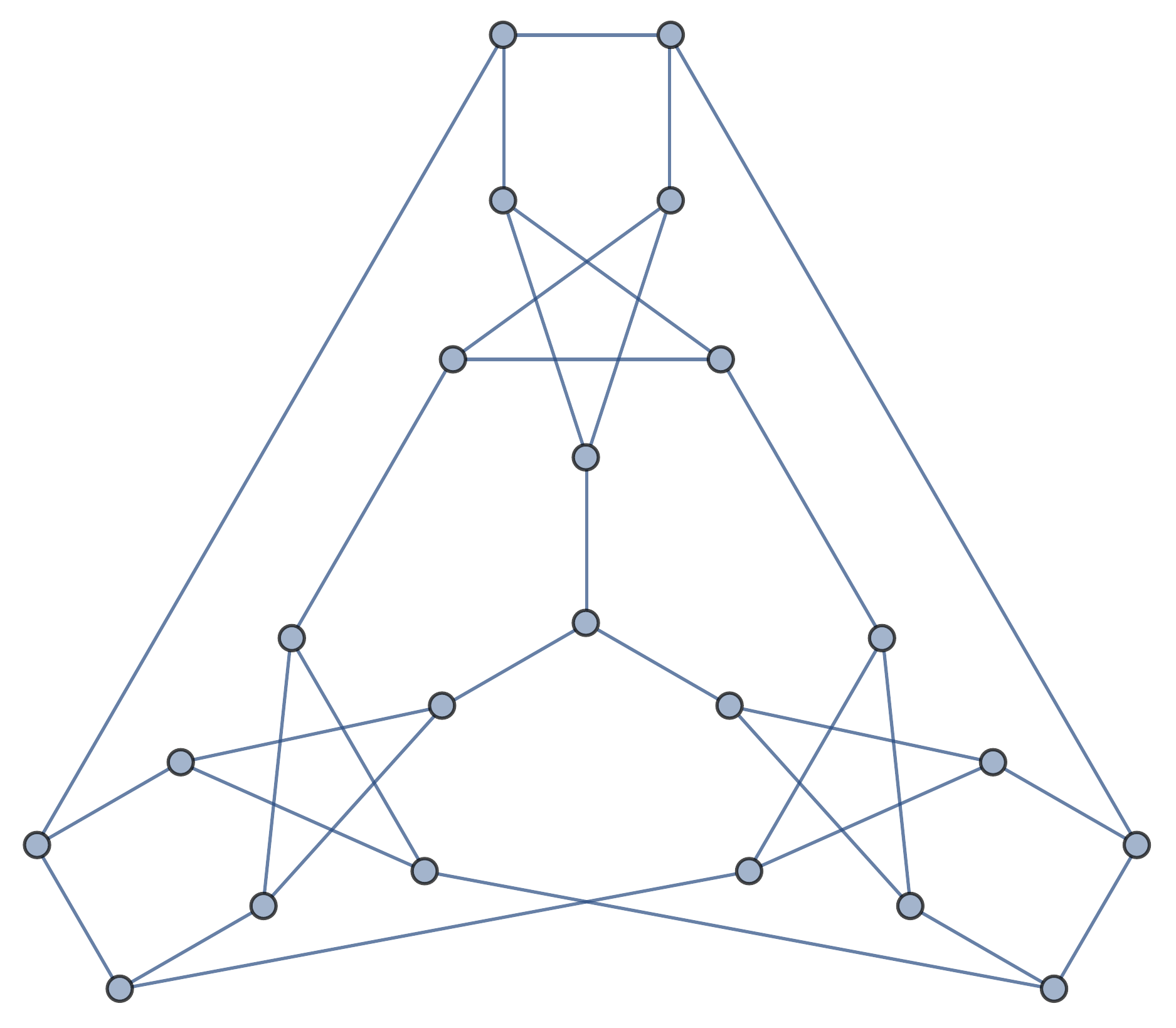}}
\end{subfloatrow}
}{
\caption{Loupekine's snarks on 22 vertices, in their default drawings in  Mathematica \cite{mathematica}. }
\label{LoupekineSnarks}
}
\end{center}
\end{figure}

In \cite{Wat1989,WatWil1991}, J.J. Watkins describes a method, due to   F\'eodor Loupekine, for constructing infinite families of snarks. Loupekine's method, which he developed after reading an article by Martin Gardner \cite{Gar1976}, was originally published by R. Isaacs in a hard-to-access Johns Hopkins technical report \cite{Isa1976}. The method requires the well-known Parity Lemma, which we state as follows (although it has been phrased in several ways, via zones \cite{Des1948,Isa1975}, semiedges/dangling edges/pendant edges \cite{Gol1981,BriPreSas2015,Isa1976,Wat1989,Fio1991}, cutsets \cite{Ste1998}, or multipoles \cite{ChlSko2006,MacSko2006b}). 

\begin{lemma}[Parity Lemma] \label{parityLemma} \cite{Des1948,Wat1989} 
If $G$ is a 3-edge-colored graph, whose edges have been colored 1, 2, 3, and $N$ is a cutset of size $n$ that contains $n_{i}$ edges of color $i$, then
\[ n_{1} \equiv n_{2}\equiv n_{3} \equiv n \bmod 2.\]
\end{lemma}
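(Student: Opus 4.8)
The plan is to exploit the standard fact that a proper $3$-edge-coloring of a cubic graph decomposes its edge set into three perfect matchings, one for each color, and then to run a one-line parity count on a single side of the cut.

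First I would write the cutset $N$ as the edge cut determined by a bipartition $V(G) = S \sqcup T$, so that $N$ is exactly the set of edges with one endpoint in $S$ and one in $T$, and $n = |N|$. For each color $i \in \{1,2,3\}$, let $M_i \subseteq E(G)$ be the set of edges colored $i$. Since $G$ is cubic and the coloring is proper, every vertex is incident to exactly one edge of each color, so each $M_i$ is a perfect matching of $G$.

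Next, fix a color $i$ and count endpoints lying in $S$ of edges of $M_i$. Every vertex of $S$ lies on exactly one edge of $M_i$; that edge either has both endpoints in $S$, contributing $2$, or exactly one endpoint in $S$, in which case it belongs to $N$ and contributes $1$. Writing $a_i$ for the number of edges of $M_i$ contained in $S$, this gives $|S| = 2a_i + n_i$, hence $n_i \equiv |S| \pmod 2$. As this holds for each $i$, we get $n_1 \equiv n_2 \equiv n_3 \equiv |S| \pmod 2$, and then $n = n_1 + n_2 + n_3 \equiv 3|S| \equiv |S| \pmod 2$, so $n$ is congruent to each $n_i$ as well, which is the assertion.

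There is no real obstacle here; the single point that needs care is exactly where $3$-edge-colorability is used — namely that each color class is a perfect matching, which is also where cubicity of $G$ enters and is the reason the statement lives in the world of cubic graphs (a genuinely proper bipartition, i.e.\ a bond, is likewise what is meant by ``cutset''). I would note in passing that the same count goes through verbatim in the multipole/semiedge formulation used later in the paper, treating a dangling edge as contributing its unique endpoint to whichever side it lies on.
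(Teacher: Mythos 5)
Your proof is correct, and it is the standard argument: in a proper $3$-edge-coloring of a cubic graph each color class is a perfect matching, and counting $M_i$-endpoints on one side $S$ of the cut gives $|S| = 2a_i + n_i$, hence $n_i \equiv |S| \pmod 2$ for each $i$ and $n = n_1+n_2+n_3 \equiv |S| \pmod 2$. For comparison, the paper does not prove this lemma at all --- it is stated with citations to Descartes and Watkins and used as a known fact --- so there is nothing internal to measure your argument against; yours is the classical proof one finds in those sources. Two small points worth keeping explicit if this were to be written out: the lemma as stated in the paper says only ``3-edge-colored graph,'' and your observation that cubicity and properness are exactly what make each color class a perfect matching is the right reading of the hypothesis (it is how the paper applies it, to cubic graphs and to $5$-poles); and your closing remark about the semiedge/multipole formulation is the version the paper actually invokes later, e.g.\ when the five dangling edges of a $5$-pole are treated as a cutset, so noting that the same endpoint count works with dangling edges contributing one endpoint is a useful addition rather than a gap.
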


Although the term multipole is used in a more general setting (see for instance \cite{Hag2016,ChlSko2010,MacSko2006b}), throughout this paper a \emph{$k$-pole} $H$ refers to a graph with $k$ dangling edges (often called \emph{semiedges}); the term was introduced (somewhat more generally) in 1991 by M.A. Fiol \cite{Fio1991}. This type of object has also been called a semigraph \cite{BriPreSas2015} and a $k$-pendant graph \cite{Isa1975}. We are primarily interested in a specific type of 5-pole. In the remainder of the paper, all the 5-poles are formed by removing a path of three vertices and two edges from a cubic graph  (see Figure \ref{5multipole}).

Loupekine's snark construction method is as follows:

\begin{algorithm}\label{alg:Loupekine} \hfill
\begin{enumerate}
\item Choose your favorite snark $\bar{S}$.
\item Delete a path $P_{3}$ of two edges and three vertices from $\bar{S}$ to form a 5-pole $S$ which has 5 dangling edges.
\item By the Parity Lemma, 
 in any 3-edge-coloring of $S$, those 5 dangling edges, labelled (following Isaacs) as $A, B, C,B', A'$ in the order they were incident with the removed $P_{3}$ must be colored with three edges of one color, and the other two edges of the two remaining different colors. 
\item It is straightforward to show that if $S$ is properly 3-edge-colored and $A, B$ are colored different colors, then $A'$ and $B'$ must be colored the same color, so if some copies of $S$ are linked cyclically by joining edges labelled $A, B$ in one copy to edges labelled $A'$, $B'$ in the next copy, and the remaining $C$ edges are connected arbitrarily (in a 3-valent way), then if the result is a cycle of an odd number of linked copies, it's impossible to 3-edge-color that linked cycle. 
\end{enumerate}
\end{algorithm}

\begin{definition}A \emph{Loupekine 5-pole} is a 5-pole formed by deleting a path $P_{3}$ from a snark.\end{definition}

Assume you have $k$ copies $S_i$ from an original 5-pole $S$ with corresponding dangling edges $A_i, B_i, C_i,B'_i, A'_i$ as before, where $i=\{1,\dots , k\}$.
Then, depending on the way that the $A_{i}, B_{i}$ are connected to the successive $A_{i+1}, B_{i+1}$ and how the $C_{i}$ are connected, it may or may not be possible to draw the resulting graph with geometric symmetry. In the next section we specialize this construction to guarantee it produces snarks which can be drawn with nontrivial geometric symmetry, which we call \emph{cyclic Loupekine snarks}; a precise definition will be given below.

\section{Loupekine snarks and voltage graphs}
A \emph{voltage graph} \cite[Chapter 2]{GroTuc1987},\cite[Section 3.5]{PisSer2013} is a directed graph $\tilde{H}$ whose directed edges are labelled by invertible elements of some group $\Gamma$, the \emph{voltage group}. Voltage graphs are typically used to construct a \emph{derived graph}, or \emph{lift graph};  conversely, given a graph with certain symmetry properties, voltage graphs can be used to describe that graph (or class of graphs) concisely. In this paper we are exclusively interested in voltage graphs whose voltage group is some cyclic group $\mathbb{Z}_{m}$ (with group operation addition modulo $m$). 

Given any voltage graph $\tilde{H}$, a $\mathbb{Z}_{m}$ lift centered at $(0,0)$ can be constructed as follows.  For each vertex $v$ in  a voltage graph $\tilde{H}$, construct $v_{0}$ arbitrarily in the lift, and then for $i = 1, \ldots, m-1$, construct $v_{i}$ to be the \emph{clockwise} rotation of $v_{0}$ by $\frac{2 \pi i}{m}$ around $(0,0)$ (counterclockwise rotation is typically used, but in this paper we use clockwise rotation to correspond to the left-to-right order of English strings used later in the paper). If vertices $v$ and $w$ are connected by an arrow from $v$ to $w$ labelled $a$ in $\tilde{H}$, then the lift contains the set of edges $v_{i}w_{i+a}$ for $i = 0, \ldots, m-1$, with all index arithmetic computed modulo $m$. If $\tilde{H}$ contains a loop from $w$ to $w$ labelled $a$, then the lift contains edges $w_{i}w_{i+a}$.  In all voltage graphs presented in this paper, unlabelled, undirected edges correspond to edges with label $0$ (that is, we suppress 0-labelled edges for clarity, and the direction of the edge is not relevant when the label is 0). We call the non-zero-labelled directed edges between two distinct vertices in $\tilde{H}$ the \emph{arrows} in the voltage graph, and a non-zero-labelled edge from a vertex to itself is called a \emph{loop}.

A number of classical snark constructions can be interpreted as arising from $\mathbb{Z}_{m}$ lifts of voltage graphs which are formed from constructing directed edges  from the dangling edges of various $k$-poles, although the particular language of voltage graphs does not appear to have been specifically used. The flower snarks \cite{Isa1975},  for instance,  can be constructed from a voltage graph with 4 vertices forming a claw of unlabelled edges, with a pair of alternately oriented arrows labelled 1 connecting two of the leaves of the claw, and a loop labelled 1 at the other leaf, following the description of flower snarks given in \cite{ClaEnt1983}.
The  Goldberg snarks \cite{Gol1981,Wat1989} are in fact the $\mathbb{Z}_{m}$ generalization of the Loupekine snarks on 22 vertices that we will describe below as $P_{\alpha}(m; 1,1,1)$ (see the description of these snarks given in \cite{FioRui2008}), and the Watkins and Szekeres snarks \cite{Wat1989} can be interpreted as coming from voltage graphs with two arrows (i.e., a 4-pole) constructed by modifying the Petersen graph.  The recent paper \cite{Hag2016} also presents a construction for snarks with high oddness which could be interpreted as corresponding to a voltage-graph-type construction, although he does not use that language.

Given a graph with $\mathbb{Z}_{m}$ rotational symmetry where the orbits (symmetry classes) of the vertices and edges under that rotational symmetry all have the same number of elements (a ``polycyclic'' graph; see e.g., \cite{BobPis2003}),  it is straightforward to construct a corresponding voltage graph, by selecting (arbitrarily) one vertex from each symmetry class to be the $0^{\text{th}}$ element and then constructing the voltage graph by assigning one node for each symmetry class of vertices and  recording the connections between the vertex-class-nodes using appropriately labelled arrows and unlabelled edges. For example, if  $v_{i}w_{i+a}$ is an edge of the graph for all $i = 0, \ldots, m-1$, then in the corresponding voltage graph we have vertices labelled $v$ and $w$ connected with a directed edge from $v$ to $w$  labelled $a$.

For later reference, we collect here some basic facts about voltage graphs and their lifts; see, e.g., \cite[Chapter 2]{GroTuc1987} and \cite[Section 3.5]{PisSer2013}.

\begin{prop}\label{thm:voltageFacts}
Suppose that $\tilde{H}$ is a voltage graph with $\mathbb{Z}_{m}$ lift $H$. 
\begin{enumerate}
\item If $s$ is an integer such that $\gcd(s, m) = 1$, then if $\tilde{H'}$ is formed from $\tilde{H}$ by multiplying all labels by $s$, and $H'$ is the corresponding $\mathbb{Z}_{m}$ lift, then $H \cong H'$.
\item If $\{\lambda_{1}, \ldots, \lambda_{k}\}$ is the collection of non-zero labels in $\tilde{H}$, and if for each $i$, $\gcd(m, \lambda_{i})>1$, then the lift $H$ is disconnected.
\item\label{liftLoop} If $\tilde{H}$ contains a loop of label $c$ and $\gcd(c,m) = t$, then in the lift $H$, the edges induced by the loop form $t$  cycles of length $\frac{m}{t}$. 
\item If $\tilde H$ has an arrow from $v$ to $w$ labelled $a$, then we can replace it with an arrow from $w$ to $v$ labelled $-a$ (that is, by the inverse of $a$ in the voltage group) and the resulting lift will be the same.

\end{enumerate}
\end{prop}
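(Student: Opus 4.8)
The plan is to handle the four parts independently; none requires a genuinely new idea, and each reduces to writing down an explicit bijection of, or invariant on, the vertex set $V(\tilde H)\times\mathbb{Z}_m$ of the lift. The only thing that needs care is keeping the index arithmetic modulo $m$ (and the clockwise-rotation convention) consistent throughout, which I regard as the main, though modest, obstacle — there is no deeper difficulty, which is presumably why these are grouped as ``basic facts.''

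For part (1) I would exhibit an explicit isomorphism $\phi\colon H\to H'$. Since $\gcd(s,m)=1$, multiplication by $s$ is a bijection of $\mathbb{Z}_m$, so $\phi(v_i):=v_{si}$ is a well-defined bijection on vertices (it permutes within each fiber $\{v_0,\dots,v_{m-1}\}$). An arrow $v\to w$ labelled $a$ contributes to $H$ the edge set $\{v_i w_{i+a}:i\in\mathbb{Z}_m\}$; applying $\phi$ and writing $j=si$ (which ranges over all of $\mathbb{Z}_m$) gives $\{v_j w_{j+sa}:j\in\mathbb{Z}_m\}$, precisely the edge set that the arrow $v\to w$ labelled $sa$ contributes to $H'$. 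The same one-line computation handles $0$-labelled edges and loops, so $\phi$ is a graph isomorphism and $H\cong H'$.

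For parts (2) and (3) the key observation is the same: traversing any edge of the lift changes the $\mathbb{Z}_m$-coordinate by one of the labels $\lambda_i$ (or leaves it fixed, for a suppressed $0$-labelled edge). Hence for (2), the residue of the second coordinate modulo $d:=\gcd\bigl(m,\lambda_1,\dots,\lambda_k\bigr)$ is constant on each connected component of $H$; once $d>1$ at least two residues occur (realized, e.g., by $v_0$ and $v_1$ for any vertex $v$ of $\tilde H$), so $H$ is disconnected. For (3) I would restrict to the subgraph on $\{w_0,\dots,w_{m-1}\}$ induced by a single loop of label $c$: its edges are exactly the pairs $w_i w_{i+c}$, i.e.\ it is the functional graph of the permutation $\sigma\colon i\mapsto i+c$ of $\mathbb{Z}_m$. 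The subgroup $\langle c\rangle\le\mathbb{Z}_m$ equals $\langle t\rangle$ and has order $m/t$, so every $\sigma$-orbit has size $m/t$ and there are exactly $t$ of them; along each orbit the loop-edges trace out a cycle of length $m/t$, which is the claim. (The only edge case worth a remark is $m/t=2$, where ``cycle'' degenerates to a digon; $m/t=1$ cannot occur since then $c\equiv 0$ would not be a genuine loop label.)

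For part (4) I would observe that the lift is not merely isomorphic but literally unchanged. The arrow $v\to w$ labelled $a$ produces $\{v_i w_{i+a}:i\in\mathbb{Z}_m\}$, and reindexing by $j=i+a$ shows this equals $\{w_j v_{j-a}:j\in\mathbb{Z}_m\}$, exactly the edge set produced by the arrow $w\to v$ labelled $-a$; the loop case is identical. Since everything here is pure reindexing, the verification is immediate once the conventions from the construction are fixed.
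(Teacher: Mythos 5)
The paper never proves this proposition: it is presented as a list of standard facts with a pointer to the Gross--Tucker and Pisanski--Servatius texts, so there is no in-paper argument to compare against. Your explicit fiber computations for (1), (3) and (4) are correct and are precisely the standard arguments: the vertex map $v_i\mapsto v_{si}$ (well defined because $\gcd(s,m)=1$) for (1), the orbit structure of $i\mapsto i+c$ on $\mathbb{Z}_m$ (orbit length $m/t$, $t$ orbits) for (3), and pure reindexing $j=i+a$ for (4), including the harmless $m/t=2$ digon remark.

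The genuine problem is in your part (2). Your invariant is the fiber index modulo $d=\gcd(m,\lambda_1,\dots,\lambda_k)$, and your conclusion needs $d>1$; but the stated hypothesis is only that each individual $\gcd(m,\lambda_i)>1$, which does not imply $d>1$ (take $m=6$ with labels $2$ and $3$). Indeed, under the literal hypothesis the conclusion is false: a voltage graph consisting of a single vertex with two loops labelled $2$ and $3$ over $\mathbb{Z}_6$ lifts to the graph on $\mathbb{Z}_6$ with edges $\{i,i+2\}$ and $\{i,i+3\}$, which is connected. So your phrase ``once $d>1$'' quietly replaces the stated hypothesis by the stronger (and correct) one that the labels and $m$ share a common factor exceeding $1$; that strengthened version is in fact what the paper uses later, since Proposition \ref{thm:LoupFam} assumes $\gcd(m,a,c)=1$ ``for connectivity.'' You should state this substitution explicitly --- as written, your argument does not prove statement (2) as formulated, because no argument can; the statement itself needs the joint-gcd hypothesis.
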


Let  $H$ be a 5-pole constructed by removing a $P_{3}$ from some cubic graph $\bar{H}$ (not necessarily a snark), in which the 5 semiedges of $H$ have labels  $A, B, C, B', A'$, as before, where edges $A, B$ and $A', B'$ were incident with the two endpoints (respectively) of the removed $P_{3}$ and $C$ was incident with the center vertex of the $P_{3}$. (See Figure \ref{fig:GenericVoltageGraphEx}.) Then we will label the corresponding endpoints of the five semiedges in $H$ as $A, A'$, $B, B'$ as well, and label the endpoint of $C$ in $H$ as $v$.

\begin{figure}[htbp]
\begin{center}
\ffigbox{
\begin{subfloatrow}[4]
\ffigbox{\caption{A cubic graph $\bar{H}$, with a path $P_{3}=swt$ shown dashed.}\label{}}{
\begin{tikzpicture}[scale=.8]
\drawOneBlobWithLabels{0}{white}{white}{gray, thin}{white}{white}

\node[below left=of A0, fill = white, vtx] (s) {$s$};
\node[below right=of A'0, fill = white, vtx] (t) {$t$};

\draw[ultra thick] (A0) -- node[draw, fill=white, inner sep=2, thin]{\tiny{A}} (s);
\draw[ultra thick]  (B0) to[bend right= 30] node[draw, fill=white, inner sep=2, thin]{\tiny{$B$}} (s);
\draw[ultra thick]  (B'0) to[bend left = 30]  node[draw, fill=white, inner sep=2, thin]{\tiny{$B'$}} (t);
\draw[ultra thick] (A'0) -- node[draw, fill=white, inner sep=2, thin]{\tiny{$A'$}} (t);
\draw[ultra thick, dashed, gray] (s) -- (z0)--(t);
\draw[ultra thick] (C0) --node[draw, fill=white, inner sep=2, thin]{\tiny{$C$}} (z0);

\end{tikzpicture}}

\ffigbox{\caption{A 5-pole $H$ constructed by removing a path $P_{3}$ from a cubic graph $\bar{H}$}\label{5multipole}}{
\begin{tikzpicture}
\drawOneBlobWithLabels{0}{white}{white}{gray, thin}{white}{white}
\node[left=of A0] (a0) {};
\node[left=of B0] (b0) {};
\node[right=of A'0] (a'0) {};
\node[right=of B'0] (b'0) {};

\draw[ultra thick] (A0)--(a0);
\draw[ultra thick] (B0)--(b0);
\draw[ultra thick] (A'0)--(a'0);
\draw[ultra thick] (B'0)--(b'0);

\draw[->, ultra thick] (z0.west) arc(90:360+50:.4) node[midway, arclabel] {$c$};
\draw[] (C0) -- (z0);


\end{tikzpicture}
}
\ffigbox{\caption{A schematic of the voltage graph corresponding to the $\alpha$-connection $\tilde{H}_{\alpha}(a, b, c)$}\label{Galpha}}{
\begin{tikzpicture}
\drawOneBlobWithLabels{0}{white}{white}{gray, thin}{white}{white}

%
%

\draw[<-, ultra thick] (A0) to[->, out=180+45, in = -45, looseness=3] node[arclabel, near start] {$a$} (A'0); 

\draw[<-, ultra thick] (B0)  to[->, out=180-45, in = 45, looseness=3] node[arclabel, midway] {$b$} (B'0) ; 

\draw[->, ultra thick] (z0.west) arc(90:360+50:.4) node[midway, arclabel] {$c$};
\draw[] (C0) -- (z0);

\end{tikzpicture}

}

\ffigbox{\caption{A schematic of the voltage graph corresponding to the $\beta$-connection $\tilde{H}_{\beta}(a, b, c)$}\label{Gbeta}}{
\begin{tikzpicture}
\drawOneBlobWithLabels{0}{white}{white}{gray, thin}{white}{white}
\draw[<-, ultra thick] (A0) --node[circle, arclabel, near end, inner sep = 1 pt] {$a$} (B'0); 

\draw[<-, ultra thick] (B0)  -- node[arclabel, near end,  inner sep = 1 pt] {$b$} (A'0) ; 

\draw[->, ultra thick] (z0.west) arc(90:360+50:.4) node[midway, arclabel] {$c$};
%

\draw[] (C0) -- (z0);

\end{tikzpicture}
}

\end{subfloatrow}
}{
\caption{A generic 5-pole $H$ constructed by removing a path $P_{3}$ from a cubic graph $\bar{H}$, and the $\alpha$-connection and $\beta$-connection voltage graphs which we construct from the 5-pole. An addtional vertex and loop is added at the end of semiedge $C$. }
\label{fig:GenericVoltageGraphEx}
}
\end{center}
\end{figure}
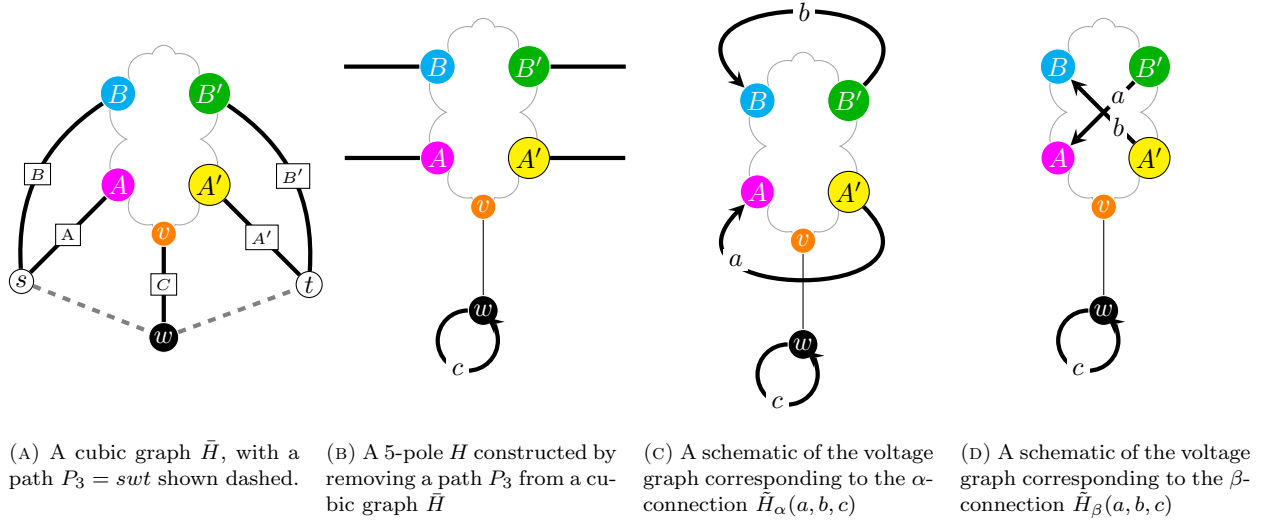

From $H$, we can construct two voltage graphs, called $\tilde{H}_{\alpha}(a,b,c)$ and $\tilde{H}_{\beta}(a,b,c)$.  To construct $\tilde{H}_{\alpha}$, called the  \emph{$\alpha$-connection}, we introduce an arrow labelled $a$ from $A'$ to $A$ and an arrow labelled $b$ from $B'$ to $B$. Add an endpoint  $w$ at the other end of semiedge $C$, and add a loop at $w$ labelled $c$. In symbols, $A \xrightarrow{a} A'$ and $B \xrightarrow{b} B'$.    To construct  $\tilde{H}_{\beta}(a,b,c)$, we follow the same instructions, but the two arrows connect $B' \xrightarrow{a} A$ and $A' \xrightarrow{b} B$. Figure \ref{Galpha} and \ref{Gbeta} show schematics of such voltage graphs. Note that the same initial graph $\bar{H}$ can generate non-isomorphic 5-poles $H$ depending on which path is deleted, and for a fixed 5-pole $H$, the corresponding $\alpha$- and $\beta$- voltage graphs may or may not be isomorphic. (Additionally, there is a choice as to which semiedges are labelled what when the $P_{3}$ is removed from the original graph, but this basically corresponds to constructing the $\alpha$- or $\beta$-connections.) 
To ensure that the resulting lift is actually cubic, we require that $1 \leq a, b < m$ and $1 \leq c <m/2$. 

Note that Loupekine's construction as described above in Algorithm \ref{alg:Loupekine} is more general than this, since he allows the edges labelled $C_{i}$ to be connected up arbitrarily, while we force them to be ``spoke edges'' all incident with a central collection of concentric congruent polygons (a single $m$-gon when $\gcd(c,m) = 1$). On the other hand, Loupekine's construction assumed that each cluster was connected by pairs to the next one, and the voltage graph construction provides for other choices of skips.

\begin{definition}A  \emph{pseudo-Loupekine graph} is any graph constructed as a $\mathbb{Z}_{m}$ lift of $\tilde{H}_{\alpha}(a,b,c)$ or $\tilde{H}_{\beta}(a,b,c)$, where $H$ is a 5-pole constructed by removing a $P_{3}$ from any cubic graph $\bar{H}$ (not necessarily a snark).
\end{definition}

For convenience, we define the following parts of the graph $H_{x}(m; a,b,c)$, $x\in \{\alpha, \beta\}$. For a fixed integer $j$, the \emph{cluster} $H_{j}$ is the subgraph of the lift induced by the vertices in the lift that all have index $j$ (represented by the cloud shape in the schematic drawings, e.g., Figure \ref{fig:GenericVoltageGraphEx}). The \emph{spoke edges} are the edges $v_{i}w_{i}$. The \emph{loop edges} are the edges induced by the loop from $w$ to itself; that is, each loop edge is of the form $w_{i}w_{i+c}$. The \emph{connecting edges} are induced by the arrows in the graph; they are of the form $A'_{i}A_{i+a}$ and $B'_{i}B_{i+b}$ in an $\alpha$-connection, and of the form $A'_{i}B_{i+b}$ and $B'_{i}A_{i+a}$ in a $\beta$-connection. (Note that vertex classes $A, B, A', B'$ are labelled in accordance with the labels of the corresponding dangling edges.) 
In all pictures in this paper, the voltage graph vertices $A, B, A', B', v, w$ and the corresponding symmetry classes of vertices $A_{i}, B_{i}, A'_{i}, B'_{i}, v_{i}, w_{i}$ are colored magenta, dark cyan, yellow, dark green, orange, and black. Other vertices in the voltage graphs or in the lifts, which are not connected to arrows or spokes in the voltage graph, are colored gray.

Of particular interest to us are graphs $H_{x}(m; a,a,c)$.
\begin{observation}\label{clusterCycle} In $H_{x}(m; a,a,c)$, the pairs of connecting edges  and the clusters together form \emph{cluster-cycles},  where all the elements of the cluster $H_i$ are considered as  ``vertices'' and the ``edges'' of the cycle are the pairs of edges between $H_{i}$ and $H_{i+a}$. If $\gcd(a,m) = t$, then the connecting edges form $t$ cluster-cycles of length $m/t$.
\end{observation}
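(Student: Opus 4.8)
The plan is to collapse each cluster to a single point and analyze the resulting quotient multigraph, whose cycle structure is controlled by the order of $a$ in $\mathbb{Z}_m$.

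First I would make the quotient precise: let $\Gamma$ be the multigraph with vertex set $\mathbb{Z}_m$, where the vertex $[j]$ represents the cluster $H_j$, and whose edges are the images of the connecting edges of $H_x(m;a,a,c)$ after each $H_j$ is contracted to $[j]$. The crucial point is that, because we have specialized to $b=a$, both arrows of the voltage graph $\tilde H_x$ carry the label $a$, so the connecting edges incident to $H_j$ are precisely $A'_j A_{j+a}$ and $B'_j B_{j+a}$ (in the $\alpha$-case; $A'_jB_{j+a}$ and $B'_jA_{j+a}$ in the $\beta$-case), together with the two analogous edges arriving from $H_{j-a}$. Hence the only clusters joined to $H_j$ by connecting edges are $H_{j+a}$ and $H_{j-a}$, and between $H_j$ and $H_{j+a}$ there are exactly the two connecting edges forming one ``pair''; in $\Gamma$ this pair becomes a single edge (a \emph{cluster-edge}) joining $[j]$ and $[j+a]$. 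This is exactly why the statement is restricted to the case $b=a$: with $a\neq b$ a cluster meets four others and $\Gamma$ fails to be $2$-regular. Note also that since $1\le a<m$ there are no loops in $\Gamma$.

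Next I would observe that $\Gamma$ is $2$-regular: each $[j]$ lies on the cluster-edge to $[j+a]$ and on the cluster-edge to $[j-a]$, and on no others. A connected $2$-regular multigraph is a cycle, so $\Gamma$ is a disjoint union of cycles; pulling this decomposition back up, the clusters together with the pairs of connecting edges form the \emph{cluster-cycles} of the statement. Here I would explicitly flag the degenerate case $2a\equiv 0\pmod m$: then $[j+a]=[j-a]$ and the two cluster-edges at $[j]$ are parallel, so the component is a $2$-cycle in the multigraph sense — a genuine cluster-cycle through two clusters joined by two pairs of connecting edges — even though it is not a cycle as a simple graph.

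Finally I would count components and compute their lengths. Starting from $[0]$, the component is $\{[0],[a],[2a],\dots\}$, and the least positive integer $\ell$ with $\ell a\equiv 0\pmod m$ is $\ell=m/\gcd(a,m)=m/t$, which is the order of the cyclic subgroup $\langle a\rangle=\langle t\rangle\leq\mathbb{Z}_m$; by translation every component is a cycle supported on a coset of $\langle a\rangle$, hence has length $m/t$. Since these $m/t$-element cosets partition $\mathbb{Z}_m$, there are exactly $m/(m/t)=t$ cluster-cycles. I expect no genuine obstacle: this is essentially Proposition~\ref{thm:voltageFacts}(\ref{liftLoop}) applied to the loop of label $a$ obtained by contracting the clusters, and the only real care needed is (i) verifying that the contraction yields a well-defined multigraph, which is precisely where $b=a$ is used, and (ii) handling the degenerate length-$2$ cluster-cycles that occur when $2a\equiv 0\pmod m$.
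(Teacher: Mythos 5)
Your argument is correct, and it is essentially the reasoning the paper leaves implicit: the statement is given as an Observation with no written proof, resting on the same fact it records for loops in Proposition~\ref{thm:voltageFacts}(\ref{liftLoop}), namely that stepping by $a$ in $\mathbb{Z}_m$ decomposes the indices into the $t=\gcd(a,m)$ cosets of $\langle a\rangle$, each of size $m/t$. Your contraction of each cluster to a vertex, the use of $b=a$ to get a $2$-regular quotient, and the explicit handling of the degenerate case $2a\equiv 0\pmod m$ simply make that implicit argument precise, so there is nothing to correct.
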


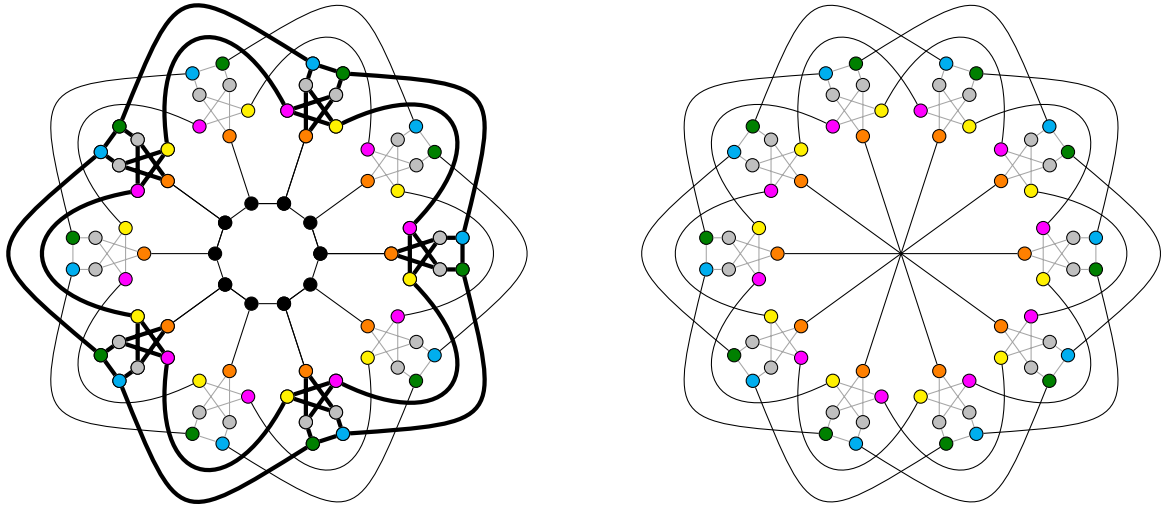
\begin{figure}[htbp]
\begin{center}
\ffigbox{
\begin{subfloatrow}
\ffigbox{\caption{The graph $P_{\alpha}(10; 2,2,1)$. Since $\gcd(10,2) = 2$, the graph has two cluster-cycles, both of length 5; one is shown with thick lines.}
\label{fig:newLSex}}{
\begin{tikzpicture}[scale=1]
\drawPentaBlobs[.7]{10}{.7}{.3}
\foreach \i in {0, 1, ..., 9}{
	\drawAlphaAwiggle[-80]{10}{\i}{2}{black}{2.5}
	\drawAlphaBwiggle[-50]{10}{\i}{2}{black}{2}
	\drawSpoke{\i}{black}
	\drawLoop{10}{\i}{black}{1}
}
\foreach \i in {0,  2, 4, 6, 8}{
	\drawPentaBlobOne[.7]{10}{.7}{.3}{\i}{ultra thick, black}
	\drawAlphaAwiggle[-80]{10}{\i}{2}{ultra thick,black}{2.5}
	\drawAlphaBwiggle[-50]{10}{\i}{2}{ultra thick,black}{2}
	\drawSpoke{\i}{ }
	\drawLoop{10}{\i}{ }{1}
}
\end{tikzpicture}
}
\ffigbox{\caption{A second graph, where the spoke and loop edges are replaced by ``diameters'', corresponding to a semiedge labelled $m/2$ (in this case, 5) in the voltage graph}\label{fig:newLSexDiams}}{
\begin{tikzpicture}[scale=1]
\drawPentaBlobsNoBlack[.7]{10}{.7}{.3}
\foreach \i in {0, 1, ..., 9}{
	\drawAlphaAwiggle[-80]{10}{\i}{2}{black}{2.5}
	\drawAlphaBwiggle[-50]{10}{\i}{2}{black}{2}
}
\foreach \i in {0,1,2,3,4}{ \draw let \n1 = {int(mod(\i + 5,10))} in (C\i) -- (C\n1);}
\end{tikzpicture}

}
\end{subfloatrow}
}{
\caption{Two graphs with $10$-fold rotational symmetry, constructed from the 5-pole $P$ formed by removing a path  of length 2 from the Petersen graph. We form the $\alpha$-connection with a skip of 2, and connect up the inner spoke edges in various systematic ways. We will later show these are snarks, in Proposition \ref{thm:LoupFam}.}\label{}
}
\end{center}
\end{figure}

Given any voltage graph with two arrows plus a spoke terminating in a loop, it is straightforward to reconstruct the original graph which had had a $P_{3}$ removed from it. First, label, respectively, the head and tail vertex of the first arrow as $A'$ and $A$ and the head and tail of the second arrow as $B'$ and $B$ (this corresponds to clipping the arrows in half to form four of the semiedges of the 5-pole $H$). 
Remove the two arrows and the loop (but not their incident vertices), and add two new vertices to the graph, called $s$ and $t$. Finally add edges $As, Bs, A't, B't, vw,$ and $sw$ and $wt$ to finish reconstructing the original graph. We will use this process in Sections \ref{Sec:LoupConstr} and \ref{sec:newSnarks}.

\subsection*{\bf Example}
 Let $P$ be the 5-pole formed by removing a path $P_{3}$ from the Petersen graph (Figure \ref{fig:Ex-Petersen}). The two Loupekine Snarks on 22 vertices can be interpreted as $P_{\alpha}(3; 1,1,1)$ and $P_{\beta}(3;1,1,1)$, shown in Figure \ref{Fig:LoupSnarkAsLift}, if the central triangle is contracted to a point.

 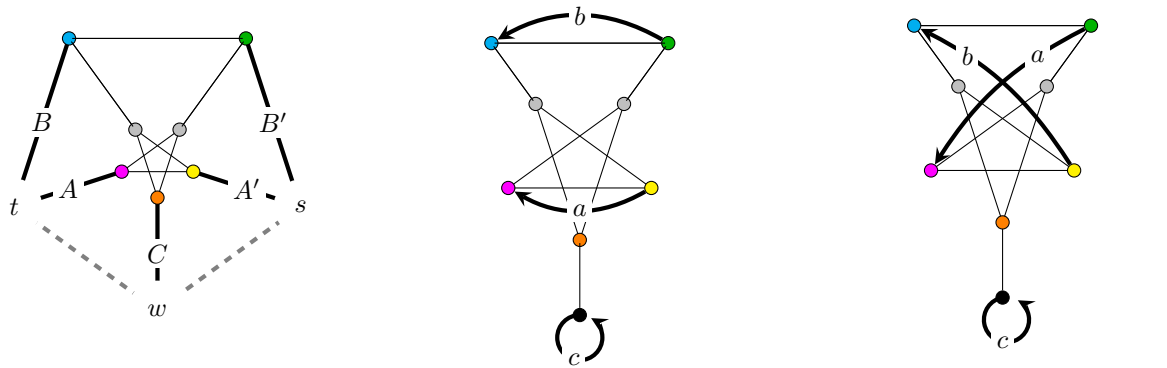
\begin{figure}[htbp]
\begin{center}
\ffigbox{
\begin{subfloatrow}[3]
\ffigbox{\caption{A 5-pole $P$ formed by removing a path $P_{3}$ (dashed) from the Petersen graph $\bar{P} $} \label{fig:Ex-Petersen}}{
\begin{tikzpicture}[arclabel/.style={midway, fill=white, inner sep = 2 pt}]
\foreach \i in {0, ...,4}{
	\node[inner sep=1pt] (v\i) at (  72*\i-72/4:.5){};
	\node[inner sep=1pt] (w\i) at ( 72*\i-72/4:2){};%
}
\foreach \i in {0,..., 4}{
	\draw[] let \n1={int(mod(\i+2, 5))} in  (v\i) -- (v\n1);}
\foreach \i in {1}{
	\draw let \n1={int(mod(\i+1, 5))} in  (w\i) -- (w\n1);}
\foreach \i in {1,2}{\draw (v\i) -- (w\i);}

\node[vtx, fill = mymagenta]  (A) at (v3){};
\node[vtx, fill = cyan] (B) at (w2){};
\node[vtx, fill = yellow] (A') at (v0){};
\node[vtx, fill = green!70!black] (B') at (w1){};
\node[vtx, fill=black](C) at (w4){};

\draw  (v1) node[vtx, fill=gray!50!white]{};
\draw (v2) node[vtx, fill=gray!50!white]{};
\draw  (v4) node[vtx, fill=orange]{};

\begin{scope}[on background layer]
\draw[ultra thick] (w2) -- node[arclabel]{$B$} (w3);
\draw[ultra thick] (v3) --  node[arclabel]{$A$} (w3);
\draw[ultra thick] (w1) -- node[arclabel]{$B'$} (w0);
\draw[ultra thick] (v0) --  node[arclabel]{$A'$} (w0);
\draw[ultra thick] (v4) --  node[arclabel]{$C$} (w4);
\end{scope}

\draw[ultra thick, dashed, gray]  (w3) -- (w4);
\draw[ultra thick, dashed, gray]  (w4) -- (w0);


\draw (w0) node[fill =white, circle, inner sep = 5 pt]{$s$};
\draw (w3) node[fill =white, circle, inner sep = 5 pt]{$t$};
\draw (w4) node[fill =white, circle, inner sep = 5 pt]{$w$};

\draw[] (v1) -- (w1) -- (w2) -- (v2);
\end{tikzpicture}
}

\ffigbox{\caption{The voltage graph $\tilde{P}_{\alpha}(a,b,c)$}\label{}}{

\begin{tikzpicture}[arclabel/.style={midway, fill=white, inner sep = 2 pt}]
\foreach \i in {0, ...,4}{
	\node[inner sep=1pt] (v\i) at (  72*\i-72/4:1){};
	\node[inner sep=1pt] (w\i) at ( 72*\i-72/4:2){};%
}
\foreach \i in {0,..., 4}{
	\draw[] let \n1={int(mod(\i+2, 5))} in  (v\i) -- (v\n1);}
\foreach \i in {1}{
	\draw let \n1={int(mod(\i+1, 5))} in  (w\i) -- (w\n1);}
\foreach \i in {1,2}{\draw (v\i) -- (w\i);}

\node[vtx, fill = mymagenta]  (A) at (v3){};
\node[vtx, fill = cyan] (B) at (w2){};
\node[vtx, fill = yellow] (A') at (v0){};
\node[vtx, fill = green!70!black] (B') at (w1){};
\node[vtx, fill=black](C) at (w4){};

\draw  (v1) node[vtx, fill=gray!50!white]{};
\draw (v2) node[vtx, fill=gray!50!white]{};
\draw  (v4) node[vtx, fill=orange]{};


\draw[] (v1) -- (w1) -- (w2) -- (v2);
\draw (v4) -- (C);

\draw[ultra thick,<-] (A) to[out=-30, in = 180+30] node[near start, arclabel]{$a$} (A');
\draw[ultra thick, <-] (B) to[out=+30, in = 180-30] node[midway, arclabel]{$b$} (B');
\draw[ultra thick, ->] (C) arc(90:360+60:.3) node[midway, arclabel]{$c$} (C);
\end{tikzpicture}

}

\ffigbox{\caption{The voltage graph $\tilde{P}_{\beta}(a,b,c)$}\label{}}{
\begin{tikzpicture}[arclabel/.style={midway, fill=white, inner sep = 2 pt}]
\foreach \i in {0, ...,4}{
	\node[inner sep=1pt] (v\i) at (  72*\i-72/4:1){};
	\node[inner sep=1pt] (w\i) at ( 72*\i-72/4:2){};%
}
\foreach \i in {0,..., 4}{
	\draw[] let \n1={int(mod(\i+2, 5))} in  (v\i) -- (v\n1);}
\foreach \i in {1}{
	\draw let \n1={int(mod(\i+1, 5))} in  (w\i) -- (w\n1);}
\foreach \i in {1,2}{\draw (v\i) -- (w\i);}

\node[vtx, fill = mymagenta]  (A) at (v3){};
\node[vtx, fill = cyan] (B) at (w2){};
\node[vtx, fill = yellow] (A') at (v0){};
\node[vtx, fill = green!70!black] (B') at (w1){};
\node[vtx, fill=black](C) at (w4){};

\draw  (v1) node[vtx, fill=gray!50!white]{};
\draw (v2) node[vtx, fill=gray!50!white]{};
\draw  (v4) node[vtx, fill=orange]{};

\begin{scope}[on background layer]
\draw[] (v1) -- (w1) -- (w2) -- (v2);
\draw (v4) -- (C);

\draw[ultra thick,<-] (A) to[bend left=15] 
node[midway, arclabel, near end]{$a$} (B');
\draw[ultra thick, <-] (B) to[bend left=15] 
node[midway, arclabel, near start]{$b$} (A');
\draw[ultra thick, ->] (C) arc(100:360+60:.3) node[midway, arclabel]{$c$} (C);
\end{scope}
\end{tikzpicture}

}
\end{subfloatrow}
}{
\caption{Removing a path $P_{3}$ from the Petersen graph $\bar{P}$ produces a 5-pole $P$ which generates the $\tilde{P}_{\alpha}(a,b,c)$ and $\tilde{P}_{\beta}(a,b,c)$ voltage graphs. }
\label{default}}
\end{center}
\end{figure}

\begin{figure}[htbp]
\begin{center}
\ffigbox{
\begin{subfloatrow}
\ffigbox{\caption{$P_{\alpha}(3; 1,1,1)$, which is isomorphic to the first Loupekine Snark on 22 vertices when the central triangle is contracted to a point.}\label{fig:Loup22Snark1}}{
\begin{tikzpicture}
\drawPentaBlobs{3}{.2}{.5}
\foreach \i in {0,1,2}{
\drawAlphaA[.5]{3}{\i}{1}{black}{0}
\drawAlphaB[.5]{3}{\i}{1}{black}{0}
\drawSpoke{\i}{black}
\drawLoop{3}{\i}{black}{1}
}
\end{tikzpicture}
}

\ffigbox{\caption{$P_{\beta}(3; 1,1,1)$, which is isomorphic to the second Loupekine Snark on 22 vertices when the central triangle is contracted to a point.}\label{fig:Loup22Snark2}}{
\begin{tikzpicture}
\drawPentaBlobs{3}{.2}{.5}
\foreach \i in {0,1,2}{
\drawBetaA[.5]{3}{\i}{1}{black}{0}
\drawBetaB[.5]{3}{\i}{1}{black}{0}
\drawSpoke{\i}{black}
\drawLoop{3}{\i}{black}{1}
}
\end{tikzpicture}
}

\end{subfloatrow}

}{
\caption{The Loupekine Snarks on 22 vertices are $\mathbb{Z}_{3}$ lifts of the voltage graphs $\tilde{P}_{\alpha}(1,1,1)$ and $\tilde{P}_{\beta}(1,1,1)$, after the central triangle is contracted to a point. }
\label{Fig:LoupSnarkAsLift}
}
\end{center}
\end{figure}
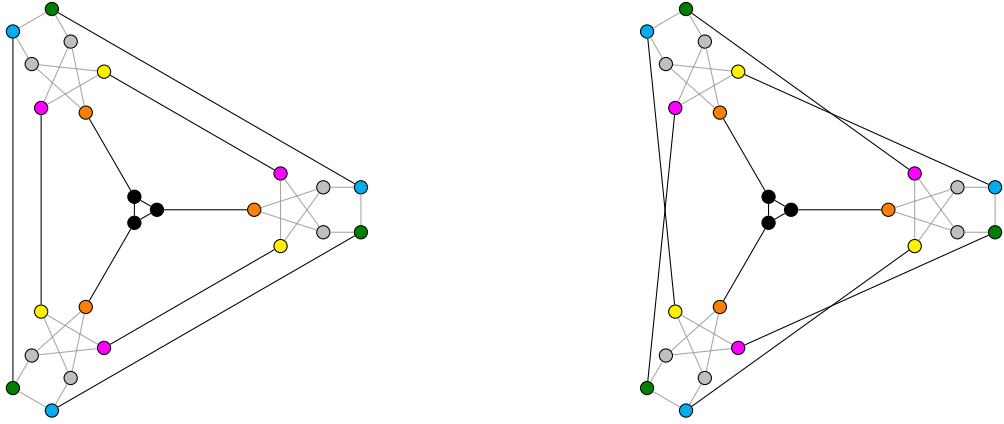

Figure \ref{fig:newLSex} shows the snark $P_{\alpha}(10; 2,2,1)$.
The Goldberg snarks (see \cite{FioRui2008}) are $P_{\alpha}(m; 1,1,1)$ for odd $m$. 

\subsection{\bf More Loupekine snarks}\hfill

Loupekine's argument essentially reduces to the following observation:

\begin{observation}\label{LoupekineSnarks} Any odd cluster-cycle of Loupekine 5-poles $S$ (i.e., formed from a snark 
minus a path of two edges, $P_{3}$) is a snark.
\end{observation}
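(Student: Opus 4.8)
The plan is to reduce the statement to a non-3-edge-colorability claim. Let $G$ be an odd cluster-cycle of Loupekine 5-poles: reindexing along the cluster-cycle, $G$ is assembled from copies $S_0,\dots,S_{k-1}$ of a fixed Loupekine 5-pole $S$ with $k$ odd, where the connecting edges identify $\{A_i,B_i\}$ of $S_i$ with $\{A'_{i+1},B'_{i+1}\}$ of $S_{i+1}$ (indices mod $k$), and the semiedges $C_i$ are joined up in some cubic way. The graph $G$ is cubic by construction, so by Vizing's theorem $\chi'(G)\le 4$; it therefore suffices to show that $G$ has no proper $3$-edge-coloring, as then $\chi'(G)=4$ and $G$ is a snark by definition. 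I would argue by contradiction, fixing a proper $3$-edge-coloring $\phi$ of $G$ and restricting it to each copy $S_i$.

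The heart of the argument is a local dichotomy, which is essentially step~(4) of Algorithm~\ref{alg:Loupekine}: \textbf{in any proper $3$-edge-coloring of a Loupekine $5$-pole, exactly one of $\phi(A)=\phi(B)$ and $\phi(A')=\phi(B')$ holds.} Applying the Parity Lemma (Lemma~\ref{parityLemma}) to the $5$-edge cutset $\{A,B,C,B',A'\}$ shows that one color is used on three of these edges and the remaining two colors on one edge each. If $\phi(A)=\phi(B)$ and $\phi(A')=\phi(B')$ both held, then some color would be forced to have even count on this cutset (either a monochromatic pair $\{A,B\}$, $\{A',B'\}$ uses a color that is not the thrice-used one, or both pairs use the same color, forcing that color's count to be $5$ and the others' to be $0$), contradicting the Parity Lemma; this gives the ``at most one'' half. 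For the ``at least one'' half, suppose $\phi(A)\neq\phi(B)$ and $\phi(A')\neq\phi(B')$; then the Parity Lemma forces the thrice-used color $\gamma$ onto $C$, onto exactly one of $\{A,B\}$, and onto exactly one of $\{A',B'\}$, with the remaining two edges carrying the other two colors. Now reattach the deleted path $s$--$w$--$t$ to recover the snark $\bar S$, exactly as described just before the definition of pseudo-Loupekine graphs ($s$ meeting the endpoints of $A,B$; $t$ meeting those of $A',B'$; $w$ meeting that of $C$): the colors of $sw$ and $wt$ are then forced, and in each (symmetric) subcase one checks directly that the three edges at $w$ receive the three distinct colors. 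Thus $\phi$ would extend to a proper $3$-edge-coloring of $\bar S$, contradicting that $\bar S$ is a snark, and the dichotomy follows.

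The global step is then short. Define $P(j)=1$ if $\phi(A'_j)=\phi(B'_j)$ and $P(j)=0$ otherwise. Since the connecting edges identify $\{A_i,B_i\}$ with $\{A'_{i+1},B'_{i+1}\}$ and this identification only involves equality of colors, we have $\phi(A'_{i+1})=\phi(B'_{i+1})$ if and only if $\phi(A_i)=\phi(B_i)$ --- regardless of whether the interface is of $\alpha$- or $\beta$-type; by the dichotomy applied to $S_i$ the latter is equivalent to $\phi(A'_i)\neq\phi(B'_i)$. Hence $P(i+1)\equiv P(i)+1 \pmod 2$ for all $i\in\mathbb{Z}_k$, and summing around the cycle gives $P(0)\equiv P(0)+k \pmod 2$, which forces $k$ even --- contradiction. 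So $G$ admits no proper $3$-edge-coloring and is a snark.

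I expect the only real obstacle to be the local dichotomy, and within it the ``at least one'' case: one wants to phrase it so that the Parity Lemma alone pins down the location of the thrice-used color, turning the extension to $\bar S$ into a single uniform check rather than a proliferation of cases. The ``at most one'' direction and the cycle argument are routine. I would also make explicit (i) that the proof never uses how the $C_i$ are wired, so it covers the fully general construction of Algorithm~\ref{alg:Loupekine}, and (ii) that it is insensitive to mixing $\alpha$- and $\beta$-connections around the cluster-cycle, since the invariant $P(j)$ records only whether two colors agree.
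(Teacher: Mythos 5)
Your proof is correct and follows essentially the same route the paper intends: the paper states this as an observation with no written proof, appealing to Loupekine's argument (the match/mismatch dichotomy asserted as ``straightforward'' in step~(4) of Algorithm~\ref{alg:Loupekine}, combined with parity around an odd cycle of clusters). You simply fill in the details the paper omits---deriving both directions of the dichotomy from the Parity Lemma and the reattachment of the removed $P_{3}$ to the snark $\bar S$---and your observations that the argument is independent of how the $C_i$ are wired and of $\alpha$- versus $\beta$-type interfaces match the paper's later remarks.
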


We can construct more Loupekine snarks if we allow the clusters to connect to each other in a more complicated way than simply one cluster to the next one. That is:

\begin{prop}\label{thm:LoupFam} Suppose that $\gcd(m, a,  c) = 1$ (for connectivity),  $r = m/\gcd(a,m)$ is odd, and  $S$ 
is constructed by removing a $P_{3}$ from a snark  $\bar{S}$. Then $\mc{S} =S_{x}(m; a,a,c)$  for $x = \alpha,\beta$ is a snark. If the girth of $S$ is at least 4,  $3 \neq m/\gcd(m,c)$, and $r \neq 3$, then the snark $\mc{S}$ has girth at least 4 as well.
\end{prop}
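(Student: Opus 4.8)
The plan is to verify, in order, the three defining properties of a snark for $\mathcal S$ — cubic, connected, chromatic index $4$ — and then, granting the extra hypotheses, triangle-freeness. That $\mathcal S$ is cubic, loopless, and free of parallel edges I would read off from the construction: the $\mathbb Z_m$ lift of $\tilde H_\alpha$ or $\tilde H_\beta$ together with the standing inequalities $1\le a<m$ and $1\le c<m/2$ forces $3$-valence and simplicity at the hub, while $r$ odd (so $2a\not\equiv 0\pmod m$) prevents two connecting edges from landing on the same vertex pair. For connectedness I would contract each cluster $H_i$ to a point: the connecting edges then contribute an edge $i\sim i+a$ and the loop edges an edge $i\sim i+c$ for every $i$, so the contracted graph is a Cayley graph of $\mathbb Z_m$ with connection set $\{a,c\}$, which is connected precisely because $\gcd(m,a,c)=1$; since each $H_i$ is connected on its own (it is $S$ with the pendant edge $v_iw_i$), $\mathcal S$ is connected.

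The absence of a proper $3$-edge-coloring is Loupekine's parity argument (the principle behind Algorithm~\ref{alg:Loupekine}), which I would run directly on $\mathcal S$. Suppose $\phi$ is a proper $3$-edge-coloring. Its restriction to a cluster $H_i\cong S$ properly $3$-edge-colors the $5$-pole $S$, with the five semiedges receiving the $\phi$-colors of the five edges of $\mathcal S$ leaving $H_i$ (the four connecting edges at $A_i,B_i,A'_i,B'_i$, and the spoke at $v_i$ playing the role of $C$). The Parity Lemma~\ref{parityLemma} forbids \emph{both} pairs $\{A_i,B_i\}$ and $\{A'_i,B'_i\}$ from being monochromatic, while the last step of Algorithm~\ref{alg:Loupekine} — which uses that $\bar S$ is a snark, and which I would also apply symmetrically in the two ends of the deleted $P_3$ — forbids both from being bichromatic; hence exactly one of the two pairs is monochromatic. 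Letting $e_i\in\mathbb Z_2$ record whether $\{A_i,B_i\}$ is monochromatic, this reads $e'_i=1-e_i$, where $e'_i$ is the analogous indicator for $\{A'_i,B'_i\}$; and the connecting edges — $\alpha$- or $\beta$-type alike, since both equate the multiset $\{\phi(A'_i),\phi(B'_i)\}$ with $\{\phi(A_{i+a}),\phi(B_{i+a})\}$ — give $e'_i=e_{i+a}$. Thus $e_{i+a}=1-e_i$; iterating around a cluster-cycle, which has length $r$ by Observation~\ref{clusterCycle}, yields $e_0=e_{ra}=e_0+r$ in $\mathbb Z_2$, so $r$ is even, contradicting the hypothesis. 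Hence $\mathcal S$ is a snark.

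For girth at least $4$ I would show $\mathcal S$ is triangle-free (it is already simple, by the bookkeeping above) by classifying a would-be triangle $T$ according to the clusters containing its three vertices (every vertex, hub vertices included, lies in a unique cluster). The key local fact is that a vertex outside $H_i$ is adjacent to at most one vertex of $H_i$: a hub vertex reaches $H_i$ only through one loop edge, and a cluster vertex of some $H_j$ only through its connecting edge(s), which — as $2a\not\equiv0$ — lead to two \emph{distinct} clusters. Consequently, if two vertices of $T$ share a cluster then all three do, and since $w_i$ is pendant inside $H_i$, $T$ would be a triangle of $S$ — impossible, as $S$ has girth at least $4$. If the three vertices lie in three distinct clusters, every edge of $T$ joins distinct clusters, hence is a loop edge or a connecting edge. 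If all three are loop edges, the vertices are hub vertices pairwise differing by $\pm c$, forcing $3c\equiv0\pmod m$, i.e.\ $m/\gcd(m,c)\in\{1,3\}$, which (since $1\le c<m/2$) means $m/\gcd(m,c)=3$, excluded. Otherwise some vertex of $T$ carries two connecting edges, which forces a coincidence of two semiedge endpoints of $S$; among the coincidences not already excluded by simplicity of $\bar S$ (for the $\alpha$-connection these are $A\equiv A'$ or $B\equiv B'$, for the $\beta$-connection $A\equiv B'$ or $B\equiv A'$, possibly also absorbing $v$), each such coincidence makes the relevant connecting edges close up into a cycle whose length is exactly that of a cluster-cycle, namely $r$; this is a triangle only when $r=3$, which is excluded.

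I expect the main obstacle to be this last part of the girth argument: correctly enumerating which coincidences among the endpoints $A,B,A',B',v$ of $S$ remain possible once $\bar S$ is required to be simple and $S$ to have girth at least $4$, and then checking — uniformly for the $\alpha$- and $\beta$-connections — that each such coincidence produces a $3$-cycle exactly when $r=3$ (and otherwise only cycles of length $r\ge5$, hence nothing below the target girth). By contrast, non-colorability reduces to the ``exactly one monochromatic pair'' statement combined with an odd cluster-cycle, and the cubic/connected/simple checks are routine.
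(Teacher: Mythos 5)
Your proof is correct and takes essentially the same route as the paper: the non-3-colorability is exactly the odd cluster-cycle parity argument the paper invokes via Observations \ref{clusterCycle} and \ref{LoupekineSnarks} (you simply spell out the ``exactly one monochromatic pair'' step), and the girth claim is settled by the same decomposition into in-cluster cycles, loop-edge cycles, and cycles of connecting edges, with your treatment of the last case actually more explicit than the paper's (which never says where $r\neq 3$ enters). The only loose point is your claim that every endpoint coincidence closes the connecting edges into a cycle of length exactly $r$ --- in a mixed coincidence (e.g.\ $A=B'$ together with $B=A'$ in an $\alpha$-connection) one gets a $2r$-cycle, and a single mixed coincidence alone gives only paths --- but this is harmless, since any triangle made of connecting edges joins three pairwise-adjacent clusters and hence forces $3a\equiv 0\pmod m$, i.e.\ $r=3$, which you have excluded.
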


\begin{proof} 
By Observation \ref{clusterCycle}, the connecting edges form $\gcd(s,m)$ cluster-cycles each of odd-length $r$.
Observation \ref{LoupekineSnarks} shows that this cycle cannot be coherently 3-edge-colored. Hence $\mc{S}$ is a snark.

By construction, each cluster $S_{i}$ has girth at least 4, so the graph $\mc{S}'$ constructed by deleting the spoke and loop edges from $\mc{S}$ has girth at least 4. Thus, the girth of $\mc{S}$ is determined by the size of the cycles induced by the loop edges. However, by Proposition \ref{thm:voltageFacts}(\ref{liftLoop}), the loop edges form $\gcd(m, c)$ cycles of length  $m/\gcd(m,c)$. Thus, whenever $3 < m/\gcd(m,c)$, $\mc{S}$ has girth greater than 3. If $m = 3$, the loop edges form a triangle, so $\mc{S}$ has girth 3. (However, in this case, the triangle can be contracted to a single point, and the result is still a snark, now with girth at least 4.)
\end{proof}

\begin{definition} A \emph{cyclic Loupekine snark} is a pseudo-Loupekine snark of the form $S_{x}(m;a,a,c)$ for $x = \alpha, \beta$, where $S$ is constructed from a Loupekine 5-pole, that is, by removing a path of length 2 from a snark $\bar S$. \end{definition}

It appears that this easy generalization of Loupekine's construction has not been discussed in this form in the literature.  Figure \ref{fig:newLSex} shows $P_{\alpha}(10; 2,2,1)$ (where the initial snark $P$ is the Petersen graph), which has $m$-fold rotational symmetry for an even $m$. (Of course, the original descriptions of Loupekine's construction \cite{Isa1976,Wat1989} allowed for constructing snarks using an even number of copies of a given 5-pole, but complete details on how to do this were not provided.)

Observe that simply by choosing some even number $m$ and parameter $a$ such that $m/gcd(m,a)$ is odd, then we obtain a snark with rotational symmetry for even $m$. For example, $P_{\alpha}(5(2k); 2,2,1)$ yields  a snark for every $k$. That is:

\begin{corollary}There are infinitely many snarks which can be drawn with $\mathbb{Z}_{m}$ rotational symmetry for even $m$.
\end{corollary}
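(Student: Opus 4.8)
The plan is to read off the corollary directly from Proposition~\ref{thm:LoupFam} by producing an explicit infinite subfamily of cyclic Loupekine snarks $S_{x}(m;a,a,c)$ whose rotation parameter $m$ takes infinitely many even values. Concretely, I would fix once and for all the Loupekine $5$-pole $P$ obtained by deleting a path $P_{3}$ from the Petersen graph $\bar P$ (Figure~\ref{fig:Ex-Petersen}), and, for each \emph{odd} integer $j\geq 1$, set $m=10j$, $a=b=2$, and $c=1$, so that the graph under consideration is $\mathcal{S}_{j}:=P_{\alpha}(10j;\,2,2,1)$. Note that $\mathcal{S}_{1}$ is exactly the graph already drawn in Figure~\ref{fig:newLSex}.

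The only thing to verify is that these parameters satisfy the hypotheses of Proposition~\ref{thm:LoupFam}. For connectivity we need $\gcd(m,a,c)=1$, which holds because $c=1$. For the snark conclusion we need $r=m/\gcd(a,m)$ to be odd; here $\gcd(a,m)=\gcd(2,10j)=2$, so $r=5j$, which is odd precisely because $j$ is odd. The standing admissibility conditions $1\leq a,b<m$ and $1\leq c<m/2$ also hold, since $m=10j\geq 10$. Hence Proposition~\ref{thm:LoupFam} applies and each $\mathcal{S}_{j}$ is a snark. Being a $\mathbb{Z}_{m}$ lift of the voltage graph $\tilde{P}_{\alpha}(2,2,1)$, it can be drawn with $m$-fold rotational symmetry by the very construction of the lift, and $m=10j$ is even. (If one also wants these snarks to have girth at least $4$, note that $P$ inherits girth $\geq 5$ from the Petersen graph, $m/\gcd(m,c)=10j\neq 3$, and $r=5j\neq 3$, so the second assertion of Proposition~\ref{thm:LoupFam} gives girth at least $4$.)

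Finally I would check that distinct values of $j$ give distinct snarks. Each cluster of $\mathcal{S}_{j}$ consists of the seven vertices of the Petersen $5$-pole $P$ together with the extra loop vertex $w$, so $|V(\mathcal{S}_{j})|=8m=80j$; these orders are pairwise distinct and unbounded, hence the $\mathcal{S}_{j}$ are pairwise non-isomorphic and form an infinite family. There is essentially no obstacle in this argument: it is a pure specialization of Proposition~\ref{thm:LoupFam}, and the only care needed is in choosing $(m,a,c)$ so that the two arithmetic conditions (coprimality for connectivity, and oddness of $m/\gcd(a,m)$) hold simultaneously while $m$ ranges over an infinite set of even integers — the choice $m=10j$, $a=2$, $c=1$ with $j$ odd does exactly this.
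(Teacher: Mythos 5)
Your proposal is correct and follows essentially the same route as the paper, which also deduces the corollary from Proposition \ref{thm:LoupFam} via the explicit family $P_{\alpha}(10k;2,2,1)$. If anything, you are more careful than the paper's one-line example: restricting to odd $j$ is genuinely needed so that $r=m/\gcd(a,m)=5j$ is odd (the paper's phrase ``for every $k$'' glosses over this), and your order count $8m$ cleanly gives pairwise non-isomorphic members.
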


Martin \v{S}koviera suggested\footnote{personal communication, December 2, 2016} an additional infinite class of snarks which can be formed from the snarks $S_{\alpha}(2k; a, a, 1)$, by replacing the inner ring of points $w_{i}$ and the inner cycle formed by the loop edges with ``diameters'' connecting $v_{i}$ and $v_{i+k}$; this corresponds to replacing the bottom spoke and loop in the voltage graph with a single semiedge labelled $k$, where the voltage group is $\mathbb{Z}_{2k}$. Figure \ref{fig:newLSexDiams} shows an example of this type of snark, based on $P_{\alpha}(10; 2,2,1)$. Preliminary analysis suggests that these snarks may have very interesting properties (e.g., the snark in Figure \ref{fig:newLSexDiams} is cyclically 5-connected and irreducible).

\begin{conjecture}
All cyclic pseudo-Loupekine graphs $S_{x}(m; a,a,c)$, where $S$ is a Loupekine 5-pole and where $m/\gcd(m,a)$ is even and $m/\gcd(m,c)$ is odd, are 3-edge-colorable.
\end{conjecture}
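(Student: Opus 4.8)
The plan is to construct an explicit proper $3$-edge-coloring. Write $\mathcal{S}=S_{x}(m;a,a,c)$, and set $t_{a}=\gcd(m,a)$, $r=m/t_{a}$ (even, by hypothesis), $t_{c}=\gcd(m,c)$, $r'=m/t_{c}$ (odd, by hypothesis). First I would delete the $m$ spoke edges $v_{i}w_{i}$. This disconnects $\mathcal{S}$ into an \emph{outer} graph $\mathcal{O}$, carrying all the clusters and all the connecting edges but with each $v_{i}$ now of degree $2$, and an \emph{inner} graph $\mathcal{I}$, which by Proposition~\ref{thm:voltageFacts}(\ref{liftLoop}) is a disjoint union of $t_{c}$ odd cycles $C_{r'}$ on the vertices $w_{i}$. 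A proper $3$-edge-coloring of $\mathcal{S}$ is then precisely: a proper coloring of $\mathcal{O}$, a proper coloring of $\mathcal{I}$, and, for every $i$, the requirement that the color missing at $v_{i}$ in $\mathcal{O}$ (the color not carried by the two cluster edges at $v_{i}$) agree with the color missing at $w_{i}$ in $\mathcal{I}$ (not carried by the two loop edges at $w_{i}$); call this common value $\gamma_{i}$, which is then forced to be the color of the restored spoke $v_{i}w_{i}$.

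Next I would exploit the two parity hypotheses. Since $r'$ is odd, each component of $\mathcal{I}$ is $3$-edge-colorable, and the cyclic sequences $(\gamma_{i})$ of missing colors that it can produce form an explicitly describable family: around an odd cycle the edge-color sequence is recovered from $(\gamma_{i})$ position by position, forced where $\gamma_{j}\neq\gamma_{j+1}$ and free (two choices) where $\gamma_{j}=\gamma_{j+1}$, subject only to cyclic consistency, so in particular $(\gamma_{i})$ is never constant. On the outer side, since $r$ is even, I would color the clusters along each cluster-cycle (Observation~\ref{clusterCycle}) so that their ``balance types'' alternate, where a cluster is of type $L$ if $\phi(A)=\phi(B)$ and of type $R$ if $\phi(A')=\phi(B')$: by the parity content of step~(4) of Algorithm~\ref{alg:Loupekine}, every proper coloring of a Loupekine $5$-pole is of exactly one of these two types, and the connecting-edge identities $\phi(A'_{i})=\phi(A_{i+a})$, $\phi(B'_{i})=\phi(B_{i+a})$ force consecutive clusters around a cluster-cycle to have opposite types, so an alternating pattern closes up precisely because $r$ is even (the colorable counterpart of Observation~\ref{LoupekineSnarks}). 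Coloring $\mathcal{O}$ with a prescribed sequence of missing colors $(\gamma_{i})$ then amounts to choosing, in each copy of $S$, a proper coloring of the prescribed type with $\phi(C)=\gamma_{i}$ whose connecting-semiedge colors match those of its neighbors; I would walk this choice around each even-length cluster-cycle and check closure, the point being that the ``transition'' relating the admissible boundary colorings of one cluster to those of the next, composed $r$ times around the cycle, must be realized as the identity, and an even number of transitions should give exactly the freedom to arrange that.

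The main obstacle is a flexibility lemma for a single Loupekine $5$-pole $S$, on which the whole scheme rests: for each of the two types and each target color $\gamma$ there should be a proper $3$-edge-coloring of $S$ of that type with $\phi(C)=\gamma$, and the colors these colorings put on the connecting semiedges should range over a set rich enough to be glued into a cycle. The heart of this is showing that \emph{both} types actually occur for \emph{every} Loupekine $5$-pole --- equivalently, that both the graph obtained from $S$ by identifying the semiedges $A$ and $B$, and the graph obtained by identifying $A'$ and $B'$, are $3$-edge-colorable; if either of these fails, then $\mathcal{S}$ is a snark regardless of the parity of $r$. This is exactly where the snarkness of $\bar{S}$ must be used in an essential, not merely formal, way. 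When $\bar{S}$ is vertex-critical (in particular when it is irreducible) the lemma is easy: $\bar{S}-s$ and $\bar{S}-t$ are both $3$-edge-colorable and restrict to a type-$R$ and a type-$L$ coloring of $S$ respectively, and Kempe-chain exchanges inside $\bar{S}$ then sweep $\phi(C)$ through all three colors. For a general proper snark, however, $\bar{S}-s$ need not be colorable, and it is this case --- or the possibility that the conjecture must in fact carry a vertex-criticality hypothesis on $\bar{S}$ --- that is the real difficulty: a single Kempe exchange inside $S$ merely permutes which semiedges carry which color and need not flip the type, so a finer recoloring argument, or an analysis of the cyclic connectivity of a minimal counterexample, seems to be required.
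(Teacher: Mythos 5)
You should first note that the paper does not prove this statement at all: it appears there only as a conjecture, left open by the authors, so there is no ``paper proof'' for your argument to be measured against. The question is therefore whether your outline actually settles the conjecture, and by your own admission it does not. The reduction you set up is sound as far as it goes: deleting the spokes splits the graph into $t_{c}$ odd loop cycles (Proposition \ref{thm:voltageFacts}, part (\ref{liftLoop})) and an outer graph organized into cluster-cycles of even length $r$ (Observation \ref{clusterCycle}), the two-type (L/R) dichotomy for a properly colored Loupekine $5$-pole follows from Lemma \ref{parityLemma} together with the match/mismatch argument in step (4) of Algorithm \ref{alg:Loupekine}, and the alternation of types around an even cluster-cycle is the correct colorable counterpart of Observation \ref{LoupekineSnarks}.

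The genuine gap is exactly the ``flexibility lemma'' you isolate, and it is not a small technicality but the whole content of the conjecture. Being a Loupekine $5$-pole only constrains which boundary patterns are \emph{admissible}; it says nothing about which are \emph{realized}. For a non-critical proper snark $\bar S$ the $5$-pole $S$ may realize only one of the two types, or realize a given type only with a restricted set of spoke colors and output pairs, and in either situation your alternating scheme cannot even start, while the graph could still conceivably be colorable by some non-alternating assignment --- so neither direction is controlled. Moreover, even granting both types with every spoke color, the closure step (``the transition composed $r$ times around the cluster-cycle is the identity'') is asserted rather than proved, and it must be solved \emph{simultaneously} with the inner requirement that on each of the $t_{c}$ odd loop cycles the prescribed missing-color sequence $(\gamma_{i})$ is realizable (non-constant, and satisfying the propagation constraints along the cycle); these two systems interact through the spokes, and that interaction is precisely what forces the paper, in the colorability results it does prove (Theorem \ref{lem:G34no9BETA} and the $G_{\alpha}(m;1,1,c)$ theorems), to exhibit explicit color strings and explicit pattern sequences rather than to argue by generic parity. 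So your proposal is a reasonable plan of attack, and your observation that the statement may need a criticality-type hypothesis on $\bar S$ is a worthwhile remark about the conjecture itself, but as a proof it is incomplete at its central step, and the statement remains open, as in the paper.
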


\begin{conjecture}
All cyclic pseudo-Loupekine graphs $S_{x}(m; a,b,c)$ where $S$ is a Loupekine 5-pole and $a \neq b$ are 3-edge-colorable.
\end{conjecture}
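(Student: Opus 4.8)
The plan is to $3$-edge-color $G = S_{x}(m;a,b,c)$ cluster by cluster. Fix a proper $3$-edge-coloring $\phi$ of the $5$-pole $S$ (which exists since $S$ is obtained from a snark by deleting a $P_{3}$). Recall first the dichotomy that drives Loupekine's argument. In any proper $3$-edge-coloring of a Loupekine $5$-pole the boundary coloring of $(A,B,C,B',A')$ is a $(3,1,1)$ pattern by the Parity Lemma; it cannot have $\phi(A)=\phi(B)$ \emph{and} $\phi(A')=\phi(B')$ (a short parity computation rules this out, since that would force a $(2,2,1)$ or $(3,2,0)$ count on the cutset); and it cannot have $\phi(A)\neq\phi(B)$ \emph{and} $\phi(A')\neq\phi(B')$ either, for if it did, coloring $sw$ with the color missing from $\{\phi(A),\phi(B)\}$ and $wt$ with the color missing from $\{\phi(A'),\phi(B')\}$ extends $\phi$ to a proper $3$-edge-coloring of $\bar S$, contradicting that $\bar S$ is a snark (the needed compatibility at $w$ is itself automatic from parity). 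So every cluster coloring is of exactly one of two types: Type~2 ($\phi(A)=\phi(B)$, $\phi(A')\neq\phi(B')$) or Type~3 ($\phi(A)\neq\phi(B)$, $\phi(A')=\phi(B')$). When $a=b$ the two families of connecting edges $A'_{i}A_{i+a}$ and $B'_{i}B_{i+a}$ both run from cluster $i$ to cluster $i+a$, so a cluster's type forces the opposite type on the next cluster along the cluster-cycle of length $m/\gcd(m,a)$, which is impossible when that length is odd (this is Observation~\ref{LoupekineSnarks} and Proposition~\ref{thm:LoupFam}). The whole point of $a\neq b$ is that the $A$-connections (skip $a$) and the $B$-connections (skip $b$) now trace out \emph{two different} cyclic structures, so there is no global alternation obstruction and one expects to be able to write an honest coloring down.

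Concretely, I would color cluster $i$ by $\pi_{i}\circ\phi$ for permutations $\pi_{i}\in S_{3}$ to be chosen, so that the only conditions left are the transport equations for the connecting edges --- $\pi_{i}(\phi(A'))=\pi_{i+a}(\phi(A))$ and $\pi_{i}(\phi(B'))=\pi_{i+b}(\phi(B))$ for the $\alpha$-connection, and the analogous crossed equations for the $\beta$-connection --- together with colorability of the inner ``wheel'': the spoke $v_{i}w_{i}$ is forced to receive color $\pi_{i}(\phi(C))$, and the loop edges form $\gcd(m,c)$ cycles (Proposition~\ref{thm:voltageFacts}(\ref{liftLoop})), each edge of which must avoid the two spoke colors at its ends (so a clash of two spoke colors across an edge forces that edge's color). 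Using Proposition~\ref{thm:voltageFacts}(1) one may normalize so that $a=\gcd(a,m)$ divides $m$. Then the $A$-transport equation chains around each $A$-cluster-cycle, and since the sequence $(\pi_{i}(\phi(A)))$ around that cycle need only be constant (if $\phi(A)=\phi(A')$) or a proper $3$-coloring of a cycle of length $\geq 2$ (if $\phi(A)\neq\phi(A')$), it can always be realized; this pins down each $\pi_{i}$ on the pair $\{\phi(A),\phi(A')\}$ and leaves a residual freedom. The heart of the argument is to use exactly this residual freedom, together with the fact that $b\not\equiv a$ makes the $B$-cluster-cycles meet the $A$-cluster-cycles transversally, to simultaneously (i) satisfy the $B$-transport equations and (ii) make the induced spoke-color sequence $(\pi_{i}(\phi(C)))_{i}$ ``loop-colorable'' --- for instance constant on each loop-cycle, or at least changing value an even number of times around each loop-cycle so the forced edge-colors never clash at a vertex.

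I expect step (ii) to be the main obstacle: reconciling the $B$-connection constraints with loop-colorability is a genuinely combinatorial bookkeeping problem whose shape depends on the interplay of $\gcd(a,m)$, $\gcd(b,m)$, and $\gcd(m,c)$, and it is quite possible that a strictly uniform recipe (one $\phi$, permuted) does not suffice, so one may have to allow a bounded number of ``defect'' clusters carrying a different base coloring, or repair a first attempt by Kempe-chain swaps along loop-cycles or cluster-cycles. One should also reduce to the connected case at the outset, separate the cases according to which boundary types $S$ actually admits (a Loupekine $5$-pole need not a priori admit colorings of both types, and the construction must then be adapted), and rule out sporadic obstructions for very small $m$ or degenerate $(a,b,c)$. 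A cleaner alternative worth pursuing in parallel is the matching formulation: exhibit a perfect matching $M$ of $G$ whose complementary $2$-factor is a union of \emph{even} cycles, assembled from near-perfect matchings of $S$ glued compatibly across the connecting edges; here again $a\neq b$ should be precisely what breaks the parity obstruction that kills the $a=b$, odd-$r$ case.
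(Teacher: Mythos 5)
First, note that in the paper this statement is only a \emph{conjecture}; no proof is given there, so there is no paper argument to compare yours against --- a complete proof would settle an open question raised by the authors. The preliminary part of your proposal is correct: the Parity Lemma (Lemma \ref{parityLemma}) forces a $(3,1,1)$ boundary pattern on the five dangling edges, your parity count ruling out $\phi(A)=\phi(B)$ together with $\phi(A')=\phi(B')$ is right, and the extension argument ruling out the doubly-mismatched case (with compatibility at $w$ forced by parity) is exactly the classical Loupekine mechanism that the paper uses for the $a=b$ case in Observation \ref{LoupekineSnarks} and Proposition \ref{thm:LoupFam}. Your diagnosis of why that obstruction disappears when $a\neq b$ is also a reasonable heuristic.

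However, what you have written is a strategy, not a proof, and the gap sits exactly at the step that carries all the content. After setting up the transport equations for the connecting edges and the forced spoke colors, the problem of choosing the cluster colorings so that the $B$-transport equations and the colorability of the spoke/loop wheel hold simultaneously is just a restatement of the original question, and you explicitly leave it unresolved (your step (ii), with possible ``defect'' clusters and Kempe repairs mentioned but not carried out). Two concrete difficulties make this more than bookkeeping. First, the ansatz of coloring every cluster by $\pi_{i}\circ\phi$ for a single base coloring $\phi$ is too rigid: permuting a fixed $\phi$ preserves its boundary type, while a coloring of the lift will in general need clusters of both types, and a Loupekine $5$-pole may admit several inequivalent colorings within a type; which spoke-color strings are completable depends delicately on the internal structure of the $5$-pole, as the paper's flow/color-string analysis of the (non-Loupekine) $5$-pole $G$ in Section \ref{sec:newSnarks} shows --- the match/mismatch dichotomy alone does not determine the answer. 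Second, the interaction of the loop-cycle structure (governed by $\gcd(m,c)$, each spoke cut set again subject to the Parity Lemma) with the two different cluster-cycle structures for $a$ and $b$ is precisely where a uniform recipe could fail, and no case of the conjecture, not even a single small $(m;a,b,c)$, is actually verified in the proposal. As it stands, the argument reduces the conjecture to an equivalent constraint-satisfaction problem rather than solving it; the matching reformulation at the end is likewise only a suggestion.
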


\section{The three snarks on 28 vertices with 3-fold rotation}
We analyzed all the snarks listed in the House of Graphs \cite{HoG} on at most 32 vertices whose automorphism group is divisible by 3. In addition to the Petersen graph and the two Loupekine snarks on 22 vertices, there are three other  snarks, all on 28 vertices, which admit realizations with 3-fold rotational symmetry; they are listed in 
\ref{appendix:28}. These snarks are shown in Figure \ref{fig:28snark}. Note that each of them has a fixed point in the center of the graph; if we expand this fixed point to a central triangle, then we can construct a voltage graph over $\mathbb{Z}_{3}$ from the embedding. In all three cases, the voltage graph has two arrows, plus a spoke terminating in a loop.

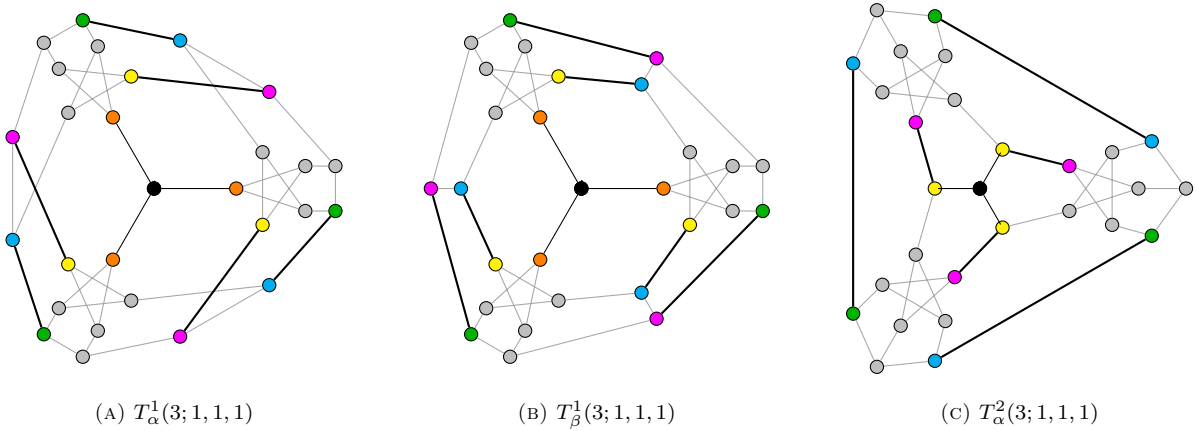
\begin{figure}[htbp]
\begin{center}
\ffigbox{
\begin{subfloatrow}[3]

\ffigbox{\caption{$T^{1}_{\alpha}(3;1,1,1)$}\label{}}{
\begin{tikzpicture}[scale=.8]
\drawGTwentyEightAlphaBlobs{3}{0}{.5}
\foreach \i in {0,1,2}{
\drawAlphaA[0]{3}{\i}{1}{black, thick}{0}
\drawAlphaB[0]{3}{\i}{1}{black, thick}{0}
\drawSpoke{\i}{black}
\drawLoop{3}{\i}{black}{1}
}
\end{tikzpicture}
}

\ffigbox{\caption{$T^{1}_{\beta}(3;1,1,1)$}\label{}}{
\begin{tikzpicture}[scale=.8]
\drawGTwentyEightBetaBlobs{3}{0}{.5}
\foreach \i in {0,1,2}{
\drawBetaA[0]{3}{\i}{1}{black,  thick}{0}
\drawBetaB[0]{3}{\i}{1}{black, thick}{0}
\drawSpoke{\i}{black}
\drawLoop{3}{\i}{black, thick}{1}
}
\end{tikzpicture}

}

\ffigbox{\caption{$T^{2}_{\alpha}(3;1,1,1)$}\label{G28NEW}}{
\begin{tikzpicture}[scale=.8]
\drawGTwentyEightNEWBlobs{3}{0}{.5}
\foreach \i in {0,1,2}{
\drawAlphaA[0]{3}{\i}{1}{black, thick}{0}
\drawAlphaB[0]{3}{\i}{1}{black, thick}{0}
\drawSpoke{\i}{black}
\drawLoop{3}{\i}{black}{1}
}
\end{tikzpicture}
}

\end{subfloatrow}
}{
\caption{The three snarks on 28 vertices which can be drawn with three-fold rotational symmetry. Their labels correspond to their construction using the original graph and voltage graphs shown in Figure \ref{fig:G28-orig-volt}.}
\label{fig:28snark}}
\end{center}
\end{figure}

\begin{figure}[htbp]
\begin{center}
\ffigbox{
\begin{subfloatrow}[3]

\ffigbox{\caption{The voltage graph $\tilde{T}^1_{\alpha}(m; a,b,c)$}\label{fig:G28AlphaVolt}}{
\begin{tikzpicture}
\drawOneGTwentyEight	
	\draw[<-, ultra thick] (A1) to[bend left = 40, looseness=1.5] node[arclabel, midway]{$a$} (A'1);
	\draw[<-, ultra thick] (B1) to[bend left = 80, looseness=1.5] node[arclabel, midway]{$b$} (B'1);
	\begin{scope}[on background layer]
	\draw[->, ultra thick] (z1) arc(100:360+50:.3)node[midway, arclabel]{$c$} (z1);
	\end{scope}
	\path[left of = w] node{$\mathbb{Z}_{m}$};
\end{tikzpicture}}

\ffigbox{\caption{The voltage graph $\tilde{T}^1_{\beta}(a,b,c)$}\label{fig:G28BetaVolt}}{

\begin{tikzpicture}
\drawOneGTwentyEight
		
	\draw[<-, ultra thick] (A1) to[bend left = 60, looseness=1.5] node[arclabel, midway]{$a$} (B'1);
	\draw[<-, ultra thick] (B1) to[bend right = 80, looseness=1.5] node[arclabel, near start]{$b$} (A'1);
	\begin{scope}[on background layer]
	\draw[->, ultra thick] (z1) arc(100:360+50:.3)node[midway, arclabel]{$c$} (z1);
	\end{scope}
	
\end{tikzpicture}

}

\ffigbox{\caption{The original graph $\bar{T}$ for both voltage graphs is the Petersen graph with one vertex replaced with a triangle, minus a $P_{3}$; the removed path is shown dashed, yielding the 5-pole $T^1$.}\label{fig:Ptri1}}{
\begin{tikzpicture}
\drawOneGTwentyEight
	\node[vtx, fill=white,   left = .5  of B1] (s) {$s$};
	\node[vtx, fill = white, right = of A'1] (t) {$t$};
	\draw[ultra thick, dashed, gray] (s) -- (z1) -- (t);
	\draw[ultra thick] (A1)--(s) (B1)--(s);
	\draw[ultra thick] (A'1)--(t) (B'1)--(t);
\end{tikzpicture}

}

\end{subfloatrow}

\begin{subfloatrow}[2]

\ffigbox{\caption{The voltage graph for $\tilde{T}^2_{\alpha}(a,b,c) \cong \tilde{T}^2_{\beta}(a,b,c) $}\label{fig:G28newVolt}}{
\begin{tikzpicture}
\drawOneGTwentyEightNEW
	\draw[<-, ultra thick] (A1) to[bend right = 30] node[ arclabel]{$a$} (A'1);
	\draw[<-, ultra thick] (B1) to[bend left = 20] node[arclabel, near start, ]{$b$}(B'1);
	\begin{scope}[on background layer]
	\draw[->, ultra thick] (z1) arc(100:360+50:.3)node[midway, arclabel]{$c$} (z1);
	\end{scope}
	\end{tikzpicture}

}

\ffigbox{\caption{The original graph $\bar{T}$ is again the Petersen graph with one vertex replaced with a triangle, minus a $P_{3}$, but this time the $P_{3}$ removes one side of the triangle, yielding the 5-pole $T^2$. The removed path is shown dashed.}\label{fig:Ptri2}}{

\begin{tikzpicture}
\drawOneGTwentyEightNEW
	\node[vtx, fill=white,   below left = .7  of A1] (s) {$s$};
	\node[vtx, fill = white, right = of A'1] (t) {$t$};
	\draw[ultra thick, dashed, gray] (s) -- (z1) -- (t);
	\draw[ultra thick] (A1)--(s) (B1)--(s);
	\draw[ultra thick] (A'1)--(t) (B'1)--(t);
	
\end{tikzpicture}
}

\end{subfloatrow}

}{
\caption{Voltage graphs and original graphs for $\tilde{T}^1_{\alpha}(a,b,c)$, $\tilde{T}^1_{\beta}( a,b,c)$ and $\tilde{T}^2_{\alpha}(a,b,c)$ }
\label{fig:G28-orig-volt}}
\end{center}
\end{figure}
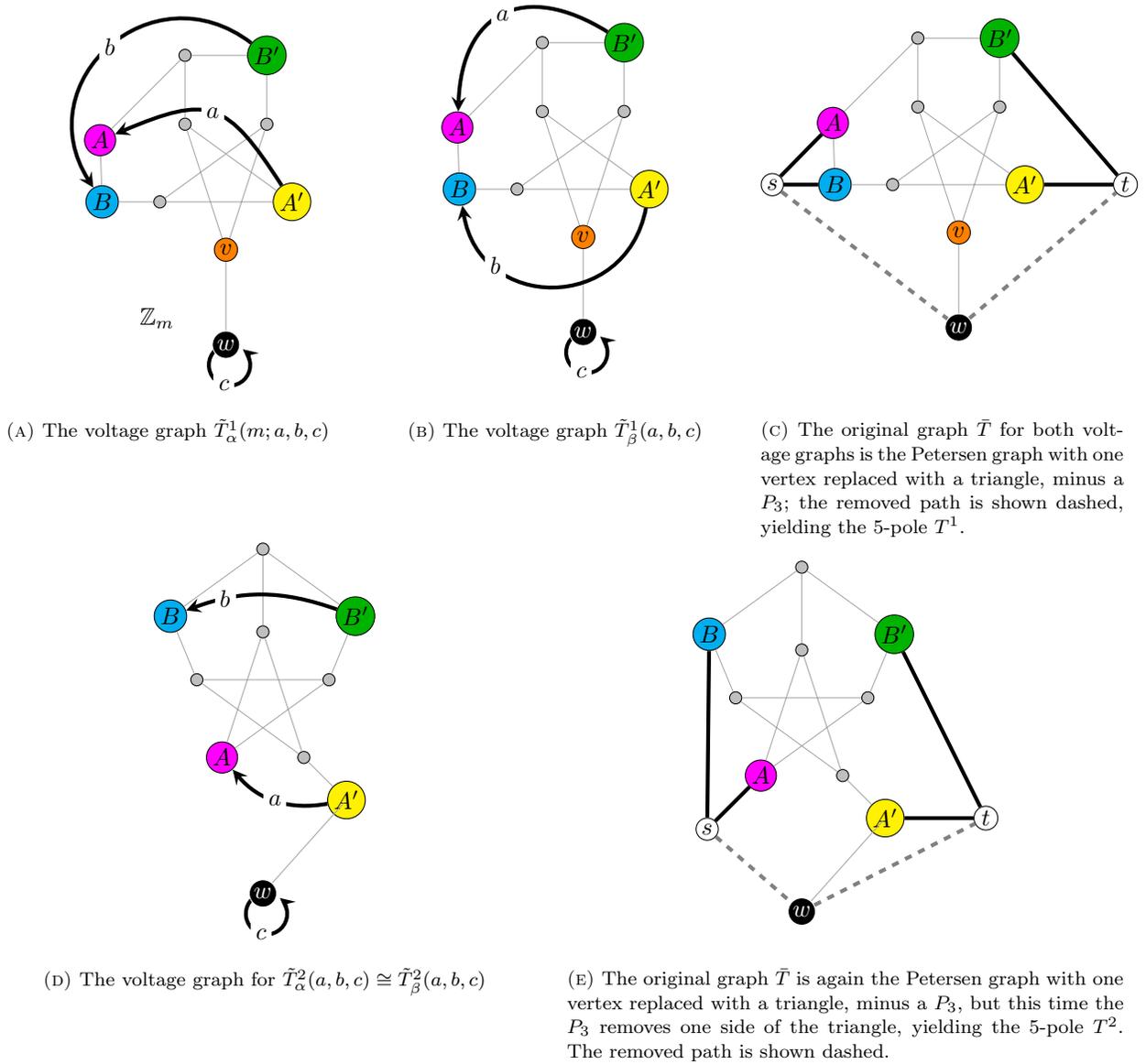

In all three cases, the original graph  that is reconstructed from the clipped voltage graph is the (improper) snark $\bar{T}$ formed by replacing a single vertex of the Petersen graph with a triangle. 
Up to isomorphism, there are two ways to delete a $P_{3}$ from this graph so that the resulting 5-pole has girth 5 (that is, in a triangle-destroying way); either a vertex of the triangle is an endpoint of the $P_{3}$, shown in Figure \ref{fig:Ptri1}, or an edge of the triangle is one of the edges of the $P_{3}$, shown in Figure \ref{fig:Ptri2}. We  name the resulting 5-poles $T^1$ and $T^2$ respectively;  the corresponding voltage graphs  $\tilde{T}^{1}_{\alpha}(a,b,c)$ and $\tilde{T}^{1}_{\beta}(a,b,c)$ are shown in Figures \ref{fig:G28AlphaVolt} and \ref{fig:G28BetaVolt}, and $\tilde{T}^{2}_{\alpha}(a,b,c)$ is shown in Figure \ref{fig:G28newVolt}.

Straightforward analysis shows that the graphs \[T^1_{\alpha}(3; 1,1,1) \not\cong T^1_{\beta}(3; 1,1,1)\not\cong T^2_{\alpha}(3; 1,1,1)\]
but that $T^2_{\alpha}(3; 1,1,1) \cong T^2_{\beta}(3; 1,1,1)$ (since it is easy to show that the voltage graphs $\tilde{T}^2_{\alpha}(a,b,c) \cong  \tilde{T}^2_{\beta}(a,b,c)$).

\begin{prop} The families $T^{1}_{\alpha}(m; a,a,c)$, $T^{1}_{\beta}(m; a,a,c)$, $T^{2}_{\alpha}(m; a,a,c)$, where $m/gcd(m, a)$ is odd and $m\geq 2$, form three infinite families of snarks of order $10m$, which all can be drawn with $m$-fold rotational symmetry.\end{prop}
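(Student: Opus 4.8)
The plan is to derive this as a direct consequence of Proposition \ref{thm:LoupFam}, once $T^1$ and $T^2$ have been recognized as Loupekine 5-poles and the vertex count has been pinned down. First I would check that the graph $\bar T$ obtained from the Petersen graph by expanding one vertex into a triangle is a snark in the (loose) sense used here, namely a cubic graph of chromatic index $4$. It is obviously cubic. If it had a proper $3$-edge-colouring, then the three external edges of the expanded triangle would be forced to receive the three distinct colours (any $3$-edge-colouring of a triangle uses all three colours, and each external edge is then forced onto the unique colour absent from the two triangle edges it meets); contracting the triangle back to a point would therefore produce a proper $3$-edge-colouring of the Petersen graph, which is impossible. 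Hence $\bar T$ is a snark, and since (by the constructions in Figures \ref{fig:Ptri1} and \ref{fig:Ptri2}) each of $T^1$ and $T^2$ arises by deleting a path $P_3$ from $\bar T$, each is a Loupekine 5-pole.

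Next I would invoke Proposition \ref{thm:LoupFam} with $b = a$: for any admissible parameters ($1 \le a < m$, $1 \le c < m/2$, $\gcd(m,a,c) = 1$) with $r := m/\gcd(m,a)$ odd, it gives at once that $T^1_\alpha(m;a,a,c)$, $T^1_\beta(m;a,a,c)$ and $T^2_\alpha(m;a,a,c)$ are snarks: by Observation \ref{clusterCycle} the connecting edges arrange the clusters into $\gcd(m,a)$ cluster-cycles of odd length $r$, and by Observation \ref{LoupekineSnarks} such a cluster-cycle of Loupekine 5-poles cannot be coherently $3$-edge-coloured, so neither can $T^i_x(m;a,a,c)$. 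Since infinitely many $m$ admit such an $a$ (for instance every odd $m \ge 3$ with $a = 1$), these are three infinite families; that they are in fact \emph{cyclic} Loupekine snarks is immediate from the definition, as $\bar T$ is a snark.

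It then remains to verify the order and the symmetry. Here $\bar T$ has $10 - 1 + 3 = 12$ vertices; deleting the three vertices of the removed $P_3$ leaves the $9$-vertex 5-poles $T^i$, and re-attaching the endpoint $w$ of semiedge $C$ (which carries the loop in the voltage graph) gives voltage graphs on $10$ vertices. By the construction of the $\mathbb{Z}_m$ lift, the lift then has exactly $10m$ vertices, one rotated copy $u_i$ of each voltage-graph vertex $u$ for each $i \in \mathbb{Z}_m$, where $u_i$ is the clockwise rotation of $u_0$ by $2\pi i/m$ about the centre; so the lift comes equipped with a drawing invariant under the cyclic rotation group of order $m$. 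This yields the stated order $10m$ and the $m$-fold rotational symmetry.

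In truth there is no deep obstacle here; the one point that needs care is precisely that $\bar T$ is \emph{not} a proper snark -- it contains a triangle and has cyclic edge-connectivity $3$ -- so the argument must lean only on the weak definition of snark (chromatic index $4$), which is all that Proposition \ref{thm:LoupFam}, Observation \ref{LoupekineSnarks}, and the underlying Parity Lemma require. Relatedly, the $m = 3$ members themselves still have girth $3$ (the loop edges form a central triangle), but remain snarks under this definition, and contracting that triangle recovers the three $28$-vertex snarks of Figure \ref{fig:28snark}. I would also record explicitly that connectedness of the lift is equivalent to the hypothesis $\gcd(m,a,c) = 1$, which should be carried along as part of the standing assumptions on the parameters.
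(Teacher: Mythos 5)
Your proof is correct and follows essentially the same route as the paper, which simply cites Proposition \ref{thm:LoupFam} together with the voltage graphs; your added verifications (that $\bar T$ has chromatic index $4$, the $10m$ vertex count, and the rotational symmetry of the lift) are exactly the details the paper leaves implicit, with $\bar T$ being asserted to be an (improper) snark in the surrounding text. No issues to report.
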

\begin{proof} This follows immediately from Proposition \ref{thm:LoupFam} and the voltage graphs.\end{proof}

These three snarks have interesting properties.

%
%

\begin{prop}
The snarks $T^{2}_{\alpha}(m; a,a,c)$ are cyclically 4-connected for $m >3$.
\end{prop}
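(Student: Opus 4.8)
The plan is to show that no set of three edges of $\mc{S} = T^2_\alpha(m;a,a,c)$ disconnects the graph into two pieces each containing a cycle. Since $\mc{S}$ is constructed as a $\mathbb{Z}_m$ lift, I would first understand the structure of $\mc{S}$ as $m$ copies of the cluster $T^2$ (the 5-pole obtained from the Petersen graph with one vertex blown up into a triangle, minus a side of that triangle), joined into one cluster-cycle of length $r = m/\gcd(m,a)$ (odd, by hypothesis) together with the spoke edges $v_iw_i$ and the loop/inner cycle on the $w_i$. The key local facts I need are: each cluster $T^2$ is 2-edge-connected (so it has no bridge), and more precisely, for every partition of a single cluster into a ``small'' part and the rest, the number of cluster-internal edges plus semiedges crossing the partition is at least $3$ unless the small part is empty or everything. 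This is a finite check on the fixed 10-vertex 5-pole $T^2$, and it is where most of the casework lives.

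Next I would argue globally. Suppose $F$ is a 3-edge-cut separating $\mc{S}$ into $X$ and $Y$, both containing cycles. Consider which vertices of each cluster $T^2_i$ lie in $X$ versus $Y$. For all but at most one index $i$, the cluster must lie entirely in $X$ or entirely in $Y$ — otherwise, by the local fact above, the cut would use at least $3$ edges inside or at the boundary of that single cluster, leaving none for the rest, which is impossible since $\mc{S}$ is connected and both $X,Y$ are nonempty and each contains a cycle (so neither side is contained in a single cluster's interior together with pendant debris). So $F$ essentially cuts the ``cluster-cycle plus central gadget'' quotient structure: collapsing each cluster to a vertex and the central cycle on the $w_i$ to its own structure, $F$ must be a small edge-cut of a cubic-ish graph built from a cycle $C_r$ (the cluster-cycle, whose ``edges'' are the pairs of connecting edges, hence weight $2$) together with $m$ spokes to the inner ring and the inner ring's own cycle(s) of length $m/\gcd(m,c)$. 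Because each cluster-cycle edge has multiplicity $2$ and $r$ is odd, cutting the cluster-cycle costs at least $4$ unless we only cut through the central part; and cutting off a contiguous arc of the inner ring together with its spokes and the attached clusters quickly exceeds $3$ as soon as more than a trivial piece is separated. Carefully enumerating: a $3$-cut could only isolate something like a single inner vertex $w_i$ together with nothing else, but $\deg w_i = 3$ and $\{w_i\}$ has no cycle; or it could try to split along the cluster-cycle, costing $\geq 4$. The hypothesis $m > 3$ is used to ensure the inner ring (of length $m/\gcd(m,c)\ge$ something $>3$, or a union of such) is genuinely long enough that no $3$ edges sever a cyclic chunk of it; for $m=3$ the central triangle gives a $3$-cut, which is exactly the excluded case.

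Concretely the steps are: (1) fix coordinates/notation for $\mc{S}$ as a lift and list the six edge-orbits (cluster-internal edges, the two arrow-orbits = connecting edges, spokes, loop edges); (2) verify the local $2$-edge-connectivity and ``$\geq 3$ crossing'' property of the single cluster $T^2$ by finite inspection of the 5-pole $T^2$ shown in Figure \ref{fig:G28newVolt}; (3) reduce an arbitrary $3$-edge-cut of $\mc{S}$ to a $3$-edge-cut respecting cluster boundaries, using step (2) to rule out cuts that slice a cluster interior; (4) analyze the reduced cut on the quotient structure $C_r$-with-double-edges plus spokes plus inner ring, using $r$ odd and $m>3$ to conclude no such cut isolates two cyclic pieces; (5) conclude cyclic $4$-connectivity. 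The main obstacle I anticipate is step (3)–(4): making the reduction clean and then handling the interaction of the doubled cluster-cycle edges with the spokes and the inner ring without an explosion of cases — in particular confirming that a $3$-cut can never combine ``one spoke $+$ a couple of inner-ring edges'' to peel off a cyclic arc, which is really where the $m>3$ bound does its work.
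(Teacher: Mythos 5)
Your proposal and the paper's proof aim at opposite inequalities. The paper's entire argument consists of exhibiting (or rather asserting the existence of) a cycle-separating cut set of size four, i.e., it establishes that the cyclic edge-connectivity is \emph{at most} $4$; in context, the point of the proposition is that the $T^{2}$ family, unlike the $T^{1}$ family discussed immediately afterwards, is not cyclically $5$-connected. You never produce such a $4$-cut. Instead you attempt the complementary bound, that no $3$-edge-cut separates two cycles, which is precisely the half the paper leaves unargued. So even if your sketch were completed it would not reproduce the content of the paper's proof, and it omits the one thing that proof actually does.

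As a lower-bound argument, your sketch also has a genuine gap: the assertion that $m>3$ forces the central ring(s), or the cluster-cycles, to be long enough is false for general $a,c$. For example, $T^{2}_{\alpha}(15;1,1,5)$ satisfies the hypotheses of Proposition \ref{thm:LoupFam} ($\gcd(15,1,5)=1$, $r=15$ odd, $5<15/2$), yet its loop edges form five central triangles, and the three spokes attached to one such triangle are a cycle-separating $3$-cut; similarly $T^{2}_{\alpha}(12;4,4,1)$ has $\gcd(4,12)=4$ cluster-cycles of length $r=3$, and the three spokes meeting one such cluster-cycle form a cycle-separating $3$-cut (compare the paper's later observation that $G_{\alpha}(3k;k,k,1)$ is only cyclically $3$-connected). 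Hence the statement in the literal reading ``cyclic edge-connectivity $\geq 4$'' requires hypotheses beyond $m>3$, such as $m/\gcd(m,c)>3$ and $r>3$ (or $\gcd(a,m)=\gcd(c,m)=1$), and your steps (3)--(4) silently assume them: you treat the inner ring as a single long cycle and never consider peeling off an entire cluster-cycle along its spokes. In addition, the finite verification of the ``at least $3$ crossing edges'' property of the $10$-vertex $5$-pole $T^{2}$, and the elimination of cuts mixing one spoke with two inner-ring edges, are asserted rather than carried out, so the argument is not complete even in the generic case.
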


\begin{proof}
It is easy to see a cut set of size 4 that disconnects a cycle in $T^{2}_{\alpha}(m; a,a,c)$. \end{proof}

The snarks $T^{1}_{\alpha}(3; 1,1,1)$ and $T^{1}_{\beta}(3; 1,1,1)$, after the central triangle is contracted to a point, are cyclically 5-connected. Checking specific examples shows that the snarks $T^{1}_{\alpha}(m; 1,1,1)$ and $T^{1}_{\beta}(m; 1,1,1)$ for $m = 5, a = 1, 2, c = 1, 2$ and $m = 7, a = 1, 2, 3, c = 1,2,3$ are cyclically 5-connected as well. We conjecture:

\begin{conjecture}
If $m/\gcd(m, c) > 4$, then the infinite families of snarks $T^{1}_{\alpha}(m; a,a,c)$ and $T^{1}_{\beta}(m; a,a,c)$ are cyclically 5-connected.
\end{conjecture}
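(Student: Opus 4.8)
The plan is to argue by contradiction. Suppose $\mc{T}=T^{1}_{x}(m;a,a,c)$, with $x\in\{\alpha,\beta\}$, under the standing hypotheses of Proposition~\ref{thm:LoupFam} ($r:=m/\gcd(m,a)$ odd, $\gcd(m,a,c)=1$) together with $m/\gcd(m,c)>4$ (so that $\mc{T}$ is a snark of girth at least $5$), admitted an edge cut $F$ with $|F|\le 4$ splitting $V(\mc{T})$ into $X\sqcup Y$ with both $\mc{T}[X]$ and $\mc{T}[Y]$ containing a cycle. Use the edge classification defined above: \emph{intra-cluster} edges (inside a single cluster $T^{1}_{i}$), \emph{spoke} edges $v_{i}w_{i}$, \emph{loop} edges $w_{i}w_{i+c}$, and \emph{connecting} edges; call the union of the spoke and loop edges the \emph{hub}. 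Two facts are the backbone of the argument. First, by Proposition~\ref{thm:voltageFacts}(\ref{liftLoop}) the loop edges form $t:=\gcd(m,c)$ cycles of length $m/t>4$, so the hub has exactly $t$ components, the $j$-th one touching precisely the clusters whose index is congruent to $j$ modulo $t$; and since $\gcd(a,t)=\gcd(m,a,c)=1$, any $t$ consecutive clusters along a cluster-cycle meet every residue class modulo $t$. Second, by Observation~\ref{clusterCycle} the connecting edges and the clusters form $\gcd(a,m)$ cluster-cycles of odd length $r$, each ``super-edge'' of which is a pair of connecting edges. The third, most technical, ingredient to be established is a \emph{local lemma}: the $5$-pole $T^{1}$ of Figure~\ref{fig:Ptri1}, regarded as a graph carrying five semiedges (the spoke/loop attachment at $v$ counted as one of them), has no edge cut of size at most $4$ --- counting any severed semiedge toward the size --- that isolates a subset containing a cycle. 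Since $T^{1}$ is obtained from the Petersen graph with one vertex blown up to a triangle by deleting a triangle-destroying path $P_{3}$, and the Petersen graph is cyclically $5$-edge-connected, this should reduce to a finite check on a $9$-vertex graph.

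\textbf{Case analysis.} \emph{Case $1$: $F$ contains no intra-cluster edge.} Then $F$ is made of connecting and hub edges. To separate a cluster-cycle one must delete two of its super-edges, i.e.\ all of $F$; but then every other cluster-cycle and all of the hub survive, and the two backbone facts force the whole vertex set into one component (a short bookkeeping step handles the case $t>1$), so $F$ is not a cut. If instead $F$ meets each super-edge at most once, the clusters remain pairwise linked through the uncut halves of the super-edges, and neither side of $F$ can both avoid all clusters and carry a cycle. \emph{Case $2$: $F$ is contained in one cluster $T^{1}_{i}$ together, possibly, with its spoke $v_{i}w_{i}$.} Then one side, say $X$, lies inside $V(T^{1}_{i})\cup\{w_{i}\}$; it must contain a cycle, hence at least five vertices, while $\mc{T}[Y]$ certainly contains a cycle (all the other clusters). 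But then $F$ realizes inside $T^{1}$ precisely the kind of small separation ruled out by the local lemma, a contradiction. \emph{Case $3$: $F$ meets both an intra-cluster edge and an inter-cluster edge.} This mixed case is the heart of the matter.

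\textbf{The main obstacle.} Case $3$ is where the argument is genuinely delicate: the four-edge budget can be apportioned, for example as ``two connecting $+$ two intra-cluster'' or ``three connecting $+$ one intra-cluster'', so that the connecting edges partially detach one or two clusters while the intra-cluster edges complete a separation inside a cluster. To control this I would upgrade the local lemma to an explicit catalogue: for each of the two ways a connected side of $F$ can distribute the five semiedges of $T^{1}$ between itself and its complement (one as seen from the $\{A,B\}$ end, one from the $\{A',B'\}$ end --- this is the only point at which the $\alpha$/$\beta$ distinction matters, since by Observation~\ref{clusterCycle} the cluster-cycle geometry around the cut is identical in both families), record the least number of internal edges of $T^{1}$ needed to realize that split with a cycle on each side. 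This is a finite computation. One then observes that whichever side of $F$ contains only few clusters must still carry a cycle, and --- because every cluster has girth at least $5$ and the loop cycles have length $m/\gcd(m,c)>4$ (this is exactly where the hypothesis is used) --- that side is forced to be so large that more than four cut edges are required, a contradiction. The finitely many genuinely small instances ($r\in\{3,5\}$, or $t$ comparatively large) would be verified directly, in the spirit of the $m=5,7$ checks already reported. Granting the expected outcome of the catalogue, Cases $1$--$3$ jointly contradict the existence of $F$, proving that $T^{1}_{\alpha}(m;a,a,c)$ and $T^{1}_{\beta}(m;a,a,c)$ are cyclically $5$-connected whenever $m/\gcd(m,c)>4$.
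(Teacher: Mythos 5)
The statement you are trying to prove is not proved in the paper at all: it is stated there as a conjecture, immediately followed by the remark that a complete proof is beyond the scope of the paper (the authors only verify cyclic 5-connectivity computationally for the $m=3$ cases, after contracting the central triangle, and for a handful of parameters with $m=5,7$). So there is no proof of the authors' to compare yours against, and the relevant question is whether your argument closes the conjecture on its own. It does not: as written it is a plan whose decisive ingredients are announced rather than established. The ``local lemma'' about the 5-pole $T^{1}$ (no cut of size at most $4$, semiedges counted, isolating a cycle) is only asserted to ``reduce to a finite check''; it is plausible, but it is exactly the kind of statement that must be verified, since $\bar{T}$ itself (Petersen with a vertex blown up to a triangle) is only cyclically 3-edge-connected and the deletion of the $P_{3}$ changes the local cut structure. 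More importantly, Case 3 --- which you yourself identify as ``the heart of the matter'' --- is resolved only conditionally (``granting the expected outcome of the catalogue''), the catalogue of semiedge splits is never computed, the claim that the $\alpha$/$\beta$ distinction is immaterial outside that catalogue is not justified (the two connections route the pairs $A,B$ versus $A',B'$ differently, and mixed cuts see precisely those connecting edges), and the $t>1$ bookkeeping in Case 1 and the ``finitely many genuinely small instances'' are deferred.

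None of this means the approach is wrong --- the decomposition into intra-cluster, spoke/loop (hub) and connecting edges, the use of Proposition~\ref{thm:voltageFacts}(\ref{liftLoop}) and Observation~\ref{clusterCycle}, and a finite local analysis of the 5-pole is a sensible route, and it is essentially the only visible place where the hypothesis $m/\gcd(m,c)>4$ enters. But in its current state the proposal is an outline with the same status as the paper's conjecture: the mixed-cut case and the finite local computations would have to be carried out explicitly (and the $\alpha$ and $\beta$ families treated separately where the connecting pattern matters) before this could be called a proof.
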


A complete proof of this result is beyond the scope of this paper.

The cyclically 4-connected snark on 28 vertices shown in Figure \ref{G28NEW}, corresponding to $T^{2}_{\alpha}(3; 1,1,1)$ with the central triangle contracted to a point, appeared (in a much less symmetric drawing) as Figure 7 of \cite{MacRas2006}, where M\'a{\v{c}}ajov\'a and Raspaud showed that it is the smallest example of a graph (other than the Petersen graph) with circular flow number of 5, serving as a counterexample to  Bohjan Mohar's Strong Circular 5-flow Conjecture.

The \emph{oddness} of a cubic graph is defined to be the smallest number of odd cycles in any 2-factor of the graph. If a cubic graph is a snark, then the oddness must be at least one, since if there existed a 2-factor consisting entirely of even cycles, then each of those cycles could be colored alternately with red and blue, and the remaining edges (forming a perfect matching) with green, producing a proper 3-edge coloring of the graph. A straightforward counting argument shows that in fact, every 2-factor of any 3-valent graph must contain an even number of odd cycles, so the minimum oddness for a snark is 2. (See \cite{LukMacMaz2015} for more details, and for examples of snarks with high oddness.)

For each of the snarks on 28 vertices shown in Figure \ref{fig:28snark}, it is straightforward to find a cycle decomposition with exactly two odd cycles, so the oddness of each of those snarks is 2.

When $\gcd(m,c) = 1$ and $\gcd(m,a) = 1$ and $m$ is odd, it is also straightforward to show that $T^{1}_{\alpha}(m; a,a,c)$ and  $T^{1}_{\beta}(m; a,a,c)$ each have oddness 2, by identifying a cycle decomposition in the voltage graph that lifts to a cycle decomposition of the lift graph; see Figure \ref{fig:G28FamilyOddness}. Interestingly, when $\gcd(c,m) = \gcd(a,m) = 1$, $T^{1}_{\alpha}(m; a,a,c)$ has a decomposition in which the central cycle forms an odd cycle (when $m$ is odd) and the rest of the vertices all lie on a single odd cycle. However, $T^{1}_{\beta}(m; a,a,c)$ has no such decomposition; the best we can do is one long even cycle, of length $6m$, a shorter odd cycle of length $3m$, and the central cycle, of odd length $m$. These decompositions are shown in Figure \ref{fig:G28FamilyOddness}. 

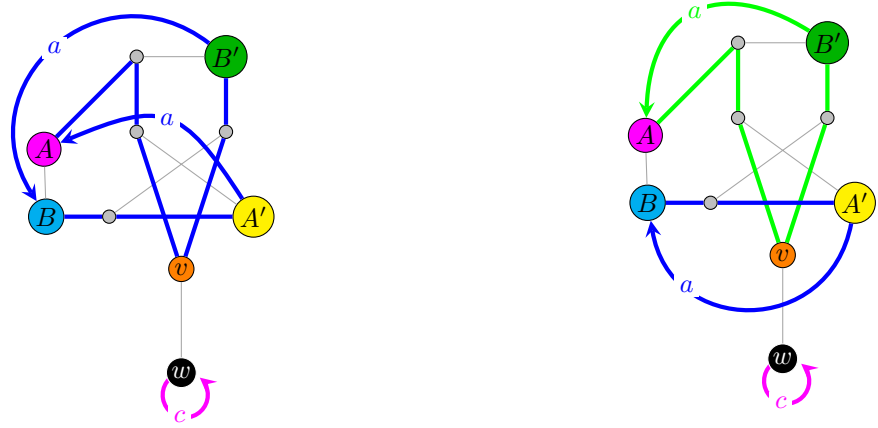
\begin{figure}[htbp]

\begin{center}
\ffigbox{
\begin{subfloatrow}[2]

\ffigbox{\caption{In the voltage graph $\tilde{T}^1_{\alpha}(a,a,c)$, the blue cycle lifts to a single cycle of length $9m$ and the magenta loop to a single cycle of length $m$.}\label{fig:G28AlphaVolt-odd}}{
\begin{tikzpicture}
\drawOneGTwentyEight	
	\draw[<-, ultra thick, blue] (A1) to[bend left = 40, looseness=1.5] node[arclabel, midway]{$a$} (A'1);
	\draw[<-, ultra thick, blue] (B1) to[bend left = 80, looseness=1.5] node[arclabel, midway]{$a$} (B'1);
	\draw[blue, ultra thick] (B'1) -- (x31) -- (C1) -- (x11) -- (x21) -- (A1);
	\draw[blue, ultra thick] (A'1) -- (x41) -- (B1);
	
	\begin{scope}[on background layer]
	\draw[->, ultra thick, mymagenta] (z1) arc(100:360+50:.3)node[midway, arclabel]{$c$} (z1);
	\end{scope}
	
\end{tikzpicture}}

\ffigbox{\caption{In the voltage graph $\tilde{T}^1_{\beta}(a,a,c)$, the blue cycle lifts to a single cycle of length $3m$, the magenta loop lifts to a single cycle of length $m$, and the green cycle lifts to a single cycle of length $6m$.}\label{fig:G28BetaVolt-odd}}{

\begin{tikzpicture}
\drawOneGTwentyEight
		
	\draw[<-, ultra thick, green] (A1) to[bend left = 60, looseness=1.5] node[arclabel, midway]{$a$} (B'1);
	\draw[<-, ultra thick, blue] (B1) to[bend right = 80, looseness=1.5] node[arclabel, near start]{$a$} (A'1);
	\draw[ultra thick, green] (A1) -- (x21) -- (x11) -- (C1) -- (x31) -- (B'1);
	\draw[ultra thick, blue] (B1) -- (x41) -- (A'1);
	\begin{scope}[on background layer]
	\draw[->, ultra thick, mymagenta] (z1) arc(100:360+50:.3)node[midway, arclabel]{$c$} (z1);
	\end{scope}
	
\end{tikzpicture}

}
\end{subfloatrow}}{
\caption{When $\gcd(a,m) = \gcd(c,m) = 1$ and $m$ is odd, the indicated cycles in the voltage graphs lift to long even or odd cycles in the lift graphs $T^{1}_{\alpha}(m; a,a,c)$ and $T^{1}_{\beta}(m; a,a,c)$. In the lift graphs, the blue and magenta cycles have odd length, and the green cycle has even length. The remaining gray edges lift to a perfect matching.}
\label{fig:G28FamilyOddness}
}
\end{center}
\end{figure}

Even more unexpectedly, the oddness of $T^{2}_{\alpha}(m; 1,1,1)$ grows with $m$. In fact:

\begin{theorem}
The snark $T^{2}_{\alpha}(2k+1; 1,1,1)$ has oddness $k+2$ when $k \equiv 0 \bmod 2$ and oddness $k+1$ when $k \equiv 1 \bmod 2$.
\end{theorem}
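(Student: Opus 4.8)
The plan is to establish matching lower and upper bounds on the oddness of $G := T^{2}_{\alpha}(2k+1;1,1,1)$, where $m=2k+1$. Recall the structure of $G$: it is built from $m$ clusters $H_{0},\dots,H_{m-1}$, each a copy of the nine-vertex $5$-pole $T^{2}$ in which the semiedges $C$ and $A'$ share the endpoint $A'_{i}$; the $A'_{i}$ are joined by spoke edges to the vertices $w_{0},\dots,w_{m-1}$ of the central $m$-cycle formed by the loop edges $w_{i}w_{i+1}$ (a single cycle since $\gcd(1,m)=1$), and successive clusters are joined by the connecting edges $A'_{i}A_{i+1}$ and $B'_{i}B_{i+1}$. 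A $2$-factor $F$ of $G$ is captured by finitely much local data at each index $i$: which of the three edges at $w_{i}$ is omitted, and the \emph{internal state} of $F$ inside $H_{i}$ --- which boundary edges of $H_{i}$ lie in $F$, how $F$ joins them by internal paths, whether $F$ contains an internal cycle, and the \emph{parities} of the lengths of those internal paths and cycle. Since $T^{2}$ has only nine vertices there are only a handful of internal states, which I would enumerate by hand or with a short computation; a $2$-factor then corresponds to a cyclic concatenation of compatible local states, the parity of each cycle of $F$ being the mod $2$ sum of the recorded parities along it, with separate bookkeeping for the central cycle. In particular the set $U=\{i : A'_{i}w_{i}\in F\}$ of used spokes, being the set of endpoints of the arcs into which $F$ breaks the central $m$-cycle, has even cardinality, and $U$ cannot be all of $\{0,\dots,m-1\}$ because an odd cycle has no perfect matching.

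\textbf{Lower bound.} The heart of the matter is to prove that every $2$-factor of $G$ has at least $k+1$ components of odd length; the stated value then follows, since when $k$ is even the number $k+1$ is odd while the count of odd cycles in a $2$-factor of a cubic graph is always even, forcing it up to $k+2$, whereas when $k$ is odd $k+1$ is already even. I would argue by cases on $U$. When $U=\emptyset$, $F$ contains the whole (odd) central $m$-cycle, and $F$ with that cycle removed is a $2$-factor of the $8m$-vertex graph obtained by deleting the $w_{i}$ and suppressing the then-bivalent $A'_{i}$, subject to using all $m$ suppressed edges $x3_{i}A_{i+1}$; I would analyse this graph directly and show it contributes at least $k$ further odd cycles, using the enumerated internal states. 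When $U\neq\emptyset$, each used spoke forces a prescribed local sub-path into its cluster, and the point is to show --- from the finite list of internal states --- that no arrangement of states around the cycle avoids accumulating odd parities at a linear rate; the clean formulation I would aim for is a discharging argument assigning to each cluster a share of (twice) the number of odd cycles and showing the total is at least $m$, whence the number of odd cycles is at least $\lceil m/2 \rceil \ge k+1$. Verifying that no periodic choice of internal states beats this bound is, I expect, the main obstacle, and it is exactly where the exhaustive check over the finitely many states of $T^{2}$ does the work.

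\textbf{Upper bound.} Finally I would exhibit $2$-factors meeting the bound. The lower-bound analysis identifies the internal state of $T^{2}$ creating the fewest odd cycles locally; I would use that state in as many clusters as the cyclic closure permits and patch the remaining $O(1)$ clusters (whose existence is forced by $m$ being odd) with an alternative state, then lift this ``periodic-with-a-defect'' pattern to a $2$-factor with exactly $k+1$ odd cycles when $k$ is odd. When $k$ is even the same recipe would produce $k+1$ odd cycles, impossible by parity, so the pattern must be modified at one further cluster, raising the count to $k+2$; I would verify the base case $m=5$ by hand and check that the construction extends to all even $k$. Together with the lower bound this pins the oddness at $k+2$ for even $k$ and $k+1$ for odd $k$.
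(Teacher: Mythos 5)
Your overall strategy is the same as the paper's (a finite local analysis of the states of the $5$-pole $T^{2}$, a counting/parity argument for the lower bound, and explicit near-periodic $2$-factors for the upper bound, with the parity bump from $k+1$ to $k+2$ for even $k$), but as written there is a genuine gap: the decisive verifications are announced rather than carried out. For the lower bound you say the main obstacle is to show that ``no arrangement of states around the cycle avoids accumulating odd parities at a linear rate,'' and you defer this to an unperformed exhaustive check plus a hoped-for discharging scheme. The paper supplies exactly the missing local fact: it enumerates the $28$ perfect matchings of a cluster (dangling edges allowed), finds the $100$ compatible ordered pairs of matchings on two adjacent clusters, and checks that \emph{every} such pair leaves an odd cycle ($5$-, $9$-, $13$-, or $17$-cycle) of the complementary $2$-factor inside those two clusters; since each odd cycle lies in at most two of the $m$ adjacent pairs, this immediately gives at least $\lceil m/2\rceil=k+1$ odd cycles. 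Note also that your discharging is phrased per cluster, and a cluster-local version cannot work: in the optimal decompositions many clusters (e.g.\ those in states the paper calls $4b$, $5b$, $12$) meet no odd cycle at all, so the correct unit of locality is a pair of adjacent clusters, not a single cluster. Your separate case $U=\emptyset$ is likewise left as ``analyse directly''; in the paper's formulation no such case split is needed, because the spoke edges are simply allowed to participate in the cluster matchings.

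The upper bound has the same problem: you describe a recipe (repeat the best local state, patch $O(1)$ clusters, check $m=5$ by hand) but exhibit no $2$-factor, and the optimal pattern is in fact period two (alternating states such as $(6a\,4b)$ or $(5b\,3a)$, with a $5$-cycle in every other cluster and one long even cycle through all of them), not a single repeated state, so ``the internal state creating the fewest odd cycles locally'' does not by itself produce the construction. The paper writes down explicit matching sequences, namely $4b\,(6a\,4b)^{j}\,14a\,(5b\,3a)^{j}\,12$ for $m=4j+3$ giving $2j+2=k+1$ odd cycles, and $16b\,(3a\,5b)^{j}\,3a\,13b\,6a\,(4b\,6a)^{j}\,9$ for $m=4j+5$ giving $2j+4=k+2$ (the even case needs a $9$-cycle, an $8$-cycle and a long odd cycle, i.e.\ more than modifying one cluster), and verifies the cycle counts. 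Your parity argument forcing $k+2$ when $k$ is even is correct and matches the paper, but until the adjacent-pair lemma and the explicit extremal $2$-factors are actually produced, the proposal is an outline rather than a proof.
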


\begin{proof}
We analyzed all possible matchings on the 5-pole formed by clipping the two arrows and the loop in Figure \ref{fig:G28newVolt} (allowing the dangling edges to participate in the matchings); there were 28 such matchings. A few of these matchings are shown in Figure \ref{fig:oddnessMatchings}, including some (Figures \ref{fig:3a}, \ref{fig:6a}, \ref{fig:14a}) that induce a 5-cycle in the complement. We then considered all possible ordered pairs to determine which of the pairs of matchings were compatible, where two matchings are compatible if matching participation agrees on the joined dangling edges; Figure \ref{fig:oddnessCompatible} shows a few compatible pairs. 
Each of the 100 pairs of compatible matchings had at least one odd cycle in the complement of the matching; of the 100 pairs, 74 contained a 5-cycle in the  complement of the matching (two examples shown in Figures \ref{fig:5b3a} and \ref{fig:4b6a}), 8 contained a 9-cycle (one example shown in Figure \ref{fig:9-16b},  12 contained a 13-cycle, and 4 contained a 17-cycle (one example shown in Figure \ref{fig:12-4b}). Therefore, since any 2-factor of the graph is the complement of a matching on the entire graph, each matching on the graph produces matchings on the individual clusters which are compatible on the dangling edges, and every pair of adjacent clusters must contain an odd cycle participating in the 2-factor induced by the matching, it follows that there must be at least $\left\lceil \frac{2k+1}{2} \right\rceil=k+1$ odd cycles in any 2-factor. When $k$ is even, $k+1$ is odd, so one more odd cycle is required in any two-factor decomposition, since the number of odd cycles must be even in any 2-factor of a 3-valent graph; that is, when $k$ is even, the oddness is at least $k+2$. 

\begin{figure}[htbp]
\begin{center}
\ffigbox{
\begin{subfloatrow}[5]
\ffigbox{\caption{$3a$ (contains a 5-cycle)}\label{fig:3a}}
{
\begin{tikzpicture}
\ThreeA{8}{.5}{.9}{2}{green}{blue}{}
\ExtraAB{8}{.5}{.9}{2}{green}{green}
\draw[red, ultra thick] (A'2) -- (a'2);
\draw[red, ultra thick] (B'2) -- (b'2);
\draw[green] (z2) -- (wb2);
\end{tikzpicture}
}
\ffigbox{\caption{$4b$}\label{fig:4b}}{
\begin{tikzpicture}
\FourB{8}{.5}{.9}{2}{green}{blue}{}
\ExtraAB{8}{.5}{.9}{2}{green}{green}
\draw[blue] (A'2) -- (a'2);
\draw[red, ultra thick] (B'2) -- (b'2);
\draw[red, ultra thick] (z2) -- (wb2);
\end{tikzpicture}
}
\ffigbox{\caption{$5b$}\label{fig:5b}}{
\begin{tikzpicture}
\FiveB{8}{.5}{.9}{2}{green}{blue}{}
\ExtraAB{8}{.5}{.9}{2}{green}{green}
\draw[red, ultra thick] (A'2) -- (a'2);
\draw[green] (B'2) -- (b'2);
\draw[red, ultra thick] (z2) -- (wb2);
\end{tikzpicture}
}
\ffigbox{\caption{$6a$ (contains a 5-cycle)}\label{fig:6a}}{
\begin{tikzpicture}
\SixA{8}{.5}{.9}{2}{green}{blue}{}
\ExtraAB{8}{.5}{.9}{2}{green}{green}
\draw[blue] (A'2) -- (a'2);
\draw[green] (B'2) -- (b'2);
\draw[blue] (z2) -- (wb2);
\end{tikzpicture}
}
\ffigbox{\caption{$9$}\label{fig:9}}{
\begin{tikzpicture}
\Nine{8}{.5}{.9}{2}{green}{blue}{mymagenta}{}
\ExtraAB{8}{.5}{.9}{2}{green}{green}
\draw[blue] (A'2) -- (a'2);
\draw[blue] (B'2) -- (b'2);
\draw[mymagenta] (z2) -- (wb2);
\end{tikzpicture}
}
\end{subfloatrow}
\begin{subfloatrow}[4]
\ffigbox{\caption{$12$}\label{fig:12}}{
\begin{tikzpicture}
\Twelve{8}{.5}{.9}{2}{green}{blue}{}
\ExtraAB{8}{.5}{.9}{2}{green}{green}
\draw[green] (A'2) -- (a'2);
\draw[green] (B'2) -- (b'2);
\draw[red, ultra thick] (z2) -- (wb2);
\end{tikzpicture}
}
\ffigbox{\caption{$13b$ (contains an 8-cycle)}\label{fig:13b}}{
\begin{tikzpicture}
\ThirteenB{8}{.5}{.9}{2}{orange}{blue}{}
\ExtraAB{8}{.5}{.9}{2}{green}{green}
\draw[blue] (A'2) -- (a'2);
\draw[red, ultra thick] (B'2) -- (b'2);
\draw[red, ultra thick] (z2) -- (wb2);
\end{tikzpicture}
}
\ffigbox{\caption{$14a$ (contains a 5-cycle)}\label{fig:14a}}{
\begin{tikzpicture}
\FourteenA{8}{.5}{.9}{2}{green}{blue}{}
\ExtraAB{8}{.5}{.9}{2}{green}{green}
\draw[red, ultra thick] (A'2) -- (a'2);
\draw[red, ultra thick] (B'2) -- (b'2);
\draw[green] (z2) -- (wb2);
\end{tikzpicture}
}
\ffigbox{\caption{$16b$}\label{fig:16b}}{
\begin{tikzpicture}
\SixteenB{8}{.5}{.9}{2}{green}{blue}{}
\ExtraAB{8}{.5}{.9}{2}{green}{green}
\draw[red, ultra thick] (A'2) -- (a'2);
\draw[blue] (B'2) -- (b'2);
\draw[red, ultra thick] (z2) -- (wb2);
\end{tikzpicture}
}

\end{subfloatrow}
}{
\caption{There are 28 possible matchings on $\tilde{T^2}$; a few useful ones are shown here. The thick red lines indicate the matching; the various other colors show the disjoint path parts (but any path part can be any color). To be compatible, the thick red matching edges must match up between two copies, but the colors of the other edges do not have to be the same.}
\label{fig:oddnessMatchings}
}
\end{center}
\end{figure}
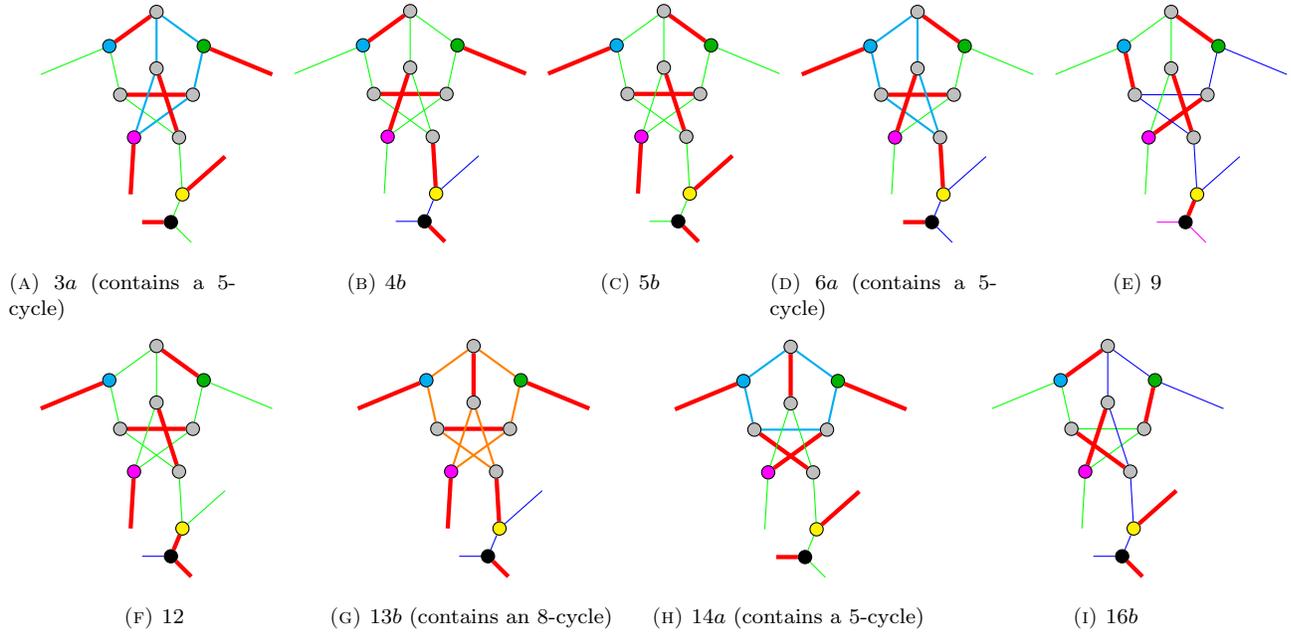

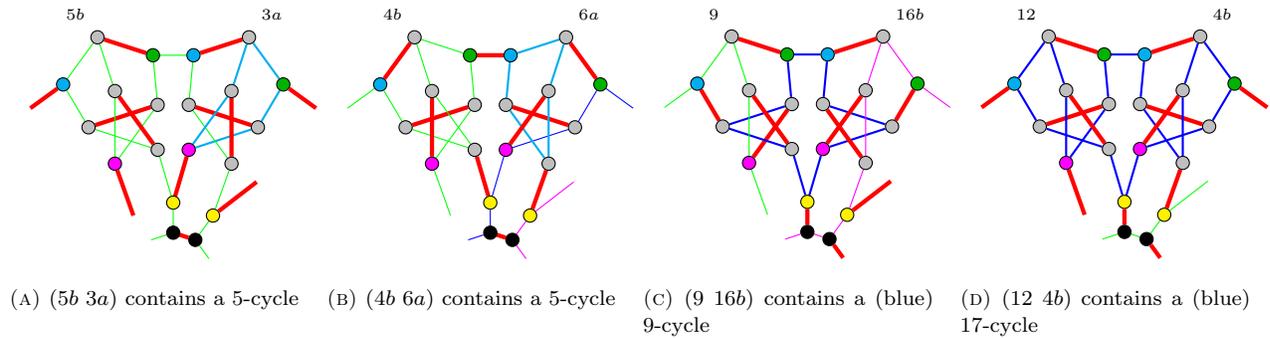
\begin{figure}[htbp]
\begin{center}
\ffigbox{
\begin{subfloatrow}[4]
\ffigbox{\caption{$(5b \ 3a)$ contains a 5-cycle}\label{fig:5b3a}}{
\begin{tikzpicture}
\ThreeA{10}{.5}{.9}{2}{green}{blue}{$3a$}
\FiveB{10}{.5}{.9}{3}{green}{green}{$5b$}
\ExtraAB{10}{.5}{.9}{2}{green}{green}
\draw[red, ultra thick] (A'2) -- (a'2);
\draw[red, ultra thick] (B'2) -- (b'2);
\draw[green] (z2) -- (wb2);
\end{tikzpicture}
}

\ffigbox{\caption{$(4b \ 6a)$ contains a 5-cycle}\label{fig:4b6a}}{
\begin{tikzpicture}
\SixA{10}{.5}{.9}{2}{blue}{mymagenta}{$6a$}
\FourB{10}{.5}{.9}{3}{green}{blue}{$4b$}
\ExtraAB{10}{.5}{.9}{2}{green}{green}
\draw[mymagenta] (A'2) -- (a'2);
\draw[blue] (B'2) -- (b'2);
\draw[mymagenta] (z2) -- (wb2);
\end{tikzpicture}
}

\ffigbox{\caption{$(9 \ 16b)$ contains a (blue) 9-cycle}\label{fig:9-16b}}{
\begin{tikzpicture}
\SixteenB{10}{.5}{.9}{2}{blue, thick}{mymagenta}{$16b$}
\Nine{10}{.5}{.9}{3}{green}{blue, thick}{mymagenta}{$9$}
\ExtraAB{10}{.5}{.9}{2}{green}{green}
\draw[red, ultra thick] (A'2) -- (a'2);
\draw[mymagenta] (B'2) -- (b'2);
\draw[red, ultra thick] (z2) -- (wb2);
\end{tikzpicture}
}

\ffigbox{\caption{$(12 \ 4b)$ contains a (blue) 17-cycle}\label{fig:12-4b}}{
\begin{tikzpicture}
\FourB{10}{.5}{.9}{2}{blue, thick}{green}{$4b$}
\Twelve{10}{.5}{.9}{3}{blue, thick}{green}{$12$}
\ExtraAB{10}{.5}{.9}{2}{green}{green}
\draw[green] (A'2) -- (a'2);
\draw[red, ultra thick] (B'2) -- (b'2);
\draw[red, ultra thick] (z2) -- (wb2);
\end{tikzpicture}
}

\end{subfloatrow}
}{
\caption{All pairs of compatible matchings on $\tilde{T^2}$ induce odd cycles in the corresponding 2-factors; a few examples are shown here.}
\label{fig:oddnessCompatible}}
\end{center}
\end{figure}

In fact, the oddness of $T^{2}_{\alpha}(2k+1; 1,1,1)$ is exactly $k+1$ when $k$ is odd and $k+2$ when $k$ is even.
To see this, it suffices to demonstrate a 2-factor decomposition that uses the required number of odd cycles. 

When $k$ is odd, we write $m = 4j+3$ for $j = 0, 1, 2, \ldots$. The sequence of matchings
\[ 4b \underbrace{ (6a \ 4b)  \cdots (6a\ 4b)}_{j \text{ pairs }} 14a \underbrace{ (5b \ 3a) \cdots (5b \ 3a)}_{j \text{ pairs }} 12\]
produces a 2-factor decomposition consisting of $2j+1$ 5-cycles, one in each cluster labelled $3a$ and $6a$ and one in the cluster labelled $14a$, and one 
 17-cycle, in the adjacent pair $(12 \ 4b)$, for a total of $2j+1+1 = 2j+2$ odd cycles. Additionally, the decomposition contains one long even cycle passing through all the clusters, of length $6 + 2(14j) + 2(j-1) + 4 = 30j+8$. Since $\left \lceil \frac{4j+3}{2} \right \rceil  = 2j + 2$, this decomposition achieves the minimum possible number of odd cycles in the decomposition. Figure \ref{fig:oddnessT2m=7} shows an example of this decomposition, when $m = 7$ and $j =1$.  

When $k$ is even, we write $m = 4j+5$ for $j = 0, 1, 2, \ldots$. The sequence of matchings
\[ 16b \underbrace{ (3a \ 5b)  \cdots (3a \ 5b)}_{j \text{ pairs }} 3a \ 13b \ 6a \underbrace{ (4b \ 6a)  \cdots (4b \ 6a)}_{j \text{ pairs }} 9\]
produces a 2-factor decomposition consisting of $2j + 2$ 5-cycles, in each of the clusters labelled $3a$ and $6a$, one 
 9-cycle, in the adjacent pair $(9 \ 16b)$, and one long odd cycle passing through all the clusters, of length $9 + 2(14j) + 2(j-1) + 2 + 14 = 30j + 23$, for a total of $2j+ 2   +2 = 2j+4$ odd cycles. Additionally, there is one 8-cycle in the cluster labelled $13b$. Since $\left \lceil \frac{4j+5}{2} \right \rceil + 1 = 2j + 4$, this decomposition achieves the minimum possible number of odd cycles in the decomposition. Figure \ref{fig:oddnessT2m=9} shows an example of this decomposition, when $m = 9$ and $j =1$. 
  \end{proof}

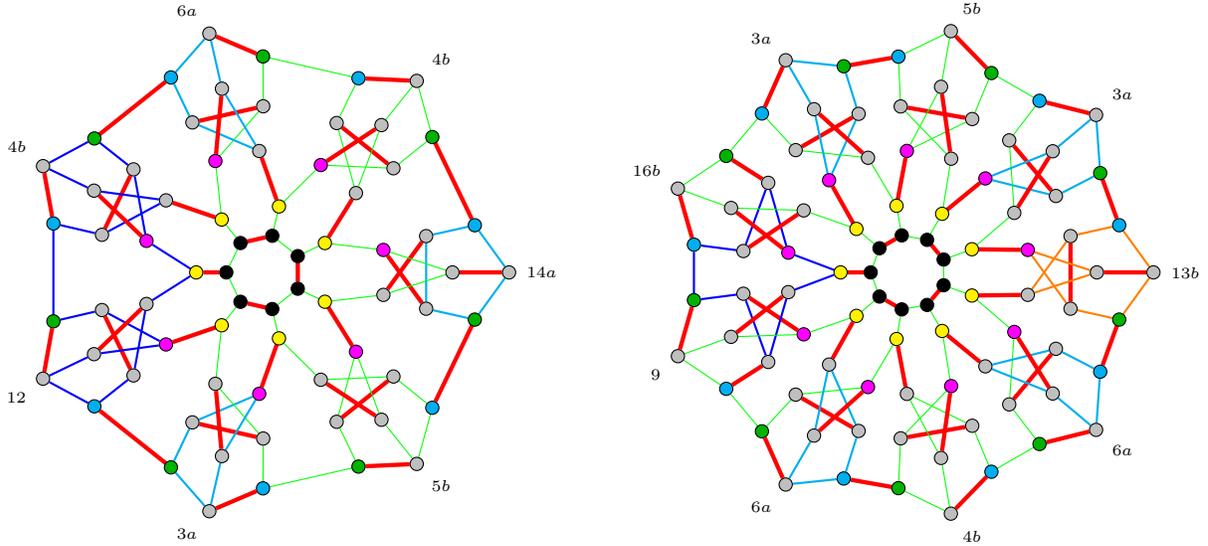
\begin{figure}[htbp]
\begin{center}
\ffigbox{
\begin{subfloatrow}[2]

\ffigbox{\caption{The oddness of $T^{2}_{\alpha}(7; 1,1,1)$ is 4, using three (cyan) 5-cycles and one (blue) 17-cycle. The long green cycle is a 38-cycle. In this case, $7 = 4(1) + 3$, so $j = 1$.}\label{fig:oddnessT2m=7}
}{
\begin{tikzpicture}
\Twelve{7}{.5}{.9}{4}{blue, thick}{green}{$12$}
\ThreeA{7}{.5}{.9}{5}{green}{green}{$3a$}
\FiveB{7}{.5}{.9}{6}{green}{green}{$5b$}
\FourteenA{7}{.5}{.9}{0}{green}{green}{$14a$}
\FourB{7}{.5}{.9}{1}{green}{green}{$4b$}
\SixA{7}{.5}{.9}{2}{green}{green}{$6a$}	
\FourB{7}{.5}{.9}{3}{blue, thick}{green}{$4b$}
\end{tikzpicture}

}

\ffigbox{\caption{The oddness of $T^{2}_{\alpha}(9; 1,1,1)$ is 6, using four (cyan) 5-cycles, one (blue) 9-cycle, and one long (green) 53-cycle; there is also an (orange) 8-cycle. In this case, $9 = 4(1) + 5$, so $j = 1$.}\label{fig:oddnessT2m=9}}{

\begin{tikzpicture}
\Nine{9}{.5}{.9}{5}{green}{blue,  thick}{green}{$9$}
\SixA{9}{.5}{.9}{6}{green}{green}{$6a$}
\FourB{9}{.5}{.9}{7}{green}{green}{$4b$}
\SixA{9}{.5}{.9}{8}{green}{green}{$6a$}
\ThirteenB{9}{.5}{.9}{0}{orange}{green}{$13b$}
\ThreeA{9}{.5}{.9}{1}{green}{blue}{$3a$}
\FiveB{9}{.5}{.9}{2}{green}{blue}{$5b$}
\ThreeA{9}{.5}{.9}{3}{green}{green}{$3a$}
\SixteenB{9}{.5}{.9}{4}{blue, thick}{green}{$16b$}
\end{tikzpicture}

}

\end{subfloatrow}
}{
\caption{Examples of cycle decompositions showing that the oddness of $T^{2}_{\alpha}(2k+1; 1,1,1)$ is $k+1$ when $k$ is odd and $k+2$ when $k$ is even.}
\label{fig:oddnessT2examples}
}
\end{center}
\end{figure}


\section{A new infinite family of pseudo-Loupekine snarks with symmetry}\label{sec:newSnarks}

We also analyzed the snarks on 34 vertices of girth at least 5 whose automorphism group is divisible by 3: there are 19 such snarks, listed in in graph6 format in  \ref{appendix:34}. We found drawings with 3-fold rotational symmetry for all of them. Surprisingly, of the 19 snarks, only one is a symmetric Loupekine snark, shown in Figure \ref{fig:G34no4} and listed in  \ref{appendix:34} as G34no4; its automorphism group has order 48, which was the largest automorphism group order among all snarks of girth more than 4 with 34 vertices. The original graph is the Petersen graph with a claw attached to the midpoints of three edges that are incident with a single vertex (see Figure \ref{fig:G34no4-original}), which is easy to show has chromatic index 4 and thus is a snark. By Proposition \ref{thm:LoupFam}, the corresponding voltage graph, shown in Figure \ref{fig:G34no4volt}, generates an infinite family of symmetric Loupekine snarks; due to the large number of automorphisms, other members of this family are likely to have interesting properties. 

In fact, the snark shown in Figure \ref{fig:G34no4} appears (in a somewhat different drawing) as Figure 7 of \cite{EspMaz2014}, where it is an example of a snark whose edge-set cannot be covered by  four perfect matchings. Such snarks are rare; of the over 64 million non-trivial snarks of order at most 36, there are only two snarks with this property: this one, and the Petersen graph. In their paper, Esperet and Mazzuoccolo generalize this snark to produce a family of snarks which each have 3-fold rotational symmetry, by iteratively applying their ``windmill construction''; their generalization produces a different family of snarks than our Loupekine snark construction, although they agree in the smallest example.

\begin{figure}[htbp]
\begin{center}
\ffigbox{
\begin{subfloatrow}[3]
\ffigbox{\caption{The unique snark with 34 vertices and automorphism group of order 48, a symmetric Loupekine snark}\label{fig:G34no4}}{
\begin{tikzpicture}[scale=1]
\drawGThirtyFourNoFourBlobs{3}{0}{.5}
\foreach \i in {0,1,2}{
\drawAlphaA[0]{3}{\i}{1}{black, thick}{0}
\drawAlphaB[0]{3}{\i}{1}{black, thick}{0}
\drawSpoke{\i}{black}
\drawLoop{3}{\i}{black}{1}
}
\end{tikzpicture}
}

\ffigbox{\caption{The corresponding voltage graph}\label{fig:G34no4volt}}{
\begin{tikzpicture}[scale=1]
\drawOneGThirtyFourNoFour
\draw[ultra thick, ->] (B1) to[bend left = 80, looseness=1.5] node[arclabel]{$b$}  (B'1);
\draw[ultra thick, ->] (A1) to[bend right = 50, looseness = 1.4] node[arclabel, near start]{$a$}  (A'1);
\begin{scope}[on background layer]
	\draw[->, ultra thick] (z1) arc(100:360+50:.3)node[midway, arclabel]{$c$} (z1);
	\end{scope}

\end{tikzpicture}
}

\ffigbox{\caption{The original graph. The introduced claw is shown in orange, and the dashed edges form the removed path.}\label{fig:G34no4-original}}{
\begin{tikzpicture}[scale=1]
\drawOneGThirtyFourNoFour
\node[vtx, fill = white, below left = of A1] (s) {$s$};
\node[vtx, fill = white, below right = of B'1] (t) {$s$};
\draw[ultra thick] (B1) -- (s)--(A1);
\draw[ultra thick] (B'1) -- (t)--(A'1);
\draw[ultra thick, dashed, gray] (s) -- (z1) -- (t);
\draw[thick, orange] (C1) -- (B1) (C1) -- (A1) (C1) -- (z1);

\end{tikzpicture}
}

\end{subfloatrow}
}{
\caption{A Loupekine snark on 34 vertices, along with its voltage graph and original graph; the 5-pole was formed by deleting a $P_{3}$ from a snark. In this case, the snark is constructed by subdividing three edges of the Petersen graph that are all incident at a vertex, and then attaching a claw to the vertices introduced in the subdivision; it is straightforward to show that this new graph is a snark. The vertices introduced in the subdivision are $B$, $A$ and $w$, and the claw (shown in orange) has center $v$.}
\label{}}
\end{center}
\end{figure}
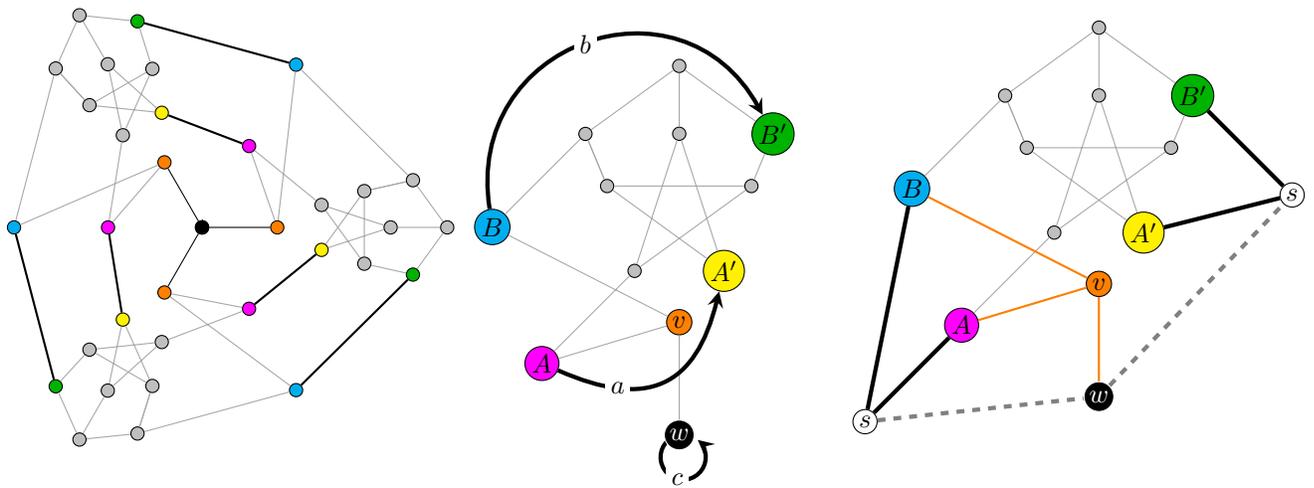

Of the remaining 18 snarks, 7 have voltage graphs with more than two arrows. Their analysis is beyond the scope of this paper. 
This leaves 10 snarks whose corresponding 5-poles and voltage graphs can be derived by deleting a path $P_{3}$ from various original graphs on 14 vertices; it is straightforward to show that these original graphs are all 3-edge-colorable.

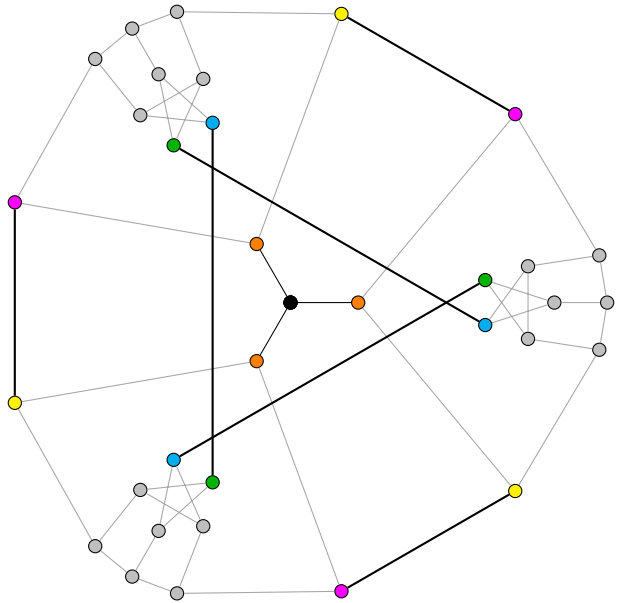
\begin{figure}[htbp]
\begin{center}
\begin{tikzpicture}[scale = 1.5]
\drawGThirtyFourNoNineBlobs{3}{0}{.5}{.6}
\foreach \i in {0,1,2}{
\drawAlphaA[0]{3}{\i}{1}{black, thick}{0}
\drawAlphaB[0]{3}{\i}{1}{black, thick}{0}
\drawSpoke{\i}{black}
\drawLoop{3}{\i}{black}{1}
}
\end{tikzpicture}
\caption{A snark on 34 vertices with 3-fold rotational symmetry (in fact, 3-fold dihedral symmetry), which generates a new infinite family of snarks. If the center point is expanded to a triangle, we will represent the graph as $G_{\alpha}(m; 1,1,1)$. Connecting edges are shown thick.}
\label{fig:G34No9}
\end{center}
\end{figure}

In this section, we focus on one of these snarks (graph G34no9 in  Appendix \ref{appendix:34}), shown in Figure \ref{fig:G34No9}.    For the remainder of this section, $G$ denotes the 5-pole shown in Figure \ref{fig:G34no9-multipole}. The voltage graph $\tilde{G}_{\alpha}(m; a,b,c)$ derived from the snark is shown in Figure \ref{fig:G34no9-volt}, and the original graph $\bar{G}$ is shown in Figure \ref{fig:G34no9-original}. The proper 3-edge-coloring in that figure demonstrates that---unlike the previous examples of symmetric Loupekine snarks---$\bar{G}$ is not itself a snark. As a lift, the snark G34no9 can be denoted by 
$G_{\alpha} (3; 1, 1,1)$ after the fixed point of the snark is expanded to a triangle.

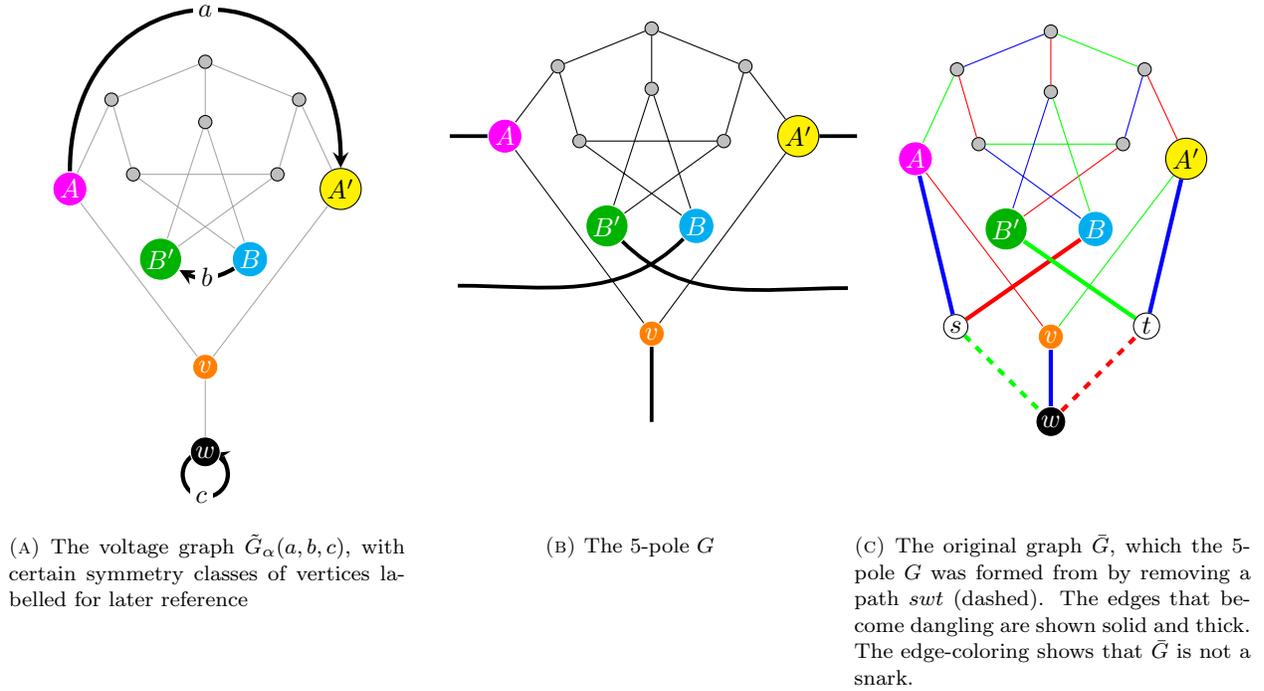
\begin{figure}[htbp]
\begin{center}
\ffigbox{
\begin{subfloatrow}[3]
\ffigbox{ \caption{The voltage graph $\tilde{G}_{\alpha}(a,b,c)$, with certain symmetry classes of vertices labelled for later reference}\label{fig:G34no9-volt}}{
\begin{tikzpicture}[scale=1.5]
	\node[circle, gray!70!white, minimum width = 2 cm, inner sep = 1pt] (G) at ($(90:2)$){};
	\node[vtx, fill = gray!50!white] (x1) at (G.90) {};
	\node[vtx, fill = gray!50!white] (x4) at (G.90-72) {};
	\node[vtx,white, fill = cyan] (B) at (G.90-2*72) {$B$};
	\node[vtx,fill = gray!50!white] (x2) at (G.90+72) {};
	\node[vtx, white,fill = green!70!black] (B') at (G.90+2*72) {$B'$};
	\node[vtx, fill = gray!50!white] (x6) at ($(0,0)!1.3!(x2)$) {};
	\node[vtx, fill = gray!50!white] (x5) at($(0,0)!1.2!(x1)$) {};
	\node[vtx, fill = gray!50!white] (x3) at ($(0,0)!1.3!(x4)$)  {};
	
	\node[vtx, fill = yellow] (A') at ($(90-30:2*1.2)$){$A'$};
	\node[vtx, white,fill = mymagenta] (A) at ($(30+90:2*1.2)$){$A$};
	\node[vtx,white, fill = orange] (C) at (90:.5){$v$};
	\node[vtx, below = .75 of C, white, fill = black] (z) {$w$};

	\draw[gray!70!white] (B') -- (x4) -- (x2) -- (B) -- (x1)--(B');
	\draw[gray!70!white] (x2) -- (x6) -- (x5) -- (x3) -- (x4);
	\draw[gray!70!white] (x6) -- (A) -- (C) -- (A')--(x3);
	\draw[gray!70!white]  (x1)--(x5);
	\draw[gray!70!white] (C) -- (z);
	
	\begin{scope}[on background layer]
	\draw[->, ultra thick, looseness = 2] (A) to[out=90, in=90]  node[midway, arclabel] {$a$} (A');
	\draw[->, ultra thick] (B) to[bend left=30] node[near start, arclabel] {$b$} (B');
	\draw[->, ultra thick] (z) arc(90:360+70:0.2) node[midway, arclabel] {$c$}  (z);
	\end{scope}
\end{tikzpicture}
}

\ffigbox{\caption{The 5-pole $G$}\label{fig:G34no9-multipole}}{

\begin{tikzpicture}[scale =1.5]
	\node[circle, gray!70!white, minimum width = 2 cm, inner sep = 1pt] (G) at ($(90:2)$){};
	\node[vtx, fill = gray!50!white] (x1) at (G.90) {};
	\node[vtx, fill = gray!50!white] (x4) at (G.90-72) {};
	\node[vtx,white, fill = cyan] (B) at (G.90-2*72) {$B$};
	\node[vtx,fill = gray!50!white] (x2) at (G.90+72) {};
	\node[vtx, white,fill = green!70!black] (B') at (G.90+2*72) {$B'$};
	\node[vtx, fill = gray!50!white] (x6) at ($(0,0)!1.3!(x2)$) {};
	\node[vtx, fill = gray!50!white] (x5) at($(0,0)!1.2!(x1)$) {};
	\node[vtx, fill = gray!50!white] (x3) at ($(0,0)!1.3!(x4)$)  {};
	
	\node[vtx, fill = yellow] (A') at ($(90-30:2*1.3)$){$A'$};
	\node[vtx, white,fill = mymagenta] (A) at ($(30+90:2*1.3)$){$A$};
	\node[vtx,white, fill = orange] (C) at (90:.5){$v$};
	\node[below = 1 of C] (z) {};

	\node[left  = .5 of A] (a){}; 
	\node[right  =.5 of A'] (a'){}; 
	\node[below left = .5 and 3 of B] (b){}; 
	\node[below right = .5 and 3 of B'] (b'){};

	\draw[line width = .5 mm, ] (a)--(A);
	\draw[line width = .5 mm,,] (a')--(A');
	\draw[line width = .5 mm, ] (B) to[out=180+45, in = 0] (b);
	\draw[line width = .5 mm, ] (B') to[out=-45, in = 180] (b');
	\draw[line width = .5 mm, ] (C) -- (z);

	\draw[] (A) -- (C) (x2) -- (x6) (x1) -- (x5) (x3) -- (A') (B') -- (x4);
	\draw[] (C) -- (A') (A) -- (x6) (x2)--(x4) (x1)--(B) (x5)--(x3);
	\draw[]  (x6)--(x5) (x2)--(B) (x4)--(x3) (B')--(x1);

	\end{tikzpicture}
}

\ffigbox{ \caption{The original graph $\bar{G}$, which the 5-pole $G$ was formed from by removing a path $swt$ (dashed). The edges that become dangling are shown solid and thick. 
The edge-coloring shows that $\bar{G}$ is not a snark.}\label{fig:G34no9-original}}{
\begin{tikzpicture}[scale=1.5]
	\node[circle, gray!70!white, minimum width = 2 cm, inner sep = 1pt] (G) at ($(90:2)$){};
	\node[vtx, fill = gray!50!white] (x1) at (G.90) {};
	\node[vtx, fill = gray!50!white] (x4) at (G.90-72) {};
	\node[vtx,white, fill = cyan] (B) at (G.90-2*72) {$B$};
	\node[vtx,fill = gray!50!white] (x2) at (G.90+72) {};
	\node[vtx, white,fill = green!70!black] (B') at (G.90+2*72) {$B'$};
	\node[vtx, fill = gray!50!white] (x6) at ($(0,0)!1.3!(x2)$) {};
	\node[vtx, fill = gray!50!white] (x5) at($(0,0)!1.2!(x1)$) {};
	\node[vtx, fill = gray!50!white] (x3) at ($(0,0)!1.3!(x4)$)  {};
	
	\node[vtx, fill = yellow] (A') at ($(90-30:2*1.2)$){$A'$};
	\node[vtx, white,fill = mymagenta] (A) at ($(30+90:2*1.2)$){$A$};
	\node[vtx,white, fill = orange] (C) at (90:.5){$v$};
	\node[vtx, below = .75 of C, white, fill = black] (z) {$w$};
	
	\node[above left= of z, vtx, fill=white] (s) {$s$};
	\node[above right = of z, vtx, fill=white] (t) {$t$};
	
	\draw[red] (A) -- (C) (x2) -- (x6) (x1) -- (x5) (x3) -- (A') (B') -- (x4);
	\draw[green] (C) -- (A') (A) -- (x6) (x2)--(x4) (x1)--(B) (x5)--(x3);
	\draw[blue]  (x6)--(x5) (x2)--(B) (x4)--(x3) (B')--(x1);
	
	\draw[ultra thick, dashed, green] (s) -- (z);
	\draw[ultra thick, dashed,red] (z)  -- (t);
	\draw[ultra thick, blue] (A) -- (s);
	\draw[ultra thick, red] (B) -- (s);
	
	\draw[ultra thick, blue] (A') -- (t) (C) -- (z);
	\draw[ultra thick, green] (B') -- (t) ;

\end{tikzpicture}

}
\end{subfloatrow}
}{
\caption{The 5-pole $G$,  voltage graph $\tilde{G}_{\alpha}(a,b,c)$ and original graph $\bar{G}$ for the new class of snarks, whose smallest member is shown in Figure \ref{fig:G34No9}.}
\label{fig:G34No9-volt-original}}
\end{center}
\end{figure}

The remainder of this section is devoted to proving the following result:

\begin{theorem}\label{thm:G34no9snark}
If $m$ is odd, then $G_{\alpha}(m; 1,1,1)$ is a snark.
\end{theorem}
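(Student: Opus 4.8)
The plan is to verify that $G_{\alpha}(m;1,1,1)$ is cubic and connected, which is routine, and then to put all the work into showing it admits no proper $3$-edge-coloring. Cubicness is guaranteed by the standing conventions $1\le a,b<m$ and $1\le c<m/2$, which hold for $a=b=c=1$ whenever $m\ge 3$; connectivity follows from $\gcd(m,a,c)=\gcd(m,1,1)=1$, exactly as in the connectivity hypothesis of Proposition~\ref{thm:LoupFam}. So assume toward a contradiction that $\phi$ is a proper $3$-edge-coloring of $G_{\alpha}(m;1,1,1)$.

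First I would extract the parity information that holds for \emph{every} $5$-pole, not just Loupekine ones. Restricting $\phi$ to a cluster $G_i$ gives a proper $3$-edge-coloring of a copy of the $5$-pole $G$, whose five semiedges $A_i,B_i,C_i,B'_i,A'_i$ form a cut; by Lemma~\ref{parityLemma} their colors realize the pattern ``one colour three times, the other two once each.'' Two purely arithmetic consequences follow: (i) if $\phi(A_i)=\phi(B_i)$ then $\phi(A'_i)\ne\phi(B'_i)$ (and the mirror statement with primes and non-primes swapped); and (ii) if $\phi(A_i)\ne\phi(B_i)$ and $\phi(A'_i)\ne\phi(B'_i)$, then $\{\phi(A_i),\phi(B_i)\}\ne\{\phi(A'_i),\phi(B'_i)\}$ and $\phi(C_i)$ is forced to be the unique colour common to these two $2$-sets. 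The connecting edges give $\phi(A'_i)=\phi(A_{i+1})$ and $\phi(B'_i)=\phi(B_{i+1})$, so, writing $T=\{i:\phi(A_i)=\phi(B_i)\}$, statement~(i) says $T$ is an independent set of the cluster-cycle $C_m$ (Observation~\ref{clusterCycle}). If the complement of $T$ were also independent then $C_m$ would be bipartite, contradicting $m$ odd; hence there are consecutive indices $j,j{+}1$ with $\phi(A_j)\ne\phi(B_j)$ and $\phi(A'_j)\ne\phi(B'_j)$, so (ii) applies at cluster $j$ and pins down $\phi(C_j)$.

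Next I would bring in the central $m$-cycle. Since $c=1$, the loop edges form a single $m$-cycle $w_0w_1\cdots w_{m-1}$ by Proposition~\ref{thm:voltageFacts}(\ref{liftLoop}), with the spoke $C_i=v_iw_i$ pendant at $w_i$; properness forces consecutive loop edges to differ in colour and forces $\phi(C_i)$ to be the colour missing at $w_i$. Because $m$ is odd this loop-colour sequence cannot use only two colours, so $\phi(C_i)$ is not constant in $i$, and its value at each $i$ is rigidly coupled to the colours of the two loop edges flanking $w_i$.

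The decisive step — and the one I expect to be the main obstacle — is a finite case analysis of the $5$-pole $G$ of Figure~\ref{fig:G34no9-multipole}: enumerate all of its proper $3$-edge-colorings up to permutation of colours and the automorphisms of $G$, recording for each the boundary pattern $(\phi(A),\phi(B),\phi(C),\phi(B'),\phi(A'))$ — in particular, in the type-(ii) situation, which colour may serve as the common colour and how the $C$-semiedge sits relative to the pair $\{A,A'\}$ versus the pair $\{B,B'\}$. The subtlety is precisely that $\bar G$ is $3$-edge-colorable, so a single cluster does admit type-(ii) colorings (the colouring displayed in Figure~\ref{fig:G34no9-original} restricts to one); what must be shown is that these local colorings cannot be glued consistently around an odd cycle. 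Concretely, one chains the identifications $\phi(A'_i)=\phi(A_{i+1})$, $\phi(B'_i)=\phi(B_{i+1})$ with the forced values of $\phi(C_i)$ coming from the central cycle and with the alternation forced by (i)–(ii), and shows the resulting constraint system on $C_m$ is unsatisfiable. I expect the cleanest packaging to be a transfer-matrix / ``state'' argument whose state at index $i$ records the colour of the loop edge $w_iw_{i+1}$ together with the ordered pair $(\phi(A_i),\phi(B_i))$; the enumeration of $G$'s colorings supplies the admissible transitions, and one verifies that the resulting digraph of states has no closed walk of odd length. The role played in the classical setting by Observation~\ref{LoupekineSnarks} (and the snark hypothesis on $\bar S$) is here played by this explicit computation on $G$. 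Since $G_{\alpha}(m;1,1,1)$ is then cubic, connected, and not $3$-edge-colorable, it is a snark.
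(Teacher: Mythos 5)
Your framework is sound, and in fact it maps almost exactly onto what the paper does: your parity consequences (i)--(ii) are correct for any 5-pole, your boundary ``state'' (the ordered pair of connecting-edge colours together with the loop-edge colour at a cut between consecutive clusters) is precisely the paper's colour pattern assignment triple $(x_1,x_2,x_3)$, and your ``admissible transitions'' are exactly the four admissible colour patterns of Figure \ref{fig:G34No9-patterns} (all of which force the spoke to carry the majority colour). But there is a genuine gap: the decisive step is only announced, not carried out. You neither enumerate the proper 3-edge-colourings of the 5-pole $G$ (so you do not actually know the transition set of your state digraph), nor verify the claim that this digraph has no closed walk of odd length. That claim is not a side check --- it is \emph{equivalent} to the theorem, since closed walks of odd length $\ell\geq 3$ in your digraph correspond bijectively to proper 3-edge-colourings of $G_{\alpha}(\ell;1,1,1)$. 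Everything you do prove (cubicity, connectivity, the Parity Lemma bookkeeping, the observation that some cluster must be of type (ii)) holds for an arbitrary 5-pole and cannot by itself distinguish $G$ from a 3-edge-colourable configuration; the entire content of the theorem resides in the unexecuted finite analysis, which you yourself flag as ``the main obstacle.''

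For comparison, the paper supplies exactly the two missing ingredients: first, a case check showing the only admissible boundary patterns of $G$ are {\sf TwoLeft}, {\sf TwoRight}, {\sf AltTop}, {\sf AltBot}; second, a proof that these cannot be chained consistently around an odd cycle, organized not as a transfer-matrix/odd-cycle check but via colour strings on the spokes: forbidden substrings (Lemma \ref{lem:xyx}, Corollary \ref{lem:badGlueLoops}), flows and functional equivalence, the Reductions Lemma \ref{collapses} with Lemma \ref{l:replaceSubstrings}, Lemma \ref{lemxyz}, and an induction on odd $m$ whose base cases $m\leq 9$ reduce to the irreducible strings $123$ and $(123)^3$, both shown to have string index 4 by explicit flow tables (Lemma \ref{condition3} closes the loop back to colourability of the lift). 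Your transfer-matrix packaging would be a legitimate, arguably more compact, alternative to this machinery --- the state space is small and ``no odd closed walk'' is decidable by checking each strongly connected component --- but to count as a proof you must actually produce the transition digraph from an enumeration of $G$'s colourings and exhibit the evenness certificate (e.g.\ a parity function on each component), which is the analogue of the paper's Section 5 analysis and is entirely absent from your write-up.
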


The proof uses a number of technical lemmas, which we present separately before the proof.

Surprisingly---and in contrast to the case of symmetric Loupekine snarks, where for a fixed Loupekine 5-pole, both the $\alpha$ and $\beta$-connections produced snarks---the graph $G_{\beta}(m;1,1,1)$ is always 3-edge-colorable, which we will prove later in Theorem \ref{lem:G34no9BETA}.

To determine the chromatic index of $G_{\alpha}(m; 1,1,1)$, we begin by considering all possible ways to assign colors to the dangling edges of the 5-pole $G$. The 5 dangling edges in the 5-pole form a cut set in the lift, and by the Parity Lemma, 
three of them must be colored 1 (say red), the \emph{main} color of the pattern, one must be colored 2 (say green), and one must be colored 3 (say blue). 

We define a \emph{color pattern} of the 5-pole $G$ to be an assignment of the colors $\{1,2, 3\}$ to the dangling edges of $G$, up to a permutation of the assigned colors. A color pattern is \emph{admissible} if that assignment of colors to the dangling edges can be completed to a 3-edge-coloring of $G$. We systematically tested all of the possible $\binom{5}{3}$ color assignments, by assigning red to each of the possible 3-subsets of the 5 dangling edges and determining which of those assignments can be completed to a proper 3-edge-coloring of the 5-pole. We conclude:

\begin{lemma}
Up to choice of color, there are only four admissible color patterns for the dangling edges of the 5-pole $G$, shown in Figure \ref{fig:G34No9-patterns}, and they all have the property that the main color of the pattern is the color of the spoke edge.
\end{lemma}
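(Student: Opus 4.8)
The plan is to prove the lemma by a finite case analysis over the $\binom{5}{3}=10$ possible color patterns, using the Parity Lemma to organize the enumeration and constraint propagation inside the small 5-pole $G$ to resolve each case. By Lemma \ref{parityLemma}, applied to the cutset consisting of the five dangling edges of $G$, in every proper 3-edge-coloring exactly one color---the main color---occurs on three of these edges, and each of the other two colors occurs once. Since a color pattern is taken up to permutation of $\{1,2,3\}$ and transposing the two non-main colors is such a permutation, a pattern is determined exactly by which $3$-element subset of $\{A,B,C,B',A'\}$ carries the main color. Thus there are ten candidate patterns: four in which $C$ (the dangling edge at $v$, i.e.\ the spoke) is \emph{not} main-colored, and six in which it is. The lemma asserts that precisely four of the ten are admissible and that all four main-color $C$.

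For the admissible direction I would simply exhibit the colorings: for each of the four patterns in Figure \ref{fig:G34No9-patterns} one verifies that the prescribed dangling-edge assignment extends to a coloring of the fourteen interior edges of $G$ in which each of the eleven interior vertices $v, A, A', B, B', x_1, \dots, x_6$ of Figure \ref{fig:G34no9-multipole} meets one edge of each color. This is routine.

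The substance of the lemma is the inadmissibility of the remaining six patterns, and above all of the four that do not main-color $C$. I would argue by forced coloring, with red as the main color. At $v$ the edges $vA$, $vA'$ and the spoke form a rainbow, so in each pattern where the spoke is not red the pair $\{vA,vA'\}$ is determined up to a transposition; since $A$'s only interior neighbor besides $v$ is $x_6$ and $A'$'s is $x_3$, the colors of $Ax_6$ and $A'x_3$ are then forced by the prescribed colors on the dangling edges at $A$ and $A'$. Propagating these forced colors through the interior cycles of $G$---trying both options whenever a genuine branch remains---one reaches in every case either an interior vertex that would have to see a repeated color, or a smaller interior cutset violating the Parity Lemma; the same method disposes of the two non-admissible patterns in which $C$ is main-colored. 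Since $G$ has only eleven vertices and the dangling-edge assignment already pins down the colors around $v, A, A', B, B'$ up to a bounded number of binary choices, each of these propagations terminates after boundedly many steps; equivalently the whole enumeration can be confirmed by computer, as the authors did. I expect this exhaustive elimination to be the principal obstacle---laborious rather than conceptually deep.

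Finally, once the four admissible patterns are identified, the assertion that the main color always equals the color of the spoke edge is immediate from their explicit form in Figure \ref{fig:G34No9-patterns}; it is precisely the statement that none of the four patterns leaving $C$ off the main-colored triple survives the elimination above.
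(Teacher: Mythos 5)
Your proposal matches the paper's argument: the authors likewise invoke the Parity Lemma to reduce to the $\binom{5}{3}$ choices of which three dangling edges carry the main color, and then systematically (by computer-assisted exhaustive checking) determine which of those assignments extend to a proper 3-edge-coloring of $G$, finding exactly the four patterns of Figure \ref{fig:G34No9-patterns}. Your added detail on forced propagation from $v$, $A$, $A'$ is just an explicit by-hand version of that same finite check, so the approach is essentially identical and correct.
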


\def\myscale{1}
\def\mythick{1.3 }
\begin{figure}[htbp]
\begin{center}
\ffigbox{
\begin{subfloatrow}[2]
\ffigbox{\caption{Pattern {\sf TwoLeft}, represented as $[(x,x) ,(y,z)]$ or $[(x,x) ,(z,y)]$}\label{twoLeft}}{
\begin{tikzpicture}[scale =\myscale]
	\node[circle, gray!70!white, minimum width = 2 cm, inner sep = 1pt] (G) at ($(90:2)$){};
	\node[vtx, fill = gray!50!white] (x1) at (G.90) {};
	\node[vtx, fill = gray!50!white] (x4) at (G.90-72) {};
	\node[vtx,white, fill = cyan] (B) at (G.90-2*72) {$B$};
	\node[vtx,fill = gray!50!white] (x2) at (G.90+72) {};
	\node[vtx, white,fill = green!70!black] (B') at (G.90+2*72) {$B'$};
	\node[vtx, fill = gray!50!white] (x6) at ($(0,0)!1.3!(x2)$) {};
	\node[vtx, fill = gray!50!white] (x5) at($(0,0)!1.2!(x1)$) {};
	\node[vtx, fill = gray!50!white] (x3) at ($(0,0)!1.3!(x4)$)  {};
	
	\node[vtx, fill = yellow, below right = .8 and .25 of x3] (A') {$A'$};
	\node[vtx, white,fill = mymagenta, below left = .8 and .25  of x6] (A) {$A$}; 
	\node[vtx,white, fill = orange] (C) at (90:.1){$v$};
	\node[below = 1 of C] (z) {};

	\node[left  = 1 of A] (a){}; 
	\node[right  =1 of A'] (a'){}; 
	\node[below left = 1 and 3.5 of B] (b){}; 
	\node[below right = 1 and 3.5 of B'] (b'){};

	\draw[line width = \mythick mm, red] (a)--(A);
	\draw[line width = \mythick mm,blue] (a')--(A');
	\draw[line width = \mythick mm, red] (B) to[out=180+45, in = 0] (b);
	\draw[line width = \mythick mm, green] (B') to[out=-45, in = 180] (b');
	\draw[line width = \mythick mm, red] (C) -- (z);

	\draw[blue, ultra thick] (A) -- (C);
	 \draw[blue, ultra thick] (x2) -- (x6);
	 \draw[green, ultra thick] (x1) -- (x5) ;
	 \draw[red, ultra thick](x3) -- (A') ;
	 \draw[blue, ultra thick] (B') -- (x4);
	\draw[red, ultra thick] (C) -- (A');
	\draw[green, ultra thick] (A) -- (x6);
	\draw[red, ultra thick] (x2)--(x4) ;
	\draw[blue, ultra thick](x1)--(B) ;
	\draw[blue, ultra thick](x5)--(x3);
	\draw[red, ultra thick]  (x6)--(x5);
	\draw[green, ultra thick] (x2)--(B);
	\draw[green, ultra thick] (x4)--(x3);
	\draw[red, ultra thick] (B')--(x1);

	\end{tikzpicture}

}
\ffigbox{\caption{Pattern {\sf TwoRight},  represented  as $[(y,z),(x,x)]$ or $[(z,y),(x,x)]$}\label{twoRight}}{
\begin{tikzpicture}[scale =\myscale]
	\node[circle, gray!70!white, minimum width = 2 cm, inner sep = 1pt] (G) at ($(90:2)$){};
	\node[vtx, fill = gray!50!white] (x1) at (G.90) {};
	\node[vtx, fill = gray!50!white] (x4) at (G.90-72) {};
	\node[vtx,white, fill = cyan] (B) at (G.90-2*72) {$B$};
	\node[vtx,fill = gray!50!white] (x2) at (G.90+72) {};
	\node[vtx, white,fill = green!70!black] (B') at (G.90+2*72) {$B'$};
	\node[vtx, fill = gray!50!white] (x6) at ($(0,0)!1.3!(x2)$) {};
	\node[vtx, fill = gray!50!white] (x5) at($(0,0)!1.2!(x1)$) {};
	\node[vtx, fill = gray!50!white] (x3) at ($(0,0)!1.3!(x4)$)  {};
	
	\node[vtx, fill = yellow, below right = .8 and .25 of x3] (A') {$A'$};
	\node[vtx, white,fill = mymagenta, below left = .8 and .25  of x6] (A) {$A$}; 
	\node[vtx,white, fill = orange] (C) at (90:.1){$v$};
	\node[below = 1 of C] (z) {};

	\node[left  = 1 of A] (a){}; 
	\node[right  =1 of A'] (a'){};  
	\node[below left = 1 and 3.5 of B] (b){}; 
	\node[below right = 1 and 3.5 of B'] (b'){};

	\draw[line width = \mythick mm, green] (a)--(A);
	\draw[line width = \mythick mm,red] (a')--(A');
	\draw[line width = \mythick mm, blue] (B) to[out=180+45, in = 0] (b);
	\draw[line width = \mythick mm, red] (B') to[out=-45, in = 180] (b');
	\draw[line width = \mythick mm, red] (C) -- (z);

	\draw[blue, ultra thick] (A) -- (C);
	 \draw[green, ultra thick] (x2) -- (x6);
	 \draw[red, ultra thick] (x1) -- (x5) ;
	 \draw[blue, ultra thick](x3) -- (A') ;
	 \draw[green, ultra thick] (B') -- (x4);
	\draw[green, ultra thick] (C) -- (A');
	\draw[red, ultra thick] (A) -- (x6);
	\draw[blue, ultra thick] (x2)--(x4) ;
	\draw[green, ultra thick](x1)--(B) ;
	\draw[green, ultra thick](x5)--(x3);
	\draw[blue, ultra thick]  (x6)--(x5);
	\draw[red, ultra thick] (x2)--(B);
	\draw[red, ultra thick] (x4)--(x3);
	\draw[blue, ultra thick] (B')--(x1);

	\end{tikzpicture}
}
\end{subfloatrow}
\begin{subfloatrow}[2]
\ffigbox{\caption{Pattern {\sf AltTop},  represented as $[(x,z) ,(x,y)]$ or $[(x,y), (x,z)]$}\label{altTop}}{

\begin{tikzpicture}[scale =\myscale]
	\node[circle, gray!70!white, minimum width = 2 cm, inner sep = 1pt] (G) at ($(90:2)$){};
	\node[vtx, fill = gray!50!white] (x1) at (G.90) {};
	\node[vtx, fill = gray!50!white] (x4) at (G.90-72) {};
	\node[vtx,white, fill = cyan] (B) at (G.90-2*72) {$B$};
	\node[vtx,fill = gray!50!white] (x2) at (G.90+72) {};
	\node[vtx, white,fill = green!70!black] (B') at (G.90+2*72) {$B'$};
	\node[vtx, fill = gray!50!white] (x6) at ($(0,0)!1.3!(x2)$) {};
	\node[vtx, fill = gray!50!white] (x5) at($(0,0)!1.2!(x1)$) {};
	\node[vtx, fill = gray!50!white] (x3) at ($(0,0)!1.3!(x4)$)  {};
	
	\node[vtx, fill = yellow, below right = .8 and .25 of x3] (A') {$A'$};
	\node[vtx, white,fill = mymagenta, below left = .8 and .25  of x6] (A) {$A$}; 
	\node[vtx,white, fill = orange] (C) at (90:.1){$v$};
	\node[below = 1 of C] (z) {};

	\node[left  = 1 of A] (a){}; 
	\node[right  =1 of A'] (a'){}; 
	\node[below left = 1 and 3.5 of B] (b){}; 
	\node[below right = 1 and 3.5 of B'] (b'){};

	\draw[line width = \mythick mm, red] (a)--(A);
	\draw[line width = \mythick mm,red] (a')--(A');
	\draw[line width = \mythick mm, blue] (B) to[out=180+45, in = 0] (b);
	\draw[line width = \mythick mm, green] (B') to[out=-45, in = 180] (b');
	\draw[line width = \mythick mm, red] (C) -- (z);

	\draw[blue, ultra thick] (A) -- (C);
	 \draw[blue, ultra thick] (x2) -- (x6);
	 \draw[blue, ultra thick] (x1) -- (x5) ;
	 \draw[blue, ultra thick](x3) -- (A') ;
	 \draw[blue, ultra thick] (B') -- (x4);
	\draw[green, ultra thick] (C) -- (A');
	\draw[green, ultra thick] (A) -- (x6);
	\draw[green, ultra thick] (x2)--(x4) ;
	\draw[green, ultra thick](x1)--(B) ;
	\draw[green, ultra thick](x5)--(x3);
	\draw[red, ultra thick]  (x6)--(x5);
	\draw[red, ultra thick] (x2)--(B);
	\draw[red, ultra thick] (x4)--(x3);
	\draw[red, ultra thick] (B')--(x1);

	\end{tikzpicture}
}
\ffigbox{\caption{Pattern {\sf AltBot}, represented  as $[(z,x),(y,x)]$ or $[(y,z),(z,x)]$}\label{altBot}}{

\begin{tikzpicture}[scale =\myscale]
	\node[circle, gray!70!white, minimum width = 2 cm, inner sep = 1pt] (G) at ($(90:2)$){};
	\node[vtx, fill = gray!50!white] (x1) at (G.90) {};
	\node[vtx, fill = gray!50!white] (x4) at (G.90-72) {};
	\node[vtx,white, fill = cyan] (B) at (G.90-2*72) {$B$};
	\node[vtx,fill = gray!50!white] (x2) at (G.90+72) {};
	\node[vtx, white,fill = green!70!black] (B') at (G.90+2*72) {$B'$};
	\node[vtx, fill = gray!50!white] (x6) at ($(0,0)!1.3!(x2)$) {};
	\node[vtx, fill = gray!50!white] (x5) at($(0,0)!1.2!(x1)$) {};
	\node[vtx, fill = gray!50!white] (x3) at ($(0,0)!1.3!(x4)$)  {};
	
	\node[vtx, fill = yellow, below right = .8 and .25 of x3] (A') {$A'$};
	\node[vtx, white,fill = mymagenta, below left = .8 and .25  of x6] (A) {$A$}; 
	\node[vtx,white, fill = orange] (C) at (90:.1){$v$};
	\node[below = 1 of C] (z) {};
	
	\node[left  = 1 of A] (a){}; 
	\node[right  =1 of A'] (a'){}; 
	\node[below left = 1 and 3.5 of B] (b){}; 
	\node[below right = 1 and 3.5 of B'] (b'){};

	\draw[line width = \mythick mm, green] (a)--(A);
	\draw[line width = \mythick mm,,blue] (a')--(A');
	\draw[line width = \mythick mm,, red] (B) to[out=180+45, in = 0] (b);
	\draw[line width = \mythick mm, red] (B') to[out=-45, in = 180] (b');
	\draw[line width = \mythick mm, red] (C) -- (z);

	\draw[blue, ultra thick] (A) -- (C);
	 \draw[blue,ultra thick] (x2) -- (x6);
	 \draw[red, ultra thick] (x1) -- (x5) ;
	 \draw[red, ultra thick](x3) -- (A') ;
	 \draw[blue, ultra thick] (B') -- (x4);
	\draw[green, ultra thick] (C) -- (A');
	\draw[red, ultra thick] (A) -- (x6);
	\draw[red, ultra thick] (x2)--(x4) ;
	\draw[blue,ultra thick](x1)--(B) ;
	\draw[blue, ultra thick](x5)--(x3);
	\draw[green,ultra thick]  (x6)--(x5);
	\draw[green, ultra thick] (x2)--(B);
	\draw[green, ultra thick] (x4)--(x3);
	\draw[green, ultra thick] (B')--(x1);

	\end{tikzpicture}

}

\end{subfloatrow}
}{
\caption{Admissible color patterns for the 5-pole $G$. The left-hand pair of edges is the \emph{input pair}, and the right-hand pair of edges is the \emph{output pair}. When reading the text representation, the description $[(x_{1},x_{2}),(y_{1},y_{2})]$ indicates that in the input pair, the top left edge is assigned color $x_{1}$ and the bottom left edge is assigned color $x_{2}$, while in the output pair, the top right edge is assigned color $y_{1}$ and the bottom right edge is assigned color $y_{2}$. }
\label{fig:G34No9-patterns}}
\end{center}
\end{figure}
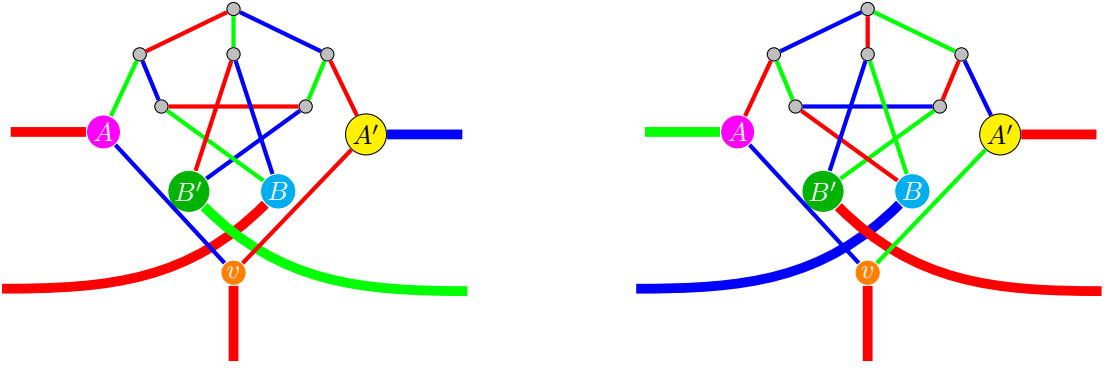
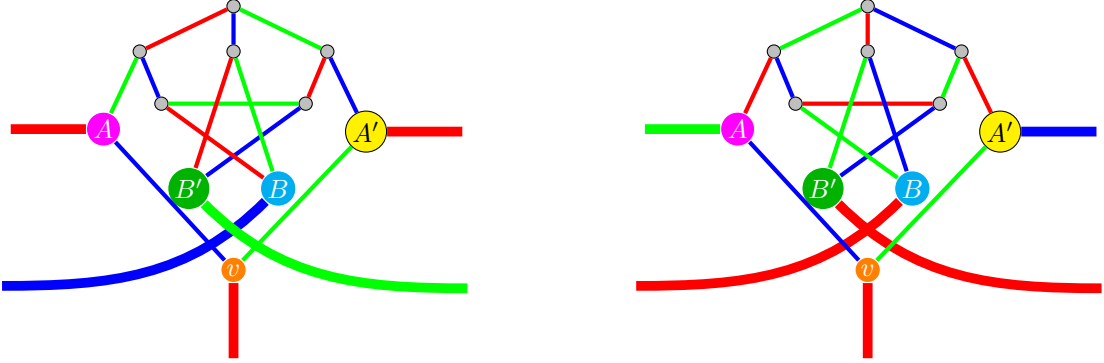

Notice that all admissible color patterns for $G$ have the spoke edge colored the main color of the pattern (red). Therefore, any assignment of colors to the left-hand dangling edges with endpoints $A$ and $B$, the \emph{input pair},  and the right-hand dangling edges with endpoints $A'$ and $B'$, the \emph{output pair}, determines the color of the spoke edge with endpoint $v$, by determining which color is repeated twice in the assignment of colors to the input and output pairs. Thus, we can completely specify the color pattern by assigning colors to the input and output pairs. 
 For notational convenience, we use ordered pairs of ordered pairs of colors, denoted by $[(x_{1},x_{2}),(y_{1},y_{2})]$, to represent each admissible  color pattern, where  
 the first pair corresponds to the colors assigned to the input pair (from top to bottom), and the second pair to the colors assigned to the output pair (from top to bottom). Specifically, $[(x_{1},x_{2}),(y_{1},y_{2})]$ assigns colors $x_{1}, x_{2}, y_{1}, y_{2}$ to the dangling edges with endpoints $A, B, A', B'$ respectively. We call $[(x_{1},x_{2}),(y_{1},y_{2})]$  the \emph{color pattern assignment for the cluster}.
For example, Figure \ref{twoLeft} shows color pattern {\sf TwoLeft}, which assigns red to the  input pair (the two left-hand dangling edges) and the spoke edge, and blue and green to output pair (the upper and lower right-hand dangling edges); the corresponding color pattern assignment is $[(x,x),(y,z)]$.

We  extend the notion of color pattern assignments to 
a consecutive sequence of $r$ connected clusters
, called a \emph{cluster sequence}, 
which we  denote by $G'_{r}$ (or $G'^{j}_{r}$ if we care that it starts at $G_{j}$, which we usually don't). Specifically, if $r <m$, then $G'^{j}_{r}$ is the induced subgraph of $G_{\alpha}(m; 1,1,1)$ formed by pulling out the $r$ clusters $G_{j}, G_{j+1}, \ldots, G_{j+r-1}$ (with index arithmetic mod $m$ and $1\leq r \leq m-1$) from the graph, including the connecting loop edges, 
 along with the dangling edges $A'_{j-1}A_{j}$, $B'_{j-1}B_{j}$, $w_{j-1}w_{j}$,  $A'_{j+(r-1)}A_{j+(r-1)+1}$, $B'_{j+(r-1)}B_{j+(r-1)+1}$ and $w_{j+(r-1)}w_{j+(r-1)+1}$. Alternately, we think of connecting up $r$ copies of the 5-pole $G$ using $\alpha$-connections (with $a = b=1$) between each successive copy. If $r = m$, then the corresponding cluster sequence is formed by cutting the edges  between clusters $G_{0}$ and $G_{m-1}$. (That is, a cluster sequence always has on each end 3 dangling edges, two connecting edges and one loop edge. Gluing together the corresponding dangling edges in $G'_{m}$ results in $G_{\alpha}(m; 1,1,1)$.)
 
The \emph{input pair} for the cluster sequence ${G'_{r}}^{j}$ is the pair of dangling edges $B'_{j-1}B_{j}$, $A'_{j-1}A_{j}$ (colored $x_{1}, x_{2}$ respectively), and the \emph{output pair} is the pair of dangling edges  $B'_{j+(r-1)}B_{j+(r-1)+1}$, $A'_{j+(r-1)}A_{j+(r-1)+1}$ (colored $y_{1}, y_{2}$ respectively).
Furthermore,  the \emph{input loop edge} is the edge $w_{j-1}w_{j}$ (colored $x_{3}$) and the \emph{output loop edge} is the edge $w_{j+(r-1)}w_{j+(r-1)+1}$  (colored $y_{3}$), which together with the input and output pairs give us the \emph{color pattern assignment for the cluster sequence} $[(x_1,x_2,x_3),(y_1,y_2,y_3)]_{{G'_{r}}^{j}}$
A schematic of a sequence of 3 clusters is shown in Figure \ref{fig:seqOfClusters}, in which non-connecting edges in the clusters are suppressed and represented by a cloud, and the vertices $A_{i}, B_{i}$ are clearly on the left of each cluster and $A'_{i}, B'_{i}$ clearly on the right of each cluster. 

In general, we assume the cluster cycle starts at 0; everything is rotationally symmetric, so we can rotate around if necessary.

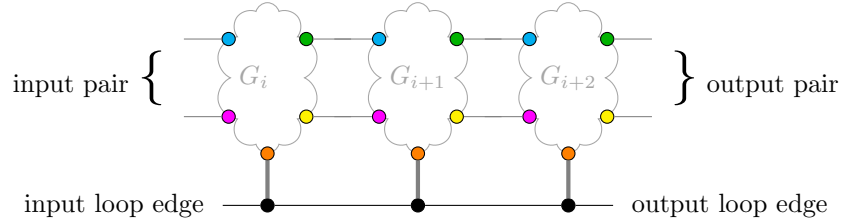
\begin{figure}[htbp]
\begin{center}
\begin{tikzpicture}[node distance=0.5cm]
\drawOneBlob[$G_{i\phantom{+1}}$]{0}{gray, thin}{gray,thin}{gray}{gray,thin}{gray,thin}
\drawOneBlob[$G_{i+1}$]{2}{gray,thin}{gray,thin}{gray}{gray,thin}{gray,thin}\drawOneBlob[$G_{i+2}$]{4}{gray,thin}{gray,thin}{gray}{gray,thin}{gray,thin}
\node[left=of z0] (start) {};
\node[right=of z4] (end) {};
\draw[] (z0) -- (z2);
\draw[] (z2) -- (z4);
\draw (start) -- (z0);
\draw (z4) -- (end);
\draw (start) node[left] {input loop edge};
\draw (end) node[right] {output loop edge};
\draw ($(a0)!.5!(b0)$) node[left] {input pair {\Huge $\{$}} ;
\draw ($(a'4)!.5!(b'4)$) node[right] {{\Huge $\}$} output pair } ;

\end{tikzpicture}
\caption{A cluster sequence $G'^{i}_{3}$, with labelled elements}
\label{fig:seqOfClusters}
\end{center}
\end{figure}

We extend the notion of a color pattern assignment to a cluster sequence by focusing on the colors of the spokes in the clusters in the sequence, since if $r >1$, knowing the colors assigned to the input and output pairs for the cluster sequence $G'_{r}$ does not specify the spoke colors (unlike the case of a single cluster). We define a \emph{color string} to be a sequence of colors
\[\mc{C} = c_{0}c_{1}\cdots c_{r-1}\] where $c_{i} \in \{1,2,3\}$.
A \emph{substring} of a color string $\mc{C}$ is any subset of consecutive entries in $\mc{C}$.

We will use color strings to assign colors to spoke edges, both in graphs $G_{\alpha}(m;1,1,1)$  and in cluster sequences $G'_{r}$, by assigning color $c_{i}$ to spoke $v_{i}w_{i}$. We will then analyze the effect of that color string on coloring the rest of the object. For the graphs $G_{\alpha}(m;1,1,1)$ we will define the \emph{string index of $\mc{C}$}, while for cluster sequences, we will define the \emph{flow} of $\mc{C}$.

\newcommand{\ind}{\ensuremath{\chi_{\mc{C}}}}

If $\mc{C}$ is a color string of length $m$, the \emph{string index of $\mc{C}$}, denoted \ind, is the number of colors needed to color the graph $G_{\alpha}(m; 1,1,1)$ if the colors from the color string $\mc{C}$ are assigned to the spokes. Of course, if there exists a length $m$ string $\mc{C}$ where $\ind = 3$, then $G_{\alpha}(m; 1,1,1)$ has chromatic index 3; however, just because \ind = 4, it does not follow that $G_{\alpha}(m; 1,1,1)$ is a snark. That is, it is clear that:

\begin{lemma} The chromatic index of $G_{\alpha}(m; 1,1,1)$ is the minimum of the string indices \ind taken over all possible color strings $\mc C$ of length $m$.\end{lemma}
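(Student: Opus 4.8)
The plan is to establish the two inequalities $\chi'\!\left(G_{\alpha}(m;1,1,1)\right)\le \min_{\mc C}\ind$ and $\chi'\!\left(G_{\alpha}(m;1,1,1)\right)\ge \min_{\mc C}\ind$ separately, using only that $G_{\alpha}(m;1,1,1)$ is a connected cubic graph (so $\chi'\!\left(G_{\alpha}(m;1,1,1)\right)\in\{3,4\}$ by Vizing's theorem) and that the spokes $v_{i}w_{i}$ form a matching. The latter fact guarantees that any assignment of colors to the spokes is automatically proper on that matching, hence extends---greedily, with enough colors---to a proper edge-coloring of the whole graph, so that $\ind$ is a well-defined finite quantity for every color string $\mc C$ of length $m$.

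For the inequality ``$\ge$'', I would simply observe that by definition $\ind$ is the number of colors used by an honest proper edge-coloring of $G_{\alpha}(m;1,1,1)$ (a most economical one that agrees with $\mc C$ on the spokes), so $\ind\ge \chi'\!\left(G_{\alpha}(m;1,1,1)\right)$, and taking the minimum over all length-$m$ strings preserves the inequality. For ``$\le$'', I would fix an optimal proper edge-coloring $\phi$ and read off the color string $\mc C_{0}$ whose $i$-th entry is $\phi(v_{i}w_{i})$. When $\chi'\!\left(G_{\alpha}(m;1,1,1)\right)=3$, the coloring $\phi$ uses only colors in $\{1,2,3\}$, so $\mc C_{0}$ is a genuine color string and $\chi_{\mc C_{0}}\le 3$; together with ``$\ge$'' this forces $\min_{\mc C}\ind=3=\chi'\!\left(G_{\alpha}(m;1,1,1)\right)$.

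The one step that is not quite automatic is the case $\chi'\!\left(G_{\alpha}(m;1,1,1)\right)=4$, where one must produce an optimal ($4$-)edge-coloring whose spoke values all lie in $\{1,2,3\}$, so that the induced $\mc C_{0}$ is again a legitimate color string and $\chi_{\mc C_{0}}\le 4$. I expect this to be handled by a Kempe exchange argument on the matching of spokes: if some spoke $v_{i}w_{i}$ is colored $4$, let $c,c'\in\{1,2,3\}$ be the colors missing at $v_{i}$ and at $w_{i}$; if $c=c'$ recolor the spoke to $c$, and otherwise swap colors along a suitable $(c,4)$- or $(c',4)$-alternating chain issuing from the spoke to recolor it into $\{1,2,3\}$, taking care that no new color-$4$ spoke is created, and iterating. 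Guaranteeing termination of this process is the main (and essentially the only) obstacle. I would also note that only the equivalence ``$\chi'\!\left(G_{\alpha}(m;1,1,1)\right)=3$ if and only if some length-$m$ color string has string index $3$'' is needed in the sequel, and that half follows at once from the two trivial observations above---restrict a $3$-edge-coloring to the spokes, and conversely a string of index $3$ is itself a $3$-edge-coloring---which is why the statement is flagged as clear.
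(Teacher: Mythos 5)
The paper itself offers no argument here (the lemma is prefaced by ``it is clear that''), and the only part used later is the easy equivalence ``$\chi'=3$ iff some length-$m$ string has string index $3$,'' which you correctly isolate and prove: your ``$\ge$'' direction and your $\chi'=3$ case are exactly the intended (implicit) reasoning. The problem is the remaining case, which you flag but do not close. When $\chi'\bigl(G_{\alpha}(m;1,1,1)\bigr)=4$ you still owe the statement that \emph{some} color string (spoke colors drawn from $\{1,2,3\}$ only) has string index at most $4$; otherwise the minimum over strings could a priori exceed $4$ and the asserted equality would fail. Your proposed fix --- read off an optimal $4$-coloring and push color $4$ off the spokes by Kempe exchanges --- is precisely the step that is not routine: a $(c,4)$-swap along the chain through a bad spoke can recolor other spokes from $c$ to $4$ (spokes are not an induced matching here, since consecutive spoke endpoints $w_i,w_{i+1}$ are joined by loop edges), and you give no termination or monotonicity argument. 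As written, this is a genuine gap, and your own text concedes it.

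The gap is easy to fill, and without any recoloring of a given optimal coloring: exhibit one string of index at most $4$ directly. Take the constant string $11\cdots 1$ and $3$-edge-color every cluster with the same admissible pattern having main color $1$ (e.g.\ {\sf AltTop}), so every spoke is colored $1$ and every connecting edge $A'_iA_{i+1}$ receives color $1$ consistently from both of its clusters. The edges $B'_iB_{i+1}$ receive conflicting demands from their two clusters; assign all of them the fourth color, which is proper because at each endpoint the other two incident edges carry two distinct colors from $\{1,2,3\}$. Finally, the loop edges form a cycle meeting only spokes, so they can be properly colored from $\{2,3,4\}$. This is a proper $4$-edge-coloring respecting the all-ones string, so $\min_{\mc C}\ind\le 4$ always, and combined with your trivial inequality $\min_{\mc C}\ind\ge\chi'$ the case $\chi'=4$ follows at once. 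With that substitution your proof is complete and, in fact, more than the paper records.
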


For the remainder of this section, $\mc{C}$ is a color string of length $m$, and we will be determining $\ind$ by analyzing properties of certain substrings of $\mc{C}$. 

In what follows, $x, y, z$ correspond to any particular choice of color assignments from the color set $\{1, 2, 3\}$, with the assumption that $x,y,z$ are all distinct colors. 

\begin{lemma}\label{lemOddx}
If $m$ is odd and $\ind = 3$, then there must be an odd number of entries in $\mc{C}$ of each of the colors 1, 2, 3.
\end{lemma}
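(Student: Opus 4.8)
The plan is to give a short parity (double-counting) argument that leans on the preceding lemma fixing the color of every spoke.

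Suppose $\ind = 3$ and fix a proper $3$-edge-coloring of $G_{\alpha}(m;1,1,1)$ in which the spoke $v_{i}w_{i}$ receives color $c_{i}$. First I would note that restricting this coloring to a single cluster $G_{i}$ and deleting the vertex $w_{i}$ (so that the spoke reverts to a dangling edge at $v_{i}$) produces a proper $3$-edge-coloring of the $5$-pole $G$ in which the dangling edge $C$ is colored $c_{i}$. By the previous lemma every admissible color pattern of $G$ has its spoke colored with the main color of the pattern, so $c_{i}$ is that main color; hence among the five dangling edges of $G$ the color $c_{i}$ occurs three times and each of the other two colors occurs exactly once. Since the spoke accounts for one of the three occurrences of $c_{i}$, among the four \emph{connecting} dangling edges incident to $G_{i}$ — namely $A'_{i-1}A_{i}$, $B'_{i-1}B_{i}$, $A'_{i}A_{i+1}$, $B'_{i}B_{i+1}$ — exactly two are colored $c_{i}$, and the remaining two carry the two non-$c_{i}$ colors, one each.

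Next, for each color $k\in\{1,2,3\}$ let $n_{k}$ be the number of indices $i$ with $c_{i}=k$ (that is, the number of entries of $\mc{C}$ equal to $k$), and let $e_{k}$ be the number of connecting edges of $G_{\alpha}(m;1,1,1)$ that are colored $k$. I would count in two ways the number of pairs $(\epsilon, G_{i})$ where $\epsilon$ is a connecting edge colored $k$ and $G_{i}$ is a cluster containing an endpoint of $\epsilon$. Each connecting edge has its two endpoints in two distinct clusters (using $m\geq 3$), so a connecting edge colored $k$ is counted exactly twice, giving $2e_{k}$ in total. Counting instead over clusters: a cluster $G_{i}$ with $c_{i}=k$ is incident to exactly two connecting edges of color $k$, and a cluster with $c_{i}\neq k$ to exactly one, so the total is $2n_{k}+(m-n_{k}) = m+n_{k}$. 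Equating the two counts yields $m+n_{k} = 2e_{k}$, hence $n_{k} = 2e_{k}-m$, which is odd because $m$ is odd. Since $k$ was arbitrary, $\mc{C}$ contains an odd number of entries of each of the colors $1,2,3$.

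The argument is essentially routine; the one place that takes care is the first step — identifying the four ``external'' edges of cluster $G_{i}$ with the $A, B, A', B'$ dangling edges of the $5$-pole $G$, so that the preceding lemma applies verbatim, and verifying that the restriction of a global proper $3$-edge-coloring really is a proper $3$-edge-coloring of $G$. Once that identification is in place, the parity count above finishes the proof with no further obstacle.
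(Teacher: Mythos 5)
Your proof is correct, but it takes a genuinely different route from the paper's. The paper disposes of this lemma in one line: the $m$ spoke edges $v_{i}w_{i}$ form an edge cutset separating the inner cycle of loop edges from the rest of the graph, so the Parity Lemma applied to this cutset of odd size $m$ gives $n_{1}\equiv n_{2}\equiv n_{3}\equiv m\equiv 1 \pmod 2$ directly. You instead work cluster by cluster: you invoke the preceding lemma (that every admissible color pattern of the $5$-pole $G$ has its main color on the spoke) to pin down how many connecting edges of each color are incident to each cluster, and then finish with a double count of incidences between $k$-colored connecting edges and clusters, getting $2e_{k}=m+n_{k}$. Both arguments are sound; your bookkeeping (two $c_{i}$-colored and one of each other color among the four connecting edges at $G_{i}$, each connecting edge meeting two distinct clusters since $m\geq 3$) checks out. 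The trade-off is that the paper's cutset argument is shorter and entirely independent of the internal structure of $G$ — it would apply verbatim to any $5$-pole assembled this way — whereas your argument leans on the structure-specific admissible-pattern lemma (itself established by exhaustive checking) and would break for a $5$-pole whose admissible patterns did not force the main color onto the spoke. On the other hand, your incidence count gives the slightly finer relation $n_{k}=2e_{k}-m$ linking spoke and connecting-edge color counts, which the cutset argument does not produce. Either way the lemma is proved; just be aware that the Parity Lemma applied to the spoke cutset is the intended (and minimal) tool here.
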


\begin{proof}
This follows immediately from the Parity Lemma, 
since cutting all the spokes disconnects the loop edges from the rest of the graph.
\end{proof}

\begin{lemma}\label{lem:xyx}
If a color string $\mc{C}$ contains either of the substrings $x\underbrace{y \cdots y}_{\text{odd}, \geq 1}x$ or $x\underbrace{y\cdots y}_{\text{even, } \geq 2}z$, then $\ind{m} = 4$.
\end{lemma}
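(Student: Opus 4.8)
The plan is to show directly that no proper $3$-edge-coloring of $G_{\alpha}(m;1,1,1)$ can have its spoke edges colored according to $\mc{C}$, and --- perhaps surprisingly --- to do this using only the cycle of loop edges, so that the four admissible color patterns of $G$ never enter the argument. The first step is to recall that, since the loop of $\tilde G_{\alpha}(m;1,1,1)$ has label $c=1$ and $\gcd(1,m)=1$, Proposition~\ref{thm:voltageFacts}(\ref{liftLoop}) makes the loop edges a single $m$-cycle $w_{0}w_{1}\cdots w_{m-1}w_{0}$, with each $w_{i}$ incident to exactly this cycle's two edges at $w_{i}$ together with the spoke $v_{i}w_{i}$. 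Assuming a proper $3$-edge-coloring with the prescribed spokes exists, I would extract two facts: (i) the two loop edges at $w_{i}$ receive different colors; and (ii) $w_{i}w_{i+1}$ avoids both $c_{i}$ and $c_{i+1}$, hence is forced to the third color whenever $c_{i}\ne c_{i+1}$ and is confined to two colors whenever $c_{i}=c_{i+1}$.

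Next I would localize to the prescribed substring, writing $x,y,z$ for the three distinct colors. In a window where $\mc{C}$ reads $x\underbrace{y\cdots y}_{j}x$ with $j$ odd, occupying positions $p,\dots,p+j+1$ (so $c_{p}=x$, $c_{p+1}=\cdots=c_{p+j}=y$, $c_{p+j+1}=x$), fact (ii) forces both end loop edges $w_{p}w_{p+1}$ and $w_{p+j}w_{p+j+1}$ to equal $z$, while every interior loop edge $w_{p+i}w_{p+i+1}$, $1\le i\le j-1$, must avoid $y$ and so lies in $\{x,z\}$. Propagating (i) along this path starting from $w_{p}w_{p+1}=z$ forces the interior edges to alternate $x,z,x,\dots$, and since $j$ is odd the edge $w_{p+j-1}w_{p+j}$ ends up colored $z$, which contradicts (i) against $w_{p+j}w_{p+j+1}=z$. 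For the other form $x\underbrace{y\cdots y}_{j}z$ with $j$ even, $j\ge 2$, the same propagation works, except that (ii) now forces the far end edge $w_{p+j}w_{p+j+1}$ to $x$ (it avoids $\{y,z\}$), and the even parity makes $w_{p+j-1}w_{p+j}$ equal $x$ as well --- again a contradiction. Either way, no proper $3$-edge-coloring respecting $\mc{C}$ exists, so $\ind=4$.

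I do not expect a genuine obstacle here: the only things requiring care are the parity bookkeeping and the two degenerate cases $j=1$ (first form) and $j=2$ (second form), where there are no interior loop edges and the contradiction must instead be read off directly from the two forced end edges being adjacent at a common $w$-vertex. It is worth flagging that this argument is strictly about the loop cycle --- it does not appeal to the Parity Lemma on the spoke cutset or to the admissible patterns of Figure~\ref{fig:G34No9-patterns} --- which is why this lemma, unlike the later case analysis in the proof of Theorem~\ref{thm:G34no9snark}, needs nothing about how colors propagate through a cluster.
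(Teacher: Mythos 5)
Your proof is correct and is essentially the paper's own argument: both derive the contradiction by propagating the forced colors of the loop edges $w_{i}w_{i+1}$ along the given substring (the paper propagates symmetrically from both ends toward a middle edge, you propagate from one end to the other), using only that a loop edge must avoid the two incident spoke colors and differ from its neighboring loop edge. One minor remark: for $j=2$ in the second form there is in fact one interior loop edge, namely $w_{p+1}w_{p+2}$, but your general propagation already covers it, so your caveat about that case is unnecessary rather than a gap.
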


\begin{proof}Case 1: substrings of the form $x\underbrace{y \cdots y}_{\text{odd}}x$. 

Suppose $\ind = 3$. Recall we index the clusters by $G_{0}, \ldots, G_{m-1}$ and we are coloring the spoke edge for cluster $G_{i}$ as $c_{i}$. Without loss of generality (we can cyclically reindex the string) suppose that the substring we are analyzing starts at cluster 0 and that the substring is length $2 + 2k$ for some $k \geq 0$, that is, the substring is $c_{0}c_{1}\cdots c_{2k+2}$, where $c_{0}= c_{2k+2} = x$ and $c_{i} = y$ for $y = 1, \ldots, 2k+1$.

Consider the coloring of the loop edges $w_{i}w_{i+1}$ in $G_{\alpha}(m; 1,1,1)$ that is induced by the spoke coloring assigned by the substring. Since $c_{0}c_{1} = xy$ and $c_{2k+1}c_{2k+2} = yx$, edges $w_{0}w_{1}$ and  $w_{2k+2}w_{2k+1}$ must both be colored $z$, forcing edges $w_{1}w_{2}$ and $w_{2k+1}w_{2k}$ to both be colored $x$. Then (since $c_{2} = c_{2k} = y$) edges $w_{2}w_{3}$ and $w_{2k}w_{2k-1}$ must both be colored $z$, and continuing in this fashion, edges $w_{i}w_{i+1}$ and $w_{2k+2-i}w_{2k+2-(i+1)}$ must both be colored  $x$ when $i$ is odd and $z$ when $i$ is even. This means that the two loop edges $w_{k}w_{k+1}$ and $w_{2k+2-k}w_{2k+2-(k+1)} = w_{k+2}w_{k+1}$  in the middle of the substring, which are adjacent,  are colored the same color, a contradiction.   

 Figure \ref{fig:G34no9-badStrings-xyx} shows the smallest forbidden substring of this type, $xyx$.

Case 2: substrings of the form $x\underbrace{y \cdots y}_{\text{even, }\geq 2}x$.  Again, suppose that $\ind = 3$ and suppose that the substring we are analyzing starts at cluster 0 and that the substring is length $2 + 2k$ for some $k \geq 1$, that is, the substring is $c_{0}c_{1}\cdots c_{2k+1}$, where $c_{0}=x$,  $c_{2k+1} = z$ and $c_{i} = y$ for $y = 1, 2, \ldots, 2k$.

Since $c_{0}c_{1} = xy$ and $c_{2k}c_{2k+1} = yz$, edges $w_{0}w_{1}$ is colored $z$ and edge  $w_{2k}w_{2k+1}$ is colored $x$, so edges $w_{1}w_{2}$ and $w_{2k+1}w_{2k}$ must  both be colored $x$ and $z$ respectively. Continuing to alternate colors for the loop edges as required, from the front and back of the string, we conclude that edge $w_{i}w_{i+1}$ is colored $z$ when $i$ is even and $x$ when $i$ is odd, while edge $w_{2k+1-i}w_{2k-i}$ is colored $x$ when $i$ is even and $z$ when $i$ is odd. But this means that the middle edge $w_{k}w_{k+1} = w_{2k+1-k}w_{2k-k}$ must be colored both $x$ and $z$, a contradiction.
 
Figure \ref{fig:G34no9-badStrings-xyyz} shows the smallest forbidden substring of this type, $xyyz$.
\end{proof}

Since we can cyclically reindex the color string (or alternately, start applying the color string at cluster $j$):

\begin{corollary}\label{lem:badGlueLoops}
If $\mc{C}$ is a length $m$ string of the form  $x\cdots xy$, $xy \cdots y$, or $yyz \cdots x$, $yz \cdots xy$, $z \cdots xyy$ then $\ind = 4$ as well.\end{corollary}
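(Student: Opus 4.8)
The plan is to reduce each of the five listed cyclic forms \emph{directly} to Lemma~\ref{lem:xyx}, using only the observation recorded just before the statement: the string index $\ind$ is unchanged if we cyclically reindex $\mc C$. (A cyclic reindexing merely relabels the clusters $G_0,\dots,G_{m-1}$ of $G_{\alpha}(m;1,1,1)$, which carries a rotational automorphism, and it assigns $\mc C$'s colours to the corresponding spokes; so any completion of one spoke-colouring transports to a completion of the other with the same number of colours, whence $\chi_{\mc C}=\chi_{\mc C'}$ for rotations $\mc C'$ of $\mc C$.) The key preliminary step is therefore to note that a \emph{seam-crossing occurrence} of a forbidden substring from Lemma~\ref{lem:xyx}---a block of entries of $\mc C$, read with indices modulo $m$, that matches $x\underbrace{y\cdots y}_{\text{odd},\ \ge1}x$ or $x\underbrace{y\cdots y}_{\text{even},\ \ge2}z$ but possibly straddles the seam between $c_{m-1}$ and $c_0$---already forces $\ind=4$: simply rotate $\mc C$ so that this block becomes a genuine (non-wrapping) substring, and invoke Lemma~\ref{lem:xyx}. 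I would also flag here that in Lemma~\ref{lem:xyx} the symbols $x,y,z$ stand for \emph{arbitrary} distinct colours, so that, for instance, $yxy$ is forbidden (it is an instance of the first pattern, with outer colour $y$ and a single middle entry).

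Next I would verify, form by form, that each listed string exhibits such a seam-crossing occurrence, each of length at most four (so no induction on the block length is required). For $x\cdots xy$: the last two entries and the first entry read cyclically as $\dots,x,y,x,\dots$, an occurrence of $xyx$ across the seam. For $xy\cdots y$: the last entry, the first entry, and the second entry read cyclically as $\dots,y,x,y,\dots$, an occurrence of $yxy$ across the seam (the first forbidden pattern with outer colour $y$). For each of $yyz\cdots x$, $yz\cdots xy$, and $z\cdots xyy$: the relevant four consecutive entries, read cyclically, are $x,y,y,z$ in every case---precisely the smallest second-type forbidden pattern $x\underbrace{y\,y}_{\text{even}}z$. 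In each case a single cyclic rotation turns the block into an ordinary substring, and Lemma~\ref{lem:xyx} then yields $\ind=4$, which is the assertion of Corollary~\ref{lem:badGlueLoops}.

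I do not expect a substantive obstacle: the argument is a routine ``rotate and reduce'' once the cyclic invariance of $\ind$ is in hand. The only points deserving a sentence of care are (i) that Lemma~\ref{lem:xyx} is phrased for (non-wrapping) substrings, so a seam-crossing block must be unwound by a rotation before the lemma applies, and (ii) that the colour names $x,y,z$ in that lemma are placeholders, which is what allows $yxy$ (and the colour-permuted copies of $xyyz$ hidden in the last three forms) to count as forbidden; I would state both explicitly. One also tacitly uses $m\ge3$ so that a three- or four-entry seam-crossing block is meaningful, which is harmless since only $m\ge3$ is ever under consideration.
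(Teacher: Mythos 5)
Your proposal is correct and is essentially the paper's own argument: the paper proves the corollary with the single remark that one may cyclically reindex the color string (equivalently, start applying it at cluster $j$), which is exactly your "rotate and reduce" step, and your case-by-case identification of the seam-crossing occurrences ($xyx$, $yxy$, and $xyyz$ up to permutation of colors) just makes explicit what the paper leaves implicit.
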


\begin{figure}[htbp]
\begin{center}
\ffigbox{
\begin{subfloatrow}[2]
\ffigbox{\caption{$xyx$  requires 4 colors}\label{fig:G34no9-badStrings-xyx}}{
\begin{tikzpicture}[node distance=0.5cm]
\drawOneBlob{0}{gray, thin}{gray,thin}{red}{gray,thin}{gray,thin}\drawOneBlob{2}{gray,thin}{gray,thin}{blue, dotted}{gray,thin}{gray,thin}\drawOneBlob{4}{gray,thin}{gray,thin}{red}{gray,thin}{gray,thin}
\node[left=of z0] (start) {};
\node[right=of z4] (end) {};
\draw[ultra thick, green, dashed] (z0) -- (z2);
\draw[orange, decorate, decoration={
zigzag, segment length=3}] (z2) -- (z4);
\end{tikzpicture}
}

\ffigbox{\caption{$xyyz$  requires 4 colors}\label{fig:G34no9-badStrings-xyyz}}{
\begin{tikzpicture}[scale=.6, node distance=0.5cm]
\drawOneBlob{0}{gray, thin}{gray,thin}{red}{gray,thin}{gray,thin}
\drawOneBlob{3}{gray,thin}{gray,thin}{blue, dotted}{gray,thin}{gray,thin}
\drawOneBlob{6}{gray,thin}{gray,thin}{blue, dotted}{gray,thin}{gray,thin}
\drawOneBlob{9}{gray,thin}{gray,thin}{green, dashed}{gray,thin}{gray,thin}

\foreach \i in {0,3,6,9}{
\draw  (z\i) node[circle, inner sep = 1, fill = black] {} ;}
\node[left=of z0] (start) {};
\node[right=of z9] (end) {};
\draw[ultra thick, green, dashed] (z0) -- (z3);
\draw[ultra thick, red] (z6) -- (z9);
\draw[orange, decorate, decoration={
zigzag, segment length=3}] (z3) -- (z6);
\end{tikzpicture}
}

\end{subfloatrow}
}{
\caption{The substrings $xyx$ and $xyyz$  are forbidden in a color string $\mc{C}$ with $\ind = 3$,\ 
 since those  spoke colors require four colors in the loop edge colorings (shown with zig-zags).}
\label{fig:G34no9-badStrings}}
\end{center}
\end{figure}

Following the same notation as before, given a color string $\mc{C}$ of length $r$, a \emph{color pattern assignment for $\mc{C}$} is an ordered pair of ordered triples $[(x_{1}, x_{2}, x_{3}), (y_{1}, y_{2}, y_{3})]_{\mc{C}}$ that is the color pattern assignment for the corresponding cluster sequence $G'_{r}$. 
which has colors $(x_{1}, x_{2})$  assigned (top to bottom) to the input pair of the cluster sequence $G'_{r}$, color $x_{3}$ assigned to the input loop edge,  colors  $c_{0}, \ldots, c_{r-1}$ from $\mc{C}$  assigned to the spokes of $G'_{r}$,  colors $(y_{1}, y_{2})$  assigned to the output pair of $G'_{r}$ (top to bottom), and color $y_{3}$ assigned to the output loop edge. 
Figure \ref{fig:colorPatternAssignmentExample} shows that $[(1,3,2), (3,2,1)]$ is a valid color pattern assignment for the string $123322$ (where 1=red, 2=blue, 3=green).
 
 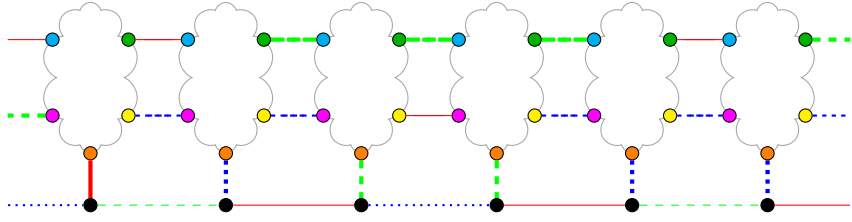
\begin{figure}[htbp]
\begin{center}
\begin{tikzpicture}[scale=.6, node distance=0.5cm]
\drawOneBlob{0}{red, thin}{green,dashed}{red}{blue,dotted, thick}{red,thin}
\drawOneBlob{3}{red,thin}{blue,dotted, thick}{blue, dotted}{blue,dotted, thick}{green,dashed}
\drawOneBlob{6}{green,dashed}{blue,dotted, thick}{green, dashed}{red,thin}{green,dashed}
\drawOneBlob{9}{green,dashed}{red,thin}{green, dashed}{blue,dotted, thick}{green,dashed}
\drawOneBlob{12}{green,dashed}{blue,dotted, thick}{blue, dotted}{blue,dotted, thick}{red,thin}
\drawOneBlob{15}{red,thin}{blue,dotted, thick}{blue, dotted}{blue,dotted, thick}{green,dashed}

\foreach \i in {0,3,6,9, 12, 15}{
\draw  (z\i) node[circle, inner sep = 1, fill = black] {} ;}
\node[left=1 cm of z0] (start) {};
\node[right= 1 cm of z15] (end) {};
\draw[blue, dotted, thick](z6)--(z9)  (start) -- (z0) ;
\draw[red] (z3) -- (z6) (z9) -- (z12) (z15) -- (end);
\draw[green, dashed]  (z0) -- (z3)  (z12)--(z15);
\end{tikzpicture}

\caption{$[(1,3,2), (3,2,1)]$ is a valid color pattern assignment for the string $123322$.}
\label{fig:colorPatternAssignmentExample}
\end{center}
\end{figure}

 We define the \emph{flow} of a color string $\mc{C}$, denoted $\Flow{\mc C}$, to be the set of all possible color pattern assignments for $\mc{C}$.
If $\Flow{\mc C}=\varnothing$, then there are no possible color patterns associated with $\mc C$.
 
 \begin{observation} If the color string $\mc{C}$ does not contain one of the forbidden substrings from Lemma \ref{lem:xyx} and contains two consecutive distinct entries, then the entries $x_{3}$ and $y_{3}$, corresponding to possible colors for the input and output loop edges, are completely determined. If the color string $\mc{C}$ has all entries the same (e.g., $\mc{C} = 1  1 \cdots 1)$, then $x_{3}$ and $y_{3}$ each can be assigned either one of the other two colors. \end{observation}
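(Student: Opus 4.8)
The plan is to reduce the claim to a self‑contained statement about colouring a path. In the cluster sequence $G'_{r}$, the loop edges together with the spoke edges form a subgraph that meets the rest of $G'_{r}$ only at the vertices $v_{i}$, the non‑$w$ endpoints of the spokes. Hence, once the spokes have been given the colours prescribed by $\mc{C}=c_{0}c_{1}\cdots c_{r-1}$, the colours of the loop edges are constrained purely by properness at the internal vertices $w_{0},\ldots,w_{r-1}$: at each $w_{i}$ the three incident edges are the two loop edges $w_{i-1}w_{i}$, $w_{i}w_{i+1}$ and the spoke coloured $c_{i}$, so the two loop edges at $w_{i}$ must receive exactly the two colours different from $c_{i}$. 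Writing $e_{-1}=w_{-1}w_{0}$ (the input loop edge, coloured $x_{3}$), $e_{i}=w_{i}w_{i+1}$ for $0\le i\le r-2$, and $e_{r-1}=w_{r-1}w_{r}$ (the output loop edge, coloured $y_{3}$), it therefore suffices to analyse the colourings of the path $w_{-1},w_{0},\ldots,w_{r-1},w_{r}$ subject to $\{\operatorname{col}(e_{i-1}),\operatorname{col}(e_{i})\}=\{1,2,3\}\setminus\{c_{i}\}$ for $0\le i\le r-1$.

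For the first assertion I would use the hypothesis that $\mc{C}$ has two consecutive distinct entries $c_{k}\neq c_{k+1}$. The constraints at $w_{k}$ and $w_{k+1}$ force $\operatorname{col}(e_{k})$ to avoid both $c_{k}$ and $c_{k+1}$, so $\operatorname{col}(e_{k})$ must be the third colour, hence is determined. Now propagate: given $\operatorname{col}(e_{i})$, the constraint at $w_{i}$ determines $\operatorname{col}(e_{i-1})$ (the colour avoiding both $c_{i}$ and $\operatorname{col}(e_{i})$) and the constraint at $w_{i+1}$ determines $\operatorname{col}(e_{i+1})$. Iterating leftward and rightward from $e_{k}$ forces the colour of every loop edge, in particular of $e_{-1}$ and $e_{r-1}$; so $x_{3}$ and $y_{3}$ take the same value in every admissible colour pattern assignment for $\mc{C}$, i.e.\ they are completely determined. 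I would add the remark that the assumption that $\mc{C}$ contains no forbidden substring from Lemma~\ref{lem:xyx} is exactly what makes this propagation self‑consistent (never forcing a loop edge to carry two colours at once): decomposing $\mc{C}$ into maximal constant runs, the loop edge at each junction between runs of colours $x$ and $y$ is forced to be the third colour, the loop edges inside a run of colour $x$ strictly alternate between the two colours $\neq x$, and a short parity count shows that the junction value on a run's boundary agrees with the run's internal alternation precisely when neither $x\underbrace{y\cdots y}_{\text{odd},\ \geq 1}x$ nor $x\underbrace{y\cdots y}_{\text{even},\ \geq 2}z$ occurs.

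For the second assertion, if $\mc{C}=xx\cdots x$ then every constraint reads $\{\operatorname{col}(e_{i-1}),\operatorname{col}(e_{i})\}=\{1,2,3\}\setminus\{x\}$, so the sequence $\operatorname{col}(e_{-1}),\operatorname{col}(e_{0}),\ldots,\operatorname{col}(e_{r-1})$ must strictly alternate between the two colours different from $x$. There are exactly two such sequences, one beginning with each of those colours, and they are interchanged by swapping the two non‑$x$ colours; hence $x_{3}=\operatorname{col}(e_{-1})$ may be either colour $\neq x$. Since $\operatorname{col}(e_{r-1})$ equals $\operatorname{col}(e_{-1})$ or the other non‑$x$ colour according to the parity of $r$, as $\operatorname{col}(e_{-1})$ ranges over the two colours $\neq x$ so does $y_{3}=\operatorname{col}(e_{r-1})$, which gives the claim.

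The only step that requires genuine care is the parenthetical remark in the second paragraph: verifying that ``no forbidden substring'' is precisely the consistency condition for the propagation is a run‑length‑and‑parity bookkeeping argument, essentially a re‑run of the proof of Lemma~\ref{lem:xyx} on a cluster sequence rather than on the whole lift $G_{\alpha}(m;1,1,1)$. Everything else is a deterministic walk along a path together with a two‑colouring of a path, both routine.
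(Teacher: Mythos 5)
Your proposal is correct and follows essentially the same route as the paper's own (very terse) proof: a pair of consecutive distinct spoke colors forces the loop edge between them to be the third color, and properness at each $w_{i}$ then propagates the loop-edge colors outward to determine $x_{3}$ and $y_{3}$. Your extra detail on the all-equal case (strict alternation of the two non-spoke colors) and on why the no-forbidden-substring hypothesis keeps the propagation consistent simply fills in what the paper leaves implicit.
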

 
 \begin{proof} If a color string contains a substring $c_{i}c_{i+1} = xz$, then the loop edge between those two spokes, $w_{i}w_{i+1}$, must be colored $y$, and the loop edges $w_{i-1}w_{i}$ and $w_{i+1}w_{i+2}$ must be colored $z$ and $x$, respectively. We can continue to propagate the colors on the loop edges out from spoke $i$.
\end{proof}
 
 Typically, we determine the flow of a given (short) color string $\mc{C}$ of length $r$ by assigning the colors from $\mc{C}$ to the spokes of $G'_{r}$, analyzing the assignment of the loop edge colors induced by the color string, and then systematically testing all of the 9 possible pairs of colors $(x_{1}, x_{2})$, $x_{i} \in \{1,2,3\}$ that could be assigned as input pairs to see which of them can be completed to a proper 3-coloring of the entire cluster sequence, recording the valid output colors. 
 
 For example, the color string $xyzzyy$ forces the input loop color to be $y$ and the output loop color to be $z$, and (see Table \ref{t:xyzzyy} for details)
 \[ \Flow{xyzzyy} = \{ [(x,x,y), (x,y,z)], [(x,x,y),(y,x,z)],  [(x,z,y), (z,y,z)], [(z,x,y), (y,z,z)] \}. \]

\begin{lemma}\label{lem:xyxFlow}
If a color string $\mc{C}$ contains a substring of the form $x\underbrace{y \cdots y}_{2k+1} x$ or $x\underbrace{y \cdots y}_{2k}z$, then $\Flow{\mc{C}} = \varnothing$.
\end{lemma}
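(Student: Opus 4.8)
The plan is to recognize this lemma as the ``localized'' analogue of Lemma~\ref{lem:xyx}: the proof of that lemma never used the cyclic closing-up of $G_{\alpha}(m;1,1,1)$, only the forced propagation of colors along the run of loop edges lying strictly between the spokes of the forbidden substring. So I would re-run exactly that propagation, but inside a single cluster sequence $G'_{r}$ rather than inside the whole lift. Unwinding the definitions, $\Flow{\mc{C}}$ is non-empty precisely when some proper $3$-edge-coloring of $G'_{r}$ places the colors $c_{0},\dots,c_{r-1}$ of $\mc{C}$ on the spokes $v_{0}w_{0},\dots,v_{r-1}w_{r-1}$; hence it suffices to rule out the existence of any such coloring.

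First I would fix the location of the forbidden substring inside $\mc{C}$, say it occupies clusters $p,p+1,\dots,q$ with $0\le p\le q\le r-1$. The one small structural point to record is that every loop edge $w_{i}w_{i+1}$ with $p\le i\le q-1$ --- that is, every loop edge strictly between two consecutive spokes of the substring --- is a genuine internal edge of $G'_{r}$ (since then $0\le i\le r-2$), so the two dangling loop edges $w_{-1}w_{0}$ and $w_{r-1}w_{r}$ at the ends of the sequence are never invoked. Next, assuming for contradiction that a valid coloring of $G'_{r}$ exists, I would apply the reasoning of Lemma~\ref{lem:xyx} verbatim: at each vertex $w_{i}$ the spoke (color $c_{i}$) together with its two incident loop edges must carry the three distinct colors, so the pair of loop edges at $w_{i}$ is exactly $\{1,2,3\}\setminus\{c_{i}\}$; propagating this constraint inward from both ends of the substring forces, in the case $x\underbrace{y\cdots y}_{2k+1}x$ with an odd run, the two loop edges meeting at the central vertex of the substring to receive the same color, and in the case $x\underbrace{y\cdots y}_{2k}z$ with an even run ($k\ge 1$, matching Lemma~\ref{lem:xyx}), the single central loop edge to receive two different colors. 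Either outcome contradicts properness, so no valid coloring of $G'_{r}$ exists and $\Flow{\mc{C}}=\varnothing$.

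I do not expect any genuine obstacle. The only thing needing a moment's care is the boundary check just mentioned, namely that a substring abutting the start or end of $\mc{C}$ does not make us reference loop edges outside $G'_{r}$; this is immediate because the propagation touches only loop edges with indices in $\{p,\dots,q-1\}\subseteq\{0,\dots,r-2\}$. Since the whole argument is local to the substring and involves no wraparound, this lemma really is a corollary of the proof of Lemma~\ref{lem:xyx}, and I would present it that way --- quoting that proof for the color-propagation step rather than re-doing the case analysis in full --- perhaps also recalling the preceding Observation to stress that the complementary situation (no forbidden substring, at least two distinct entries) is the one where the loop-edge colors are pinned down and $\Flow{\mc{C}}$ can be non-empty.
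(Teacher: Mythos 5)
Your proposal is correct and matches the paper's proof, which simply notes that the argument of Lemma~\ref{lem:xyx} applies verbatim because assigning the forbidden substring's colors to the spokes forces an internal loop edge to need a fourth color; your extra care about the boundary loop edges (and the $k\ge 1$ reading of the even case) is a sensible elaboration of the same idea rather than a different route.
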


\begin{proof} This follows similarly to Lemma \ref{lem:xyx}; assigning those colors to the spokes forces one of the internal loop edges to require a fourth color. \end{proof}

\begin{lemma}\label{condition3}
Suppose $\mc{C}$ is a color string of length $m$. The string index $\ind = 3$ if and only if $\Flow{ \mc{C} }$ contains a pair of the form $[(x_{1},x_{2}, x_{3}),(x_{1},x_{2}, x_{3})]$.
\end{lemma}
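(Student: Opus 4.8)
The plan is to prove both directions by the evident cut-and-glue correspondence between proper $3$-edge-colorings of $G_{\alpha}(m;1,1,1)$ that use the spoke colors prescribed by $\mc{C}$ and proper $3$-edge-colorings of the cluster sequence $G'_{m}$ that use those same spoke colors. Recall that $G'_{m}$ is obtained from $G_{\alpha}(m;1,1,1)$ by cutting the two connecting edges $A'_{m-1}A_{0}$, $B'_{m-1}B_{0}$ and the loop edge $w_{m-1}w_{0}$ joining cluster $G_{m-1}$ to cluster $G_{0}$ (note $w_{m-1}w_{0}=w_{m-1}w_{(m-1)+1}$ really is a loop edge, since $c=1$). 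Each cut edge becomes a pair of dangling edges: one half contributes to the input triple of $G'_{m}$ (the halves at $B_{0}, A_{0}, w_{0}$, colored $x_{1},x_{2},x_{3}$ respectively in a color pattern assignment) and the other half to the output triple (the halves at $B'_{m-1}, A'_{m-1}, w_{m-1}$, colored $y_{1},y_{2},y_{3}$ respectively). Reidentifying the matching halves recovers $G_{\alpha}(m;1,1,1)$, and this reidentification is compatible with a proper $3$-edge-coloring precisely when the two halves of each of the three cut edges carry the same color, i.e. when $(x_{1},x_{2},x_{3})=(y_{1},y_{2},y_{3})$. Since $\mc{C}$ prescribes honest colors $c_{0},\dots,c_{m-1}\in\{1,2,3\}$ on the spokes, a color pattern assignment for $\mc{C}$ is exactly an honest such pair of triples; the ``up to permutation of colors'' convention plays no role here.

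For the \emph{if} direction, suppose $[(x_{1},x_{2},x_{3}),(x_{1},x_{2},x_{3})]\in\Flow{\mc{C}}$. By definition there is a proper $3$-edge-coloring of $G'_{m}$ whose spokes carry the colors of $\mc{C}$ and whose input and output triples are both colored $(x_{1},x_{2},x_{3})$. Gluing the three pairs of dangling edges back together is then color-consistent at each reidentified edge, so we obtain a proper $3$-edge-coloring of $G_{\alpha}(m;1,1,1)$ that uses the spoke colors of $\mc{C}$; hence $\ind=3$.

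For the \emph{only if} direction, suppose $\ind=3$, and fix a proper $3$-edge-coloring of $G_{\alpha}(m;1,1,1)$ in which spoke $v_{i}w_{i}$ is colored $c_{i}$ for each $i$. Cut the three edges joining $G_{m-1}$ and $G_{0}$, coloring each resulting half with the color that the cut edge carried; no vertex's incident edge set is changed, so this is a proper $3$-edge-coloring of $G'_{m}$ whose spokes are colored by $\mc{C}$. Because the two halves of each cut edge inherit the same color, the input triple and the output triple of this coloring coincide, say both equal to $(x_{1},x_{2},x_{3})$. Thus $[(x_{1},x_{2},x_{3}),(x_{1},x_{2},x_{3})]\in\Flow{\mc{C}}$, as required.

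The argument is essentially bookkeeping, and there is no deep obstacle: the only point requiring care is the faithful translation between the six dangling edges of $G'_{m}$ (together with the spoke colors) and the halves of the three cut edges of $G_{\alpha}(m;1,1,1)$ under the index conventions fixed above, together with the observation that the color-pattern-assignment formalism records exactly the boundary data needed to detect color-consistent regluing.
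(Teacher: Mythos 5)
Your proof is correct and follows essentially the same cut-and-glue argument as the paper: glue the matching input/output triples to 3-color $G_{\alpha}(m;1,1,1)$ in one direction, and clip the three edges between clusters $G_{m-1}$ and $G_{0}$ and record their colors in the other. The extra bookkeeping you supply (which dangling edges correspond to which halves, and the remark that colors are honest rather than up-to-permutation) only makes explicit what the paper leaves implicit.
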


\begin{proof} 
If $\Flow{\mc{C}}$ contains a pair of the form $[(x_{1},x_{2}, x_{3}),(x_{1},x_{2}, x_{3})]$ then we can glue together the correspondingly-colored input and output edges to produce a coloring of $G_{\alpha}(m; 1,1,1)$ using only three colors.

Conversely, if $\ind = 3$, then clipping the 3-edge-colored version of $G_{\alpha}(m; 1,1,1)$ (where the spokes are colored with $\mc{C}$) between clusters $G_{0}$ and $G_{m-1}$ produces the required element of $\Flow{\mc C}$, by recording the colors of the clipped connecting edges. Lemmas \ref{lem:xyx} and \ref{lem:badGlueLoops} assert that $\mc{C}$ does not contain the forbidden substrings.
\end{proof}

\begin{definition}
We say that two color strings ${\mc C}_{r}$ and ${\mc C}_{s}$ are \emph{functionally equivalent}, denoted ${\mc C}_{r}\sim {\mc C}_{s}$, if 
$\Flow{{\mc C}_r}=\Flow{{\mc C}_s}$.
\end{definition}

We denote the \emph{concatenation} of two (not necessarily functionally equivalent) color strings 
by juxtaposition; that is, the concatenation of ${\mc C}_{r}$ and ${\mc C}_{s}$ is represented by $\mc{C}_{r}\mc{C}_{s}$.

An easy observation is
\begin{observation}\label{obs:flows} If 
$\Flow{\mc{C}_{r}\mc{C}_{s}}\neq \varnothing$, then 
\begin{multline}\text{Flow}({\mc C}_{r}{\mc C}_{s}) =  \{[(x_{1},x_{2}, x_{3}),(z_{1},z_{2}, z_{3})]\mid\\ \exists (y_{1},y_{2}, y_{3}) \text{ s.t. }[(x_{1},x_{2}, x_{3}),(y_{1},y_{2}, y_{3})]\in \text{Flow}({\mc C}_{r}) \text{ and }
[(y_{1},y_{2}, y_{3}),(z_{1},z_{2}, z_{3})]\in \text{Flow}({\mc C}_{s}) \}.\end{multline}
\end{observation}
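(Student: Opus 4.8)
The plan is to prove the identity by the obvious gluing/restriction argument. First I would observe that, because $a=b=c=1$, the cluster sequence $G'_{r+s}$ (say, starting at index $0$) is \emph{literally} the graph obtained from $G'_{r}$ (the clusters $G_{0},\ldots,G_{r-1}$) and $G'_{s}$ (the clusters $G_{r},\ldots,G_{r+s-1}$) by identifying the three output dangling edges of $G'_{r}$ --- the output pair $B'_{r-1}B_{r}$, $A'_{r-1}A_{r}$ together with the output loop edge $w_{r-1}w_{r}$ --- with the three input dangling edges of $G'_{s}$, which are the very same three half-edges with their missing endpoints supplied. Under this identification the spokes of $G'_{r+s}$ are exactly those of $G'_{r}$ followed by those of $G'_{s}$, so coloring the spokes of $G'_{r+s}$ by $\mc{C}_{r}\mc{C}_{s}$ restricts to $\mc{C}_{r}$ on $G'_{r}$ and $\mc{C}_{s}$ on $G'_{s}$. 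I would also stress that a color pattern assignment for a string $\mc{C}$ is an honest assignment of colors (not an equivalence class up to permutation, in contrast with the color patterns of the bare $5$-pole), so ``the output triple of a coloring of $G'_{r}$ equals the input triple of a coloring of $G'_{s}$'' is a genuine equality of color-triples.

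For the inclusion $\supseteq$ I would take $[(x_{1},x_{2},x_{3}),(y_{1},y_{2},y_{3})]\in\Flow{\mc{C}_{r}}$ and $[(y_{1},y_{2},y_{3}),(z_{1},z_{2},z_{3})]\in\Flow{\mc{C}_{s}}$, witnessed by proper $3$-edge-colorings $\phi_{r}$ of $G'_{r}$ and $\phi_{s}$ of $G'_{s}$ whose spokes carry $\mc{C}_{r}$ and $\mc{C}_{s}$. Since $\phi_{r}$ and $\phi_{s}$ assign the same colors $(y_{1},y_{2},y_{3})$ to the three identified edges, their union is a well-defined coloring $\phi$ of $G'_{r+s}$, proper because each vertex of $G'_{r+s}$ lies in one of the two pieces with all its incident edges colored consistently by $\phi_{r}$ or $\phi_{s}$; then $\phi$ has spokes $\mc{C}_{r}\mc{C}_{s}$, input triple $(x_{1},x_{2},x_{3})$ and output triple $(z_{1},z_{2},z_{3})$, so $[(x_{1},x_{2},x_{3}),(z_{1},z_{2},z_{3})]\in\Flow{\mc{C}_{r}\mc{C}_{s}}$. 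For $\subseteq$ I would start from $[(x_{1},x_{2},x_{3}),(z_{1},z_{2},z_{3})]\in\Flow{\mc{C}_{r}\mc{C}_{s}}$, witnessed by some $\phi$, cut $G'_{r+s}$ between clusters $G_{r-1}$ and $G_{r}$ (splitting $B'_{r-1}B_{r}$, $A'_{r-1}A_{r}$, $w_{r-1}w_{r}$ back into dangling edges), let $(y_{1},y_{2},y_{3})$ be the colors $\phi$ gives those three edges, and note that the restrictions of $\phi$ certify $[(x_{1},x_{2},x_{3}),(y_{1},y_{2},y_{3})]\in\Flow{\mc{C}_{r}}$ and $[(y_{1},y_{2},y_{3}),(z_{1},z_{2},z_{3})]\in\Flow{\mc{C}_{s}}$.

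There is no real obstacle here; the only point needing care is the index bookkeeping that makes the gluing identification exact (this is precisely where $a=b=c=1$ enters, so that connecting and loop edges always run between consecutive indices). I would also remark that the stated hypothesis $\Flow{\mc{C}_{r}\mc{C}_{s}}\neq\varnothing$ is in fact unnecessary: the inclusion $\supseteq$ is unconditional, and in the $\subseteq$ direction it is used only to guarantee that there is a coloring of $G'_{r+s}$ to restrict, while if $\Flow{\mc{C}_{r}\mc{C}_{s}}=\varnothing$ --- for instance by Lemma~\ref{lem:xyxFlow}, when the obstruction is a forbidden substring straddling the junction --- then no matching middle triple $(y_{1},y_{2},y_{3})$ can exist and the right-hand set is empty as well.
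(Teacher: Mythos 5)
Your proposal is correct and follows essentially the same route as the paper: the paper's proof also establishes $\subseteq$ by reading off the colors of the edges joining the two cluster sequences and $\supseteq$ by gluing the two colored sequences along the matching triple $(y_{1},y_{2},y_{3})$; your write-up simply makes the identification/restriction bookkeeping explicit. Your side remark that the hypothesis $\Flow{\mc{C}_{r}\mc{C}_{s}}\neq\varnothing$ is not actually needed is also correct, though the paper keeps it in the statement.
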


\begin{proof} Certainly, if $[(x_{1},x_{2}, x_{3}),(z_{1},z_{2}, z_{3})] \in 
\Flow{\mc{C}_{r}\mc{C}_{s}}$, then the colors of the  edges connecting clusters $G_{r}$ and $G_{r+1}$ gives the required element $(y_{1}, y_{2}, y_{3})$. Conversely, if  $[(x_{1},x_{2}, x_{3}),(y_{1},y_{2}, y_{3})] \in \Flow{\mc{C}_{r}}$ and $[(y_{1},y_{2}, y_{3}),(z_{1},z_{2}, z_{3})]\in \text{Flow}({\mc C}_{s})$, then gluing together the two colored cluster sequences gives a 3-coloring of $\mc{C}_{r}\mc{C}_{s}$.\end{proof}

The basic idea we will use to show that a graph $G_{\alpha}(m; 1,1,1)$ is a snark is to look at a possible color string $\mc{C}$ and argue that we can pull out a cluster sequence from the graph and replace it with a shorter cluster string that has the same string index. Specifically:

\begin{lemma}\label{l:replaceSubstrings} 

If $\mc{C} = \mc{C}_{r}\mc{C}_{t}$, 
 $\mc{C}' = \mc{C}_{s}\mc{C}_{t}$, 
and $\mc{C}_{r} \sim \mc{C}_{s}$
, then $\chi_{\mc{C}} = \chi_{\mc{C}'}$.
\end{lemma}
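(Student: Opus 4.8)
The plan is to reduce the statement to two facts already established: that the flow of a concatenation depends only on the flows of its two pieces (Observation~\ref{obs:flows}), and that having string index $3$ is detectable from the flow alone (Lemma~\ref{condition3}). First I would unpack the hypothesis $\mc{C}_r\sim\mc{C}_s$, which by definition means exactly $\Flow{\mc{C}_r}=\Flow{\mc{C}_s}$. The goal then becomes to show that $\mc{C}=\mc{C}_r\mc{C}_t$ and $\mc{C}'=\mc{C}_s\mc{C}_t$ are themselves functionally equivalent, i.e. $\Flow{\mc{C}}=\Flow{\mc{C}'}$; once that is in hand the conclusion drops out of Lemma~\ref{condition3}.

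For the key step, observe that the right-hand side of the displayed identity in Observation~\ref{obs:flows} expresses $\Flow{\mc{C}_r\mc{C}_t}$ purely in terms of the two sets $\Flow{\mc{C}_r}$ and $\Flow{\mc{C}_t}$ --- it is the ``composition'' of those two flows, thought of as relations on color pattern assignments, matching the output triple $(y_1,y_2,y_3)$ of the first against the input triple of the second. Since $\Flow{\mc{C}_r}=\Flow{\mc{C}_s}$ and $\Flow{\mc{C}_t}$ is the same in both cases, the two compositions coincide, so $\Flow{\mc{C}_r\mc{C}_t}=\Flow{\mc{C}_s\mc{C}_t}$, i.e. $\mc{C}\sim\mc{C}'$. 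The one wrinkle is that Observation~\ref{obs:flows} is stated under the hypothesis that the concatenated flow is nonempty, so I would dispose of the degenerate case by hand: if both concatenated flows are empty there is nothing to prove, and if, say, $\Flow{\mc{C}_s\mc{C}_t}\neq\varnothing$ while $\Flow{\mc{C}_r\mc{C}_t}=\varnothing$, then a matching triple $(y_1,y_2,y_3)$ witnessing an element of $\Flow{\mc{C}_s\mc{C}_t}$ also sits between an element of $\Flow{\mc{C}_r}=\Flow{\mc{C}_s}$ and an element of $\Flow{\mc{C}_t}$, and gluing the two correspondingly colored cluster sequences along that triple (exactly as in the proof of Observation~\ref{obs:flows}) produces an element of $\Flow{\mc{C}_r\mc{C}_t}$, a contradiction. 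Hence in every case $\Flow{\mc{C}}=\Flow{\mc{C}'}$.

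Finally I would apply Lemma~\ref{condition3} to each string (to $\mc{C}$ as a string of length $r+t$ coloring $G_{\alpha}(r+t;1,1,1)$, and to $\mc{C}'$ as a string of length $s+t$ coloring $G_{\alpha}(s+t;1,1,1)$): $\chi_{\mc{C}}=3$ iff $\Flow{\mc{C}}$ contains a diagonal pair $[(x_1,x_2,x_3),(x_1,x_2,x_3)]$, and likewise $\chi_{\mc{C}'}=3$ iff $\Flow{\mc{C}'}$ contains such a pair. Since $\Flow{\mc{C}}=\Flow{\mc{C}'}$ these two conditions are equivalent, so $\chi_{\mc{C}}=3\iff\chi_{\mc{C}'}=3$. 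Because any cubic graph has chromatic index $3$ or $4$, both string indices lie in $\{3,4\}$, and therefore $\chi_{\mc{C}}=\chi_{\mc{C}'}$. The only genuine obstacle is the bookkeeping around the empty-flow edge case in the previous paragraph; the rest is a formal manipulation of flows followed by reading off the diagonal criterion.
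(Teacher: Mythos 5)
Your proposal is correct, and it rests on the same two ingredients the paper uses (Observation~\ref{obs:flows} and Lemma~\ref{condition3}), but it organizes them differently. The paper argues the contrapositive: assuming $\chi_{\mc{C}}=3$ while $\chi_{\mc{C}'}=4$, it extracts from the diagonal element of $\Flow{\mc{C}_r\mc{C}_t}$ a pattern $[(x_1,x_2,x_3),(y_1,y_2,y_3)]\in\Flow{\mc{C}_r}$ that cannot lie in $\Flow{\mc{C}_s}$ (else gluing with the matching element of $\Flow{\mc{C}_t}$ would 3-color $\mc{C}'$), contradicting $\mc{C}_r\sim\mc{C}_s$. You instead prove the stronger intermediate fact that concatenation respects functional equivalence, $\Flow{\mc{C}_r\mc{C}_t}=\Flow{\mc{C}_s\mc{C}_t}$, by viewing both sides as the relational composition of the pieces' flows, and then read off the conclusion from the diagonal criterion of Lemma~\ref{condition3}. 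Your version buys a reusable statement ($\sim$ is a congruence for concatenation on the left factor) and, to your credit, explicitly disposes of the empty-flow edge case that the stated hypothesis of Observation~\ref{obs:flows} leaves open, using the gluing direction of its proof; the paper's contrapositive sidesteps that case without comment. Both arguments share the paper's implicit convention that a string index takes only the values 3 or 4, so your final step ($\chi_{\mc{C}}=3\iff\chi_{\mc{C}'}=3$ implies equality) is on the same footing as the paper's ``without loss of generality'' step.
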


\begin{proof}

Suppose 
that $\mc{C} = \mc{C}_{r}\mc{C}_{t}$, 
 $\mc{C}' = \mc{C}_{s}\mc{C}_{t}$, 
and  $\chi_{\mc{C}} \neq \chi_{\mc{C'}}$. Without loss of generality, suppose $\chi_{\mc{C}} = 3$ and $\chi_{\mc{C'}} = 4$.  
Since $\chi_{\mc{C}'} = 4$, 
$\Flow{\mc{C}_{s}\mc{C}_{t}}$ does not contain an element of the form $[(x_{1}, x_{2}, x_{3}), (x_{1}, x_{2}, x_{3})]$ (by Lemma \ref{condition3})
but since $\chi_{\mc{C}} = 3$, 
$\Flow{\mc{C}_{r}\mc{C}_{t}}$ 
does contain such an element. Hence by Observation \ref{obs:flows}, there exists a triple $(y_{1}, y_{2}, y_{3})$ so that $[(x_{1}, x_{2}, x_{3}), (y_{1}, y_{2}, y_{3})] \in \Flow{\mc{C}_{r}}$ and $[(y_{1}, y_{2}, y_{3}), (x_{1}, x_{2}, x_{3})] \in \Flow{\mc{C}_{t}}$. But since 
$\Flow{\mc{C}_{s}\mc{C}_{t}}$ doesn't contain $[(x_{1}, x_{2}, x_{3}), (x_{1}, x_{2}, x_{3})]$, it follows that $[(x_{1}, x_{2}, x_{3}), (y_{1}, y_{2}, y_{3})] \notin \Flow{\mc{C}_{s}}$. Therefore $\mc{C}_{r}$ and $\mc{C}_{s}$ are not functionally equivalent.
\end{proof}

\begin{observation} If $\mc{C}_{r} \sim \mc{C}_{s}$, then the two color strings formed by reversing the entries are also functionally equivalent.\end{observation}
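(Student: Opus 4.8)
The plan is to show that reversing a color string induces one fixed involution on flows --- the one that simply interchanges the input triple with the output triple --- so that $\Flow{\bar{\mc C}}$ is completely determined by $\Flow{\mc C}$; the statement then follows instantly, since $\Flow{\mc C_r}=\Flow{\mc C_s}$ forces the same transformation applied to both to give the same set.

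First I would make precise the geometric meaning of reversal: passing from $\mc C=c_0c_1\cdots c_{r-1}$ to $\bar{\mc C}=c_{r-1}\cdots c_1c_0$ amounts to reflecting the cluster sequence left--to--right. Concretely, in the cluster sequence defining $\Flow{\bar{\mc C}}$, relabel the clusters by $K_j:=G_{r-1-j}$ and, inside each cluster, apply the reflective automorphism $\phi$ of the $5$-pole $G$ visible in Figure~\ref{fig:G34no9-multipole} (reflection across the vertical axis through $v$), which exchanges the dangling edges $A\leftrightarrow A'$ and $B\leftrightarrow B'$ while fixing $v$, $w$ and the spoke edge. One then checks three things: the $\alpha$-connection between $K_j$ and $K_{j+1}$ is again a standard $\alpha$-connection (the direction reversal caused by reading the clusters backwards is undone by $\phi$, since $\phi$ carries $A'$ to $A$ and $B'$ to $B$); the spoke of $K_j$ now carries color $c_j$, so the new spoke string is exactly $\mc C$; and the old output pair together with the output loop edge of the $\bar{\mc C}$-sequence becomes the new input pair and input loop edge, while the old input data becomes the new output data, with the ``$B$-edge, then $A$-edge, then loop edge'' coordinate order preserved throughout. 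Hence the relabeled object is precisely the cluster sequence defining $\Flow{\mc C}$.

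Consequently a proper $3$-edge-coloring of the $\bar{\mc C}$-cluster-sequence realizing a color pattern assignment $[(p_1,p_2,p_3),(q_1,q_2,q_3)]$ is carried bijectively to a proper $3$-edge-coloring of the $\mc C$-cluster-sequence realizing $[(q_1,q_2,q_3),(p_1,p_2,p_3)]$, so
\[
\Flow{\bar{\mc C}}=\bigl\{\,[(q_1,q_2,q_3),(p_1,p_2,p_3)]\ \bigm|\ [(p_1,p_2,p_3),(q_1,q_2,q_3)]\in\Flow{\mc C}\,\bigr\}.
\]
In particular $\Flow{\bar{\mc C}}$ is a function of $\Flow{\mc C}$ alone, so if $\mc C_r\sim\mc C_s$, i.e.\ $\Flow{\mc C_r}=\Flow{\mc C_s}$, then applying this identity to each side yields $\Flow{\bar{\mc C_r}}=\Flow{\bar{\mc C_s}}$, that is, $\bar{\mc C_r}\sim\bar{\mc C_s}$.

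I expect the only genuine work to be the bookkeeping in the second paragraph: verifying that the $5$-pole $G$ really admits the reflective automorphism $\phi$, that ``read the clusters in reverse order'' composed with ``apply $\phi$ in each cluster'' turns backward $\alpha$-connections into forward ones, and that the conventions pinning down the order of the three coordinates of a color pattern assignment are respected, so that the induced map on flows is exactly the triple-swap and not the triple-swap composed with some further permutation of coordinates. Everything after that is formal. (As a sanity check, the map is an involution, consistent with reversing a string twice, and with $\Flow{\cdot}$ recording no length data, nothing prevents $\bar{\mc C_r}$ and $\bar{\mc C_s}$ from having different lengths $r\neq s$.)
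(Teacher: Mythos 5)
Your proof is correct; the paper states this observation without any proof, so you are supplying an argument it omits. Your reduction of the statement to the existence of the mirror automorphism $\phi$ of the 5-pole $G$ is sound, and that automorphism really does exist: the map fixing $v$, $x_1$, $x_5$ and swapping $A\leftrightarrow A'$, $B\leftrightarrow B'$, $x_2\leftrightarrow x_4$, $x_3\leftrightarrow x_6$ preserves all adjacencies of $G$ (this is also visible in Figure \ref{fig:G34No9}, whose lift is drawn with dihedral, not just cyclic, symmetry), and it sends $A$-type to $A$-type and $B$-type to $B$-type dangling edges, so the induced map on color pattern assignments is exactly the swap $[(p_1,p_2,p_3),(q_1,q_2,q_3)]\mapsto[(q_1,q_2,q_3),(p_1,p_2,p_3)]$ with no extra coordinate permutation. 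You are also right that this is the genuine content of the statement: reading the clusters backwards reverses the orientation of the $\alpha$-connections, and without some such symmetry one could only conclude that $\Flow{\bar{\mc{C}}}$ is determined by the string, not by $\Flow{\mc{C}}$ alone. An equivalent way to discharge this step without exhibiting the graph automorphism is to observe from Figure \ref{fig:G34No9-patterns} that the set of admissible color patterns of a single cluster is closed under exchanging input and output pairs ({\sf TwoLeft}$\leftrightarrow${\sf TwoRight}, {\sf AltTop} and {\sf AltBot} to themselves), i.e.\ $\Flow{x}$ is symmetric as a relation, and then apply Observation \ref{obs:flows} to the factorization of $\mc{C}$ into single clusters; either route gives $\Flow{\bar{\mc{C}}}=\{[(q_1,q_2,q_3),(p_1,p_2,p_3)] : [(p_1,p_2,p_3),(q_1,q_2,q_3)]\in\Flow{\mc{C}}\}$, from which the conclusion follows exactly as you say.
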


\begin{lemma}[Reductions Lemma]\label{collapses} The following color strings are functionally equivalent.
\be
\item $xxx\sim x$\label{i:xxx}
\item $xyzzyy\sim xyzz$\label{i:xyzzyy}
\item $xyzxyy\sim xyzx$ \label{i:xyzxyy}
\item $xyzzyxx\sim xyzzy$  \label{i:xyzzyxx}
\item $xyzzyxz\sim xyzxy$ \label{i:xyzzyxz}
\item $(xyz)^{4} \sim (xyz)^{2}$ \label{xyz4}
\ee
\end{lemma}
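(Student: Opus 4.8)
The plan is to verify each of the six functional equivalences by a direct but organized computation of the flows involved, using the machinery already built up: the four admissible color patterns of Figure~\ref{fig:G34No9-patterns} (which I will index as \textsf{TwoLeft}, \textsf{TwoRight}, \textsf{AltTop}, \textsf{AltBot}), Observation~\ref{obs:flows} for composing flows along concatenations, and Lemmas~\ref{lem:xyx}, \ref{lem:xyxFlow}, \ref{condition3} to rule out forbidden substrings and to interpret the results. For each item, the strategy is the same: build up the flow of the longer string by starting from the $3^2=9$ possible input pairs $(x_1,x_2)$, propagating the forced loop-edge colors through the string (the ``input/output loop edge'' colors $x_3,y_3$ are determined once two consecutive spoke entries differ, by the Observation following Lemma~\ref{lem:xyxFlow}), and at each cluster selecting among the admissible patterns whose input pair matches the incoming colors and whose spoke color equals the prescribed entry of $\mc{C}$. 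One records the surviving output triples. The same is done for the shorter string, and one checks the two resulting sets of color-pattern-assignment pairs coincide. Because each individual cluster has only four admissible patterns and the strings here have length at most $12$, each verification is a finite (and quite small) case check; in practice the loop-edge propagation kills most branches immediately.

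For the concrete items: \eqref{i:xxx} $xxx\sim x$ is essentially the statement that repeating a monochromatic spoke color is idempotent --- here the loop edges are not forced, so one checks that the set of $[(x_1,x_2,x_3),(y_1,y_2,y_3)]$ realizable across three $x$-spoke clusters equals that across one, which follows because passing through an $x$-colored cluster is (on admissible patterns) an involution-like operation on the input/output data whose square is the identity on the reachable set. Items \eqref{i:xyzzyy}--\eqref{i:xyzzyxz} are the heart of the lemma: for $xyzzyy\sim xyzz$ I would first note that the string $xyzzyy$ was already analyzed in the text (its flow is displayed just before Lemma~\ref{lem:xyxFlow}, and Table~\ref{t:xyzzyy} contains the details), so the work reduces to computing $\Flow{xyzz}$ and checking the two coincide; the suffix $yy$ (two equal spokes after a $z$) acts trivially on the already-determined output data, which is the conceptual reason for the collapse. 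The strings $xyzxyy$, $xyzzyxx$, $xyzzyxz$ are handled identically: one checks that the final repeated or ``returning'' block $yy$, $xx$, or $xz$ induces on the flow the same transformation as the corresponding shorter tail, so appending it to the common prefix gives the same flow. Item \eqref{xyz4}, $(xyz)^4\sim(xyz)^2$, is proved by first computing the flow $F:=\Flow{xyz}$ as a relation on triples, then observing that composition with $F$ (in the sense of Observation~\ref{obs:flows}) is periodic with period dividing $2$ on the reachable triples --- i.e. $F^{2}\circ F^{2}=F^{2}$ as relations --- so $(xyz)^{4}$ and $(xyz)^{2}$ have equal flow; concretely $F$ permutes the relevant triples by a transformation of order $2$ (after discarding triples that die), so its square stabilizes, giving $F^{4}=F^{2}$.

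The main obstacle I anticipate is purely bookkeeping rather than conceptual: one must be scrupulous that the ``color pattern assignment'' triples $(x_1,x_2,x_3)$ are read with the correct top/bottom and loop-edge conventions fixed in the definitions before Figure~\ref{fig:seqOfClusters}, and that when composing one matches the \emph{entire} triple, including the loop-edge coordinate $x_3$ (this is exactly what makes Lemma~\ref{condition3} work and is easy to slip on). It is also essential to first invoke Lemma~\ref{lem:xyxFlow} to confirm that none of the six strings --- long or short --- contains a forbidden substring $x\,\underbrace{y\cdots y}_{2k+1}\,x$ or $x\,\underbrace{y\cdots y}_{2k}\,z$, so that all the flows under discussion are genuinely nonempty and the loop-edge colors propagate consistently; a quick inspection shows each string here is ``safe'' in this sense (every maximal run of a repeated letter that is bracketed by distinct letters has the right parity), so this preliminary check is short. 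Given those guardrails, each of the six equivalences reduces to a transparent finite verification, which I would present compactly --- for the longer strings, by exhibiting the flow as an explicit small set (as already done for $xyzzyy$) and pointing to the accompanying tables.
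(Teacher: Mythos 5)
Your plan is correct and is essentially the paper's own proof: the paper likewise first records (Table \ref{tab:loopColors}) that the input/output loop-edge colors of each pair of strings agree and can then be suppressed, and verifies each equivalence by exactly the finite cluster-by-cluster propagation of the nine candidate input pairs through the admissible patterns, tabulating only $xxx\sim x$ and $xyzzyy\sim xyzz$ explicitly (Tables \ref{t:xxx} and \ref{t:xyzzyy}) and declaring the remaining cases similar. The only caveat is that your structural glosses should not replace the check itself---for instance a single $x$-spoke cluster is not literally an involution as a relation, since from $(y,z)$ one reaches both $(y,z)$ and $(z,y)$ after two such clusters---but the finite verification you actually propose yields $\Flow{xxx}=\Flow{x}$ (and the other five identities) all the same.
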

\begin{proof}
We first determine what the possible colors for the loop input and output edges are, based on the spoke assignments, and observe that in all cases, the input and output colors for the loop edges for the pairs of strings can be chosen to be the same; see Table \ref{tab:loopColors}. Therefore, we can suppress recording the colors of the loop edges in later analysis.

\begin{table}[htbp]
\caption{Possible loop colors for the input and output loop edges for strings in \autoref{collapses}.}
\label{tab:loopColors}
\begin{tabular}{c |c c || c| c c}
string & input loop color & output loop color& string & input loop color & output loop color\\ \hline
$xxx$ &$y$  &$z$& $x$ &$y$&$z$ \\
$xxx$ & $z$ & $y$& $x$ &$z$&$y$ \\
$xyzzyy$&$y$&$x$& $xyzz$ &$y$&$x$ \\
$xyzxyy$ &$y$&$z$ & $xyzx$ &$y$&$z$\\
$xyzzyxx$ &$y$& $z$&$xyzzy$ &$y$&$z$\\
$xyzzyxz$ & $y$& $x$ & $xyzxy$ & $y$ & $x$ \\
$(xyz)^{4}$ &$z$&$y$ &  $(xyz)^{2}$ &$z$ & $y$
\end{tabular}
\end{table}

Each case involves seeing which of the ordered pairs of $\{(x,x),\allowbreak (y,y),\allowbreak (z,z),\allowbreak (x,y),\allowbreak (y,x),\allowbreak (x,z),\allowbreak (z,x),\allowbreak (y,z),\allowbreak (z,y)\}$ occur as possible colors for the input pair, and what the induced colors on the output pairs must be, when possible. 
For a cluster with spoke edge colored $x$, input $(x,y)$  is only seen in the {\sf AltTop} pattern, so the only possible coloring of the output edges for that cluster is $(x,z)$, while $(x,x)$ corresponds to the {\sf TwoLeft} pattern, which has two possible outputs, $(y,z)$ and $(z,y)$. For compactness in our presentation, we will preserve the order of alternatives in subsequent columns, unless they collapse; e.g., if $(y,z),(z,y)$ appears to the left of a column with spoke color $y$, the first pair gives output color $(y,x)$ and the second gives output color $(x,y)$ and  we write ``$(y,x),(z,y)$'' to indicate this. On the other hand if $(y,z),(z,y)$ appears to the left of a column with spoke color $x$, then both input colors yield $(x,x)$ as an output color, and so we simply write $(x,x)$ in the succeeding column. If a pair of input colors is impossible given a certain spoke color (e.g., $(x,x)$ is not an admissible labeling of the input edges of a cluster with spoke edge colored $z$)  we write ``---'' for its output position. 

We present analyses of two specific reductions here, in Tables \ref{t:xxx} and \ref{t:xyzzyy}, first showing that $xxx \sim x$ and then that $xyzzyy \sim xyyz$; the proof of the remaining reductions is (tediously) similar. We simplify the presentation of the analysis by storing the analysis in an array as follows: the first column of the array contains all possible assignments of colors for the input pairs. Subsequent columns are labelled with the spoke color of the corresponding cluster,  and entries in each column correspond to the possible output pairs that can be obtained from the input listed at the left after traversing that cluster. Loop colors are suppressed entirely.
\begin{table}[htp]
\caption{The color string $xxx$ is functionally equivalent to $x$. 
}
\begin{center}
\begin{tabular}{c|a|c|a}
\backslashbox{inputs}{spoke colors}& $x$&$x$&$x$\\ \hline
$(x,x)$&$(y,z)$, $(z,y)$&$(x,x)$&$(y,z)$, $(z,y)$\\ 
$(y,y)$&---&&\\
$(z,z)$&---&&\\
$(x,y)$&$(x,z)$&$(x,y)$&$(x,z)$\\ 
$(y,x)$&$(z,x)$&$(y,x)$&$(z,x)$\\ 
$(x,z)$&$(x,y)$&$(x,z)$&$(x,y)$\\ 
$(z,x)$&$(y,x)$&$(z,x)$&$(y,x)$\\ 
$(y,z)$&$(x,x)$&$(y,z)$, $(z,y)$&$(x,x)$\\ 
$(z,y)$&$(x,x)$&$(y,z)$, $(z,y)$&$(x,x)$
\end{tabular}
\end{center}
\label{t:xxx}
\end{table}%
Observe that in Table \ref{t:xxx} the first and last columns (highlighted) match, and since the strings have the same possible colors for the loop edges, shown in the first two rows of Table \ref{tab:loopColors},  $\Flow{xxx}=\Flow{x}$. 

In Table \ref{t:xyzzyy}, observe that the second to last and last columns of the array agree, and the loop colors from the 3rd row of Table \ref{tab:loopColors} also agree, so $\Flow{xyzzyy}=\Flow{xyzz}$, and thus $xyzzyy$ and $xyzz$ are functionally equivalent.

\begin{table}[htp]
\caption{The substring $xyzzyy$ is functionally equivalent to $xyzz$.
}
\begin{center}
\resizebox{\columnwidth}{!}{%
\begin{tabular}{c|c|c|c|a|c|a}
\backslashbox{inputs}{spoke colors}& $x$&$y$&$z$&$z$&$y$&$y$\\\hline
$(x,x)$&$(y,z)$, $(z,y)$&$(y,x)$, $(x,y)$&$(z,z)$&$(x,y)$ ,  $(y,x)$&$(z,y)$, $(y,z)$&$(x,y)$, $(y,x)$\\
$(y,y)$&---&&&&&\\
$(z,z)$&---&&&&&\\
$(x,y)$&$(x,z)$&$(y,y)$&---&&&\\
$(y,x)$&$(z,x)$&$(y,y)$&---&&&\\
$(x,z)$&$(x,y)$&$(z,y)$&$(z,x)$&$(z,y)$&$(x,y)$&$(z,y)$\\
$(z,x)$&$(y,x)$&$(y,z)$&$(x,z)$&$(y,z)$&$(y,x)$&$(y,z)$\\
$(y,z)$&$(x,x)$&---&&&&\\
$(z,y)$&$(x,x)$&---&&&&
\end{tabular}}
\end{center}
\label{t:xyzzyy}
\end{table}%
\end{proof}

\begin{corollary}In the color string $\mc{C}$ any subsequence $xx\cdots x$ of odd length is functionally equivalent to a single $x$.
\label{lemxxx}
\begin{proof} This follows immediately by repeated application of reduction \ref{i:xxx} of  \autoref{collapses}.
\end{proof}
\end{corollary}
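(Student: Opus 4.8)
The plan is to prove, by induction on $k\geq 0$, the clean statement that $\underbrace{x\cdots x}_{2k+1}\sim x$; once this is in hand, the corollary as phrased follows because functional equivalence is a congruence for concatenation, so an odd run of $x$'s occurring as a substring of $\mc{C}$ may be replaced by a single $x$ without altering $\Flow{\mc{C}}$, hence (by Lemma~\ref{l:replaceSubstrings}) without altering $\ind$.

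For the base cases, $k=0$ is the tautology $x\sim x$ and $k=1$ is precisely reduction~\ref{i:xxx} of the Reductions Lemma~\ref{collapses}, i.e.\ $xxx\sim x$, whose two flows were shown to coincide in Table~\ref{t:xxx}. For the inductive step ($k\geq 2$) I would write $\underbrace{x\cdots x}_{2k+1}=(xxx)\,\underbrace{x\cdots x}_{2k-2}$, apply $xxx\sim x$ to the leading three symbols to obtain $\underbrace{x\cdots x}_{2k+1}\sim x\,\underbrace{x\cdots x}_{2k-2}=\underbrace{x\cdots x}_{2k-1}$, and then invoke the inductive hypothesis $\underbrace{x\cdots x}_{2k-1}\sim x$. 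Every string appearing here has nonempty flow (each is functionally equivalent to $x$, and $\Flow{x}\neq\varnothing$ since admissible color patterns exist), so Observation~\ref{obs:flows} applies exactly as stated.

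The only delicate point --- and the closest thing to an obstacle --- is the congruence claim: $\mc{C}_r\sim\mc{C}_s$ implies $\mc{C}_r\mc{C}_t\sim\mc{C}_s\mc{C}_t$ (and symmetrically $\mc{C}_t\mc{C}_r\sim\mc{C}_t\mc{C}_s$). This is immediate from Observation~\ref{obs:flows}: the flow of a concatenation depends only on the flows of the two pieces, through the existence of a compatible intermediate triple $(y_1,y_2,y_3)$, so substituting a functionally equivalent piece does not change the outcome. If one prefers to avoid even this remark, one can run the argument directly on the ambient length-$m$ string: repeatedly invoke Lemma~\ref{l:replaceSubstrings} with $\mc{C}_r=xxx$, $\mc{C}_s=x$ (functionally equivalent by reduction~\ref{i:xxx}) and $\mc{C}_t$ the remainder, each application shortening the odd run by two and leaving $\ind$ fixed, until a single $x$ remains. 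I do not anticipate real difficulty here --- modulo the single computation $xxx\sim x$ already carried out in \autoref{collapses}, this corollary is pure bookkeeping.
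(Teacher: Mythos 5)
Your proposal is correct and follows the paper's own route: the paper proves this corollary exactly by repeated application of reduction \ref{i:xxx} of \autoref{collapses}, and your induction on the run length together with the concatenation/congruence remark (via Observation \ref{obs:flows} or directly via Lemma \ref{l:replaceSubstrings}) is just a careful formalization of that same "repeated application." No gap; the extra bookkeeping you supply is exactly what the paper leaves implicit.
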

\begin{lemma}\label{lemxyz}If color string $\mc{C}$ has odd length and has no consecutive odd sequences of the same color of length more than 1, then $\mc C$ must contain a color substring of the form $xyz$, $xy \cdots z$ or $x\cdots yz$, up to permutation of the $x,y,z$. 
\end{lemma}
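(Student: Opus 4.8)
The plan is to work entirely at the level of the \emph{block decomposition} of $\mc{C}$: write $\mc{C}$ (read cyclically) as $B_1 B_2 \cdots B_k$, where each $B_j$ is a maximal constant run of some color $a_j$ and $a_j\neq a_{j+1}$ (indices mod $k$). The hypothesis ``no consecutive odd sequence of the same color of length more than $1$'' says exactly that each $|B_j|$ is either $1$ or even. I will also use that all three colors occur in $\mc{C}$ — this is automatic in the intended application, since Lemma~\ref{lemxyz} is invoked for a string with $\ind=3$ and then Lemma~\ref{lemOddx} guarantees each color appears an odd (hence positive) number of times; equivalently, at least three distinct values occur among $a_1,\dots,a_k$, so $k\geq 3$.

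First I would show that some three consecutive blocks carry three distinct colors. If not, then for every $j$ we have $a_{j+1}\in\{a_{j-1},a_j\}$, and since $a_{j+1}\neq a_j$ this forces $a_{j+1}=a_{j-1}$ for all $j$; hence the color sequence is $2$-periodic, so only two colors occur among the $a_j$, contradicting that all three colors occur. (If $k$ is odd this $2$-periodicity is already self-contradictory, but we do not need that.) So fix $j$ with $a_{j-1},a_j,a_{j+1}$ pairwise distinct, set $x=a_{j-1}$, $y=a_j$, $z=a_{j+1}$, and look at the substring $x^{|B_{j-1}|}\,y^{|B_j|}\,z^{|B_{j+1}|}$ of $\mc{C}$, with $x,y,z$ distinct.

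Next I would case-split on $|B_{j-1}|$ and $|B_j|$, each of which is $1$ or even. If $|B_{j-1}|=|B_j|=1$, the first three symbols of the substring are $x,y,z$, a substring of type $xyz$. If $|B_{j-1}|=1$ and $|B_j|\geq 2$, the substring begins $x\,\underbrace{y\cdots y}_{|B_j|\text{ even}}\,z$, a substring of type ``$xy\cdots z$''. If $|B_{j-1}|\geq 2$ and $|B_j|=1$, dually the substring contains $\underbrace{x\cdots x}_{|B_{j-1}|\text{ even}}\,y\,z$, of type ``$x\cdots yz$''. Finally if $|B_{j-1}|,|B_j|\geq 2$, discarding all but one $x$ yields a substring $x\,(y\cdots y)\,z$ of type ``$xy\cdots z$''. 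In every case $\mc{C}$ contains a substring of one of the three advertised forms, which proves the lemma.

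The only genuinely delicate point is the clause ``all three colors occur'': without it the statement is false (for instance the cyclic string $x^2y^2xy^2$ of odd length $7$ has every run of length $1$ or even yet contains none of the three forms, only $yxy$), so I would either add this to the hypotheses or, as above, note that the surrounding argument supplies it via Lemma~\ref{lemOddx}. Everything else is a short finite case analysis on run lengths, so I do not expect any real obstacle beyond organizing the four cases cleanly.
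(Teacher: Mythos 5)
Your proof is correct, and it is in fact tighter than the paper's own two--line argument. The paper argues by contradiction: if there is no cyclic occurrence of three consecutive distinct colors, then ``all runs must be of length~2,'' so each color appears an even number of times, contradicting Lemma~\ref{lemOddx}. Your cyclic block decomposition (no three consecutive blocks of pairwise distinct colors forces $a_{j+1}=a_{j-1}$ for all $j$, hence $2$-periodicity and at most two colors) reaches the same dichotomy without the paper's unjustified intermediate step: a singleton run flanked on both sides by the \emph{same} color (the $xyx$ configuration), or an even run of length $\geq 4$, is perfectly compatible with the lemma's hypotheses and with the absence of $xyz$, so ``all runs have length~$2$'' does not follow as stated. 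You are also right that the statement is false as written without assuming all three colors occur; your counterexample $x^2y^2xy^2$ is valid, and the paper's proof silently imports the missing hypothesis by citing Lemma~\ref{lemOddx}, whose assumption $\chi_{\mc{C}}=3$ is not part of Lemma~\ref{lemxyz}. In the application inside Theorem~\ref{thm:G34no9snark} this is harmless, exactly as you observe, since a string omitting a color has string index $4$ anyway.

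One caveat concerns what the three forms mean. You read ``$xy\cdots z$'' and ``$x\cdots yz$'' as substrings $xy^{k}z$ and $x^{k}yz$; the paper's parallel notation in Corollary~\ref{lem:badGlueLoops} and the phrase ``possibly after reindexing'' in the proof of Theorem~\ref{thm:G34no9snark} suggest the intended meaning is a \emph{cyclic} occurrence of three consecutive distinct entries, the last two forms describing the wrap-around positions. Under that stronger reading even the amended statement fails: the cyclic string $y^2xy^2z^2$ (all three colors present), or $y^2xy^2z^2yz^2x^2zx^2$ (each color occurring an odd number of times), satisfies the hypotheses yet has no three consecutive distinct entries, because each singleton run sits in an $xyx$ configuration; excluding that configuration requires $\chi_{\mc{C}}=3$ via Lemma~\ref{lem:xyx} and Corollary~\ref{lem:badGlueLoops}. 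Under your reading the lemma is exactly what you prove, and since your extra outputs $xy^{\mathrm{even}}z$ and $x^{\mathrm{even}}yz$ force $\chi_{\mc{C}}=4$ by Lemma~\ref{lem:xyx}, the downstream use of the lemma is unaffected either way; so this is a question of aligning the statement with its intended use, not a flaw in your argument.
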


\begin{proof}
If $\mc{C}$ has no same-color subsequences of odd length larger than 1, then it must contain consecutive subsequences of only length 1 or 2. If there is no sequence $xyz$ (up to permutation of the $x, y, z$), taken cyclically, then all runs must be of length 2, so there are an even number of $x$, $y$, and $z$ entries in $\mc{C}$, which contradicts Lemma \ref{lemOddx}.
\end{proof}


Finally, we can prove Theorem \ref{thm:G34no9snark}.

\begin{proof}[Proof of Theorem \ref{thm:G34no9snark}]
We proceed by induction on odd $m$. Observe that to prove Theorem \ref{thm:G34no9snark}, it suffices to show that $\ind = 4$ for all color strings $\mc{C}$ of length $m$.

{\bf Base Cases: $m = 3, 5, 7,9$}

For $m = 3, 5, 7, 9$, we (via computer) listed all possible color strings, beginning with $123$, of odd length up to 9. By Lemma \ref{lemxyz} we then removed all strings that contained the forbidden substrings of the form $xyx$, taken cyclically (i.e., we eliminated $xy\cdots y$ and $x\cdots xy$ as well).

Next, we iteratively applied the substring reductions $xxx$ to $x$, $xyzzyy$ to $xyzz$, and $xyzxyy$ to $xyzx$ until the resulting strings were fixed, eliminating strings that contained $xyx$ and $xyyz$ after each reduction, until we had all valid strings of length at most 9 that could not be reduced.
There are two irreducible color strings ${\mc C}$, with $m$ odd and at most 9 up to assignment of colors:  $123$ and $123123123$. It is straightforward to show that $\chi_{123} = 4$ and $\chi_{123123123}= 4$. To see this requires inspection of the flows corresponding to these color strings. For string 123, the input loop color must be 2, and the output loop color must be 2, and likewise for $123123123$. Therefore, we must analyze possible colors for the input and output pairs. First, consider the constraints imposed on the edge colorings shown in Table \ref{t:xyz}.
\begin{table}[htp]
\caption{Analysis of the admissible color assignments for the input and output pairs for $123$. }
\begin{center}
\begin{tabular}{c|c|c|c}
\backslashbox{inputs}{dangling edges}&1&2&3\\
\hline
\rowcolor{lightgray}(1,1)&(2,3), (3,2)&(2,1), (1,2)& (3,3)\\
(2,2)&---&&\\
(3,3)&---&&\\
(1,2)&(1,3)&(2,2)&---\\
(2,1)&(3,1)&(2,2)&---\\
\rowcolor{lightgray}(1,3)&(1,2)&(3,2)&(3,1)\\
\rowcolor{lightgray}(3,1)&(2,1)&(2,3)&(1,3)\\
(2,3)&(1,1)&---&\\
(3,2)&(1,1)&---&
\end{tabular}
\end{center}
\label{t:xyz}
\end{table}%
We immediately 
conclude that $\chi_{123} = 4$, since \Flow{123} does not contain any input and output pairs with equal entries. In Table \ref{t:xyzxyzxyz} we consider the effect of repeated application of $123$, and see that there are only two admissible assignments to the input pair, but again, they do not agree with the output pair. Therefore, $\chi_{123123123} = 4$. Since all other possible color strings for $m = 3, 5, 7, 9$ can be reduced via functionally equivalent reductions, which preserve string index, to the string $123$, it follows that $\chi= 4$ for all odd $m \leq 9$. 

\begin{table}[htp]
\caption{Analysis of the admissible color assignments to the input and output pairs of edges for the color string $123123123$. 
}
\begin{center}
\begin{tabular}{c|c|c|c}
\backslashbox{inputs}{blocks}&123&123&123\\
\hline
(1,1)& (3,3)&---&\\
\rowcolor{lightgray}(1,3)&(3,1)&(1,3)&(3,1)\\
\rowcolor{lightgray}(3,1)&(1,3)&(3,1)&(1,3)
\end{tabular}
\end{center}
\label{t:xyzxyzxyz}
\end{table}%

{\bf Induction Hypothesis}: Assume that every color string $\mc{C}$ of odd length at most $m$ has
$\chi_{\mc{C}} = 4$.  

Now let $\mc{C}$ be any color string of odd length $m+2$, for $m \geq 9$.
\begin{enumerate}[{\bf {Case} 1:}]
\item  Suppose that $\mc{C}$ contains a substring consisting of an odd number of consecutive repeated entries (say $x\cdots x$); then we can replace that substring with a single entry (say $x$) forming a smaller color string $\mc{C'}$ with odd length $m' \leq m$. By Corollary \ref{lemxxx} and Lemma \ref{l:replaceSubstrings}, we conclude that $\chi_{\mc{C}} = \chi_{\mc{C'}}$. By the induction hypothesis, $\chi_{\mc{C'}}= 4$, so it follows that $\chi_{\mc{C}} = 4$. 

\item Suppose next that $\mc{C}$ does not contain any consecutive same-color substrings of odd length 3 or more.  By Lemma \ref{lemxyz} we may assume (possibly after reindexing) that $\mc{C}$
contains the string $(xyz)^k$  for some $k\geq 1$. 

If $k\geq 4$, by reduction (\ref{xyz4}) of Lemma \ref{collapses} we can replace $(xyz)^k$ by the substring $(xyz)^{k-2}$ to yield a new string $\mc{C'}$ of length $m+2-6 \leq m$. By the induction hypothesis, $\chi_{\mc{C'}} = 4$ and since $\mc{C} \sim \mc{C'}$, it follows that $\chi_{\mc{C}} = 4$.  

\item Finally, suppose that $(xyz)^k$ ocurrs only for $1\leq k\leq 3$.  
Cyclically shifting if necessary, we can assume that 
\[ \mc{C} = (xyz)^{k-1}\underbrace{x\ y\ z \ c_{3k+1}\ c_{3k+2} \ldots c_{m+2}}_{\text{length} \geq 5}.\]

Recall $m\geq 9$. 
We analyze what $x\ y\ z \ c_{3k+1}\ c_{3k+2} \ldots c_{m+2}$ can look like. Recall that subsequences of the form $xyx$ (or in this case, $yzy$) force the string index to equal 4. If we analyze all ways to choose $c_{3k+1}$ and $c_{3k+2}$, observe that $c_{3k+1} = x$ or $z$.

There are thus six possible subsequences of length $5$ containing $xyz$, three of which can possibly be extended further:
\bi
\item $xyzxx$. If $m= 11$, then $\mc{C} = (xyz)^2xyzxx$ which contains the subsequence $x\cdots xx$ forcing $\chi_{\mc{C}} = 4$. If $m >11$ then we can extend $xyzxx$ further.
\item $xyzxy$.  If $m= 11$, then $\mc{C} = (xyz)^2xyzxy$ which contains the subsequence $x \cdots xy$ forcing $\chi_{\mc{C}} = 4$; if $m >11$ then we can extend $xyzxy$ further.
\item $xyzxz$ forces $\chi_{\mc{C}} = 4$ because of $zxz$.
\item $xyzzx$ forces $\chi_{\mc{C}} = 4$ because of $yzzx$. 
\item $xyzzy$. If $m= 11$, then $\mc{C} = (xyz)^2xyzzy$ which contains the subsequence $xy \cdots xy$, forcing $\chi_{\mc{C}} = 4$; if $m >11$ then we can extend $xyzzy$ further.
\item $xyzzz$ can't occur, because we assumed we had removed all runs of an odd length at least 3 already, before we reached this case.
\ei

Extending the string by one more step, we list all possible strings of length $6$ or larger starting with $xyz$. The only resulting strings that do not contain a substring that force the string index to be 4 (i.e., $xyx$ or $xyyz$, up to permutation) and do not contain a run of at least three consecutive same-valued entries, which were supposed to have been eliminated in an earlier step, are $xyzxxz$,  $xyzxyy$ and $xyzzyx$, and $xyzzyy$; note that $xyzxyz$ cannot appear, since we already assumed that we were beginning with the last $xyz$ in the subsequence $(xyz)^{k}$. Observe that if $\mc{C}$ contains $xyzxyy$, then we can apply reduction \eqref{i:xyzxyy} to get a smaller string $C'$ which, by the induction hypothesis, has $\chi_{C'} = 4$. Similarly, $xyzzyy$ reduces to a smaller string $C'$ by reduction \eqref{i:xyzzyy}, which must also have $\chi_{C'} = 4$ by the induction hypothesis. 

The remaining two strings $xyzxxz$ and $xyzzyx$ again can be extended, again eliminating extensions that contain 3 or more consecutive entries or substrings which force string index 4: 
$xyzxxz$ can be extended by $z$ or $y$, and $xyzzyx$ can be extended by $x$ or $z$. However, in each case, $\mc{C}$ may be reduced by replacing a substring by a shorter functionally equivalent substring, yielding a smaller substring:
\bi
\item $xyzxxzz = x (yzxxzz)$ and $yzxxzz \sim yzxx$ by reduction \eqref{i:xyzzyy} and permutation $(y,x,z)$;
\item $xyzxxzy \sim  yxzyx$
by the reverse of reduction \eqref{i:xyzzyxz}  and  permutation $(x,z,y)$  ;
\item $xyzzyxx \sim xyzzy$ by reduction \eqref{i:xyzzyxx} 
\item $xyzzyxz \sim xyzxy$ by reduction \eqref{i:xyzzyxz}
\ei

By Lemma \ref{l:replaceSubstrings} and the induction hypothesis, it follows that $\chi_{\mc{C}} = 4$. 
\end{enumerate}
\end{proof}

\begin{corollary}\label{thm:G34no9snark-multiply}
If $\gcd(m, a) = 1$ and $m$ is odd, then $G_{\alpha}(m; a,a,a)$ is a snark.\end{corollary}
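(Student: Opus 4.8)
The plan is to deduce this immediately from Theorem~\ref{thm:G34no9snark} together with the label-multiplication invariance of voltage-graph lifts. By definition, $G_{\alpha}(m;a,a,a)$ is the $\mathbb{Z}_{m}$ lift of the voltage graph $\tilde{G}_{\alpha}(a,a,a)$, all of whose non-zero labels — the two arrows and the loop at $w$ — equal $a$. Since $\gcd(a,m)=1$, choose an integer $s$ with $sa\equiv 1\pmod{m}$; then $\gcd(s,m)=1$ as well, so Proposition~\ref{thm:voltageFacts}(1) applies: forming $\tilde{G}'$ from $\tilde{G}_{\alpha}(a,a,a)$ by multiplying every label by $s$ yields a voltage graph whose $\mathbb{Z}_{m}$ lift is isomorphic to $G_{\alpha}(m;a,a,a)$. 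Every non-zero label of $\tilde{G}'$ is now congruent to $sa\equiv 1\pmod{m}$.

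The second step is to identify $\tilde{G}'$ with $\tilde{G}_{\alpha}(1,1,1)$. The only point needing care is that the rescaled labels should be written in the admissible ranges $1\le a,b<m$, $1\le c<m/2$ used to guarantee cubicity; but here the common rescaled label is simply $1$, which is admissible once $m\geq 3$ (as it is, $m$ being odd). If one prefers, the arrow directions can be normalised by Proposition~\ref{thm:voltageFacts}(4) and the loop label by the observation that $w_{i}w_{i+c}$ and $w_{i}w_{i+(m-c)}$ describe the same set of loop edges, but neither adjustment is actually needed. Hence $G_{\alpha}(m;a,a,a)\cong G_{\alpha}(m;1,1,1)$.

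Finally, since $m$ is odd, Theorem~\ref{thm:G34no9snark} tells us that $G_{\alpha}(m;1,1,1)$ is a snark, and being a snark is an isomorphism invariant, so $G_{\alpha}(m;a,a,a)$ is a snark as well. I do not expect any real obstacle in this argument: all the difficulty is already absorbed into Theorem~\ref{thm:G34no9snark} (recall that $\bar{G}$ is \emph{not} a snark, so Proposition~\ref{thm:LoupFam} cannot be invoked in its place), and what remains is just the one-line reduction via Proposition~\ref{thm:voltageFacts}(1). The only thing to keep an eye on is the bookkeeping of label ranges, which is harmless here because the rescaled label is $1$.
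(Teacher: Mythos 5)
Your proposal is correct and follows essentially the same route as the paper, which likewise derives the corollary immediately from Theorem \ref{thm:G34no9snark} together with Proposition \ref{thm:voltageFacts} (the label-rescaling invariance of lifts). Your write-up merely makes explicit the choice of $s$ with $sa\equiv 1\pmod m$ and the harmless bookkeeping about label ranges, which the paper leaves implicit.
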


\begin{proof} This follows as an immediate consequence of Theorem \ref{thm:G34no9snark} and Proposition \ref{thm:voltageFacts}.
\end{proof}

Observe in Figure \ref{fig:Gno9cycCon} that the highlighted edges form a cycle-separating set for the voltage graph;  clipping the arrow from $B'$ to $B$ corresponds to clipping two arrows in the lift. That is:

\begin{theorem} The graph $G_{\alpha}(m; 1,1,1)$ is cyclically 4-connected for $m > 3$.\end{theorem}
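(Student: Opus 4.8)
The plan is to establish, on the one hand, that there is a cycle-separating edge cut of size $4$ in $G_{\alpha}(m;1,1,1)$ (so the cyclic edge-connectivity is at most $4$), and on the other hand that no cycle-separating edge cut of size $3$ exists (so it is exactly $4$). The easy direction is indicated in the paragraph preceding the statement: take the highlighted edges in Figure~\ref{fig:Gno9cycCon}, which form a cycle-separating set for the voltage graph $\tilde{G}_{\alpha}(m;1,1,1)$; the arrow from $B'$ to $B$ lifts to the two connecting edges $B'_{i}B_{i+1}$ (for a single $i$ when $\gcd(b,m)=1$, but here we only need one of them after cutting), while the remaining highlighted edges lift to one edge each, giving a total of $4$ edges whose removal separates the lift into two pieces each of which contains a cycle. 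One should check explicitly that each of the two resulting components contains a cycle — on one side there is a long connecting-edge cycle through the clusters, and on the other side one uses a short cycle inside a cluster together with a loop-edge cycle — and that for $m>3$ these pieces are genuinely both cyclic (the restriction $m>3$ is needed precisely because for $m=3$ the loop edges form a triangle rather than a longer cycle, and the count of which side is cyclic changes). I would draw this cut carefully on the lift picture and record the four edges explicitly.

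The harder direction is to show there is no cycle-separating $3$-edge-cut. First, by the Parity Lemma (Lemma~\ref{parityLemma}) there is a quick reduction: since $G_{\alpha}(m;1,1,1)$ is a snark (Theorem~\ref{thm:G34no9snark}), it has no $3$-edge-cut whose removal separates it into two $3$-edge-colorable pieces, but a priori a $3$-edge-cut could still be cycle-separating. I would argue structurally: suppose $F$ is a cycle-separating set of $3$ edges, with components $X$ and $Y$ each containing a cycle. Then $F$ is an edge cut, and by the Parity Lemma the three edges of $F$ meet any $3$-edge-coloring in a controlled way, but more usefully, $|F|=3$ odd forces $X$ and $Y$ each to contain an odd number of vertices of odd degree in the induced subgraph — i.e. each side has exactly one vertex incident to an odd number of cut edges, or all three cut edges emanate from a single vertex on one side. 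The latter case (all three edges at one vertex $x$) would mean $x$ is a cut vertex of the cubic graph, impossible once we know the graph is cyclically $3$-connected, which follows because it is a snark of girth $\geq 5$ with the structure described. So the cut edges split as $2{+}1$ at the two sides. The key step is then a local analysis using the cluster decomposition: the graph is built from $m$ copies of the $5$-pole $G$ joined by $\alpha$-connections, and every small edge cut must either lie essentially within one cluster (where one checks, by the same finite case analysis on the $5$-pole $G$ used to prove the Reductions Lemma and the admissible-pattern lemma, that $G$ together with its attaching edges is cyclically $4$-edge-connected) or must cut across the cyclic chain of clusters, where cutting fewer than $4$ edges between consecutive clusters either leaves a bridge (contradicting $2$-edge-connectivity, which holds since the graph is cubic and connected with girth $\geq 5$) or fails to separate a cycle.

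I expect the main obstacle to be the case analysis for the "internal" cuts — verifying that the $5$-pole $G$ has no cycle-separating cut of size $\leq 3$ once its six attaching semiedges are taken into account, and that no clever cut mixing a few internal edges of one cluster with one or two connecting edges works. This is a finite check on a fixed $14$-vertex graph $\bar G$ (equivalently on $G$ plus its connection pattern), and the honest way to do it is to enumerate the small edge cuts of $G$ directly, exactly as the paper already enumerates the admissible colour patterns and the matchings of $G$; the structure of the argument mirrors the oddness computation and the Reductions Lemma, so the needed machinery is already in place. The cross-chain case, by contrast, is clean: between any two adjacent clusters there are exactly three edges (two connecting edges and one loop edge in the relevant accounting), and one shows that a cut confined to the "rim" of the cyclic structure needs to sever the connecting-edge cycle in at least two places, forcing $\geq 4$ edges, while a cut using the loop-edge cycle analogously needs $\geq 2$ loop edges; combining with the rim bound gives the required $\geq 4$.
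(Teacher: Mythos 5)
The first half of your proposal coincides with the entire argument the paper actually gives for this theorem: the paper's proof is nothing more than the observation that the highlighted edges in Figure~\ref{fig:Gno9cycCon} form a cycle-separating set in the voltage graph, and that the arrow from $B'$ to $B$ lifts to \emph{two} edges, so that the four lift edges $A_i x_{6,i}$, $A'_i x_{3,i}$, $B'_{i}B_{i+1}$ and $B_{i}B'_{i-1}$ isolate the eight vertices $x_{1,i},\dots,x_{6,i},B_i,B'_i$ (which contain a $5$-cycle) from the rest of the graph (which contains the $m$-cycle of loop edges). Two local slips in your description of this step: first, you say that of the two lifted $B$-edges ``we only need one of them after cutting.'' That is wrong --- removing only one of $B'_{i}B_{i+1}$, $B_{i}B'_{i-1}$ leaves the cluster attached through the other, so the remaining three edges are not an edge cut at all; both lifted edges are needed, which is exactly why the cut has size $4$, and the figure caption records this. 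Second, your explanation of the hypothesis $m>3$ is mistaken: for $m=3$ the loop edges still form a cycle (a triangle), so the $4$-cut is cycle-separating for every $m\ge 3$; the reason $m=3$ is excluded is that there the three spokes $v_iw_i$ give a cycle-separating $3$-cut (the paper's parenthetical remark), so the cyclic edge-connectivity drops to $3$.

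Everything in your second half goes beyond what the paper does: the paper never proves the lower bound (no cycle-separating cut of size $\le 3$ when $m>3$); it states the theorem immediately after exhibiting the $4$-cut. Your plan for that lower bound is plausible in outline but is not a proof as written. The decisive steps --- the finite enumeration of small cuts of the $5$-pole $G$ together with its attaching edges, and the counting argument for cuts that cross the cyclic chain of clusters --- are deferred (``I would,'' ``finite check,'' ``I expect''), and several supporting assertions are shaky: the appeal to Theorem~\ref{thm:G34no9snark} is available only for odd $m$, whereas the statement covers all $m>3$; ``cubic, connected, girth $\ge 5$ implies $2$-edge-connected'' is not a valid inference; and the case of three cut edges meeting one vertex is excluded simply because an isolated vertex contains no cycle, with no connectivity input needed. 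So, relative to the paper, your proposal reproduces its (upper-bound) argument with the corrections above, and sketches, without completing, a lower-bound verification that the paper leaves implicit.
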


(When $m=3$, clipping the spokes separates the central triangle in $G_{\alpha}(3; 1,1,1)$  from the rest of the graph, so that graph is only cyclically 3-connected.)

In addition, we are interested in the  oddness of the snark. 

\begin{theorem}
The oddness of $G_{\alpha}(2k+1; 1,1,1)$  is 2.
\end{theorem}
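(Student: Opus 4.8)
The plan is to combine the general lower bound for snarks with a single explicit 2-factor. Since $G_{\alpha}(2k+1;1,1,1)$ is a snark by Theorem \ref{thm:G34no9snark}, and (as noted earlier) every 2-factor of a cubic graph has an even number of odd cycles while a snark admits no all-even 2-factor, the oddness is automatically at least $2$. So the whole task is to produce a 2-factor of $G_{\alpha}(2k+1;1,1,1)$ with \emph{exactly} two odd cycles.

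I would build this 2-factor at the level of the voltage graph. The lift of a 2-factor of $\tilde G_{\alpha}(1,1,1)$ is a 2-factor of $G_{\alpha}(m;1,1,1)$, and by the standard lifting facts (cf.\ Proposition \ref{thm:voltageFacts}) a cycle of length $\ell$ and net voltage $d$ in the voltage graph lifts to $\gcd(d,m)$ cycles each of length $\ell m/\gcd(d,m)$; so it suffices to exhibit a 2-factor of $\tilde G_{\alpha}(1,1,1)$ every cycle of which has odd length. The choice I propose is the union of (i) the loop at $w$, (ii) the $B$-arrow, and (iii) a Hamilton path of the $5$-pole $G$ running from $B$ to $B'$ — concretely, the path through the vertices $B,x_{1},x_{5},x_{3},A',v,A,x_{6},x_{2},x_{4},B'$ (naming the six gray vertices of $G$ as in Figure \ref{fig:G34no9-multipole}); one checks each of the ten displayed edges is an edge of $G$ and that the path visits all eleven vertices of $G$. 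In this subgraph of $\tilde G_{\alpha}(1,1,1)$ every vertex has degree $2$ ($w$ from the loop, $B$ and $B'$ each from one path-edge plus the arrow, all others from two path-edges), so it is a 2-factor consisting of exactly two cycles: the loop (length $1$, net voltage $1$) and the $11$-cycle formed by the Hamilton path together with the $B$-arrow (net voltage $\pm 1$). Lifting, the loop gives a single $(2k+1)$-cycle on the vertices $w_{i}$ (Proposition \ref{thm:voltageFacts}(\ref{liftLoop})), and the $11$-cycle gives a single cycle of length $11(2k+1)$ through the remaining $11(2k+1)$ vertices. Both $2k+1$ and $11(2k+1)$ are odd, so this 2-factor has exactly two odd cycles, giving oddness at most $2$ and hence equal to $2$.

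The only place calling for care is step (iii). The naive $\mathbb Z_{m}$-invariant attempt — take the loop at $w$ and, in each cluster, use no connecting edge at all — is forced to decompose each cluster into a $6$-cycle and a $5$-cycle, which together with the central $(2k+1)$-cycle produces $2k+2$ odd cycles rather than two. The whole point is to use the $B$-arrow so that the $5$-pole contributions of the $m$ clusters are threaded into one long cycle instead of $m$ short ones, and this requires knowing that $G$ has a Hamilton path from $B$ to $B'$ in its fourteen internal edges. Verifying that such a path exists (equivalently, that the displayed path works) is a small finite check on the graph in Figure \ref{fig:G34no9-multipole}, and it is the only genuine obstacle in the argument.
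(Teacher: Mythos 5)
Your proposal is correct and is essentially the paper's proof: the lower bound comes from the snark property (Theorem \ref{thm:G34no9snark}), and the upper bound from exhibiting a 2-factor in the voltage graph consisting of the loop at $w$ together with the $B$-arrow closed up by a Hamilton path of the 5-pole from $B$ to $B'$, which lifts to one $m$-cycle and one $11m$-cycle, both odd. The explicit path you give is exactly the blue cycle shown in Figure \ref{fig:Gno9oddness}, so the two arguments coincide.
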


\begin{proof}
By Theorem \ref{thm:G34no9snark}, $G_{\alpha}(2k+1;1,1,1)$ is a snark, so the oddness is at least 2. For $2k+1 \geq 3$, we exhibit a 2-factorization of $G_{\alpha}(2k+1;1,1,1)$ with precisely two odd cycles and no even cycles, shown as a cycle in the voltage graph in Figure \ref{fig:Gno9oddness}. 
\end{proof}

\begin{figure}[htbp]
\begin{center}
\ffigbox{
\begin{subfloatrow}
\ffigbox{ \caption{A cycle-separating cut set in the voltage graph $\tilde{G}_{\alpha}(a,b,c)$, shown in red, shows that the cyclic edge-connectivity of $G_{\alpha}(m; a,b,c)$ is at most 4. Note the arrow from $B$ to $B'$ corresponds to two edges in the lift graph (that is, $B'_{i}B_{i+b}$ and $B_{i}B'_{i-b}$ are both part of the cut set).}\label{fig:Gno9cycCon}}{
\begin{tikzpicture}[scale=1.5]
	\node[circle, gray!70!white, minimum width = 2 cm, inner sep = 1pt] (G) at ($(90:2)$){};
	\node[vtx, fill = gray!50!white] (x1) at (G.90) {};
	\node[vtx, fill = gray!50!white] (x4) at (G.90-72) {};
	\node[vtx,white, fill = cyan] (B) at (G.90-2*72) {$B$};
	\node[vtx,fill = gray!50!white] (x2) at (G.90+72) {};
	\node[vtx, white,fill = green!70!black] (B') at (G.90+2*72) {$B'$};
	\node[vtx, fill = gray!50!white] (x6) at ($(0,0)!1.3!(x2)$) {};
	\node[vtx, fill = gray!50!white] (x5) at($(0,0)!1.2!(x1)$) {};
	\node[vtx, fill = gray!50!white] (x3) at ($(0,0)!1.3!(x4)$)  {};
	
	\node[vtx, fill = yellow] (A') at ($(90-30:2*1.2)$){$A'$};
	\node[vtx, white,fill = mymagenta] (A) at ($(30+90:2*1.2)$){$A$};
	\node[vtx,white, fill = orange] (C) at (90:.5){$v$};
	\node[vtx, below = .75 of C, white, fill = black] (z) {$w$};

	\draw[gray!70!white] (B') -- (x4) -- (x2) -- (B) -- (x1)--(B');
	\draw[gray!70!white] (x2) -- (x6) -- (x5) -- (x3) -- (x4);
	\draw[gray!70!white] (x6) -- (A) -- (C) -- (A')--(x3);
	\draw[gray!70!white]  (x1)--(x5);
	\draw[gray!70!white] (C) -- (z);
	
	\draw[red, ultra thick] (A) -- (x6) (A') -- (x3);
	
	\begin{scope}[on background layer]
	\draw[<-,  looseness = 2] (A) to[out=90, in=90]  node[midway, arclabel] {$a$} (A');
	\draw[<-, ultra thick, red] (B) to[bend left=30] node[near start, arclabel] {$b$} (B');
	\draw[->] (z) arc(90:360+70:0.2) node[midway, arclabel] {$c$}  (z);
	\end{scope}
\end{tikzpicture}
}

\ffigbox{ \caption{A 2-factor in $\tilde{G}_{\alpha}(1,1,1)$ which lifts to a pair of odd cycles in $G_{\alpha}(m; 1,1,1)$; the blue cycle lifts to a single cycle of length $11m$, while the magenta loop lifts to a single odd cycle of length $m$. Hence the oddness is 2.}\label{fig:Gno9oddness}}{
\begin{tikzpicture}[scale=1.5]
	\node[circle, gray!70!white, minimum width = 2 cm, inner sep = 1pt] (G) at ($(90:2)$){};
	\node[vtx, fill = gray!50!white] (x1) at (G.90) {1};
	\node[vtx, fill = gray!50!white] (x4) at (G.90-72) {4};
	\node[vtx,white, fill = cyan] (B) at (G.90-2*72) {$B$};
	\node[vtx,fill = gray!50!white] (x2) at (G.90+72) {2};
	\node[vtx, white,fill = green!70!black] (B') at (G.90+2*72) {$B'$};
	\node[vtx, fill = gray!50!white] (x6) at ($(0,0)!1.3!(x2)$) {6};
	\node[vtx, fill = gray!50!white] (x5) at($(0,0)!1.2!(x1)$) {5};
	\node[vtx, fill = gray!50!white] (x3) at ($(0,0)!1.3!(x4)$)  {3};
	
	\node[vtx, fill = yellow] (A') at ($(90-30:2*1.2)$){$A'$};
	\node[vtx, white,fill = mymagenta] (A) at ($(30+90:2*1.2)$){$A$};
	\node[vtx,white, fill = orange] (C) at (90:.5){$v$};
	\node[vtx, below = .75 of C, white, fill = black] (z) {$w$};

	\draw[gray!70!white] (B') -- (x4) -- (x2) -- (B) -- (x1)--(B');
	\draw[gray!70!white] (x2) -- (x6) -- (x5) -- (x3) -- (x4);
	\draw[gray!70!white] (x6) -- (A) -- (C) -- (A')--(x3);
	\draw[gray!70!white]  (x1)--(x5);
	\draw[gray!70!white] (C) -- (z);
	
	\draw[ultra thick, blue] (B') -- (x4) -- (x2) -- (x6) -- (A) -- (C) -- (A') -- (x3) -- (x5) -- (x1) -- (B);
	
	\begin{scope}[on background layer]
	\draw[<-, looseness = 2] (A) to[out=90, in=90]  node[midway, arclabel] {$1$} (A');
	\draw[<-, ultra thick, blue] (B) to[bend left=30] node[near start, arclabel] {$1$} (B');
	\draw[->, ultra thick, mymagenta] (z) arc(90:360+70:0.2) node[midway, arclabel] {$1$}  (z);
	\end{scope}
\end{tikzpicture}
}

\end{subfloatrow}

}{
\caption{Properties of the graph $G_{\alpha}(2k+1;1,1,1)$}
\label{fig:properties}
}
\end{center}
\end{figure}

The snark on 34 vertices shown in Figure \ref{fig:G34No9}, $G_{\alpha}(3;1,1,1)$ with the central triangle contracted to a point, also has oddness 2; it is reasonably straightforward to find a decomposition using two odd cycles (and one even cycle) which includes the central vertex.

\begin{remark} Note that nothing in Theorem \ref{thm:G34no9snark} depended on the specific interconnections inside the graph $G$, but rather on the possible admissible color patterns associated with the cluster. Therefore, any other 5-pole with the same restrictions on the possible admissible color patterns (that is, the admissible color patterns are precisely {\sffamily AltTop}, {\sffamily AltBot}, {\sffamily TwoRight} and {\sffamily TwoLeft}) will also be a snark, using the same argument, in the same way that Loupekine's argument extends to any 5-pole that has the match/mismatch property for its collection of admissible color patterns.\end{remark}

\begin{remark}
In the language of admissible color patterns, of course, Loupekine's construction doesn't depend on starting with a snark at all; it only depends on a 5-pole that possesses the ``match/mismatch'' property. That is, the only admissible color patterns  are $[(x,x), (y,z)]$ or $[(x,x), (z,y)]$ (spoke colored $x$) or $[(x,x),(y,x)]$ or $[(x,x),(x,y)]$ (spoke colored $z$), or their reverses. Any 5-pole $G$ that has this property will produce a snark, by the same arguments as in Observation \ref{LoupekineSnarks} and Proposition \ref{thm:LoupFam}, whether or not the starting graph was a snark. Snarks simply provide a convenient class of graphs with this property. 
\end{remark}


\section{Chromatic index of other graphs of type $G_{\alpha}(m;a,b,c)$ and $G_{\beta}(m;a,b,c)$, and future directions}

We do not have a complete characterization of the chromatic index of graphs $G_{\alpha}(m;a,b,c)$  and $G_{\beta}(m;a,b,c)$ for all possible voltage assignments $a, b, c$. However, we do have some partial results towards such a characterization. In general, the graphs are (unsurprisingly) not snarks.
\subsection{Graphs of Type $G_{\beta}(m;a,b,c)$}

\begin{theorem}\label{lem:G34no9BETA}The graph $G_{\beta} (m; 1,1,1)$ is always 3-edge-colorable.
\end{theorem}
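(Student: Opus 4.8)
The plan is to exhibit an explicit proper 3-edge-coloring of $G_{\beta}(m;1,1,1)$ for every $m\geq 3$, assembled cluster-by-cluster from the admissible color patterns already classified for the 5-pole $G$. Recall that in any 3-edge-coloring of the lift each cluster $G_i$ must carry one of the four admissible patterns {\sf TwoLeft}, {\sf TwoRight}, {\sf AltTop}, {\sf AltBot} (with some permutation of the three colors), each of which colors the spoke $v_iw_i$ with the main color $s_i$ of that pattern, and the two non-$s_i$ colors are distributed among the dangling edges. So a 3-edge-coloring of $G_{\beta}(m;1,1,1)$ is exactly a cyclic sequence of such patterns satisfying two compatibility conditions: the \emph{$\beta$-matching} condition, that the color of the $A'$-edge of $G_i$ equals the color of the $B$-edge of $G_{i+1}$ and the color of the $B'$-edge of $G_i$ equals the color of the $A$-edge of $G_{i+1}$ (because in a $\beta$-connection the connecting edges are $A'_iB_{i+1}$ and $B'_iA_{i+1}$); and the \emph{loop} condition, that the colors of the loop edges $w_iw_{i+1}$ properly 3-edge-color the $m$-cycle $w_0w_1\cdots w_{m-1}$ with the two loop edges at each $w_i$ both avoiding $s_i$. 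Equivalently, each loop edge $w_iw_{i+1}$ must avoid $\{s_i,s_{i+1}\}$ and consecutive loop edges must differ.

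The core of the construction is to build two ``gadget segments'' with \emph{identical interfaces}. The \emph{$3$-segment} is three consecutive clusters, all of pattern {\sf AltTop}, with spoke colors $1,2,3$ and dangling-edge colorings $(A,B,A',B')=(1,3,1,2),\,(2,1,2,3),\,(3,2,3,1)$; one checks the internal $\beta$-matchings hold, the two internal loop edges are then forced to colors $3$ and $1$, and the segment has \emph{input data} $\big((A,B),\text{loop}\big)=\big((1,3),2\big)$ on its left and \emph{output data} $\big((A',B'),\text{loop}\big)=\big((3,1),2\big)$ on its right. The \emph{$2$-segment} is two consecutive clusters: the first {\sf AltTop} with spoke $1$ colored $(1,3,1,2)$, the second {\sf AltBot} with spoke $1$ colored $(2,1,3,1)$; again the internal $\beta$-matching holds, the single internal loop edge is forced to $3$, and this segment has \emph{the same} input data $\big((1,3),2\big)$ and output data $\big((3,1),2\big)$ as the $3$-segment.

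With the gadgets in hand, I would assemble the coloring by concatenating segments cyclically. Since every integer $m\geq 2$ can be written as $m=3j+2k$ with $j,k\geq 0$, take $j$ copies of the $3$-segment and $k$ copies of the $2$-segment and place them in a cycle. At each seam, the last cluster of one segment has output $(A',B')=(3,1)$ and the first cluster of the next is {\sf AltTop} with input $(A,B)=(1,3)$; the $\beta$-connection identifies $B'$ of the former with $A$ of the latter and $A'$ of the former with $B$ of the latter, so the required color equalities are $1=1$ and $3=3$, while the seam loop edge is colored $2$ and avoids the relevant spoke colors at both endpoints (which are $1$ and either $3$ or $1$). Every seam—including the wrap-around one—is thus consistent, every cluster carries an admissible pattern, and at every vertex the colors are distinct, yielding a proper 3-edge-coloring of $G_{\beta}(m;1,1,1)$; hence it is never a snark.

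The only real content beyond bookkeeping is the design of the two gadgets so that they present exactly the same input/output interface, which is what makes arbitrary cyclic concatenation possible; this is precisely where the color-swap built into the $\beta$-connection is exploited, and it is the structural feature that distinguishes this case from the symmetric Loupekine situation of $G_{\alpha}(m;1,1,1)$ in Theorem~\ref{thm:G34no9snark}. I would present each gadget in a small table, analogous to Tables~\ref{t:xxx}--\ref{t:xyzzyy}, verifying directly that each of its clusters is one of the admissible color patterns of $G$ and that the loop-edge colors propagate as claimed; the remaining elementary fact needed is that every $m\geq 3$ is of the form $3j+2k$ with $j,k\geq 0$.
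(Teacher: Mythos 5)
Your proposal is correct and follows essentially the same route as the paper: the paper's proof also exhibits explicit colorings built from the same two building blocks --- a block of three clusters with spoke colors $1,2,3$ all using {\sf AltTop}, and even runs of spoke-$1$ clusters alternating {\sf AltTop} and {\sf AltBot} --- exploiting that the $\beta$-connection swaps the input/output pair so these blocks glue cyclically. The only cosmetic difference is that you decompose $m=3j+2k$ and verify the common interface $[(1,3),2]\to[(3,1),2]$ explicitly, whereas the paper uses the color string $11\cdots1\,132$ (at most one $3$-block) together with its $\flowB{\cdot}$ criterion.
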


\newcommand{\flowB}[1]{\text{Flow}_{\beta}(#1)}

\begin{proof}
It suffices to provide a valid color string for any $m$. Figure \ref{fig:G34no9-Beta} provides examples of color strings and corresponding 3-edge-colorings for $m = 3,4,5$. Define $\flowB{\mc{C}}$ to be the assignment of colors to the input and output edges where successive clusters are connected using the $\beta$-connecting edges $A'_{i}B_{i+1}$ and $B'_{i}A_{i+1}$. Observe that $\chi_{\mc{C}}(G_{\beta}(m;1,1,1)) = 3$ if and only if $\flowB{C}$ contains $[(x_{1}, x_{2}, x_{3}), (x_{2}, x_{1}, x_{3})]$. 

It is straightforward to show that $\flowB{123}$ contains $[(1,2,3), (2,1,3)]$ by using the {\sf AltTop} color pattern three times (on each of the colors 1, 2, 3 assigned to consecutive spokes).

For any even $k$, the color string $\underbrace{11\cdots1}_{k}$ contains $[(1,2,3), (2,1,3)]$ in $\flowB{11\cdots1}$ by applying, alternately, the {\sf AltTop} and {\sf AltBot} patterns.

Finally, using the trivial fact that if $m$ is odd, $m-3$ is even, to color $G_{\beta} (m; 1,1,1)$, if $m$ is odd, use the color string $\underbrace{11\cdots1}_{m-3}132$ while if $m$ is even,  use the color string $11\cdots1$ (and use the color pattern sequences described  above).
\end{proof}

\begin{figure}[htbp]
\begin{center}
\ffigbox{
\begin{subfloatrow}[3]

\ffigbox{\caption{$G_{\beta}(3; 1,1,1)$; $\mc{C} = 132$. }\label{}}{
\begin{tikzpicture}[scale=.8]
\drawGThirtyFourNoNineBlobs{3}{.3}{.5}{.8}
\drawSpoke{0}{red, ultra thick}
\drawSpoke{1}{green, ultra thick}
\drawSpoke{2}{blue, ultra thick}
\betaPair{3}{0}{red}{green}
\betaPair{3}{1}{green}{blue}
\betaPair{3}{2}{blue}{red}

\colorNineAlt{0}{red}{green}{blue}
\colorNineAlt{1}{green}{blue}{red}
\colorNineAlt{2}{blue}{red}{green}

\drawLoop{3}{0}{blue, thick}{1}
\drawLoop{3}{1}{red, thick}{1}
\drawLoop{3}{2}{green, thick}{1}
\end{tikzpicture}

}

\ffigbox{\caption{$G_{\beta}(4;1,1,1)$; $\mc{C} = 1111$.}\label{}}{
\begin{tikzpicture}[scale=.8]
\drawGThirtyFourNoNineBlobs{4}{.3}{.5}{.8}
\drawSpoke{0}{red, ultra thick}
\drawSpoke{1}{red, ultra thick}
\drawSpoke{2}{red, ultra thick}
\drawSpoke{3}{red, ultra thick}

\betaPair[30]{4}{0}{red}{green}
\betaPair[30]{4}{1}{blue}{red}
\betaPair[30]{4}{2}{red}{green}
\betaPair[30]{4}{3}{blue}{red}

\foreach \i in {0,2}{\drawLoop{4}{\i}{blue, thick}{1}}
\foreach \i in {1,3}{\drawLoop{4}{\i}{green, thick}{1}}

\colorNineAlt{0}{red}{green}{blue}
\colorNineAltBot{1}{red}{blue}{green}
\colorNineAlt{2}{red}{green}{blue}
\colorNineAltBot{3}{red}{blue}{green}

\end{tikzpicture}
}

\ffigbox{\caption{$G_{\beta}(5; 1,1,1)$; $\mc{C} = 11132$.}\label{}}{

\begin{tikzpicture}[scale=.8]
\drawGThirtyFourNoNineBlobs[.8]{5}{.4}{.5}{.9}
\drawSpoke{0}{red, ultra thick}
\drawSpoke{1}{green, ultra thick}
\drawSpoke{2}{blue, ultra thick}
\betaPair[30]{5}{0}{red}{green}
\betaPair[30]{5}{1}{green}{blue}
\betaPair[30]{5}{2}{blue}{red}

\colorNineAlt{0}{red}{green}{blue}
\colorNineAlt{1}{green}{blue}{red}
\colorNineAlt{2}{blue}{red}{green}

\drawLoop{5}{0}{blue, thick}{1}
\drawLoop{5}{1}{red, thick}{1}
\drawLoop{5}{2}{green, thick}{1}
\drawLoop{5}{3}{blue, thick}{1}
\drawLoop{5}{4}{green, thick}{1}

\foreach \i in {3,4}{\drawSpoke{\i}{red, ultra thick}}
\betaPair[30]{5}{3}{red}{green}
\betaPair[30]{5}{4}{blue}{red}
\colorNineAlt{3}{red}{green}{blue}
\colorNineAltBot{4}{red}{green}{blue}

\end{tikzpicture}
}

\end{subfloatrow}

}{
\caption{$G_{\beta}(m; 1,1,1)$ is always 3-edge-colorable}
\label{fig:G34no9-Beta}}
\end{center}
\end{figure}
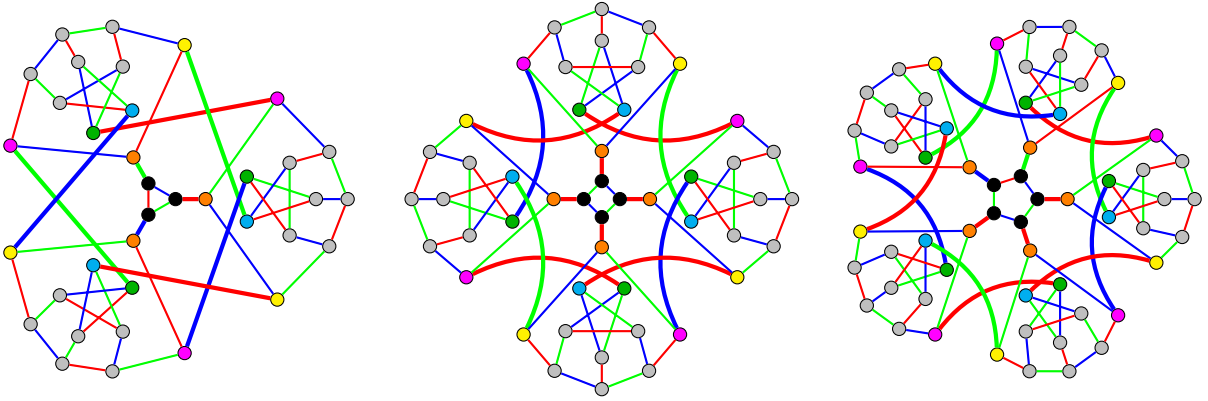

\begin{conjecture}
The graph $G_{\beta}(m; a,b,c)$ is never a snark.
\end{conjecture}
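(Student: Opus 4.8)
\medskip
\noindent\textbf{A strategy toward this conjecture.} The aim is to construct, for every $m\geq 3$ and every admissible triple $(a,b,c)$, a proper $3$-edge-colouring of $G_{\beta}(m;a,b,c)$. The first step cuts down the number of cases. By Proposition~\ref{thm:voltageFacts}(1) the isomorphism type is unchanged under multiplying $(a,b,c)$ by a unit of $\mathbb{Z}_{m}$, so one may normalise any one of the three parameters to $1$ whenever it is coprime to $m$, and by Proposition~\ref{thm:voltageFacts}(4) the direction of each arrow is immaterial. When $\gcd(a,b,c,m)>1$ the lift is disconnected, and each of its components is again a $\beta$-lift of the same $5$-pole over $\mathbb{Z}_{m/\gcd(a,b,c,m)}$, so this case is dispatched by induction on $m$; it therefore suffices to treat the connected case, organised by the divisors $t_{a}=\gcd(a,m)$, $t_{b}=\gcd(b,m)$ and $t_{c}=\gcd(c,m)$.

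The feature that should make the $\beta$-construction colourable, in contrast both to Loupekine's ``match/mismatch'' obstruction and to the $\alpha$-case of Theorem~\ref{thm:G34no9snark}, is the presence of the admissible colour patterns {\sffamily AltTop} and {\sffamily AltBot} of Figure~\ref{fig:G34No9-patterns}: in each of these the input pair $(A,B)$ carries two \emph{distinct} colours \emph{and so does} the output pair $(A',B')$, which is exactly what cannot happen for a Loupekine $5$-pole. I would exploit this as follows in the case $a=b$. By Observation~\ref{clusterCycle} the connecting edges then form $t_{a}$ cluster-cycles, each of length $r=m/t_{a}$, and a $\beta$-connection glues the output pair of one cluster, with its two coordinates \emph{crossed}, onto the input pair of the next. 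Recording the ordered pair of colours on a pair of connecting edges as a ``state'', one checks that a cluster coloured by {\sffamily AltTop} (resp.\ {\sffamily AltBot}) followed by the $\beta$-crossing acts on the six two-distinct-colour states as a fixed permutation $\sigma$ (resp.\ $\sigma^{-1}$), where $\sigma$ is a product of two $3$-cycles and hence has order $3$. If every cluster of a cluster-cycle uses {\sffamily AltTop} or {\sffamily AltBot}, the net effect around the cycle depends only on (number of {\sffamily AltTop}) $-$ (number of {\sffamily AltBot}) (the powers of $\sigma$ commute), and one can always make this difference a multiple of $3$ of the correct parity --- difference $0$ when $r$ is even, difference $3$ when $r$ is odd --- so that the cluster-cycle closes up consistently. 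This pins down the colour of every spoke edge (the main colour of the pattern used in that cluster); as extra slack one may also insert a {\sffamily TwoRight}--{\sffamily TwoLeft} pair of consecutive clusters, which one verifies acts on states as the identity or as the coordinate-swap, at will.

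It then remains to colour the loop edges $w_{i}w_{i+c}$ compatibly with the spoke colours just fixed. Exactly as in the proof of Lemma~\ref{lem:xyx}, once the spoke colours $(s_{i})$ are given the colour of each loop edge is forced except where two spokes at $\mathbb{Z}_{m}$-distance $c$ coincide, and the condition that must hold around each of the $t_{c}$ loop-cycles of length $m/t_{c}$ is precisely the parity/propagation condition behind the forbidden substrings $x\,y\cdots y\,x$ and $x\,y\cdots y\,z$. The core of the argument is to show that the freedom available --- which clusters receive {\sffamily AltTop} versus {\sffamily AltBot}, which of the two variants of each pattern is used, and where (and whether) a {\sffamily TwoRight}--{\sffamily TwoLeft} gadget is inserted --- suffices to meet the cluster-cycle closure conditions and all the loop-cycle conditions \emph{simultaneously}, for every combination of $t_{a},t_{b},t_{c}$. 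I expect this simultaneous bookkeeping to be the main obstacle: it is the direct analogue of the long case analysis behind Theorem~\ref{thm:G34no9snark}, now entangled with the loop shift $c$ and, when $a\neq b$, with a genuinely different connection pattern.

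When $a\neq b$ the connecting edges no longer split into cluster-cycles: the $A$-arrows advance by $a$ and the $B$-arrows by $b$, with the $A$ and $B$ roles interchanged at each $\beta$-crossing, so the relevant object is the union of the orbits of $\pm a$ and $\pm b$ on $\mathbb{Z}_{m}$. Here I would aim at a direct construction rather than a transfer argument: the {\sffamily AltTop}/{\sffamily AltBot} patterns allow the ``top'' and ``bottom'' colour coordinates to be propagated along the $a$- and $b$-structures essentially independently, turning the consistency requirements into a system of linear congruences over $\mathbb{Z}_{m}$ (one closure equation per $\gcd$-orbit of each structure, together with the loop-cycle equations), which should always be solvable. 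Theorem~\ref{lem:G34no9BETA} is the special case $a=b=c=1$, proved there by exhibiting an explicit colour string, and one can confirm by computer that $G_{\beta}(m;a,b,c)$ is $3$-edge-colourable for all admissible $(a,b,c)$ and all small $m$; making the construction uniform in $m$ and $(a,b,c)$ is what the conjecture asks for.
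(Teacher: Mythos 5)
First, note what you are up against: the statement you were asked to prove is stated in the paper only as a \emph{conjecture}, with no proof offered; the paper establishes just the special case $a=b=c=1$ (Theorem~\ref{lem:G34no9BETA}), by exhibiting explicit colour strings and pattern sequences. So there is no paper proof to match, and your submission is, by your own framing, a strategy rather than a proof.

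Within that framing, the parts you make precise are correct and genuinely extend the idea behind Theorem~\ref{lem:G34no9BETA}. I checked your key computation: with the $\beta$-crossing appended, a cluster coloured by {\sf AltTop} sends the six ordered distinct-colour input states around the two $3$-cycles $(1,2)\to(3,1)\to(2,3)\to(1,2)$ and $(1,3)\to(2,1)\to(3,2)\to(1,3)$, and {\sf AltBot} gives the inverse permutation, so the order-$3$ transfer map $\sigma$ and the closure condition $n_{\text{\sf AltTop}}-n_{\text{\sf AltBot}}\equiv 0 \pmod 3$ (with the right parity relative to $r=m/\gcd(a,m)$) are right, as is the observation that a {\sf TwoRight}--{\sf TwoLeft} pair acts as identity or swap. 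This is exactly the mechanism the paper exploits ad hoc for $(1,1,1)$, and packaging it as a state-transfer permutation is a real improvement in organization.

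The gap is that the argument stops where the difficulty starts, and you say so yourself. Closing each cluster-cycle of connecting edges is only half of a $3$-edge-colouring: the pattern choices force the spoke colours (the main colour of the pattern at each cluster), and these forced spoke colours must then be compatible with a proper colouring of the $\gcd(c,m)$ loop cycles of length $m/\gcd(c,m)$, i.e.\ they must avoid the propagation obstructions of Lemma~\ref{lem:xyx} transported along the $c$-shift. Because the spoke colours are not free parameters but are determined by the travelling state, you cannot simply prescribe a convenient colour string for the spokes as in the proof of Theorem~\ref{lem:G34no9BETA}; you must show that the remaining freedom (which clusters get {\sf AltTop} versus {\sf AltBot}, which variant, where a {\sf TwoRight}--{\sf TwoLeft} gadget is inserted) can satisfy the cluster-cycle closure and all loop-cycle conditions simultaneously, for every $\gcd$ configuration, and this is asserted but not carried out. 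The case $a\neq b$ is weaker still: ``turning the consistency requirements into a system of linear congruences \ldots which should always be solvable'' is a hope, not an argument, and no reduction is given showing the constraints are in fact linear or independent. Until that simultaneous bookkeeping is done (including small cluster-cycle lengths such as $r=2,3$, where the $\pm 3$ imbalance and the gadget insertions compete for the same clusters), the conjecture remains open; what you have is a promising reduction of it to a concrete combinatorial satisfiability problem, plus a correct new proof mechanism for the cases it does reach.
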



\subsection{Graphs of Type $G_{\alpha}(m;a,b,c)$ that are mostly not snarks}

\begin{theorem} If $m$ is even, and $m/\gcd(m,c)$ is even, then the graph $G_{\alpha}(m; 1,1,c)$ is 3-edge-colorable.\end{theorem}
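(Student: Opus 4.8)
The plan is to produce an explicit $3$-edge-coloring of $G_{\alpha}(m;1,1,c)$ whenever $m$ is even and $r := m/\gcd(m,c)$ is even, by specifying a color string $\mc{C}$ of length $m$ together with loop colorings that are consistent around all the loop-cycles, and then invoking the machinery already set up for $G_{\alpha}(m;1,1,c)$ (the admissible color patterns of the $5$-pole $G$, the notions of \emph{flow}, and the condition for $\ind = 3$, suitably adapted to general $c$). The key point is that the loop edges $w_iw_{i+c}$ now form $\gcd(m,c)$ cycles each of length $r$ (by Proposition \ref{thm:voltageFacts}(\ref{liftLoop})), so a coloring of the spokes is extendable across the loop-cycles exactly when, on each loop-cycle of even length $r$, the forced alternation of two colors closes up consistently --- which is automatic precisely because $r$ is even, mirroring the "good glue" side of Lemma \ref{lem:xyx}.

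First I would take the constant color string $\mc{C} = 11\cdots1$ of length $m$, assigning color $1$ to every spoke $v_iw_i$. By the \emph{Observation} following the definition of flow, when all spoke entries are equal, the loop edges on each loop-cycle are forced to alternate between the other two colors, $2$ and $3$; since each loop-cycle has even length $r$, this alternation is consistent around the cycle, so all loop edges can be colored legally (alternately $2,3$ around each of the $\gcd(m,c)$ cycles). Next I would handle the clusters: with every spoke colored $1$, each cluster $G_i$ must use a color pattern from Figure \ref{fig:G34No9-patterns} whose main color is $1$, i.e.\ a pattern of type {\sf TwoLeft}, {\sf TwoRight}, {\sf AltTop}, or {\sf AltBot} with $x = 1$. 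The connecting edges $A'_iA_{i+1}$ and $B'_iB_{i+1}$ (recall $a = b = 1$) now form one long cluster-cycle of the $A/B$-pairs of \emph{even} length $m$ (by Observation \ref{clusterCycle}, since $m/\gcd(1,m) = m$ is even). I would then check that around this even cluster-cycle one can alternate, say, the {\sf AltTop} pattern and the {\sf AltBot} pattern (or {\sf TwoLeft} and {\sf TwoRight}), whichever pair has the property that the output pair of one is a legal input pair of the next --- exactly as in the proof of Theorem \ref{lem:G34no9BETA}, where for even $k$ the string $11\cdots1$ is colored by alternating {\sf AltTop} and {\sf AltBot}. Because $m$ is even, this alternation closes up around the cluster-cycle, giving a consistent assignment of input/output pairs.

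Concretely, I would record the required gluing in a small table: for a spoke-$1$ cluster, {\sf AltTop} sends input $(1,3)$ to output $(1,2)$ (up to the symmetric reading), and {\sf AltBot} sends input $(1,2)$ back to output $(1,3)$; alternating these around an even cycle of clusters produces a closed-up $2$-coloring of all connecting edges, and simultaneously the loop edges are colored by the even-length argument above. Assembling the spoke coloring ($\mc{C} = 1\cdots1$), the loop coloring (alternating $2,3$ around each even loop-cycle), and the cluster colorings (alternating {\sf AltTop}/{\sf AltBot}) gives a proper $3$-edge-coloring of $G_{\alpha}(m;1,1,c)$, so $\ind = 3$ and the graph is $3$-edge-colorable.

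The main obstacle I expect is the consistency of the two separate "closing-up" conditions simultaneously --- one around the loop-cycles (needs $r$ even), one around the cluster-cycle of $A/B$-pairs (needs $m$ even) --- together with verifying that for spoke color $1$ there genuinely exists a compatible alternating pair among {\sf AltTop}/{\sf AltBot}/{\sf TwoLeft}/{\sf TwoRight} whose input/output pairs chain correctly; this is a finite check against Figure \ref{fig:G34No9-patterns}, analogous to the tables in Lemma \ref{collapses}, but one must be careful that the chosen cluster patterns do not force the loop colors into a pattern incompatible with the alternating $2,3$ loop coloring. Since every admissible spoke-$1$ pattern permits loop colors $\{2,3\}$ at its two sides in the needed way (the loop color adjacent to a spoke-$1$ cluster is always one of $2,3$, and the two sides can be made to differ), I expect this compatibility to go through cleanly; the bookkeeping, not any genuine difficulty, is the only real cost.
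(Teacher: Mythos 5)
Your proposal is essentially the paper's own argument: color every spoke $1$, alternate $2,3$ around each loop-cycle (legal since $m/\gcd(m,c)$ is even), and close up a two-pattern alternation of spoke-$1$ clusters around the even cluster-cycle --- the paper does exactly this with {\sf TwoLeft}/{\sf TwoRight}, i.e.\ connecting pairs $(1,1)$ for even $i$ and $(2,3)$ for odd $i$. One small bookkeeping slip: in the $\alpha$-connection the output pair of a cluster feeds the next cluster's input in the same top/bottom order, and {\sf AltBot} has its repeated color in the \emph{bottom} slot, so {\sf AltTop} followed by {\sf AltBot} does not chain here (that alternation belongs to the $\beta$-connection proof); instead one may simply repeat {\sf AltTop} (input $(1,3)\to(1,2)\to(1,3)\to\cdots$, closing up since $m$ is even) or use the {\sf TwoLeft}/{\sf TwoRight} alternative you already mention.
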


\begin{proof} The collection of loop edges forms $\gcd(c,m)$ disjoint cycles of length $m/\gcd(m,c)$; by hypothesis, these are all even cycles.  Assign the colors 2 and 3 alternately to each of the the loop cycles, which forces all spoke edges to be colored 1. Then color the pair of connecting edges $A'_{i}A_{i+1}$ and $B'_{i}B_{i+1}$ both 1 if  $i$ is even and color them $2$ and $3$ respectively when $i$ is odd.
\end{proof}

\begin{corollary} If $m$ is even,  $m/\gcd(m,c)$ is even, and  $m/\gcd(m,a)$ is even, then the graph $G_{\alpha}(m; a,a,c)$ is 3-edge-colorable.
\end{corollary}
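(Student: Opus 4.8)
The plan is to mimic the proof of the preceding theorem (for $G_{\alpha}(m;1,1,c)$), the only new feature being that the connecting edges now organize into several cluster-cycles rather than a single one. First I would set $t=\gcd(a,m)$ and $r = m/\gcd(m,a)$ and invoke Observation \ref{clusterCycle}: the clusters and connecting edges of $G_{\alpha}(m;a,a,c)$ assemble into $t$ cluster-cycles, each of length $r$, which is even by hypothesis. Next, by Proposition \ref{thm:voltageFacts}(\ref{liftLoop}), the loop edges form $\gcd(m,c)$ disjoint cycles of length $m/\gcd(m,c)$, each even by hypothesis as well.

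Then I would describe the coloring explicitly. Color the edges of each loop cycle alternately with colors $2$ and $3$; this is possible precisely because each loop cycle has even length, and it forces every spoke edge $v_i w_i$ to receive color $1$, since the two loop edges meeting $w_i$ carry colors $2$ and $3$. Now fix one cluster-cycle and index its clusters consecutively as $C_0, C_1, \dots, C_{r-1}$, so that the output pair of $C_k$ is the input pair of $C_{k+1}$ (indices mod $r$). Since every spoke is colored $1$, assign to each $C_k$ an admissible color pattern with main color $1$: use {\sffamily TwoLeft} (input pair $(1,1)$, output pair $(2,3)$) when $k$ is even, and {\sffamily TwoRight} (input pair $(2,3)$, output pair $(1,1)$) when $k$ is odd. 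Because $r$ is even, $C_{r-1}$ is assigned {\sffamily TwoRight} with output $(1,1)$, which matches the input pair of $C_0$, so the alternation closes up consistently around the cycle. Doing this on each of the $t$ cluster-cycles and on each loop cycle independently produces a coloring of every edge of $G_{\alpha}(m;a,a,c)$.

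The verification that this coloring is proper splits into two routine checks: at each vertex $w_i$ the spoke and the two loop edges carry $1,2,3$ (automatic from the alternating loop coloring), and inside each cluster the prescribed colors on the five dangling edges and the spoke extend to a proper $3$-edge-coloring of the interior (automatic because {\sffamily TwoLeft} and {\sffamily TwoRight} are admissible color patterns for the $5$-pole $G$). The only real point to watch is the parity bookkeeping — that the loop cycles and all the cluster-cycles are simultaneously even — which is exactly what the three hypotheses deliver via Observation \ref{clusterCycle} and Proposition \ref{thm:voltageFacts}(\ref{liftLoop}); once that is in hand there is no obstruction, and the argument is essentially word-for-word the proof of the preceding theorem with the single length-$m$ cluster-cycle replaced by $t$ cluster-cycles of length $r$.
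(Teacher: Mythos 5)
Your proposal is correct and follows essentially the same route as the paper: the paper's proof likewise colors all spokes $1$, colors each (even) loop cycle alternately $2,3$, and colors the pairs of connecting edges around each even cluster-cycle alternately $(1,1)$ and $(2,3)$, which is exactly your {\sffamily TwoLeft}/{\sffamily TwoRight} alternation. Your write-up just spells out the admissibility and parity checks that the paper leaves implicit.
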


\begin{proof} In this case, the loop edges are partitioned into $\gcd(m,c)$ disjoint cycles of even length $m/\gcd(m,c)$, and the repeated parameter $a$ induces $\gcd(a,m)$ disjoint cluster-cycles, each of even length $m/\gcd(a,m)$. Again, color all the spokes 1, color the loop edges of each loop cycle alternately 2 and 3, and color the pairs of connecting edges in each cluster-cycle alternately with $(1,1)$ and $(2,3)$. (See Figure \ref{fig:3-color-ex-Even}.)
\end{proof}

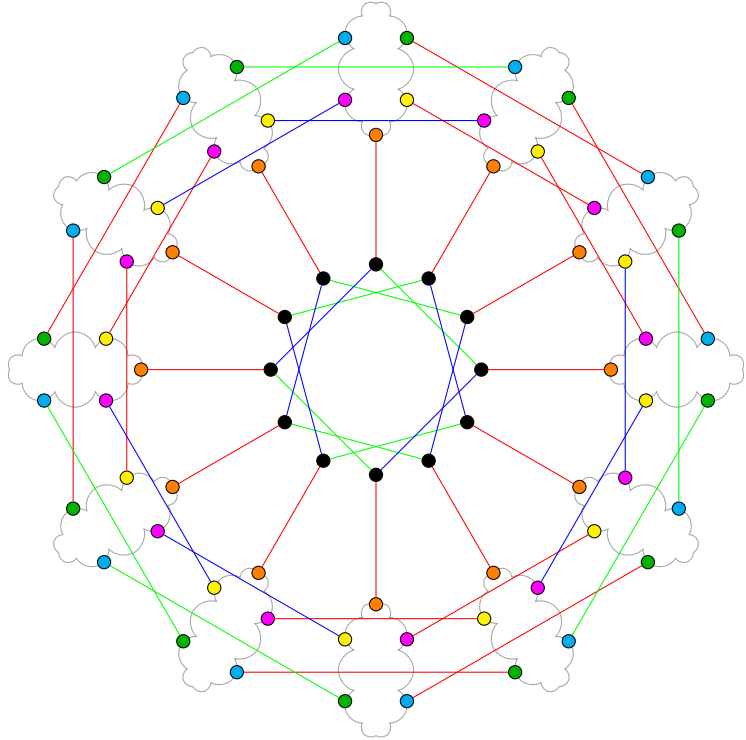
\begin{figure}[htbp]
\begin{center}
\begin{tikzpicture}[scale=2]
\drawBlobs[1.8]{12}{.7}{1}
\foreach \i in {0,...,11}{
\drawSpoke{\i}{red}
}

\foreach \i in {0,4, 8, 1, 5, 9}{\drawAlphaB[.5]{12}{\i}{2}{red}{90}}
\foreach \i in {0, 4, 8, 1, 5, 9}{\drawAlphaA[.5]{12}{\i}{2}{red}{90}}

\foreach \i in {2,6,10}{\drawAlphaA[.5]{12}{\i}{2}{blue}{0}}
\foreach \i in {2,6,10}{\drawAlphaB[.5]{12}{\i}{2}{green}{0}}

\foreach \i in {3,7,11}{\drawAlphaB[.5]{12}{\i}{2}{green}{0}}
\foreach \i in {3,7,11}{\drawAlphaA[.5]{12}{\i}{2}{blue}{0}}

\foreach \i in {0,6, 1,7, 2,8}{\drawLoop{12}{\i}{green}{3}}
\foreach \i in {3,9,4,10, 5,11}{\drawLoop{12}{\i}{blue}{3}}
\end{tikzpicture}
\caption{$G_{\alpha}(12;2,2,3)$ is 3-edge-colorable, since $12/\gcd(12,3)$ and $12/\gcd(12,2)$ are both even.}
\label{fig:3-color-ex-Even}
\end{center}
\end{figure}

\begin{theorem}
 If $m$ is odd, $\gcd(c,m) = 1$, and $c >1$ then the graph $G_{\alpha}(m; 1,1,c)$ is 3-edge-colorable.
\end{theorem}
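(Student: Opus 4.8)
The plan is to exhibit an explicit proper $3$-edge-coloring of $G_{\alpha}(m;1,1,c)$ whenever $m$ is odd, $\gcd(c,m)=1$, and $c>1$. The key structural fact (Proposition~\ref{thm:voltageFacts}(\ref{liftLoop})) is that since $\gcd(c,m)=1$, the loop edges form a \emph{single} cycle $w_{0}w_{c}w_{2c}\cdots w_{(m-1)c}w_{0}$ of odd length $m$. An odd cycle cannot be $2$-colored, so the coloring that worked in the even case (alternating colors $2,3$ around the loop cycle, forcing all spokes to color $1$) fails here; this is exactly why $c>1$ is needed — we get to use color $1$ on some loop edges too, and the extra ``slack'' lets us route around the parity obstruction.

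First I would reindex the inner ring along the loop cycle: write the loop cycle as $u_{0}u_{1}\cdots u_{m-1}u_{0}$ where $u_{k}=w_{kc \bmod m}$, so consecutive $u_{k}$'s are loop-adjacent and $u_{k}$ is spoke-adjacent to $v_{kc\bmod m}$. On this odd cycle of length $m$, I would pick a coloring of the loop edges using colors from $\{1,2,3\}$ in which exactly one edge is forced to repeat (say use pattern $2,3,2,3,\ldots,2,3,1$ around the cycle, so color $1$ appears once and at the unique spot where the alternation breaks). At a vertex $u_{k}$ incident to two loop edges of colors $\{2,3\}$, the spoke at that vertex is colored $1$; at the two endpoints of the single $1$-colored loop edge, the incident loop colors are $\{1,2\}$ or $\{1,3\}$, so those two spokes get colored $3$ or $2$ respectively. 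Thus the spoke coloring $\mathcal{C}$ has all entries $1$ except two entries, one $2$ and one $3$ — and crucially those two special spokes are \emph{adjacent} indices around the loop cycle, hence (after untangling the $\gcd(c,m)=1$ relabeling) the color string in the original cluster-indexing is some permutation of the $w_{i}$'s. Then I would invoke the machinery already built for $c=1$: by Proposition~\ref{thm:voltageFacts}(\ref{liftLoop}) and a multiplication-by-$s$ argument (Proposition~\ref{thm:voltageFacts}(1)), the coloring problem for $G_{\alpha}(m;1,1,c)$ with this spoke assignment reduces to coloring the clusters with a spoke color string that, up to cyclic relabeling of indices by the permutation $i\mapsto ic$, looks like $11\cdots1\,x\,y$ for two adjacent special spokes.

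The remaining work is to show the clusters can actually be colored compatibly given this spoke string — that is, to trace the connecting-edge colors around the cluster-cycle and close up. Here I would lean on the admissible color patterns {\sf AltTop}, {\sf AltBot}, {\sf TwoLeft}, {\sf TwoRight} of the $5$-pole $G$ (Figure~\ref{fig:G34No9-patterns}) together with the flow/concatenation formalism (Observation~\ref{obs:flows}, Lemma~\ref{condition3}): a spoke colored $1$ on a run uses {\sf TwoLeft}/{\sf TwoRight} to propagate a matched connecting pair, and the two special spokes (colors $2,3$) absorb exactly the ``twist'' needed. Concretely, I expect the run of $1$-spokes to transport the pair $(1,1)$ unchanged, and the two-spoke gadget to realize an identity on the input/output pair, so $\mathrm{Flow}(\mathcal{C})$ contains a diagonal element $[(x_1,x_2,x_3),(x_1,x_2,x_3)]$ and Lemma~\ref{condition3} gives $\ind=3$. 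Equivalently, and perhaps cleaner for the write-up, I would just draw the coloring directly as in Figure~\ref{fig:G34no9-Beta}, giving explicit small cases ($m=3,5$ with $c=2$, say) and then the general pattern, verifying properness edge-class by edge-class (loop edges, spokes, connecting edges, and the internal cluster edges according to the chosen pattern at each cluster).

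\textbf{Main obstacle.} The delicate point is the \emph{global} parity bookkeeping: because $\gcd(c,m)=1$ the loop cycle has odd length $m$, and one must be sure the single forced color-repeat on the loop cycle can be placed so that the two non-$1$ spokes it creates sit at loop-cycle-adjacent positions \emph{and} so that, after pulling this back through the index permutation $i\mapsto ic$ to the cluster-cycle ordering, the resulting cluster-string still admits a consistent assignment of connecting-edge colors that closes up around the (also odd, length $m$) cluster-cycle. In the $c=1$ case the loop ordering and the cluster ordering coincide; for general $c$ they differ, and the risk is that a spoke-string which is locally fine creates a forbidden substring (of the type ruled out in Lemma~\ref{lem:xyx}) once read in cluster order, or that the two cycles' parities conspire to obstruct closure. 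I expect this to be resolvable precisely because $c>1$ provides the extra color-$1$ loop edge (so we are not forced into the all-even-loop situation), but making the argument airtight — rather than hand-waving ``by symmetry'' — is where the real care is required, and is why an explicit construction with a worked small example is the safest route.
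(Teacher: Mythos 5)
Your color string is the same one the paper uses: all spokes colored $1$ except a single $2$ and a single $3$ placed at cluster-distance $c$ (equivalently, adjacent along the loop cycle), and the paper likewise completes the coloring with the four admissible patterns of Figure~\ref{fig:G34No9-patterns}. So the route is the paper's route. But the step you leave as an expectation is exactly where the content lies, and the concrete mechanism you state would fail: a cluster whose spoke is colored $1$ and whose input pair is $(1,1)$ must be colored with {\sf TwoLeft}, so its output pair is $(2,3)$ or $(3,2)$; hence a run of spoke-$1$ clusters returns $(1,1)$ to itself only when the run has \emph{even} length. In your string the two runs of $1$s have lengths $c-1$ and $m-c-1$, and since $m$ is odd exactly one of these is odd, so ``transport $(1,1)$ unchanged along the runs and let the two special spokes act as an identity'' cannot close up as stated. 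Note also that the $2$- and $3$-spoke clusters sit at cluster-distance $c>1$, so they are not a single two-cluster gadget in the cluster order, which is the order in which the connecting edges must actually be traced; and the detour through Proposition~\ref{thm:voltageFacts}(1) does not give the reduction you want, since multiplying all voltages by $c^{-1}$ rescales the connection labels $a=b=1$ as well.

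The paper's proof consists precisely of the parity bookkeeping you defer: after a cyclic shift (and possibly swapping $2$ and $3$) the string has the form $2\,\underbrace{1\cdots1}_{\text{odd}}\,3\,\underbrace{1\cdots1}_{\text{even}}$; the spoke-$2$ and spoke-$3$ clusters are colored with {\sf AltBot}, the odd run of $1$s is traversed with {\sf AltTop} (which carries pairs of the form $(1,2)\leftrightarrow(1,3)$, so its parity is harmless), and the even run alternates {\sf TwoLeft}/{\sf TwoRight}, with the final output chosen to match the input of the first cluster. So: right string, right toolbox, but the closing-up argument---which is the actual proof---is missing, and the specific way you propose to carry it out breaks on the odd run.
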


\begin{proof} Without loss of generality, we may assume that  $1<c<\lfloor\frac{m}{2}\rfloor$ (otherwise, replace $c$ with $m-c$). It suffices to provide a color string $\mc{C}$ and a 3-edge-coloring associated with that color string. Since $\gcd(m, c) = 1$, the loop edges in $G_{\alpha}(m; 1,1,c)$ form a single $m$-cycle, and since $c \neq 1$, the loop edges form a $m/c$ star polygon. We will use a color string $\mc{C}$ with $m-2$ edges colored 1, one edge colored 2 and one edge colored 3. Lemma \ref{lemxyz} in the situation where the loop edges are of the form $w_{i}w_{i+c}$ (as opposed to $w_{i}w_{i+1}$) can be generalized to show that in the color string, the pattern $c_{i-c}\cdots c_{i}\cdots c_{i+c} = x \cdots y \cdots x$ is forbidden (with index arithmetic mod $m$, of course). That is, for each entry $c_{i}$ in $\mc{C}$, the entries $c$ places forward and backwards (taken cyclically) cannot be equal.

We claim that (1) the color string 
\[\mc{C} =  2 \underbrace{1 \cdots 1}_{c-1} 3 \underbrace{1 \cdots 1}_{m-c-1} \]
is a valid color string and (2) using this color string, it is possible to assign 3 colors to the connecting edges without contradiction, using the coloring patterns shown in Figure \ref{fig:G34No9-patterns}.

To see that the string is valid, observe that for each $C_{i}$ in $\mc{C}$, the colors $C_{i-c}$ and $C_{i+c}$ are not equal to each other but different from $C_{i}$, by construction of the string, so we have no collision among the loop edge colors. To color the corresponding graph, observe that if $c$ is odd, then $c-1$ is even, while if $c$ is even, $m-2-(c-1) = m-1-c$ is even. Cyclically shift the color string (and swap 2 and 3 if necessary) so that it is of the form
\[ \mc{C} = 2 \underbrace{1 \cdots 1}_{\text{odd}}3 \underbrace{1 \cdots 1}_{\text{even}} \]

Color the cluster $G_{0}$, which is to have spoke color 3, by the {\sf AltBot} pattern,  choosing  input pair (3,2) and output pair (1,2). Then color the next odd sequence of clusters, which all have spoke color 1, using  {\sf AltTop}. This produces an output pair colored (1,3) at the end of the sequence. Color the next cluster, with spoke color 2, using the {\sf AltBot} pattern, so that the the output pair is colored (2,3) and the input pair is colored (1,3), which matches up with the output pair. Then color the clusters corresponding to the even substring of 1s alternately with {\sf TwoLeft} and {\sf TwoRight}, beginning  with {\sf TwoLeft} and ending with {\sf TwoRight}, and choose (3,2) as the output colors from the last {\sf TwoRight} to match up with the input colors from the first cluster.

An example of a schematic 3-edge-coloring of $G_{\alpha}(9; 1,1,4)$ using this method is shown in Figure \ref{fig:3-color-ex}.
\end{proof}

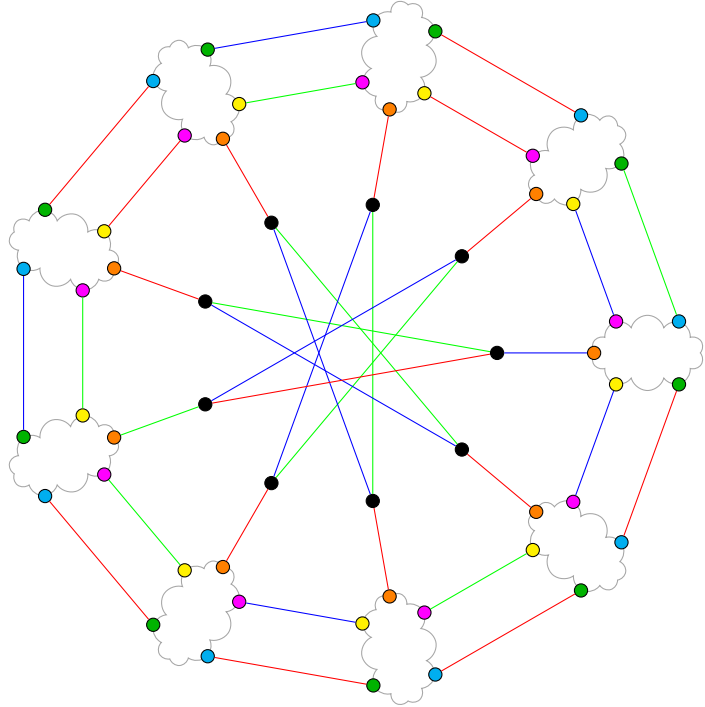
\begin{figure}[htbp]
\begin{center}
\begin{tikzpicture}[scale=2]
\drawBlobs[1.5]{9}{1}{1}
\foreach \i in {1,2,3,4,6,7,8}{
\drawSpoke{\i}{red}
}
\drawSpoke{0}{blue}
\drawSpoke{5}{green}

\foreach \i in {1,3,5,6,7,8}{\drawAlphaB[.5]{9}{\i}{1}{red}{0}}
\foreach \i in {1,3}{\drawAlphaA[.5]{9}{\i}{1}{red}{0}}

\foreach \i in {0,6,8}{\drawAlphaA[.5]{9}{\i}{1}{blue}{0}}
\foreach \i in {2,4}{\drawAlphaB[.5]{9}{\i}{1}{blue}{0}}

\foreach \i in {0}{\drawAlphaB[.5]{9}{\i}{1}{green}{0}}
\foreach \i in {2,4,5,7}{\drawAlphaA[.5]{9}{\i}{1}{green}{0}}

\foreach \i in {0, 8,7,6}{\drawLoop{9}{\i}{green}{4}}
\foreach \i in {4,3,2,1}{\drawLoop{9}{\i}{blue}{4}}
\foreach \i in {5}{\drawLoop{9}{\i}{red}{4}}
\end{tikzpicture}
\caption{The graph $G_{\alpha}(9; 1,1,4)$ is 3-edge-colorable, using color string $\mc{C}=211131111$. (In this example, red=1, blue=2, green=3.)}
\label{fig:3-color-ex}
\end{center}
\end{figure}

If  $G_{\alpha}(m; 1,1,c)$ has $\gcd(c, m) \neq 1$, things are more complicated and less well-understood, since the loop edges form $\gcd(c,m)$ disjoint cycles. Each collection of spoke edges associated with one of the loop cycles itself forms a cut set subject to the Parity Lemma, 
 so there must be at least $\gcd(c,m)$ spokes colored 2 and $\gcd(c,m)$ spokes colored 3, and hence allowable color strings $\mc{C}$ are more complicated.

We conjecture:
\begin{conjecture}
The graph $G_{\alpha}(m; 1,1,c)$ with $\gcd(c,m)>1$, is 3-edge-colorable.
\end{conjecture}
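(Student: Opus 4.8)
The final statement to prove is the conjecture that $G_{\alpha}(m;1,1,c)$ with $\gcd(c,m)>1$ is always 3-edge-colorable. My plan is to reduce this to the coprime case already handled in the excerpt, by exploiting the block structure imposed by $t=\gcd(c,m)$. Write $m = t\ell$ with $\ell = m/\gcd(c,m)$. The loop edges decompose into $t$ disjoint cycles of length $\ell$, where the $j$-th loop cycle consists of the vertices $w_{i}$ with $i \equiv j \pmod t$, and within that residue class the loop acts like a step of $c/t$ (more precisely, $c \equiv t c' \pmod m$ with $\gcd(c',\ell)=1$ after suitable reduction). Correspondingly the connecting-edge structure, since $a=b=1$, links cluster $G_i$ to $G_{i+1}$, which mixes the residue classes. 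So the graph is \emph{not} a disjoint union over residue classes, and the task is to color the whole thing coherently.

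The key idea I would pursue is to assign a color string $\mc{C} = c_0 c_1 \cdots c_{m-1}$ in which, for \emph{each} residue class $j \bmod t$, the restricted substring $c_j\, c_{j+t}\, c_{j+2t}\cdots$ looks like a valid color string for the coprime problem $G_{\alpha}(\ell;1,1,c')$. Concretely, I would use $t$ disjoint ``defect'' blocks: within residue class $j$ put one spoke colored $2$, one colored $3$, and the rest colored $1$, placed exactly as in the proof of the theorem ``If $m$ is odd, $\gcd(c,m)=1$, and $c>1$ then $G_{\alpha}(m;1,1,c)$ is 3-edge-colorable'' — that is, the positions of the $2$ and the $3$ are $c'-1$ apart (mod $\ell$) within the class so that the forbidden pattern $x\cdots y\cdots x$ at step $c$ never arises among the loop edges. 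Since each loop cycle then sees a valid color string for the $\ell$-cycle loop-coloring problem, all $\gcd(c,m)$ loop cycles can be properly 2-and-3-colored, and this forces all spoke colors to agree with $\mc{C}$, satisfying the Parity Lemma on each of the $t$ spoke-cut-sets. The remaining work is to color the connecting edges $A'_iA_{i+1}$, $B'_iB_{i+1}$ and the internal cluster edges; here I would mimic the coprime construction cluster-by-cluster going around $i = 0,1,\ldots,m-1$, using the patterns {\sf AltTop}, {\sf AltBot}, {\sf TwoLeft}, {\sf TwoRight}, but now the ``defect'' clusters (those with spoke color $2$ or $3$) occur $t$ times rather than twice, so I need to check that the input/output matching still closes up consistently around the full cycle of length $m$.

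The main obstacle, and the step that needs genuine care, is precisely this closing-up condition: in the coprime proof there is exactly one ``odd run'' of $1$'s and one ``even run'' of $1$'s between the two defect spokes, and the parities were arranged so that alternating {\sf AltTop}/{\sf AltBot} on the odd run and alternating {\sf TwoLeft}/{\sf TwoRight} on the even run matched the cyclic boundary condition. With $t$ defect blocks the string is partitioned into $2t$ runs of $1$'s whose lengths I must control simultaneously modulo $2$, and I must verify that the product of the ``transfer'' behaviors of these runs and defect clusters around the whole $m$-cycle returns to the starting input pair. I expect this to work by choosing the defect positions so that consecutive runs of $1$'s alternate in parity, or by inserting a short functionally-equivalent adjustment (using the Reductions Lemma \ref{collapses}, e.g.\ $xxx\sim x$) to fix parity locally; one should also separately dispatch the easy sub-case where all residue classes have $c'=1$, which reduces to coloring all spokes $1$ and handling $\gcd(c,m)$ parallel even/odd loop cycles as in the ``$m$ even, $m/\gcd(m,c)$ even'' theorem when $\ell$ is even, and to the case $\ell$ odd with $c'=1$ which is exactly $G_\alpha(\ell;1,1,1)$-style on each class. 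Once the connecting edges are colored consistently, every cluster is internally 3-colored by one of the four admissible patterns, so the whole graph $G_{\alpha}(m;1,1,c)$ is 3-edge-colored, proving it is not a snark.
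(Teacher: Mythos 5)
First, a point of calibration: this statement is posed in the paper only as a conjecture. The paper explicitly says that when $\gcd(c,m)\neq 1$ ``things are more complicated and less well-understood,'' and it offers no proof; so there is no argument of the authors to compare yours against, and what you have written would need to stand entirely on its own as a resolution of an open problem. It does not yet do so: it is a strategy outline whose decisive step you yourself flag as unverified (``I expect this to work\dots''), namely the global closing-up of the cluster patterns around all $m$ clusters.

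The gap is real and not merely bookkeeping. Your per-class defect placement is already tightly constrained: along each loop cycle the spoke colors restricted to a residue class read cyclically as $2,1,\dots,1,3,1,\dots,1$, and the forbidden patterns $x\,y\,x$ and $x\,y\cdots y\,z$ (with an even run) force the $2$ and the $3$ of each class to sit at positions $i$ and $i+c$ of the \emph{same} loop cycle, with the remaining run of $1$'s of odd length; this is exactly what the coprime construction does for its single class. But once you interleave $t=\gcd(c,m)$ such defect pairs into the consecutive cluster order, the induced runs of $1$'s between defects have lengths you no longer control, and the alternating {\sf AltTop}/{\sf AltBot} versus {\sf TwoLeft}/{\sf TwoRight} bookkeeping of the coprime proof does not obviously close up; you give no argument that a simultaneous choice of the $t$ offsets $i_j$ exists for every $m,c$. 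Worse, the repair you propose---using the Reductions Lemma (e.g.\ $xxx\sim x$) to ``fix parity locally''---is not available here: functional equivalence and $\Flow{\cdot}$ are defined and proved in the paper only for $G_{\alpha}(m;1,1,1)$, where a consecutive cluster sequence cuts exactly one loop edge at each end; for $c>1$ a consecutive block cuts many loop edges, so the transfer data is different and the equivalences would have to be re-proved, and in any case replacing $xxx$ by $x$ changes $m$ and hence $\gcd(c,m)$ and the entire loop-cycle structure, so it cannot be applied to this family as stated. To turn your sketch into a proof you would need (i) a transfer/flow formalism adapted to step-$c$ loops (tracking all cut loop edges), or a direct explicit coloring with the $t$ defect pairs placed so that both the per-class loop constraints and the global pattern composition are satisfied, and (ii) treatment of the even-$m$, odd-$\ell$ subcase not covered by the paper's even/even theorem; verifying even one new instance such as $m=9$, $c=3$ explicitly would be a natural first test of the construction.
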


\begin{prop}The graph $G_{\alpha}(3k; k, k, 1)$ is a cyclically 3-connected snark. \end{prop}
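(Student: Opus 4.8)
The plan is to prove that $G_\alpha(3k;k,k,1)$ is a connected cubic graph admitting no proper $3$-edge-colouring, and then exhibit a cycle-separating $3$-edge-cut. First I would note that the lift is cubic (here $a=b=k$ and $c=1$ satisfy $1\le a,b<m$ and $1\le c<m/2$ for $m=3k$, $k\ge 1$) and connected (since $\gcd(c,m)=\gcd(1,3k)=1$, Proposition \ref{thm:voltageFacts} applies). The structural input is Observation \ref{clusterCycle}: because $\gcd(a,m)=\gcd(k,3k)=k$, the connecting edges decompose the clusters into $k$ cluster-cycles, each of the odd length $3k/k=3$. It therefore suffices to show that one such cluster-cycle, together with its three attached spokes, cannot be $3$-edge-coloured at all.

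Fix the cluster-cycle on $G_0,G_k,G_{2k}$ and let $\mc G$ be the subgraph of $G_\alpha(3k;k,k,1)$ induced by the vertices of these three clusters other than $w_0,w_k,w_{2k}$, together with the six connecting edges joining the three clusters and the three spoke edges $v_0w_0,v_kw_k,v_{2k}w_{2k}$, viewed as dangling edges. Since $i\pm k\in\{0,k,2k\}\pmod{3k}$ whenever $i\in\{0,k,2k\}$, every connecting edge incident with $G_0,G_k,G_{2k}$ lies inside $\mc G$, so the only edges of $G_\alpha(3k;k,k,1)$ leaving $V(\mc G)$ are those three spokes. Consequently, in any proper $3$-edge-colouring of $G_\alpha(3k;k,k,1)$ these three spokes form a cutset of size $3$, and Lemma \ref{parityLemma} forces them to carry three distinct colours; moreover the colouring restricts to a proper $3$-edge-colouring of $\mc G$ in which each $G_i$ ($i\in\{0,k,2k\}$) exhibits one of the four admissible colour patterns \textsf{AltTop}, \textsf{AltBot}, \textsf{TwoLeft}, \textsf{TwoRight}, with main colour equal to its spoke colour, and the output pair of $G_i$ matches the input pair of $G_{i+k}$.

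Up to a permutation of colours we may assume the three spokes are coloured $1,2,3$. Then a proper $3$-edge-colouring of $\mc G$ would, after recording the colours on the $A$- and $B$-type connecting edges, give an input pair for $G_0$ that is returned unchanged after passing in turn through $G_0$, $G_k$, $G_{2k}$; but Table \ref{t:xyz}, compiled in the proof of Theorem \ref{thm:G34no9snark} for the colour string $\mc C=123$, shows that none of the nine possible input pairs has this property (this table concerns only the $A$- and $B$-type edges and makes no reference to the loop edges, which is exactly why it transfers to the present setting). Hence $\mc G$ has no proper $3$-edge-colouring, so neither does $G_\alpha(3k;k,k,1)$, which is therefore a snark. (If one prefers a self-contained argument, one checks directly that for a cluster $G_i$ one has $\alpha_i=\alpha_{i+1}$ precisely when $G_i$ is \textsf{AltTop} and $\beta_i=\beta_{i+1}$ precisely when $G_i$ is \textsf{AltBot} --- writing $\alpha_i,\beta_i$ for the colours of the two connecting edges into $G_i$ --- so a $3$-cycle admits at most one \textsf{AltTop} and at most one \textsf{AltBot}; running through the remaining combinations of the four patterns forces the main colour of some cluster onto an anti-diagonal of its $2\times 2$ pattern grid, which is not admissible.)

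Finally, for the connectivity claim: the three spokes $v_0w_0,v_kw_k,v_{2k}w_{2k}$ form a cycle-separating $3$-edge-cut, since $\mc G$ contains a cycle (each $G_i$ has girth $5$) and the complementary side contains the $3k$-cycle of loop edges $w_0w_1\cdots w_{3k-1}w_0$; thus the cyclic edge-connectivity is at most $3$. A routine check shows there is no cycle-separating edge cut of size at most $2$, so the cyclic edge-connectivity equals $3$ and $G_\alpha(3k;k,k,1)$ is cyclically $3$-connected. The delicate point of the argument is the claim that the $\mc C=123$ flow analysis of Theorem \ref{thm:G34no9snark} still applies here even though the three spokes of $\mc G$ run into a long loop-cycle rather than into a triangle as in $G_\alpha(3;1,1,1)$; this is resolved by observing that the obstruction recorded in Table \ref{t:xyz} lives entirely on the $A$- and $B$-type edges, while the Parity Lemma independently pins the three spoke colours down to a permutation of $\{1,2,3\}$.
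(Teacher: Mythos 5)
Your proposal is correct and follows essentially the same route as the paper: isolate one triangle of clusters cut off by its three spokes, and show that no assignment of spoke colours extends to the connecting edges of that cluster-triangle, the non-colourability of the $xyz$ case being exactly the content of Table \ref{t:xyz} (which indeed involves only the $A$- and $B$-type edges). The only difference is cosmetic: you invoke the Parity Lemma to rule out the all-equal spoke colouring outright, whereas the paper also checks the (in fact impossible) $xxx$ case, and your treatment of the cyclic $3$-connectivity matches the paper's level of detail.
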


\begin{proof} The $a = k$ parameter induces disjoint 3-cluster-cycles. Each cluster-cycle is connected by 3 spokes to the inner $3k$-cycle, so the graph is only cyclically 3-connected. Each of those spoke triples must either be colored $xxx$ or $xyz$, and neither of those color strings (on just the three spokes connecting the triangle cluster-cycle) leads to a viable coloring of the connecting edges of the cluster triangle.
\end{proof}

\begin{conjecture}
If $a \neq b$, then $G_{\alpha}(m; a,b, c)$ is 3-edge-colorable.
\end{conjecture}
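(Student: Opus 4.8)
The plan is to prove the conjecture by constructing, for every $m,a,b,c$ with $a\neq b$, an explicit proper $3$-edge-coloring of $G_{\alpha}(m;a,b,c)$. By the Lemma classifying the admissible color patterns of the $5$-pole $G$, any proper $3$-edge-coloring restricts on each cluster $G_i$ to one of the four patterns {\sffamily TwoLeft}, {\sffamily TwoRight}, {\sffamily AltTop}, {\sffamily AltBot} (Figure~\ref{fig:G34No9-patterns}), in one of its two orientations, with the spoke $v_iw_i$ colored the main color; conversely, a coloring is completely specified by a spoke color string $\mc C=c_0\cdots c_{m-1}$ together with a choice of admissible pattern of main color $c_i$ (and an orientation) for each $i$, subject to: the connecting edges $A'_iA_{i+a}$ being monochromatic, the connecting edges $B'_iB_{i+b}$ being monochromatic, and the loop edges $w_iw_{i+c}$ being properly colorable given $\mc C$. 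So it suffices to produce one consistent such choice.

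First I would dispatch the loop/spoke constraint, which is independent of $a$ and $b$. If $m/\gcd(m,c)$ is even, take $\mc C=11\cdots1$; otherwise color each of the $\gcd(m,c)$ loop cycles $w_i,w_{i+c},\dots$ by the spoke sequence $2\,3\,1\,1\cdots1$, which has the correct parities and contains none of the forbidden substrings of Lemma~\ref{lem:xyx}, so the loop edges can be properly colored. This fixes a spoke string in which all but $2\gcd(m,c)$ spokes are red.

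The heart of the argument is choosing the cluster patterns to satisfy the two strand constraints simultaneously with this spoke string. On a red-spoke cluster, {\sffamily AltTop} keeps $A_i,A'_i$ red while letting $\{B_i,B'_i\}=\{$green,blue$\}$ in either order, {\sffamily AltBot} is its mirror, and {\sffamily TwoLeft}/{\sffamily TwoRight} convert a red input pair to a non-red output pair and back. Using {\sffamily AltTop} on a generic block makes the $A$-connecting edges all red for free and reduces the $B$-constraint to $2$-coloring the $\gcd(m,b)$ $B$-cycles, which works when each has length $m/\gcd(m,b)$ even; an odd $B$-cycle is corrected by inserting a short gadget (an {\sffamily AltBot} cluster, or a {\sffamily TwoLeft}/{\sffamily TwoRight} pair) at one of its clusters, the disturbance this causes on the $A$-side being repaired locally. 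The decisive point is that because $a\neq b$, the cluster occupying a given position of a $B$-cycle occupies an unrelated position of its $A$-cycle, so a single correction does not propagate an obstruction around the $A$-side --- exactly the freedom that is missing when $a=b$, where the analogous parity count around the one cluster-cycle forces the snarks of Theorem~\ref{thm:G34no9snark} and Corollary~\ref{thm:G34no9snark-multiply}. One must also absorb the at-most-$2\gcd(m,c)$ green/blue spokes from the first step, which only perturb their $A$- and $B$-neighbours among the red-spoke clusters.

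The main obstacle is carrying out this last step uniformly, i.e. reconciling all $\gcd(m,a)$ $A$-cycles, $\gcd(m,b)$ $B$-cycles and $\gcd(m,c)$ loop cycles at once --- above all in the worst regime where $m$ is odd and $\gcd(m,a)=\gcd(m,b)=\gcd(m,c)=1$, so each structure is a single odd cycle and $a,b$ generate $\mathbb{Z}_m$. There neither the pure-{\sffamily AltTop} nor the pure-{\sffamily AltBot} coloring survives the parity count, and one genuinely has to track how a corrective gadget threads through the two interleaved cyclic orders on the clusters; I expect an unavoidable case analysis on $\gcd(m,a)$, $\gcd(m,b)$, $\gcd(m,c)$ and on $\gcd(m,a-b)$ (which governs how the $A$- and $B$-orders interlock), and this is presumably why the statement is only conjectured. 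A more uniform alternative would be to develop a transfer/flow calculus on pairs of colors modelled on $\Flow{\cdot}$ but adapted to two step sizes $a$ and $b$, and to show that the analogue of the forbidden configuration never forces chromatic index $4$ once $a\neq b$; I would attempt the explicit construction first and fall back on such a calculus for the residual cases.
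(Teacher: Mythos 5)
The statement you are addressing is not proved in the paper at all: it appears only as a conjecture, so there is no ``paper proof'' to match, and your proposal does not close that gap. What you have written is a strategy outline whose central step is exactly the open content of the conjecture. The reduction to choosing, for each cluster, one of the four admissible patterns of Figure \ref{fig:G34No9-patterns} compatible with a spoke string, and the observation that an all-{\sffamily AltTop} assignment works precisely when the $B$-cluster-cycles of length $m/\gcd(m,b)$ are even, are fine; but the claim that an odd $B$-cycle can be fixed by a local gadget whose ``disturbance on the $A$-side is repaired locally'' is asserted, not argued. Inserting an {\sffamily AltBot} (or {\sffamily TwoLeft}/{\sffamily TwoRight}) cluster breaks the monochromatic-red $A$-strand at that cluster, and the resulting alternation constraint then propagates around the entire $A$-cluster-cycle through that cluster, where the same parity obstruction you are trying to evade on the $B$-side reappears; simultaneously the spoke string is pinned by the loop cycles, so the three systems of constraints ($A$-cycles at step $a$, $B$-cycles at step $b$, loop cycles at step $c$) must be reconciled globally. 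In the regime you yourself single out ($m$ odd, $\gcd(m,a)=\gcd(m,b)=\gcd(m,c)=1$) no construction is given, and you explicitly defer to ``an unavoidable case analysis'' or a two-step-size analogue of $\Flow{\cdot}$ that is not developed. That is a genuine missing idea, not a routine verification.

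A further caution: the heuristic driving your ``decisive point'' --- that $a\neq b$ alone decouples the $A$- and $B$-orders enough to kill the obstruction --- cannot be purely structural. The paper reports that for the 5-pole derived from graph G34no1, the lifts $\text{G34no1}_{\alpha}(5;1,2,2)$ and $\text{G34no1}_{\alpha}(5;1,4,2)$ are snarks even though $a\neq b$ there. So any proof of the conjecture must exploit the specific set of admissible color patterns of the 5-pole $G$ (for instance via a genuine two-parameter extension of the flow calculus of Lemmas \ref{condition3}--\ref{collapses}), and the argument as sketched does not yet isolate what property of $G$'s patterns, absent for other 5-poles, makes the $a\neq b$ case colorable.
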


\begin{question}
Suppose $m/\gcd(a,m)$ is odd. When is $G_{\alpha}(m; a,a, 1)$ a snark?\end{question}

We have computationally verified that  $G_{\alpha}(10; 2,2,1)$ is not a snark. The next example in which the cluster-cycles are not triangles is $G_{\alpha}(15; 3,3,1)$. Unfortunately, this graph on 180 vertices is too big for Sage to analyze the chromatic index. Personal communication with Martin \v{S}koviera suggests that this graph is also not a snark.

\subsection{Other directions}
We have some preliminary analysis of some of the other snarks on 34 vertices, and this analysis suggests that there may be interesting infinite classes induced by these examples. For instance, consider  graph G34no1, which can be interpreted (using a particular voltage graph) as $\text{G34no1}_{\alpha}(3;1,1,1)$. Unlike the case analyzed in the previous section, in this case, $\text{G34no1}_{\alpha}(5;1,1,1)$ is 3-edge-colorable, but $\text{G34no1}_{\alpha}(5;1,2,2)$ and $\text{G34no1}_{\alpha}(5;1,4,2)$ are snarks.

The House of Graphs \cite{HoG} separates its snarks by cyclic edge-connectivity. Among the 19 snarks on 34 vertices with 3-fold rotational symmetry, those that appear in the list of snarks with cyclic edge-connectivity at least 5 are G34no5, which has 3 arrows, two forming a pair of parallel edges, and graph G34no15, which has four arrows, forming two pairs of parallel edges. We conjecture that these snarks can be generalized into broader families of snarks with rotational symmetry that have cyclic edge-connectivity at least 5.

\subsection*{Acknowledgements}The first and third author appreciate the hospitality of the Institute of Mathematics at UNAM--Juriquilla, and CINNMA.A.C. Juriquilla, Quer\'etaro, Mexico, during their sabbatical visit. The three authors were partially supported by grant PAPIIT 104915. Leah Berman's research was supported by the Simons Foundation (Grant \# 209161 to L. Berman) and D\'eborah Oliveros's research was supported by (Grant CONACyT 166306) as well. The authors appreciate several very helpful conversations with Martin \v{S}koviera, beginning at the ACCOTA 2016 conference.

\bibliographystyle{spmpsci} 
\bibliography{PseudoLoupekine}

\begin{appendix}
\section{Graph6 representations of snarks with 3-fold rotation, $n = 28$}\label{appendix:28}
\begin{enumerate}[{G28no}1:]
\item\verb=[??G?EO?G?GB_AO_g_?CP_??C??O????[O?CA?AH??C?G?@GO??C???cA??????J=
\item\verb=[??A@E?OKCCAP??B`??_@_??O?G`???@OC?C???wO???G?@I?????C@?_?@????F=
\item\verb=[?GI?AO?KCCBa?OC_??A@O??@?GE???@OO?C???w??_?_?@G??A@?G?A???_???F=

\end{enumerate}
\section{Graph6 representations of snarks with 3-fold rotation, $n=34$}\label{appendix:34}
\begin{minipage}{\linewidth}
\begin{enumerate}[{G34no}1:]

\begin{footnotesize}
\item\verb=a?`Q?cOG???@_??@g???T?a??@g@??G?C???I??K??@???HA??@???g_G???@??@??????S??H????O????c?O??@????IG=
\item\verb=a?GQ?e??GC?B_?O@g_??T????OH???`????????{??????`??AA???__?@C????J?????K???H????O??@?CG???@?O??AG=
\item\verb=a?hY@eOGG??B_??@_???O?a???o?????P??A???W??G???X??G_???E_?????@?@????C??O?G?A??????@K?A??_????Ag=
\item\verb=a?gW@eOGG?GA_??_g_?????C?A?C???O???I??@W??W???XO?O??AC?_?_????_??A?????_?????@O????k????o????BG=
\item\verb=a??Y@EOOGCCAP???????D???O?GO@??G?G????O[??O??A?_?????O@_??_??Q?@G??G???B?GA??@??OO?C??_A????B?G=
\item\verb=a?hY@eOGGC?A_??@g???@?AC?@??C???C?????@W?@????T??CG???Q_?_??????????G??C?W?O??????@c??G?O????Ag=
\item\verb=a?G?@eO?KCCA????_???X_??C??OC???[G?C??aIC????G@O?O??C??g???????J@??????_?GG@??????@GO???G?????w=
\item\verb=a?HI?_?OGCCAC?????@A@CG??DCO????CCA???@??Q????D_?G??_??_C?????A`_???G??H?GA??A??P??C?B??????D?G=
\item\verb=a??I@E??GC?B_?O@g???T_??O???_?_?C?_@??BHO?????@???_?O?@c???O_??H?????C?_?GO???O??G?CC???@??A?AG=
\item\verb=a?hW@eOGG?GB???_g?G??????AG????@c?GA??EG?O_???p_????_??_???O??O?????G????W???AG???@_?A??G?????w=
\item\verb=a??Y??O???CAa?O?g_??G?G??OK??O@_??@C???x????W?@GO??A???aG??????R_A?????O?G?O_???@??CAG???????@W=
\item\verb=a?gY@eOGGC?A_???g_??D??O?GI?????C??@??AG?O@???P?@?O???D_?_?????@???????O?W??G?????@S?@??G????Cg=
\item\verb=a?@G@c??G?GA_A?_g??CP?B??@k@?????O??@??W_?A?C?@???o???Hp???@??????????O??X????O???@CG???G????CW=
\item\verb=a?@?@c??G?GA_A?_g??CP?B??@k@?????O??@??W_?A?C?@???o???Hp???@?_??B????????H????????@KG???A????DG=
\item\verb=a??Q@eOOGCC?@???@???G?A??A??@_??KG????A[?@??@?@G??G?CC?G??_?W???_?A???@?CG?I??????EC?B???????KG=
\item\verb=a?gY@eOGGC?B???@_???O?????c??Q?A@?????cG??_???_@?G????B_?C@???@H??B????A???C???????[??G?A???O@G=
\item\verb=a?_W@e?GG??A_?O_?_??@????IGC?O??c?K???CI?G???G???G????A_???O?C?HO???G???DHO????`???C?G@????AC?G=
\item\verb=a??I@AOGGC?B_??@g_??Da????G???G?CO?C???g??C???@G???C??@_W??????PO?O???GO?GO??_???I?C@??C?????cG=
\item\verb=a??O?E?G??GB_AO_g_?CPA??@???????[O???O?g??C???d???A?C??C???O@??BI???????BK??O??W????O_????????w=

\end{footnotesize}

\end{enumerate}

\end{minipage}

\section{Graph6 representations of snarks with 3-fold rotation, $n = 36$}\label{appendix:36}
\begin{enumerate}[{G36no}1:]
\begin{tiny}
\item\verb=c?`O?cAG??GOG?C?_?A?@?G??Gc?@?OC?K?????k@?????J??O???@?_@??O??O@?B??????@X??????__????G???????wA???C?A???D=
\item\verb=c?GA?a??G???@??A`??C?_A???{?@??Oc?A?A??A??@?_?BD??????K_?????I?@G???G?S??H?G???A??@D?O???CG????E?????????F=
\item\verb=c?@Y?a?GGC?B_AO?_W??JI??O??O??@?G??G???_??C???G???K???Bo??@????h???????C?G????O??@?C@???G????@P????C?????F=
\item\verb=c?@W@e??G?GPI?CA????@??G?BC?OA?_?_????@?O?????F??CG??AC?O?????E@???A????BG@??@??@??Q??G???CC???????E?????F=
\item\verb=c?gI?C?OGC?A???Bo@?A?C?_??@?G?a??O_???GAG?????@?G????@C_?O????GH??D????A?K???C???@_C_?O?????O@GG??@???@?_@=
\item\verb=c?hG?aO?G?CAD??B_?????????y?????T??G?_?_?_A???F??G??A??_O??G?P??_???_????w??_??A???CA???@?OC??@????C?????F=
\item\verb=c?HY@?OGG??B_??@g???T?a??@gO??A?A?_???????C???J_??????c_??E????T?????A????_???O????K@????????BG??A?O?????L=
\item\verb=c?@A?eOOGCCA@???_@????????o_?OO?C@?O?@OCO???@??_?????O@__?????W@I???????_H??_?????_K?AG?????OGG?@?C????@_@=
\item\verb=c??W?EOG??GB_A?_g_?C@a??@???????[O???O?G??C???D???A?C??????O@??BI???????AI?A????O_?C_???_????PG??GG????Q?@=
\item\verb=c??I@aO????@_A??a_??aO??_CG?A?C?D????A?WE?????TA?????A?c???OG??@a???????@W?_????@??C?_??A??C??G?g????????J=

\end{tiny}

\end{enumerate}

\end{appendix}

\end{document}